\DeclareMathAlphabet{\mathpzc}{OT1}{pzc}{m}{it}
\newcommand{\starA}{\mathscr{A}_k}
\newtheorem{theorem}{Theorem}[section]
\newtheorem{corollary}[theorem]{Corollary}
\newtheorem{definition}[theorem]{Definition}
\newenvironment{proof}[1][Proof]{\noindent \emph{#1.} }
{\hfill \ \rule{0.5em}{0.5em}}
\newtheorem{lemma}[theorem]{Lemma}
\newtheorem{proposition}[theorem]{Proposition}
\newtheorem{assumption}[theorem]{Assumption}
\newtheorem{example}[theorem]{Example}
\newtheorem{remark}[theorem]{Remark}
\numberwithin{equation}{section}
\numberwithin{figure}{section}
\numberwithin{table}{section}
\newcommand{\noi}{\noindent}
\newcommand{\bzero}{\mathbf{0}}
\newcommand{\R}{\mathbb{R}}
\newcommand{\bT}{\mathbf{T}}
\newcommand{\NN}{\mathbb{N}}
\newcommand{\cH}{{\cal H}}
\newcommand{\cP}{{\cal P}}
\newcommand{\cS}{{\cal S}}
\newcommand{\cK}{{\cal K}}
\newcommand{\cD}{{\cal D}}
\newcommand{\cN}{{\cal N}}
\newcommand{\cZ}{{\cal Z}}
\newcommand{\be}{\mathbf{e}}
\newcommand{\bx}{\mathbf{x}}
\newcommand{\bd}{\mathbf{d}}
\newcommand{\br}{\mathbf{r}}
\newcommand{\bn}{\mathbf{n}}
\newcommand{\bnu}{\boldsymbol{\nu}}
\newcommand{\bt}{\mathbf{t}}
\newcommand{\ba}{\mathbf{a}}
\newcommand{\bbb}{\mathbf{b}}
\newcommand{\bv}{\mathbf{v}}
\newcommand{\bw}{\mathbf{w}}
\newcommand{\by}{\mathbf{y}}
\newcommand{\bF}{\mathbf{F}}
\newcommand{\bQ}{\mathbf{Q}}
\newcommand{\bze}{\mathbf{0}}
\newcommand{\supp}{\mathrm{supp}}
\newcommand{\dist}{\mathrm{dist}}
\newcommand{\bZ}{\mathbf{Z}}
\newcommand{\ri}{{\rm i}}
\newcommand{\rd}{{\rm d}}
\newcommand{\beq}{\begin{equation}}
\newcommand{\eeq}{\end{equation}}
\newcommand{\beqs}{\begin{equation*}}
\newcommand{\eeqs}{\end{equation*}}
\newcommand{\bit}{\begin{itemize}}
\newcommand{\eit}{\end{itemize}}
\newcommand{\ben}{\begin{enumerate}}
\newcommand{\een}{\end{enumerate}}
\newcommand{\bal}{\begin{align}}
\newcommand{\eal}{\end{align}}
\newcommand{\bals}{\begin{align*}}
\newcommand{\eals}{\end{align*}}
\newcommand{\bse}{\begin{subequations}}
\newcommand{\ese}{\end{subequations}}
\newcommand{\bpr}{\begin{proposition}}
\newcommand{\epr}{\end{proposition}}
\newcommand{\bre}{\begin{remark}}
\newcommand{\ere}{\end{remark}}
\newcommand{\bpf}{\begin{proof}}
\newcommand{\epf}{\end{proof}}
\newcommand{\ble}{\begin{lemma}}
\newcommand{\ele}{\end{lemma}}
\newcommand{\bco}{\begin{corollary}}
\newcommand{\eco}{\end{corollary}}
\newcommand{\bex}{\begin{example}}
\newcommand{\eex}{\end{example}}
\newcommand{\bth}{\begin{theorem}}
\newcommand{\enth}{\end{theorem}}
\newcommand{\Rea}{\mathbb{R}}
\newcommand{\Com}{\mathbb{C}}
\newcommand{\esssup}{\mathop{{\rm ess} \sup}}
\newcommand{\essinf}{\mathop{{\rm ess} \inf}}
\newcommand{\diam}{\mathop{{\rm diam}}}
\newcommand{\Oi}{{\Omega^-}}
\newcommand{\Oe}{{\Omega^+}}
\newcommand{\domaingen}{D}
\newcommand{\domain}{D}
\newcommand{\pD}{{\partial D}}
\newcommand{\GR}{{\Gamma_R}}
\newcommand{\eps}{\varepsilon}
\newcommand{\pdiff}[2]{\frac{\partial #1}{\partial #2}}
\newcommand{\dnu}{\partial_n u}
\newcommand{\dnuv}{\partial_{\nu} v}
\newcommand{\dnpu}{\partial_n^+ u}
\newcommand{\dnmu}{\partial_n^- u}
\newcommand{\dnpv}{\partial_n^+ v}
\newcommand{\dnmv}{\partial_n^- v}
\newcommand{\gmu}{\gamma^- u}
\newcommand{\gpu}{\gamma^+ u}
\newcommand{\gmv}{\gamma^- v}
\newcommand{\gpv}{\gamma^+ v}
\newcommand{\ngus}{|\nabla u|^2}
\newcommand{\gu}{\nabla u}
\newcommand{\ngvs}{|\nabla v|^2}
\newcommand{\gv}{\nabla v}
\newcommand{\nT}{\nabla_{\Gamma}}
\newcommand{\ntS}{\nabla_{\Gamma} S}%\pdiff{S_k}{T}}
\newcommand{\half}{\frac{1}{2}}
\newcommand{\LtG}{{L^2(\Gamma)}}
\newcommand{\LtGt}{{\LtG\rightarrow \LtG}}
\newcommand{\HhG}{{H^{1/2}(\Gamma)}}
\newcommand{\HmhG}{{H^{-1/2}(\Gamma)}}
\newcommand{\HoG}{H^1(\Gamma)}
\newcommand{\Holoc}{H^1_{{\rm loc}}}
\newcommand{\tendi}{\rightarrow \infty}
\newcommand{\tendo}{\rightarrow 0}
\newcommand{\opA}{A'_{k,\eta}}
\newcommand{\nxy}{|\bx-\by|}
\def\XXint#1#2#3{{\setbox0=\hbox{$#1{#2#3}{\int}$}
     \vcenter{\hbox{$#2#3$}}\kern-.5\wd0}}
\definecolor{myblue}{rgb}{0,0,0.6}
\newcommand*{\N}[1]{\left\|#1\right\|}
\newcommand{\tfa}{\text{ for all }}
\newcommand{\tfor}{\text{ for }}
\newcommand{\tas}{\text{ as }}
\newcommand{\tand}{\text{ and }}
\newcommand{\tst}{\text{ such that }}
\newcommand{\tfind}{\text{ find }}
\newcommand{\Hilb}{\cH}
\newcommand{\PhiH}{\Phi_k}%{\rm MH}}
\newcommand{\vertiii}[1]{{\left\vert\kern-0.25ex\left\vert\kern-0.25ex\left\vert #1
    \right\vert\kern-0.25ex\right\vert\kern-0.25ex\right\vert}}
\newcommand{\DOmegabar}{\cD(\baro)}%C_{\text{comp}}^\infty(\overline{\Omega})}
\newcommand{\baro}{\overline{\Omega}}
\newcommand{\opG}{A^\dag_{\vfb,\alpha}}
\newcommand{\opLIZ}{A'_{I,\vfb,\alpha}}
\newcommand{\opLIZi}{A_{I,\vfb,\alpha}}
\newcommand{\opLIZRHS}{B_{I,\vfb,\alpha}}
\newcommand{\opLEZ}{A'_{E,\vfb,\alpha}}
\newcommand{\opLEZi}{A_{E,\vfb,\alpha}}
\newcommand{\opLEZRHS}{B_{E,\vfb,\alpha}}
\newcommand{\opLIx}{A'_{I,\bx,\alpha}}
\newcommand{\opLIxi}{A_{I,\bx,\alpha}}
\newcommand{\opLEx}{A'_{E,\bx,\alpha}}
\newcommand{\opLExi}{A_{E,\bx,\alpha}}
\newcommand{\newangle}{\theta}
\newcommand{\vfb}{\bZ}
\newcommand{\vfext}{\bZ_{\rm ext}}
\newcommand{\vfextt}{\widetilde{\bZ}_{\rm ext}}
\newcommand{\vfd}{\widetilde{\bZ}}
\newcommand{\radius}{\kappa}
\newcommand{\Caldp}{\mathcal{K}_{\vfb}}
\newcommand{\Caldb}{K_{\vfb}}
\newcommand{\mymatrix}[1]{\mathsf{#1}}
\newcommand{\MA}{{\mymatrix{A}}}
\newcommand{\MM}{{\mymatrix{M}}}
\newcommand{\MC}{{\mymatrix{C}}}
\newcommand{\MD}{{\mymatrix{D}}}
\newcommand{\MB}{{\mymatrix{B}}}
\newcommand{\MI}{{\mymatrix{I}}}
\DeclareMathOperator{\cond}{cond}
\newcommand{\projP}{P_\Gamma}
\newcommand{\projQ}{Q_\Gamma}
\newcommand{\Ccoer}{C_{\rm coer}}
\newcommand{\mythmname}[1]{\textbf{\emph{(#1.)}}}
\definecolor{amcol}{rgb}{0.8,0,0}
\definecolor{escol}{rgb}{0,0,0.8}
\definecolor{estcol}{rgb}{0,0.6,0}
\newcommand{\es}[1]{{\color{escol}{#1}}}
\newcommand{\esnote}[1]{\ednote{\es{ES: #1}}}
\definecolor{cwcol}{rgb}{0.5,0,0.5}
\definecolor{cwstcol}{rgb}{0,0.6,0.6}
\begin{document}

\title{Coercive second-kind boundary integral equations for the Laplace Dirichlet problem on Lipschitz domains
}

\author{S. N.~Chandler-Wilde\thanks{Department of Mathematics and Statistics, University of Reading,
Whiteknights, PO Box 220, Reading, RG6 6AX, UK, \tt S.N.Chandler-Wilde@reading.ac.uk}
\,\,, E. A. Spence\thanks{Department of Mathematical Sciences, University of Bath, Bath, BA2 7AY, UK, \tt E.A.Spence@bath.ac.uk }
%\date{}
}

\date{\today}

%\renewcommand{\thefootnote}{\fnsymbol{footnote}}

%\footnotetext[1]{
%\footnotetext[2]

\maketitle

\begin{abstract}
We present new second-kind integral-equation formulations of the interior and exterior Dirichlet problems for Laplace's equation.
 The operators in these formulations are   both continuous and coercive on general Lipschitz domains in $\Rea^d$, $d\geq 2$, in the space $L^2(\Gamma)$, where $\Gamma$ denotes the boundary of the domain.
These properties of continuity and coercivity immediately imply that (i) the Galerkin method converges when applied to these formulations; and (ii) the Galerkin matrices are well-conditioned as the discretisation is refined, without the need for operator preconditioning (and we prove a corresponding result about the convergence of GMRES). 
 The main significance of these results is that it was recently proved (see Chandler-Wilde and Spence, Numer.~Math., 150(2):299-271, 2022) that there exist 2- and 3-d Lipschitz domains and 3-d star-shaped Lipschitz polyhedra for which the operators in the standard second-kind integral-equation formulations for Laplace's equation (involving the double-layer potential and its adjoint) \emph{cannot} be written as the sum of a coercive operator and a compact operator in the space $L^2(\Gamma)$. Therefore  there exist 2- and 3-d Lipschitz domains and 3-d star-shaped Lipschitz polyhedra for which Galerkin methods  in $\LtG$ do  \emph{not} converge when applied to the standard second-kind formulations, but \emph{do} converge for the new formulations.
\end{abstract}

\section{Introduction}\label{sec:intro}

\subsection{Boundary integral equations for Laplace's equation}
If an explicit expression for the fundamental solution of a linear PDE is known, then boundary value problems (BVPs) for that PDE can be converted to integral equations on the boundary of the domain.
The main advantage of this procedure is that the dimension of the problem is reduced; indeed, the problem is converted from one on a $d$-dimensional domain to one on a $(d-1)$-dimensional domain. Futhermore, if the original domain is the exterior of a bounded obstacle, then the problem is reduced from one on a $d$-dimensional \emph{infinite} domain, to one on a $(d-1)$-dimensional \emph{finite} domain.

This reduction to the boundary has both theoretical and practical benefits:~on the theoretical side, C.~Neumann famously used boundary integral equations (BIEs) to prove existence of the solution of the Dirichlet problem for Laplace's equation in convex domains in \cite{Ne:77} (see, e.g., the account in \cite[Chapter 1]{Mc:00}), and BIEs have a long history of use in the harmonic analysis literature to prove wellposedness of BVPs on rough domains (see, e.g., \cite{CoMcMe:82}, \cite{Ve:84}, \cite{Ca:85}, \cite[\S2.1]{Ke:94}, \cite{MitreaTaylor:99}, \cite[Chapter 15]{MeCo:00}, \cite[Chapter 4]{Ta:00}, \cite{MiMiTa:01}).
On the more practical side, numerical methods based on Galerkin, collocation, or numerical quadrature discretisation of BIEs, coupled with fast matrix-vector multiply and compression algorithms, and iterative solvers such as GMRES, provide spectacularly effective computational tools for solving a range of linear boundary value problems, for example in potential theory, elasticity, and acoustic and electromagnetic wave scattering (see, e.g., \cite{Rokhlin83,At:97, LaSc:99,BrKu:01,ContETAL02, ChSoCuVeHa:04,BoSa:04,XiTaWe:08,GrGuMaRo09,SaSc:11, ChDaLo:17}).

Let $\Phi(\bx,\by)$ be the fundamental solution for Laplace's equation:
\beq\label{eq:fund}
\Phi(\bx,\by):=
\displaystyle{\frac{1}{2\pi} \log \left(\frac{a}{\nxy}\right),}  \quad d= 2,
\qquad :=
\dfrac{1}{(d-2)C_d|\bx-\by|^{d-2}},  \quad d\geq 3,
\eeq
where $C_d$ is the surface area of the unit sphere $S^{d-1}\subset \Rea^d$ and  $a >0$.
With $\Gamma$ the boundary of a bounded Lipschitz domain,
the boundary integral operators (BIOs) $S$, $D$, $D'$, and $H$,
the \emph{single-layer}, \emph{double-layer}, \emph{adjoint double-layer}, and \emph{hypersingular} operators, respectively,
 are defined for $\phi\in\LtG$, $\psi\in\HoG$, and $\bx\in\Gamma$ by
\beq\label{eq:bio1}
S_k \phi(\bx) = \int_\Gamma \PhiH(\bx,\by) \phi(\by)\, \rd s(\by), \quad
D \phi(\bx) = \int_\Gamma \pdiff{\Phi(\bx,\by)}{n(\by)} \phi(\by)\, \rd s(\by), \quad
\eeq
and
\beq\label{eq:bio2}
D' \phi(\bx) = \int_\Gamma \pdiff{\Phi(\bx,\by)}{n(\bx)} \phi(\by)\, \rd s(\by), \quad
H \psi(\bx) = \pdiff{}{n(\bx)}\int_\Gamma \pdiff{\PhiH(\bx,\by)}{n(\by)} \psi(\by)\, \rd s(\by).
\eeq
When $\Gamma$ is Lipschitz, the integrals in $D$ and $D'$ are defined as Cauchy principal values, in general only for almost all $\bx\in \Gamma$ with respect to the surface measure $\rd s$.
The definition of $H$ on spaces larger than $H^1(\Gamma)$ is complicated (it must be understood either as a finite-part integral, or as the non-tangential limit of a potential; see \cite[Chapter 7]{Mc:00}, \cite[Page 113]{ChGrLaSp:12} respectively), but these details are not essential to the present paper. The standard mapping properties of $S, D, D'$, and $H$ on Sobolev spaces on $\Gamma$ are recalled in Appendix \ref{app:B} (see \eqref{eq:map}).

The BIE operators involved in the standard first- and second-kind BIEs for the Dirichlet and Neumann problems for  Laplace's equation are shown in Table \ref{tab:bies}; although we do not explicitly consider the Neumann problem in this paper, we use the information in this table in what follows.
For the details of the right-hand sides and unknowns for the integral equations corresponding to the operators in Table \ref{tab:bies}, see, e.g., \cite[\S3.4]{SaSc:11}, \cite[Chapter 7]{Mc:00}, \cite[Chapter 7]{St:08}, \cite[\S2.5]{ChGrLaSp:12}.
Recall that the adjective ``direct" in the table refers to equations where the unknown is either the Dirichlet or Neumann trace of the solution to the corresponding BVP, and the adjective ``indirect" refers to equations where the unknown does not have immediate physical relevance.

\begin{table}
\begin{center}
\begin{tabular}{|c|c|c|c|c|}%c|}
\hline%\hline
&Interior Dirichlet   & Interior Neumann & Exterior Dirichlet  & Exterior Neumann  \\
& problem & problem & problem & problem \\
\hline
 Direct  & $S$ & $H$ &  $S$ & $H$\\
 & $\displaystyle{ \frac{1}{2}I- D^\prime}$   & $\displaystyle{ \frac{1}{2}I+ D}$  &  $\displaystyle{ \frac{1}{2}I+ D'}$ & $\displaystyle{ \frac{1}{2}I- D}$\\
 \hline
 Indirect  & $S$ & $H$ &  $S$ & $H$\\
& $\displaystyle{ \frac{1}{2}I- D}$ & $\displaystyle{ \frac{1}{2}I+ D'}$& $\displaystyle{ \frac{1}{2}I+ D}$ & $\displaystyle{ \frac{1}{2}I- D'}$\\
\hline%& Neumann\\
%\hline
     \end{tabular}
     \end{center}
     \caption{
The integral operators involved in the standard boundary-integral-equation formulations of the interior and exterior Dirichlet and Neumann problems for Laplace's equation.\label{tab:bies}}
\end{table}

Following \cite[Pages 9 and 10]{SaSc:11},
we call BIEs \emph{first kind} where the unknown function only appears under the integral, and \emph{second kind} where the unknown function appears outside the integrand as well as inside; by this definition, the BIEs in the first and third row of Table \ref{tab:bies} are first kind, and in the second and fourth row second kind.
An alternative definition of second kind BIEs is that, in addition to the unknown function appearing outside the integrand as well as inside, the BIO is Fredholm of index zero (i.e., the Fredholm alternative applies to the BIE); see, e.g., \cite[\S1.1.4]{At:97}.
Every BIE that we describe in the paper as second-kind is second-kind in both senses above.

\subsection{The Galerkin method} \label{sec:Gal}

We focus on solving Laplace BIEs with the \emph{Galerkin method}. The Galerkin method  applied to the equation $A\phi = f$, where $\phi, f \in \cH$, $A:\cH\rightarrow \cH$ is a continuous (i.e.~bounded) linear operator, and $\cH$ is a complex\footnote{It is convenient, since we deal with non-self-adjoint operators and talk at some points about spectra and numerical ranges, to assume throughout that all Hilbert spaces and function spaces are complex. Of course results for the corresponding real case are easily deduced, if needed, from the complex function space case.}
Hilbert space, is:~given a sequence $(\cH_N)_{N=1}^\infty$ of finite-dimensional subspaces of $\cH$ with $\dim(\cH_N)\tendi$ as $N\tendi$,
\beq\label{eq:G}
\tfind \phi_N \in\cH_N \tst \big(A\phi_N,\psi_N)_\cH=\big(f,\psi_N\big)_\cH \quad \tfa \psi_N\in \cH_N.
\eeq
We say that the \emph{Galerkin method converges for the sequence $(\cH_N)_{N=1}^\infty$} if, for every $f\in \cH$, the Galerkin equations \eqref{eq:G} have a unique solution for all sufficiently large $N$ and $\phi_N\to A^{-1}f$ as $N\to\infty$.
We say that \emph{$(\cH_N)_{N=1}^\infty$ is asymptotically dense in $\cH$} if, for every $\phi\in \cH$,
\beq\label{eq:space_converge}
\min_{\psi_N\in \cH_N}\N{\phi-\psi_N}_{\cH} \to 0 \quad \mbox{as} \quad N\to \infty.
\eeq

A necessary condition for the convergence of the Galerkin method is that $(\cH_N)_{N=1}^\infty$ is asymptotically dense in $\cH$.
Indeed, a standard necessary and sufficient condition  (e.g., \cite[Chapter II, Theorem 2.1]{GoFe:74}) for convergence of the Galerkin method is that  $(\cH_N)_{N=1}^\infty$ is asymptotically dense and that, for some $N_0\in \NN$ and $C_{\mathrm{dis}}>0$,
\begin{equation} \label{eq:d_infsup}
 \frac{\|\cP_N A\psi_N\|_{\cH}}{\|\psi_N\|_\cH} \geq C_{\mathrm{dis}} \quad \mbox{for all non-zero }\psi_N\in \cH_N \mbox{ and } N\geq N_0,
\end{equation}
where $\cP_N$ is orthogonal projection of $\cH$ onto $\cH_N$. Importantly, if \eqref{eq:d_infsup} holds, then (\cite[Chapter II, Equation (2.5)]{GoFe:74} or see \cite[Theorem 4.2.1 and Remark 4.2.5]{SaSc:11})
\begin{equation} \label{eq:quasioptimal2}
\|\phi- \phi_N\|_{\cH} \leq \left(1+ \frac{\|A\|_{\cH\to\cH}}{C_{\mathrm{dis}}}\right)\min_{\psi_N\in \cH_N}\|\phi-\psi_N\|_\cH, \quad \mbox{for } N\geq N_0,
\end{equation}
where $\phi=A^{-1}f$ and $\phi_N$ is the unique solution of the Galerkin equations \eqref{eq:G}.
We note that \eqref{eq:quasioptimal2} is known as a \emph{quasioptimal} error estimate.

We now recap the main abstract theorem on convergence of the Galerkin method; this theorem uses the definition that
an operator $A:\cH \rightarrow \cH$ is \emph{coercive}\footnote{In the literature, the property \eqref{eq:coer} (and its analogue for operators $A: \cH\rightarrow \cH'$, where $\cH'$ is the dual of $\cH$) is sometimes called ``$\Hilb$-ellipticity" (as in, e.g., \cite[Page 39]{SaSc:11}, \cite[\S3.2]{St:08}, and \cite[Definition 5.2.2]{HsWe:08}) or ``strict coercivity" (e.g., \cite[Definition 13.22]{Kr:89}),  with ``coercivity" then used to mean \emph{either} that $A$ is the sum of a coercive operator and a compact operator (as in, e.g., \cite[\S3.6]{St:08} and \cite[\S5.2]{HsWe:08}) \emph{or} that $A$ satisfies a G\aa rding inequality (as in \cite[Definition 2.1.54]{SaSc:11}).} if there exists $\Ccoer>0$ such that
\beq\label{eq:coer}
\big|(A\psi,\psi)_{\cH}\big|\geq \Ccoer \N{\psi}^2_{\cH} \quad \tfa \psi \in \cH.
\eeq

\begin{theorem}[The main abstract theorem on convergence of the Galerkin method.]\label{thm:Galerkin}

\

\begin{enumerate}
\item[(a)]
If $A$ is invertible then there exists a sequence $(\cH_N)_{N=1}^\infty$ for which the Galerkin method converges.
\item[(b)]
If $A$ is invertible then the following are equivalent:
\begin{enumerate}
\item[(i)]
The Galerkin method converges for every asymptotically-dense sequence $(\cH_N)_{N=1}^\infty$ in $\cH$.
\item[(ii)] $A=A_0+K$ where $A_0$ is coercive and $K$ is compact.
\end{enumerate}
\item[(c)]
 If $A$ is coercive (i.e.~\eqref{eq:coer} holds) then, for every sequence $(\cH_N)_{N=1}^\infty$ and every $N\in \NN$, the Galerkin equations \eqref{eq:G} have a unique solution $\phi_N$ and, where $\phi = A^{-1}f$,
\beq\label{eq:quasioptimal}
\big\|\phi-\phi_N\big\|_\cH \leq \frac{\|A\|_{\cH\rightarrow\cH}}{\Ccoer} \,  \min_{\psi \in \cH_N}\big\|\phi-\psi\big\|_\cH,
\eeq
(so that $\phi_N\to \phi$ as $N\to\infty$ if $(\cH_N)_{N=1}^\infty$ is asymptotically dense in $\cH$).
\end{enumerate}
\end{theorem}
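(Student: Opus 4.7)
Part (c) follows by the standard C\'ea/Lax--Milgram argument. Coercivity \eqref{eq:coer} restricts to coercivity of the sesquilinear form $(A\cdot,\cdot)_\cH$ on each finite-dimensional $\cH_N$, so the Galerkin equation \eqref{eq:G} has a unique solution $\phi_N\in \cH_N$. Galerkin orthogonality $(A(\phi-\phi_N),\psi_N)_\cH = 0$ for all $\psi_N\in \cH_N$, combined with \eqref{eq:coer} applied to $\phi-\phi_N$, yields for arbitrary $\psi_N\in \cH_N$ that
\begin{equation*}
\Ccoer\|\phi-\phi_N\|_\cH^2 \leq \big|(A(\phi-\phi_N),\phi-\phi_N)_\cH\big| = \big|(A(\phi-\phi_N),\phi-\psi_N)_\cH\big| \leq \|A\|_{\cH\to\cH}\|\phi-\phi_N\|_\cH\|\phi-\psi_N\|_\cH,
\end{equation*}
from which \eqref{eq:quasioptimal} follows on dividing by $\|\phi-\phi_N\|_\cH$ and taking the infimum over $\psi_N\in\cH_N$.

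For the direction (b)(ii)$\Rightarrow$(i), write $A=A_0+K$ with $A_0$ coercive and $K$ compact. Asymptotic density of $(\cH_N)_{N=1}^\infty$ implies $\cP_N\to I$ strongly on $\cH$, and hence $\|(I-\cP_N)K\|_{\cH\to\cH}\to 0$ by compactness of $K$. Decomposing $\cP_N A|_{\cH_N} = \cP_N A_0|_{\cH_N} + \cP_N K|_{\cH_N}$, uniform boundedness of $(\cP_N A_0|_{\cH_N})^{-1}$ (from (c) applied to $A_0$) together with the invertibility of $A$ permits a Vainikko-style perturbation argument that establishes the discrete inf-sup condition \eqref{eq:d_infsup} uniformly for large $N$; convergence then follows from \eqref{eq:quasioptimal2}. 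Part (a) is less deep: one exhibits a specific sequence by an explicit construction adapted to the invertible operator $A$ (for instance, via the spectral decomposition of the positive invertible operator $A^*A$, or through a suitable biorthogonal-basis construction), and verifies asymptotic density together with a uniform discrete inf-sup bound directly from $\|A\|_{\cH\to\cH}$ and $\|A^{-1}\|_{\cH\to\cH}$.

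The main obstacle is the converse direction (b)(i)$\Rightarrow$(ii), which is a converse theorem in the Gohberg--Feldman projection-method framework \cite[Chapter II]{GoFe:74}. I would argue by contrapositive: if $A$ admits no decomposition as coercive-plus-compact, then $0$ lies in the essential numerical range of $A$, so there exists a sequence of unit vectors $(\psi_n)$ tending weakly to $0$ with $(A\psi_n,\psi_n)_\cH \to 0$. One then embeds $\{\psi_n\}$ into an asymptotically-dense sequence of subspaces $(\cH_N)_{N=1}^\infty$ with $\psi_N\in\cH_N$, chosen carefully so that $\cP_N A\psi_N$ remains asymptotically small relative to $\|\psi_N\|_\cH$; on such a sequence the discrete inf-sup condition \eqref{eq:d_infsup} fails, precluding convergence. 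The technical heart of this construction is establishing compatibility between the Weyl-type sequence producing $(\psi_n)$ and the asymptotic density requirement on $(\cH_N)$, which prevents one from choosing the subspaces too freely.
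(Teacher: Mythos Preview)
The paper does not actually prove this theorem: its ``proof'' consists solely of references to Markus \cite{Ma:74}, Gohberg--Feldman \cite{GoFe:74}, Vainikko \cite{Va:65}, and C\'ea \cite{Ce:64}. Your proposal therefore goes well beyond what the paper provides.

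Your argument for Part (c) is the standard C\'ea proof and is complete and correct. Your sketch for (b)(ii)$\Rightarrow$(i) is also the standard compactness-perturbation argument and is essentially correct, though one point deserves care: the passage from uniform invertibility of $\cP_N A_0|_{\cH_N}$ and $\|(I-\cP_N)K\|\to 0$ to the discrete inf--sup bound for $\cP_N A|_{\cH_N}$ requires the injectivity of $A$ (equivalently, invertibility of $A$, since $A$ is Fredholm of index zero) to rule out a nontrivial kernel in the limit; you invoke this but the mechanism by which it enters could be made explicit.

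For Part (a) your description is vague. The spectral decomposition of $A^*A$ does not obviously yield subspaces on which the \emph{Galerkin} (not least-squares) projection is well behaved for a non-normal $A$; the actual construction in \cite{Ma:74,GoFe:74} is more hands-on. For (b)(i)$\Rightarrow$(ii) your contrapositive strategy via the essential numerical range is the right framework (and is exactly the characterisation exploited in \cite{ChSp:22}), and you correctly flag the genuine difficulty: producing subspaces that are simultaneously asymptotically dense and for which $\cP_N A\psi_N$ is small. You have not carried this out, but neither does the paper; it simply defers to the cited sources.
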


\bpf[References for the proof]
Part (a) was first proved in \cite[Theorem 1]{Ma:74}; see also \cite[Chapter II, Theorem 4.1]{GoFe:74}.
Part (b) was first proved in \cite[Theorem 2]{Ma:74}, with this result building on results in \cite{Va:65}; see also \cite[Chapter II, Lemma 5.1 and Theorem 5.1]{GoFe:74}.
Part (c) is C\'ea's Lemma, first proved in \cite{Ce:64}.
\epf

\subsection{The rationale for using second-kind BIEs posed in $\LtG$}\label{sec:rationale}

The BIOs in Table \ref{tab:bies} are coercive in the trace spaces $H^{\pm 1/2}(\Gamma)$ (or certain subspaces of these) for Lipschitz $\Gamma$, thus insuring convergence of the associated Galerkin methods by Part (c) of Theorem \ref{thm:Galerkin}; this coercivity theory was established for first-kind equations by N\'ed\'elec and Planchard \cite{NePl:73}, Le Roux \cite{LeRo:74}, \cite{LeRo:77}, and Hsiao and Wendland \cite{HsWe:77},
and for second-kind equations by Steinbach and Wendland \cite{StWe:01}. These arguments involve transferring boundedness/coercivity properties of the PDE solution operator to the associated boundary integral operators
via the trace map and layer potentials; the generality of these arguments is why coercivity holds with $\Gamma$ only assumed to be Lipschitz, and Costabel \cite{Co:07} highlighted how these ideas can be traced back to the work of Gauss and Poincar\'e.

Despite convergence of the associated Galerkin methods, using the first-kind formulations in the trace spaces has the disadvantage that
the condition numbers of the Galerkin matrices grow as the discretisation is refined; e.g., for the $h$-version of the Galerkin method (where convergence is obtained by decreasing the mesh-width $h$ and keeping the polynomial degree fixed), the condition numbers grow like $h^{-1}$; see, e.g. \cite[\S4.5]{SaSc:11}.
The design of appropriate preconditioning strategies for these Galerkin matrices has therefore been a classic topic of study in the BIE community for over 20 years, with proposed solutions including
(i) preconditioning with an opposite-order operator \cite{StWe:98} (see also the survey \cite{Hi:06}), (ii)
using wavelets, either as an approximation space (e.g., \cite{voSc:96,HaSc:04,HaUt:18}) or in preconditioning (e.g., \cite{TrStZa:98,ScLaSc:03});
using domain decomposition methods; see, e.g., \cite{HeSt:03} and the recent book \cite{StTr:21}.
Furthermore, using the second-kind formulations in the trace spaces has the disadvantage that the inner products on $H^{\pm 1/2}(\Gamma)$ are non-local and non-trivial to evaluate; even if the basis functions $\phi_N$ and $\psi_N$ in \eqref{eq:G} have supports only on a subset of $\Gamma$, $(A\phi_N,\psi_N)_{\cH}$ is an integral over all of $\Gamma$, and the calculation of the Galerkin matrix in this case is impractical.

For the second-kind BIEs, an attractive alternative to working in the trace spaces is to work in $\LtG$.
When $\Gamma$ is $C^1$, $D$ and $D'$ are compact in $\LtG$ by the results of Fabes, Jodeit, and Rivi\`ere \cite[Theorems 1.2 and 1.9]{FaJoRi:78} and thus each of the second-kind BIOs $\half I \pm D$ and $\half I \pm D'$ is the sum of
a coercive operator and a compact operator, and convergence of the associated Galerkin methods in $\LtG$ is ensured by Part (b) of Theorem \ref{thm:Galerkin}. Since the $\LtG$ norm is local, $(A\phi_N,\psi_N)_{\cH}$ is an integral over the support of $\psi_N$, and the Galerkin matrix is much more easily computable. Furthermore, when $D$ and $D'$ are compact, the condition numbers of the Galerkin matrices of  $\half I \pm D$ and $\half I \pm  D'$ are independent of the discretisation (without preconditioning); see \cite[\S3.6.3]{At:97}, \cite[\S4.5.5]{Ha:95}.

\subsection{Convergence of the Galerkin method in $L^2(\Gamma)$ for the standard second-kind integral equations
 on polyhedral and Lipschitz domains.
}\label{sec:openbook}

The disadvantage of using second-kind BIEs in $\LtG$ is that convergence of the Galerkin method is harder to establish when
$\Gamma$ is only Lipschitz, or Lipschitz polyhedral. Indeed, in these cases $D$ and $D'$ are not compact; e.g., when $\Gamma$ has a corner or edge their spectra are not discrete; see, e.g. \cite[\S8.1.3]{At:97}.
When $\Gamma$ is only Lipschitz, $D$ and $D'$  are bounded on $L^2(\Gamma)$ by  the results on boundedness of the Cauchy integral on Lipschitz $\Gamma$ of Coifman, McIntosh, and Meyer \cite{CoMcMe:82} (following earlier work by Calder\'on \cite{Ca:77} on boundedness for $\Gamma$ with small Lipschitz character).
Verchota \cite{Ve:84}
showed that the operators
$\half I \pm D$ and $\half I \pm D'$
are Fredholm of index zero on $L^2(\Gamma)$; when $\Gamma$ is connected, $\half I - D$ and $\half I- D'$ are invertible on $\LtG$ and $\half I + D$ and $\half I+ D'$ invertible on $L^2_0(\Gamma)$, the set of $\phi\in\LtG$ with mean value zero; see \cite[Theorems 3.1 and 3.3(i)]{Ve:84}.
\footnote{The invertibility of $\half I-D'$ on $\LtG$
implies that the bilinear form of the associated least-squares formulation
\beqs
a(\phi,\psi)= \left(\left(\half I - D'\right)\phi, \left(\half I - D'\right)\psi\right)_\LtG
\eeqs
is coercive. This formulation, however, suffers from the same disadvantages as the variational formulation of $\half I- D'$ in $\HmhG$, including that computing the entries of the Galerkin matrix requires computing integrals over all of $\Gamma$, even when the basis functions have support on (small) subsets of $\Gamma$.
}

A long-standing open question has been
\begin{quotation}
\noindent Can $\half I \pm D$ and $\half I \pm  D'$ be written as the sum of a coercive operator and a compact operator in the space $L^2(\Gamma)$ when $\Gamma$ is only assumed to be Lipschitz?
\end{quotation}
By Part (b) of Theorem \ref{thm:Galerkin}, this question is equivalent to the question: does the Galerkin method
applied to  $\half I \pm D$ and $\half I \pm  D'$ in $\LtG$ converge
for every asymptotically-dense sequence of subspaces
 when $\Gamma$ is only assumed to be Lipschitz?

Until recently, this question was answered only in the following two cases, both in the affirmative: (i) $\Gamma$ is a 2d curvilinear polygon with each side $C^{1,\alpha}$ for some $0<\alpha<1$ and with each corner angle in the range $(0,2\pi)$. (ii) $\Gamma$ is Lipschitz, with sufficiently small Lipschitz character. Regarding (i): this result was announced by Shelepov in \cite{Sh:69}, with details of the proof given in \cite{Sh:91}, and with the analogous result for polygons following from the result of Chandler \cite[\S3]{Ch:84}; see, e.g. \cite[Lemma 1.5]{BoMaNiPa:90}.
Regarding (ii): Wendland \cite[\S4.2]{We:09} recognised that the results of I.~Mitrea \cite[Lemma 1, Page 392]{Mi:99} about the essential spectral radius could be adapted to prove this result, with this result proved in full in \cite[Corollary 3.5]{ChSp:22}; for more discussion on both (i) and (ii), see \cite[\S1]{ChSp:22}.

The recent paper \cite{ChSp:22} finally settled the question above negatively by giving examples of 2-d Lipschitz domains and 3-d star-shaped Lipschitz polyhedra for which $\half I \pm D$ and $\half I \pm  D'$ \emph{cannot} be written as the sum of a coercive operator and a compact operator in the space $L^2(\Gamma)$.
The 3-d star-shaped Lipschitz polyhedra are defined in \cite[Definition 5.7]{ChSp:22}, and called the \emph{open-book polyhedra}; see Figure \ref{fig:book} for an example, where we use the notation that $\Omega_{\theta,n}$ is the open-book polyhedron with $n$ pages and opening angle $\theta$.
Given $\epsilon>0$ there exists $\theta_0\in (0,\pi]$ such that the essential numerical range of $D$ in $\LtG$ contains the interval $[-\sqrt{n}/2 + \epsilon, \sqrt{n}/2-\epsilon]$ \cite[Theorem 1.3]{ChSp:22}. By the definition of the essential numerical range (see, e.g., \cite[Equation 2.3]{ChSp:22}), this result implies that if $\theta$ is sufficiently small and $n\geq 2$, then $\half I \pm D$ and $\half I \pm  D'$ \emph{cannot} be written as the sum of a coercive operator and a compact operator in the space $L^2(\Gamma)$ when $\Gamma = \partial \Omega_{\theta,n}$.

\begin{figure}
\includegraphics[width=.5\textwidth]{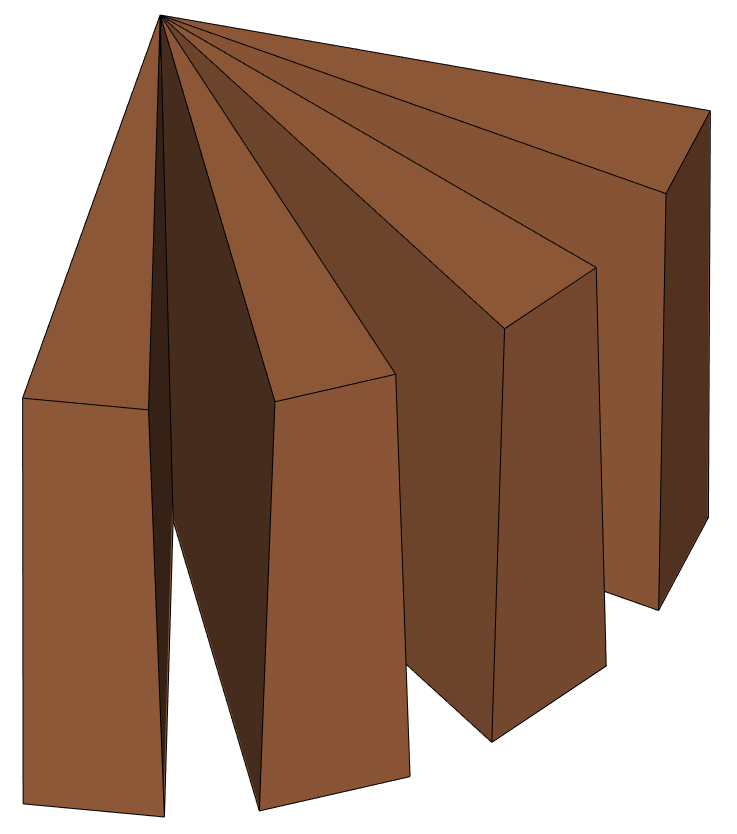}\includegraphics[width=.5\textwidth]{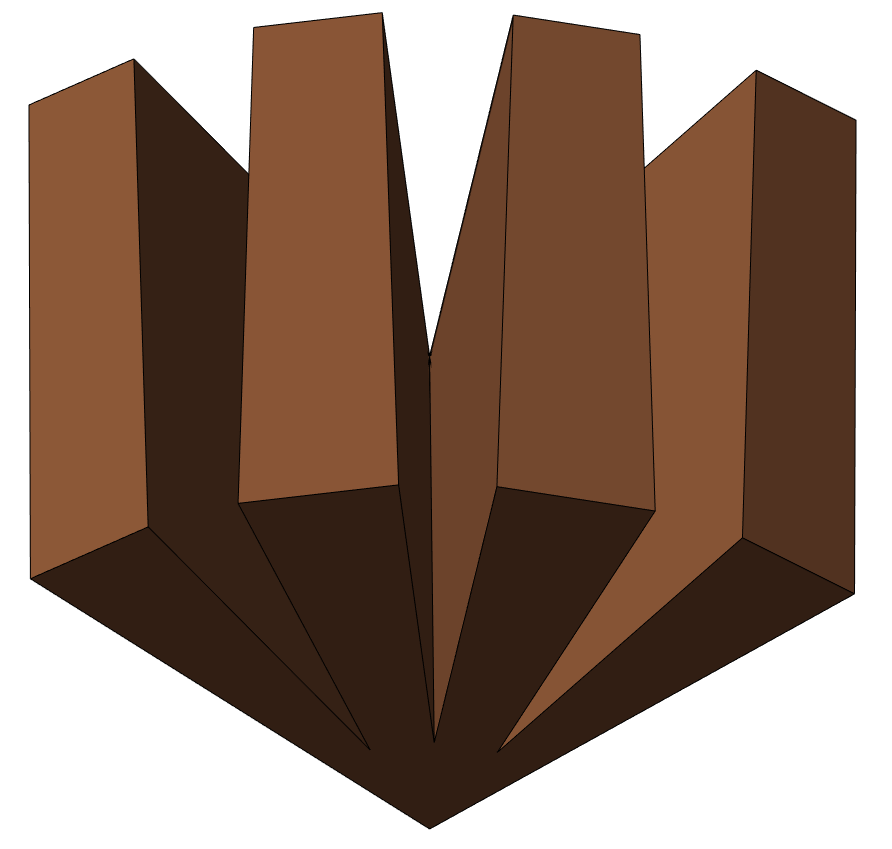}
\caption{\label{fig:book} Views from above and below of the open-book polyhedron $\Omega_{\theta,n}$ of \cite[Definition 5.7]{ChSp:22}, with $n=4$ pages and opening angle $\newangle = \pi/2$}
\end{figure}

Nevertheless, Part (b) of Theorem \ref{thm:Galerkin} only shows that the Galerkin method applied to these domains does not converge for every asymptotically dense sequence $(\cH_N)_{N=1}^\infty \subset \LtG$, leaving opening the possibility that all Galerkin methods used in practice (based on boundary element method discretisation \cite{St:08, SaSc:11}) are in fact convergent. However, the following result from \cite{ChSp:22} clarifies that this is not the case.

\begin{theorem}\textbf{\emph{(\cite[Theorem 1.4]{ChSp:22})}}\label{thm:GalGenNew} Suppose that $A$ is invertible but $A$ cannot be written in the form $A=A_0+K$, where $A_0$ is coercive and $K$ is compact, and that $(\cH^*_N)_{N=1}^\infty$ is a sequence of finite-dimensional subspaces of $\cH$, with $\cH^*_1\subset \cH^*_2 \subset ...$,
for which the Galerkin method converges.
Then there exists a sequence $(\cH_N)_{N=1}^\infty$ of finite-dimensional subspaces of $\cH$, with $\cH_1\subset \cH_2 \subset ...$, such that:
\begin{enumerate}
\item[(a)] the Galerkin method does not converge
for the sequence $(\cH_N)_{N=1}^\infty$; and
\item[(b)] for each $N\in \NN$,
\beq\label{eq:embed}
\cH^*_N \subset \cH_N \subset \cH^*_{M_N},\quad\text{ for some } M_N\in \NN.
\eeq
\end{enumerate}
\end{theorem}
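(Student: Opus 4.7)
The plan is to invoke Part (b) of Theorem \ref{thm:Galerkin} to exhibit an asymptotically-dense sequence $(\cV_N)$ of subspaces on which the Galerkin method diverges, and then to construct the required $(\cH_N)$ by enriching $\cH^*_N$ with suitably chosen approximations of vectors from $\cV_N$ that spoil the discrete inf-sup condition \eqref{eq:d_infsup} for $(\cH_N)$.

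Note first that $(\cH^*_N)$ must itself be asymptotically dense in $\cH$, since asymptotic density is a necessary condition for convergence of the Galerkin method. Next, since $A$ is invertible but not the sum of a coercive operator and a compact operator, Part (b) of Theorem \ref{thm:Galerkin} yields an asymptotically-dense sequence $(\cV_N)$ for which the Galerkin method fails; by \eqref{eq:d_infsup}, along a subsequence (relabelled as $(\cV_N)$) there exist unit vectors $v_N\in \cV_N$ with $\|\cP_{\cV_N}Av_N\|_\cH \to 0$. Using the asymptotic density of $(\cH^*_N)$, choose indices $L_N$ and approximants $\widetilde{v}_N\in \cH^*_{L_N}$ with $\|v_N-\widetilde{v}_N\|_\cH\leq \eps_N$, where $\eps_N\downarrow 0$ is to be fixed sufficiently rapidly. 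Set
\beqs
\cH_N := \cH^*_N + \spann\bigl\{\widetilde{v}_n : n\leq N\bigr\},\qquad M_N := \max\bigl\{N,L_1,\ldots,L_N\bigr\},
\eeqs
so that $(\cH_N)$ is nested and satisfies the sandwich \eqref{eq:embed}.

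The main obstacle is verifying (a): that the Galerkin method fails on $(\cH_N)$. Testing the inf-sup ratio at $\widetilde{v}_N$ and using $\|\widetilde{v}_N\|_\cH=1+O(\eps_N)$,
\beqs
\bigl\|\cP_{\cH_N}A\widetilde{v}_N\bigr\|_\cH \leq \bigl\|\cP_{\cH_N}Av_N\bigr\|_\cH + \|A\|_{\cH\to\cH}\eps_N,
\eeqs
so by \eqref{eq:d_infsup} it suffices to arrange that $\|\cP_{\cH_N}Av_N\|_\cH\to 0$. The subtlety is that $\cH_N$ strictly enlarges $\cV_N$ (and, in general, does not contain $\cV_N$): the added subspace $\cH^*_N$ might cause $\|\cP_{\cH_N}Av_N\|_\cH$ to greatly exceed the known small quantity $\|\cP_{\cV_N}Av_N\|_\cH$. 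To circumvent this, I would exploit the freedom in choosing $v_N$ to refine its selection within $\cV_N$: since $\dim\cV_N\to\infty$ while $\dim\cH^*_N$ grows independently, the codimension-$\dim\cH^*_N$ subspace $\cV_N \cap (\cH^*_N)^\perp$ has dimension tending to infinity, and one argues that the inf-sup deficit on $(\cV_N)$ persists (after a further subsequence) on this orthogonal slice. With $v_N\in (\cH^*_N)^\perp$ and $\eps_N$ chosen small in terms of $\|A\|_{\cH\to\cH}$, one then decomposes $\cP_{\cH_N}Av_N$ along $\cH^*_N$ and its $\cH_N$-orthogonal complement and shows $\|\cP_{\cH^*_N}Av_N\|_\cH$ is controlled, reducing the projection onto $\cH_N$ to essentially a projection onto the single added direction $\widetilde{v}_N - \cP_{\cH^*_N}\widetilde{v}_N$, which is close to $v_N$. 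This quantitative inf-sup transfer -- propagating the failure on $(\cV_N)$ through an almost-orthogonal enlargement to $(\cH_N)$ -- is the principal technical challenge.
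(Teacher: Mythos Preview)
This theorem is not proved in the present paper; it is quoted verbatim from \cite[Theorem 1.4]{ChSp:22} and used as a black box to motivate the new formulations. There is therefore no proof here to compare your attempt against.

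That said, your proposal has a genuine gap at exactly the point you flag as the ``principal technical challenge''. You want to show that $\|\cP_{\cH_N}A\widetilde v_N\|_\cH\to 0$, and you propose to arrange $v_N\perp\cH^*_N$ so as to ``control'' $\|\cP_{\cH^*_N}Av_N\|_\cH$. But orthogonality of $v_N$ to $\cH^*_N$ says nothing about the orthogonality of $Av_N$ to $\cH^*_N$: for a generic bounded operator $A$ there is no reason whatsoever that $\|\cP_{\cH^*_N}Av_N\|_\cH$ should be small, and indeed it can be of order $\|A\|$. Since the Galerkin method converges on $(\cH^*_N)$, the discrete inf--sup condition \eqref{eq:d_infsup} holds there with a uniform constant, so enlarging $\cH^*_N$ by a single direction cannot be expected to drive the inf--sup constant of the enlarged space to zero merely because that direction was ``bad'' for the unrelated sequence $(\cV_N)$. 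Your argument never bridges the gap between the smallness of $\cP_{\cV_N}Av_N$ and the required smallness of $\cP_{\cH_N}Av_N$; the two projections are onto subspaces that need not be close in any sense.

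The proof in \cite{ChSp:22} proceeds differently: rather than trying to transplant a single bad direction, it works with the essential numerical range of $A$ and builds the spaces $\cH_N$ so that a carefully chosen finite-rank perturbation of the Galerkin operator on $\cH_N$ has spectrum approaching zero. If you want to repair your approach, you would need a mechanism that ties the failure on $(\cV_N)$ to the structure of $(\cH^*_N)$ --- for instance by first showing that the bad sequence $(\cV_N)$ can itself be taken inside $\bigcup_N\cH^*_N$, which is what the nesting \eqref{eq:embed} ultimately requires anyway.
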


We can apply this result when $(\cH^*_N)_{N=1}^\infty$ is a sequence of boundary element subspaces that is asymptotically dense in $\LtG$, in which case $(\cH_N)_{N=1}^\infty$, satisfying \eqref{eq:embed}, is also a sequence of boundary element subspaces (since $\cH_N\subset \cH_{M_N}^*$) and is also asymptotically dense in $\LtG$ (since $\cH^*_N\subset \cH_N$).

In summary, the results of \cite{ChSp:22} show that there exist Lipschitz and polyhedral boundaries $\Gamma$ for which there are Galerkin methods for solving BIEs involving $\half I \pm D$ and $\half I \pm  D'$
that do \emph{not} converge, with these methods based on asymptotically-dense sequences $(\cH_N)_{N=1}^\infty \subset\LtG$ of boundary element subspaces.

\subsection{Motivation for the present paper and summary of the main results}

Given the negative results of  \cite{ChSp:22} about convergence of the Galerkin method for the standard second-kind formulations, a natural question is therefore
\begin{quotation}
\noindent Do there exist second-kind BIE formulations in $L^2(\Gamma)$ of Laplace's equation where, with $\Gamma$ only assumed to be Lipschitz, the operators are continuous, invertible, and can be written as the sum of a coercive operator and a compact operator?
\end{quotation}
In this paper we answer this question in the affirmative for the Laplace interior and exterior Dirichlet problems. We present new BIE formulations that are continuous and in fact \emph{coercive} (i.e., not just the sum of a coercive and a compact operator) in the space $L^2(\Gamma)$, with $\Gamma$ only assumed to be Lipschitz; thus
convergence of the Galerkin method in $\LtG$
for every asymptotically-dense sequence $(\cH_N)_{N=1}^\infty$, plus the explicit error estimate \eqref{eq:quasioptimal},
is ensured by Part (c) of Theorem \ref{thm:Galerkin}. Furthermore, the strong property of coercivity allows us to prove that, if the Galerkin matrices are preconditioned by a specified diagonal matrix, then
the number of GMRES iterations required to solve the associated linear systems to a prescribed accuracy does \emph{not} increase as the discretisation is refined and $N$ increases.

In summary, the new BIEs introduced in this paper are such that, when solving the Laplace interior and exterior Dirichlet problems on a general Lipschitz domain:
\ben
\item Given any asymptotically-dense sequence of subspaces, the associated Galerkin method is provably convergent; and
\item 
For a wide variety of subspaces, including piecewise polynomials (of arbitrary degree) on anisotropic meshes,
the Galerkin matrices are provably well conditioned -- with the number of GMRES iterations independent of the subspace dimension -- after preconditioning by only a diagonal matrix.
\een 
Indeed, 
\S\ref{sec:openbook} 
recalled that the standard second-kind BIEs in $\LtG$ do not satisfy Point 1. Furthermore, the proposed remedies to the growth of the condition number of the first-kind BIEs in the trace spaces recapped in \S\ref{sec:rationale}, although tremendously successful in many contexts, do not satisfy Point 2. Indeed, to our knowledge, there is no theory on either operator preconditioning or wavelet preconditioning of piecewise-polynomial discretisations using anisotropic 
meshes on general Lipschitz polyhedra. Furthermore, whilst there exists theory for domain-decomposition methods on anisotropic meshes (e.g., \cite{HeSt:03}) the preconditioners are more complicated, and expensive, than multiplication by a diagonal matrix.

\paragraph{Outline of the paper.} \S\ref{sec:notation} defines more precisely the Laplace BVPs we consider. \S\ref{sec:recap} recaps results about a non-standard layer potential introduced in \cite{Ca:85} and its non-tangential limits. \S\ref{sec:main_results} states the main results. \S\ref{sec:idea} discusses the ideas behind the main results, and the links to other work in the literature. \S\ref{sec:proofs} proves the main results, except the parts of the proofs that are related to the wellposedness and regularity of the Laplace oblique Robin problem, with these given in \S\ref{sec:oblique}. \S\ref{sec:Helmholtz} presents results for the Helmholtz exterior Dirichlet problem (with these results corollaries of the Laplace results in \S\ref{sec:main_results}).

\subsection{Notation and statement of the BVPs}\label{sec:notation}

Let $\Oi\subset \Rea^d$, $d\geq 2$, be a bounded (not necessarily connected) Lipschitz open set, and let $\Oe:= \Rea^d\setminus \overline{\Oi}$ and $\Gamma :=\partial \Oi$.
Let $\bn$ be the outward-pointing unit normal vector to $\Oi$ (so $\bn$ points out of $\Oi$ and into $\Oe$).
For $v \in H^1(\Oi)$ let $\gmv$ denote its Dirichlet trace. For $v \in H^1(\Oi,\Delta):= \{ w : w \in H^1(\Oi), \Delta w \in L^2(\Oi)\}$ let $\dnmv$ denote its Neumann trace; recall that, if $v \in H^2(\Oi)$ then $\dnmv = \bn \cdot \gamma^- \nabla v$.
Similarly, for $v\in \Holoc(\Oe):=\{ w: \Oe\rightarrow \Rea : \chi w \in H^1(\Oe) \tfa \chi \in C^\infty_{\rm comp}(\Rea^d)\}$,
let $\gpv$ denote its Dirichlet trace. For
$v\in \Holoc(\Oe, \Delta):=\{ w: \Oe\to \Rea : \chi w \in H^1(\Oe), \chi \Delta w \in L^2(\Oe) \tfa \chi \in C^\infty_{\rm comp}(\Rea^d)\}$, let $\dnpv$ denote its Neumann trace.

\begin{definition}[Laplace interior Dirichlet problem (IDP)]\label{def:idp}
Given $g_D\in H^{1/2}(\Gamma)$, we say that $u\in H^1(\Oi)$ satisfies the {\em interior Dirichlet problem (IDP)} if $\Delta u =0$ in $\Oi$ and $\gmu = g_D$ on $\Gamma$.
\end{definition}

\begin{definition}[Laplace exterior Dirichlet problem (EDP)]\label{def:edp}
With $\Oi$ and $\Oe$ as above, assume further that $\Oe$ is connected. Given $g_D\in H^{1/2}(\Gamma)$, we say that $u\in H^1_{\rm{loc}}(\Oe)$ satisfies the {\em exterior Dirichlet problem (EDP)} if $\Delta u =0$ in $\Oe$, $\gpu = g_D$ on $\Gamma$, and $u(\bx)= O(1)$ when $d=2$ and $u(\bx) = o(|\bx|^{3-d})$ when $d\geq 3$ as $|\bx| \rightarrow \infty$ (uniformly in all directions $\bx/|\bx|$).
\end{definition}

\noi We make three remarks.

(i) Recall that, by elliptic regularity (see, e.g., \cite[Theorem 4.16]{Mc:00}),
the solution of the IDP and EDP are $C^\infty$ in $\Oi$ and $\Oe$ respectively. Therefore, the pointwise conditions at infinity imposed in the EDP
make sense.

(ii) For the IDP and EDP, uniqueness of the solution is shown in, e.g., \cite[Corollary 8.3]{Mc:00} and \cite[Theorems 8.9 and 8.10]{Mc:00} respectively\footnote{
\cite[Theorem 8.10]{Mc:00} proves uniqueness under the condition $u(\bx) = O(|\bx|^{2-d})$ as $|\bx| \rightarrow \infty$, but when $d\geq 3$ the proof still goes through when $u(\bx) = o(|\bx|^{3-d})$ as $|\bx| \rightarrow \infty$.
}.
Existence then follows from Fredholm theory and, e.g., \cite[Theorems 7.5, 7.6, and 7.15]{Mc:00}.

(iii) The Neumann traces of the solutions of both the IDP and EDP are in $\HmhG$; see, e.g., \cite[Lemma 4.3]{Mc:00}. Later, we consider both these BVPs when the Dirichlet data is in $H^1(\Gamma)$. The regularity result of Ne\v{c}as \cite[\S5.1.2]{Ne:67} (restated as Theorem \ref{thm:Necas} below) then implies that $\dnmu$ and $\dnpu$ (in Definitions \ref{def:idp} and \ref{def:edp} respectively) are both in $\LtG$, as opposed to just in $\HmhG$.

\

The IDP and EDP can equivalently be formulated in terms of non-tangential limits, with these alternative formulations standard in the harmonic-analysis literature (see, e.g., \cite[Corollary 3.2]{Ve:84}, \cite[\S3]{Ca:85}, \cite[Theorem 2]{MeCo:00}, \cite[Proposition 5.1]{Ta:00}). We state these alternative formulations, and recall their equivalence, so that we can easily use results from the harmonic-analysis literature (summarised in Appendix \ref{app:HA} below).

Given $\bx\in \Gamma$, let $\Theta^{\pm}(\bx)$ be the \emph{non-tangential approach set to $\bx$ from $\Omega^\pm$} defined by
\beq\label{eq:ntapproach2}
\Theta^{\pm}(\bx):= \Big\{ \by \in \Omega^{\pm} \, : \, |\bx-\by|\leq \min\big\{c, C \dist(\by,\Gamma)\big\}\Big\},
\eeq
for some $c>0$ and some $C>1$ sufficiently large depending on the Lipschitz character of $\Omega^\pm$.\footnote{E.g., $C> M+1$ is large enough when $M$ is the Lipschitz character.}
If $u\in C(\Omega^{\pm})$, its non-tangential maximal function $u^*: \Gamma \rightarrow [0,\infty]$ is defined by
\beq\label{eq:ntmax}
u^*(\bx) := \sup_{\by \in \Theta^{\pm}(\bx)}|u(\by)|, \quad \bx\in\Gamma.
\eeq
Define the non-tangential limit
\beq\label{eq:ntlim}
\widetilde{\gamma}^\pm u(\bx) := \lim_{\by\rightarrow \bx,\, \by \in \Theta^\pm(\bx)}u(\by).
\eeq
If $u\in C^2(\Omega^\pm)$, $\Delta u=0$, and $u^* \in \LtG$, then $\widetilde{\gamma}^\pm u(x)$ is well-defined for almost all $x\in \Gamma$ and $\widetilde{\gamma}^\pm u \in \LtG$ by \cite[Corollary 5.5]{JeKe:95} (restated as Part (ii) of Theorem \ref{thm:JeKe:95} below). Furthermore, if $u\in H^s_{\rm loc}(\Omega^\pm)$ with $s>1/2$, then $\widetilde{\gamma}^\pm u =\gamma^\pm u$ by \cite[Lemma A.9]{ChGrLaSp:12} (restated as Lemma \ref{lem:A9} below).

\begin{definition}[Laplace IDP formulated via non-tangential limits]\label{def:idp2}
Given $g_D\in L^2(\Gamma)$, we say that $u\in C^2(\Oi)$ with $u^* \in L^2(\Gamma)$ satisfies the IDP if $\Delta u =0$ in $\Oi$ and $\widetilde{\gamma}^- u = g_D$ on $\Gamma$.
\end{definition}

\begin{definition}[Laplace EDP formulated via non-tangential limits]\label{def:edp2}
With $\Oi$ and $\Oe$ as above, assume further that $\Oe$ is connected.
Given $g_D\in L^2(\Gamma)$, we say that $u\in C^2(\Oe)$ with $u^* \in L^2(\Gamma)$ satisfies the EDP if $\Delta u =0$ in $\Oe$, $\widetilde{\gamma}^+ u = g_D$ on $\Gamma$, and $u(\bx)= O(1)$ when $d=2$ and $u(\bx) = o(|\bx|^{3-d})$ when $d\geq 3$ as $|\bx| \rightarrow \infty$ (uniformly in all directions $\bx/|\bx|$).
\end{definition}

Existence and uniqueness of the solutions of these formulations of the IDP and EDP go back to the work of Dahlberg \cite{Da:79}, and are given explicitly in, e.g., \cite[Corollary 3.2 and Lemma 3.7]{Ve:84}, \cite[\S3]{Ca:85}.
The following equivalence result is proved in Appendix \ref{app:BVPequiv}.

\begin{theorem}[Equivalence of the different formulations of the IDP and EDP]\label{thm:BVPequiv}
If $g_D\in H^{1/2}(\Gamma)$, then the solution of the IDP in the sense of Definition \ref{def:idp} is the solution of the IDP in the sense of Definition \ref{def:idp2}, and vice versa.

Similarly, if $g_D\in H^{1/2}(\Gamma)$, then the solution of the EDP in the sense of Definition \ref{def:edp} is the solution of the EDP in the sense of Definition \ref{def:edp2}, and vice versa.
\end{theorem}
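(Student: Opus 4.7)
The plan is to reduce the equivalence to uniqueness in each formulation, together with the observation that the variational solution of the first formulation also satisfies the additional requirements of the second, and vice versa via uniqueness. Existence and uniqueness of solutions of Definitions \ref{def:idp}--\ref{def:edp} in $H^1(\Oi)$ or $\Holoc(\Oe)$ are standard (as recorded in the remarks following Definition \ref{def:edp}); existence and uniqueness of solutions of Definitions \ref{def:idp2}--\ref{def:edp2} with $\LtG$ data are the Dahlberg--Verchota results cited immediately before the theorem statement.

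I treat the IDP first. Let $u\in H^1(\Oi)$ solve Definition \ref{def:idp} with data $g_D\in\HhG\subset\LtG$. Then $u\in C^\infty(\Oi)\subset C^2(\Oi)$ by interior elliptic regularity for harmonic functions, and $u^*\in\LtG$ by Dahlberg's $L^2$-Dirichlet maximal-function estimate on Lipschitz domains (to be imported in Appendix \ref{app:HA}); here one needs the standard fact that the $H^1$ variational solution with $\HhG$ data coincides with the Dahlberg--Verchota non-tangential solution with the same data viewed as an element of $\LtG$, which is a consequence of density of $C^\infty(\Gamma)$ in $\HhG$ together with continuous dependence in both formulations and interior pointwise convergence of harmonic functions. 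Lemma \ref{lem:A9} then identifies the non-tangential and Sobolev traces: since $u\in\Holoc(\Oi)$ with Sobolev exponent $1>1/2$, $\widetilde{\gamma}^- u=\gamma^- u=g_D$, so $u$ solves Definition \ref{def:idp2}. Conversely, if $\widetilde{u}$ solves Definition \ref{def:idp2} with $g_D\in\HhG$, uniqueness in Definition \ref{def:idp2} applied to the $\LtG$ datum $g_D$ forces $\widetilde{u}$ to coincide with the $H^1$ solution just constructed, so $\widetilde{u}\in H^1(\Oi)$ and $\gamma^-\widetilde{u}=g_D$; hence $\widetilde{u}$ solves Definition \ref{def:idp}.

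The EDP argument is identical. The decay conditions at infinity in Definitions \ref{def:edp} and \ref{def:edp2} coincide verbatim, and the truncation by $c$ in the non-tangential approach set \eqref{eq:ntapproach2} ensures that $u^*$ tests $u$ only in a bounded neighbourhood of $\Gamma$, so the decay at infinity is decoupled from the $L^2$-maximal-function bound. Dahlberg's estimate for the exterior $L^2$-Dirichlet problem again yields $u^*\in\LtG$, and Lemma \ref{lem:A9} applied in $\Oe$ identifies the traces; uniqueness then completes the argument as before.

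The only delicate point is the identification of the $H^1$ variational solution with the Dahlberg--Verchota non-tangential solution under $\HhG$ data, which is the technical heart of the proof in Appendix \ref{app:BVPequiv}. Once this is in hand, the rest of the proof is a routine assembly of Dahlberg's maximal-function estimate, Lemma \ref{lem:A9}, and uniqueness in both formulations.
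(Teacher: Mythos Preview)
Your overall strategy---prove the forward direction, then get the converse for free from uniqueness in the non-tangential formulation---is sound and in fact cleaner than the paper's converse, which instead writes the non-tangential solution explicitly as $v=(-\cD+\cS)\phi$ with $\phi=(\tfrac12 I - D + S)^{-1}g_D$, proves this BIO is invertible on both $\LtG$ and $\HhG$, and then uses the mapping properties \eqref{eq:LPmap} to push $v$ into $H^1(\Oi)$. Your uniqueness shortcut avoids all of that.

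Where your argument is weaker is the forward direction. You want $u^*\in\LtG$ for the $H^1$ solution and propose to get it from Dahlberg's maximal-function estimate; but that estimate is for the non-tangential solution, so applying it to $u$ presupposes the identification you are trying to prove. You recognise this and outline a density-and-continuity argument to establish the identification first---which does work, but (i) the base case (smooth data) still needs harmonic-analysis input of the same kind you are trying to avoid, and (ii) once the identification is made, $u^*\in\LtG$ and $\widetilde\gamma^- u=g_D$ are automatic, so invoking Dahlberg and then Lemma~\ref{lem:A9} separately is redundant. The paper's route here is more direct: since $u\in H^1(\Oi)\subset H^{1/2}(\Oi)$, Part~(i) of Theorem~\ref{thm:JeKe:95} (Jerison--Kenig) gives $u^*\in\LtG$ immediately, with no density argument needed; Lemma~\ref{lem:A9} then supplies $\widetilde\gamma^- u=\gmu=g_D$. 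I recommend replacing your forward argument with this two-line application of Theorem~\ref{thm:JeKe:95}(i) and keeping your uniqueness-based converse.
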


\subsection{Recap of results about layer potentials and their non-tangential limits}\label{sec:recap}

Recall that the \emph{surface gradient} operator on $\Gamma$ is the unique operator such that, when $v \in C^1(\overline{\Oi})$, $\gv = \bn (\bn\cdot \gv) + \nT (\gmv)$ on $\Gamma$ (and similarly for $v \in C^1(\overline{\Oe})$); for an explicit expression for $\nT$ in terms of a parametrisation of $\Gamma$, see, e.g., \cite[Equation A.14]{ChGrLaSp:12}.

The following results all rely on the harmonic-analysis results in \cite{CoMcMe:82} and \cite{Ve:84} (see also the accounts in \cite[Chapter 15]{MeCo:00}, \cite[Chapter 4]{Ta:00}, \cite[Chapter 2, Section 2]{Ke:94}).
Define
\beqs
\nT S \phi(\bx):= \int_\Gamma \nabla_{\Gamma,\bx}\Phi(\bx,\by)\phi(\by)\rd s(\by) \quad\text{ for a.e. } \bx\in \Gamma,
\eeqs
where the integral is understood in the principal-value sense. By \cite[Theorem 1.6]{Ve:84}, $\nT S:\LtG\rightarrow\LtG$, with this mapping continuous, and $(\nT S)\phi= \nT (S\phi)$.
The following potential was introduced in \cite[\S2]{Ca:85}; given  $\vfb\in (L^\infty(\Gamma))^d$ that is real-valued (which we assume throughout), let
\beq\label{eq:Caldp}
\Caldp \phi(\bx):= \int_\Gamma \vfb(\by) \cdot \nabla_\by \Phi(\bx,\by) \phi(\by)\, \rd s(\by) \quad
\quad\tfor\bx \in \Rea^d\setminus \Gamma,
\eeq
and let
\beq\label{eq:Caldb}
\Caldb \phi(\bx):= \int_\Gamma  \vfb(\by)\cdot\nabla_\by\Phi(\bx,\by)\phi(\by) \, \rd s(\by) \quad\quad\text{ for a.e. }\bx \in \Gamma,
\eeq
where the integral in \eqref{eq:Caldb} is understood in the principal-value sense.
The results of \cite{CoMcMe:82} and \cite{Ve:84} imply that $\Caldb:\LtG\rightarrow \LtG$ and, for $\phi\in\LtG$,
$\Caldp \phi \in C^2(\Omega^\pm)$, $\Caldp \phi$ satisfies Laplace's equation, and
$(\Caldp\phi)^*\in \LtG$ with
\beq\label{eq:jumpCald}
\widetilde{\gamma}^{\pm}
\Caldp \phi(\bx) = \pm \half \big( \vfb(\bx)\cdot \bn(\bx)\big) \phi(\bx) + \Caldb \phi (\bx) \quad\quad\text{ for a.e. } \bx\in \Gamma.
\eeq

Observe that (i) when $\vfb= \bn$, $\Caldp = \mathcal{D}$, $\Caldb= D$, and \eqref{eq:jumpCald} is the usual jump relation for the double-layer potential, and (ii) we can rewrite $\Caldb$ as
\beq\label{eq:Caldb2}
\Caldb \phi(\bx) = \int_\Gamma \left( \vfb(\by)\cdot \bn(\by) \pdiff{\Phi(\bx,\by)}{n(\by)} \phi(\by) + \vfb(\by) \cdot \nT \Phi(\bx,\by) \phi(\by)\right) \rd s(\by).
\eeq
In a similar way to how the $L^2$ adjoint of $D$ is $D'$ (see, e.g., \cite[Chapter 15, text around Equation 4.10]{MeCo:00}), the $L^2$ adjoint of $\Caldb$ is
\beq\label{eq:Caldb'}
\Caldb' \phi(\bx) := \big(\vfb(\bx) \cdot \bn(\bx) \big)D' \phi(\bx) + \vfb(\bx) \cdot \nT S \phi(\bx).
\eeq
The significance of the operator $\Caldb'$ is that it appears in the inner product of $\vfb$ with the non-tangential limit of $\nabla\mathcal{S}$, where $\cS$ is the single-layer potential defined for $\phi\in \LtG$ by
\beq\label{eq:SLP}
\cS\phi(\bx):= \int_\Gamma \Phi(\bx,\by)\phi(\by)\, \rd s(\by) \quad\quad\tfor \bx \in\Rea^d\setminus \Gamma.
\eeq
Indeed, by \cite[Theorems 1.6 and 1.11]{Ve:84} (see also \cite[Theorem 5]{MeCo:00}, \cite[Equation 2.30]{ChGrLaSp:12}), for almost every $\bx\in \Gamma$,
\beq\label{eq:gradS}
\widetilde{\gamma}^\pm
\nabla \cS\phi(\bx) = \bn(\bx) \left( \mp \half I + D'\right) \phi(\bx) + \nT(S\phi)(\bx),
\eeq
so that
\beq\label{eq:gradS2}
 \vfb(\bx)\cdot\widetilde{\gamma}^\pm\nabla \cS\phi(\bx) = \left( \mp\half(\bZ(\bx)\cdot\bn(\bx))I + \Caldb'\right)\phi(\bx).
\eeq

\section{Statement of the main results}\label{sec:main_results}

\subsection{New boundary integral equations for the Laplace interior and exterior Dirichlet problems on general Lipschitz domains for $d\geq 3$}\label{sec:dgeq3}

We focus on the case $d\geq 3$, since the question of whether or not there exist BIE formulations of the Laplace IDP and EDP that are coercive, or coercive up to a compact perturbation, on Lipschitz domains is more pressing when $d=3$ than $d=2$ (because of
the existing convergence theory for $\pm\half I + D$ and $\pm\half I+ D'$ on curvilinear polygons \cite{Sh:69, Ch:84, Sh:91}
 but negative results for these operators for certain 3-d star-shaped polyhedra \cite{ChSp:22} recapped in \S\ref{sec:openbook}).
Results for $d=2$ are given in \S\ref{sec:d=2}.

\subsubsection{The interior Dirichlet problem}

Given $\vfb\in (L^\infty(\Gamma))^d$ and $\alpha \in \Rea$,
define the integral operators $\opLIZ$,
$\opLIZi$,
and $\opLIZRHS$ by
\begin{align}
\label{eq:opLIZ}
&\opLIZ:= \half (\vfb\cdot \bn)I - \Caldb' + \alpha S, \qquad
\opLIZi:= \half (\vfb\cdot \bn)I - \Caldb + \alpha S, \\
&\opLIZRHS:=-(\vfb\cdot \bn) H  - \vfb\cdot\nT \left( \half I + D\right) + \alpha\left(\half I + D\right),
\label{eq:opLIZRHS}
\end{align}
with the subscript $I$ standing for ``interior", and the $'$ superscript indicating that $\opLIZ$ is the $L^2$ adjoint of $\opLIZi$.

\begin{theorem}[New integral equations for Laplace IDP with $d\geq 3$]\label{thm:Laplace_int}

\

(i) \textbf{\emph{Direct formulation.}} Let $u$ be the solution of the Laplace IDP of Definition \ref{def:idp} with $d\geq 3$ and additionally $g_D\in H^1(\Gamma)$.
Then $\dnmu$ satisfies
\beq\label{eq:new_int}
\opLIZ \dnmu =\opLIZRHS g_D.
\eeq

(ii) \textbf{\emph{Indirect formulation.}} Given $g_D\in \LtG$, if $\phi\in\LtG$ satisfies
\beq\label{eq:new_int_indirect}
\opLIZi \phi =- g_D,
\eeq
then
\beq\label{eq:indirectansatz1}
u:= (\Caldp - \alpha \cS)\phi
\eeq
is the solution of the Laplace IDP of Definition \ref{def:idp2}.

(iii) \textbf{\emph{Continuity.}}
$\opLIZ: \LtG\rightarrow\LtG$,
$\opLIZi: \LtG\rightarrow\LtG$, and $\opLIZRHS:\HoG\rightarrow\LtG$, and these mappings are continuous.

 (iv) \textbf{\emph{Coercivity up to compact perturbation.}} If $\bZ\in (C(\Gamma))^d$ and there exists $c>0$ such that
\beq\label{eq:c}
\vfb(\bx) \cdot \bn(\bx) \geq c \quad \text{ for almost every } \bx\in \Gamma,
\eeq
 then both $\opLIZ$ and $\opLIZi$ are the sum of a coercive operator and a compact operator on $\LtG$.

 (v) \textbf{\emph{Invertibility for all $\alpha>0$.}} If $\alpha>0$, $\bZ\in (C^{0,\beta}(\Gamma))^d$ for some $0<\beta<1$, and there exists $c>0$ such that
\eqref{eq:c} holds,
then both $\opLIZ:\LtG\to\LtG$ and $\opLIZi:\LtG\to\LtG$ are invertible.

 (vi) \textbf{\emph{Coercivity for sufficiently large $\alpha$.}} If $\bZ$ satisfies \eqref{eq:c}, $\bZ \in (C^{0,1}(\Gamma))^d$ with Lipschitz constant $L_{\vfb}$, and
 \beq\label{eq:alpha_bound_boundary}
 2\alpha \geq 3d L_{\vfb},
 \eeq
  then both
$\opLIZ$ and $\opLIZi$ are coercive on $\LtG$ with coercivity constant $c/2$ (with $c$ defined by \eqref{eq:c});
indeed,
\beq\label{eq:BIEcoercive}
\big(\opLIZ \psi,\psi\big)_{\LtG} \geq \frac{c}{2} \N{\psi}^2_{\LtG} \quad\text{ for all real-valued } \psi \in \LtG,
\eeq
and similarly for $\opLIZi$.
 \end{theorem}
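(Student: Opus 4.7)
My plan is to reduce the coercivity inequality to a Rellich-type identity for the single-layer potential. Given a real-valued $\psi\in\LtG$, set $u:=\cS\psi$ and decompose
\[
(\opLIZ\psi,\psi)_\LtG = \tfrac{1}{2}\big((\vfb\cdot\bn)\psi,\psi\big)_\LtG - (\Caldb'\psi,\psi)_\LtG + \alpha(S\psi,\psi)_\LtG.
\]
The first term is bounded below by $(c/2)\|\psi\|_\LtG^2$ by \eqref{eq:c}. For $d\geq 3$, applying Green's identity in $\Omega^\pm$ to $u=\cS\psi$ (using the decay $u=O(|\bx|^{2-d})$ and $\nabla u=O(|\bx|^{1-d})$ at infinity together with the jump $\partial_n^-u-\partial_n^+u=\psi$) gives $(S\psi,\psi)_\LtG=\int_{\Rea^d}|\nabla\cS\psi|^2\,dx\geq 0$, so the $\alpha$-term is a non-negative contribution. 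The whole task thus reduces to bounding $|(\Caldb'\psi,\psi)_\LtG|$ by a constant multiple of $(S\psi,\psi)_\LtG$.

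To obtain this bound I would extend $\vfb$ from $\Gamma$ to a Lipschitz vector field on $\Rea^d$ preserving the Lipschitz constant $L_\vfb$ (Kirszbraun theorem), and exploit the pointwise identity, valid for harmonic $u$,
\[
\dive\bigl(\vfb|\nabla u|^2 - 2(\vfb\cdot\nabla u)\nabla u\bigr)=(\dive\vfb)|\nabla u|^2 - 2(D\vfb)\nabla u\cdot\nabla u,
\]
where $D\vfb$ is the Jacobian matrix. Applying the divergence theorem to $u=\cS\psi$ separately in $\Omega^-$ and $\Omega^+$ (the sphere-at-infinity contribution vanishes because the integrand is $O(|\bx|^{3-2d})$ as $|\bx|\to\infty$), and using the orthogonal decomposition $\nabla u=\bn\,\partial_n^{\pm} u+\nT\gamma u$ on $\Gamma$, one obtains two boundary identities. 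Subtracting them cancels the tangential-gradient terms, and after substituting the jump relations $\partial_n^-u-\partial_n^+u=\psi$, $\partial_n^-u+\partial_n^+u=2D'\psi$, the identity $\gamma u=S\psi$, and the definition \eqref{eq:Caldb'}, one arrives at
\[
(\Caldb'\psi,\psi)_\LtG=-\tfrac{1}{2}\int_{\Rea^d}\bigl[(\dive\vfb)|\nabla u|^2-2(D\vfb)\nabla u\cdot\nabla u\bigr]\,dx.
\]
The pointwise bounds $|\dive\vfb|\leq dL_\vfb$ and $|(D\vfb)\nabla u\cdot\nabla u|\leq dL_\vfb|\nabla u|^2$ (a.e., from $|\partial_iZ_j|\leq L_\vfb$ and the Frobenius estimate) together with $\int_{\Rea^d}|\nabla u|^2\,dx=(S\psi,\psi)_\LtG$ then yield $|(\Caldb'\psi,\psi)_\LtG|\leq \tfrac{3dL_\vfb}{2}(S\psi,\psi)_\LtG$. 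Combining everything,
\[
(\opLIZ\psi,\psi)_\LtG\geq \tfrac{c}{2}\|\psi\|_\LtG^2+\bigl(\alpha-\tfrac{3dL_\vfb}{2}\bigr)(S\psi,\psi)_\LtG\geq \tfrac{c}{2}\|\psi\|_\LtG^2
\]
under \eqref{eq:alpha_bound_boundary}, proving \eqref{eq:BIEcoercive}. Coercivity of $\opLIZi$ follows since $\opLIZi$ is the $\LtG$-adjoint of $\opLIZ$, so the two quadratic forms agree for real-valued $\psi$; the complex-$\psi$ case of \eqref{eq:coer} follows by splitting into real and imaginary parts (the operators having real-valued Schwartz kernels), giving $|(\opLIZ\psi,\psi)_\LtG|\geq \mathrm{Re}(\opLIZ\psi,\psi)_\LtG\geq \tfrac{c}{2}\|\psi\|_\LtG^2$.

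The main technical obstacle is the rigorous justification of the Rellich identity and the divergence theorem for $u=\cS\psi$ with $\psi$ only in $\LtG$ on a general Lipschitz domain, where $u\not\in H^2$ and $\nabla u$ possesses only non-tangential boundary traces in $\LtG$. The standard device, due to Ne\v{c}as and developed in the Lipschitz setting by Verchota, Dahlberg, and Jerison--Kenig, is to establish the identity first on an exhaustion of $\Omega^\pm$ by smooth approximating subdomains (with uniform control on the unit normals and surface gradients), and then to pass to the limit using the non-tangential-maximal-function bounds $u^*,(\nabla u)^*\in \LtG$ from \S\ref{sec:recap} together with dominated convergence.
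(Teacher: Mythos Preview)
Your proof is correct and follows the same Rellich-identity route as the paper: apply the divergence identity for $\vfb|\nabla u|^2-2(\vfb\cdot\nabla u)\nabla u$ to $u=\cS\psi$ in $\Omega^\pm$, use the jump relations to identify the boundary contribution with $(K'_\vfb\psi,\psi)_\LtG$, and bound the resulting volume integral by multiples of $(S\psi,\psi)_\LtG=\int_{\R^d}|\nabla u|^2$. The paper packages your two steps into a single identity with multiplier $\vfd\cdot\nabla u+\alpha u$ (Lemma~\ref{lem:Rellich} and \eqref{eq:1}), but this is the same computation rearranged. One genuine, if minor, simplification in your argument: you keep the plain Kirszbraun extension (linear growth at infinity) and observe directly that the sphere-at-infinity term is $O(R^{2-d})\to0$; the paper instead builds a compactly supported Lipschitz extension (Lemma~\ref{lem:Lipschitz}) to kill that term outright, which costs an extra page of case analysis but makes the identity cleaner and is needed anyway for the $d=2$ results in \S\ref{sec:d=2}.
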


Recall that if $A$ is real and $(A\psi,\psi)\geq \Ccoer \|\psi\|^2_{\LtG}$ for all real-valued $\psi\in \LtG$, then $\Re(A\phi,\phi)\geq \Ccoer \|\phi\|^2_{\LtG}$ for all complex-valued $\phi\in\LtG$; thus \eqref{eq:BIEcoercive} implies that $\opLIZ$ and $\opLIZi$ are coercive on complex-valued $\LtG$.

For any bounded Lipschitz  open set $\Oi$ there exists $\bZ \in (C^{0,1}(\Gamma))^d$ such that \eqref{eq:c} holds; for completeness, and because a formula for $\bZ$ is needed for implementation, we include this result and a concrete, constructive proof in \S\ref{sec:Z} below. \esnote{to edit when cut A} 
The combination of this result and Parts (iii) and (vi) of Theorem \ref{thm:Laplace_int} imply that, for any bounded Lipschitz open set, there exists a BIE formulation of the Laplace IDP that is continuous and coercive in $\LtG$.

The vector field $\bZ$ can be thought of as a ``regularised normal vector"; the choice $\bZ= \bn$ satisfies \eqref{eq:c} but does not have the regularity required for Parts (iv), (v), and (vi) of Theorem \ref{thm:Laplace_int} unless $\Oi$ is, respectively, $C^1$, $C^{1,\beta}$, or $C^{1,1}$. Indeed, from Parts (iv)-(vi) of the theorem we see that the stronger the property one wishes to obtain for $\opLIZ$ and $\opLIZi$, the more regularity of $\mathbf \bZ$ is required. E.g.,  coercivity up to a compact perturbation is proved  for continuous $\mathbf Z$ satisfying \eqref{eq:c} but coercivity is proved only for Lipschitz $\mathbf Z$ satisfying \eqref{eq:c}. 

 \subsubsection{The exterior Dirichlet problem}

Given  $\vfb\in (L^\infty(\Gamma))^d$ and $\alpha \in \Rea$,
define the integral operators $\opLEZ$,
$\opLEZi$,
and $\opLEZRHS$ by
\begin{align}
\label{eq:opLEZ}
&\opLEZ:= \half (\vfb\cdot \bn)I + \Caldb' + \alpha S, \qquad
\opLEZi:= \half (\vfb\cdot \bn)I + \Caldb + \alpha S, \\
&\opLEZRHS:=(\vfb\cdot \bn) H  + \vfb\cdot\nT \left( -\half I + D\right) + \alpha\left(-\half I + D\right),
\label{eq:opLEZRHS}
\end{align}
with the subscript $E$ standing for ``exterior".

\begin{theorem}[New integral equations for Laplace EDP with $d\geq 3$]\label{thm:Laplace_ext}

\

(i) \textbf{\emph{Direct formulation.}} Let $u$ be the solution of the Laplace EDP of Definition \ref{def:edp} with $d=3$ and additionally $g_D\in H^1(\Gamma)$. Then $\dnpu$ satisfies
\beq\label{eq:new_ext}
\opLEZ\dnpu =\opLEZRHS g_D.
\eeq

(ii) \textbf{\emph{Indirect formulation.}} Given $g_D\in \LtG$, if $\phi\in \LtG$ satisifes
\beq\label{eq:new_ext_indirect}
\opLEZi \phi = g_D,
\eeq
then
\beq\label{eq:indirect_ext_u}
u:= (\Caldp + \alpha \cS)\phi
\eeq
is the solution of the Laplace EDP of Definition \ref{def:edp2}.

  (iii) \textbf{\emph{Continuity.}}
 $\opLEZ: \LtG\rightarrow\LtG$,
$\opLEZi: \LtG\rightarrow\LtG$,
and $\opLEZRHS:\HoG\rightarrow\LtG$, and these mappings are continuous.

  (iv) \textbf{\emph{Coercivity up to compact perturbation.}} If $\bZ\in (C(\Gamma))^d$ and there exists $c>0$ such that \eqref{eq:c} holds,
 then both $\opLEZ$ and $\opLEZi$ are the sum of a coercive operator and a compact operator on $\LtG$.

 (v) \textbf{\emph{Invertibility for all $\alpha>0$}.} If $\alpha>0$, $\bZ\in (C^{0,\beta}(\Gamma))^d$ for some $0<\beta<1$, and there exists $c>0$ such that
\eqref{eq:c} holds, then
 both $\opLEZ:\LtG\to\LtG$ and $\opLEZi:\LtG\to\LtG$ are invertible.

 (vi) \textbf{\emph{Coercivity for sufficiently large $\alpha$.}} If $\bZ\in (C^{0,1}(\Gamma))^d$ with Lipschitz constant $L_{\vfb}$ and \eqref{eq:alpha_bound_boundary} holds, then both
$\opLEZ$ and $\opLEZi$ are coercive on $\LtG$ with coercivity constant $c/2$ (with $c$ defined by \eqref{eq:c}), in that \eqref{eq:BIEcoercive} holds with $\opLIZ$ replaced by either $\opLEZ$ or $\opLEZi$.
 \end{theorem}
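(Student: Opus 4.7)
The statement mirrors Theorem~\ref{thm:Laplace_int} for the interior problem, with sign differences arising from the fact that the outward normal from $\Omega^+$ is $-\bn$ rather than $\bn$. The observation that streamlines all six parts is that \eqref{eq:gradS2} with the interior (``$-$'') sign gives
\beq\label{eq:key_identity_ext}
\opLEZ\phi \;=\; \vfb\cdot\widetilde{\gamma}^{-}\nabla\cS\phi \;+\; \alpha\,\widetilde{\gamma}^{-}\cS\phi,
\eeq
i.e., $\opLEZ\phi$ is the \emph{interior} oblique-Robin trace on $\Gamma$ of the single-layer potential with density $\phi$. The whole proof thus reduces to Rellich-identity computations for $\cS\phi$ together with the well-posedness of the interior oblique-Robin problem (established in \S\ref{sec:oblique}).

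\textbf{Parts (i)--(iii).} For Part~(i), apply $\vfb\cdot\widetilde{\gamma}^{+}\nabla$ to the exterior Green representation $u = \mathcal{D}\gpu - \cS\dnpu$ (valid in $d=3$ by the assumed decay), decompose $\nabla$ into normal and tangential components, and use \eqref{eq:gradS2} with the ``$+$'' sign together with $\widetilde{\gamma}^{+}\partial_n\mathcal{D}\psi = H\psi$, $\nT\widetilde{\gamma}^{+}\mathcal{D}\psi = \nT(\tfrac12 I + D)\psi$, and the standard first-kind BIE $S\dnpu = (-\tfrac12 I + D)\gpu$ (to introduce the $\alpha S\dnpu$ term on both sides); algebraic rearrangement yields \eqref{eq:new_ext}. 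For Part~(ii), substituting $u = (\Caldp + \alpha\cS)\phi$ and applying \eqref{eq:jumpCald} with the ``$+$'' sign gives $\widetilde{\gamma}^{+}u = \opLEZi\phi$, and the decay of $\Caldp\phi$ and $\cS\phi$ in $d\geq 3$ ensures $u$ satisfies Definition~\ref{def:edp2}. Continuity (iii) follows from $\LtG$-boundedness of $S, \Caldb, \Caldb'$ (\cite{CoMcMe:82, Ve:84}) and the mapping properties recalled in Appendix~\ref{app:B}.

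\textbf{Parts (iv), (v).} For (iv), uniformly approximate $\vfb\in C(\Gamma)^d$ by $\vfb_n\in C^{0,1}(\Gamma)^d$ with $\vfb_n\cdot\bn \geq c/2$ a.e.\ for large $n$. Part~(vi) applied to $\vfb_n$ with $\alpha_n := 3dL_{\vfb_n}/2$ gives coercivity of $A'_{E,\vfb_n,\alpha_n}$; the difference $\opLEZ - A'_{E,\vfb_n,\alpha_n}$ splits as two terms of operator norm $O(\|\vfb-\vfb_n\|_\infty)\to 0$ (multiplication by $\tfrac12(\vfb-\vfb_n)\cdot\bn$ and $K'_{\vfb-\vfb_n}$, using linearity of $\Caldb'$ in $\vfb$), plus $(\alpha-\alpha_n)S$, which is compact on $\LtG$ via $S:\LtG\to\HoG\hookrightarrow\LtG$. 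Taking $n$ large absorbs the small-norm parts into the coercive piece. For (v), Part~(iv) gives Fredholm index zero, so only injectivity is needed: if $\opLEZ\phi = 0$ then \eqref{eq:key_identity_ext} shows $w := \cS\phi$ solves the homogeneous interior oblique-Robin problem, so the uniqueness result of \S\ref{sec:oblique} (using $\vfb\in C^{0,\beta}$ and $\alpha>0$) forces $w\equiv 0$ in $\Omega^-$; then $\widetilde{\gamma}^{+}w = \widetilde{\gamma}^{-}w = S\phi = 0$, uniqueness of the EDP gives $w\equiv 0$ in $\Omega^+$, and finally $\phi = \widetilde{\gamma}^{-}\partial_nw - \widetilde{\gamma}^{+}\partial_nw = 0$. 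The statements for $\opLEZi$ follow by $L^2$-duality since $\opLEZi = (\opLEZ)^*$.

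\textbf{Part (vi).} This is the main step. Extend $\vfb$ to a compactly supported $C^{0,1}$ field on $\Rea^d$. For real-valued $\phi$, set $w := \cS\phi$, harmonic in $\Omega^\pm$ with $w,\nabla w = O(|\bx|^{2-d}),O(|\bx|^{1-d})$ at infinity. Using \eqref{eq:key_identity_ext} and $\phi = \widetilde{\gamma}^{-}\partial_nw - \widetilde{\gamma}^{+}\partial_nw$,
\beqs
(\opLEZ\phi,\phi)_{\LtG} = \int_\Gamma(\vfb\cdot\widetilde{\gamma}^{-}\nabla w)\bigl(\widetilde{\gamma}^{-}\partial_nw - \widetilde{\gamma}^{+}\partial_nw\bigr)\,ds + \alpha\|\nabla w\|^2_{L^2(\Rea^d)},
\eeqs
where the $\alpha$-term comes from Green's identity in $\Omega^\pm$ and the decay. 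Apply the Rellich--Payne--Weinberger identity to harmonic $w$ in $\Omega^-$ (normal $\bn$) and in $\Omega^+$ (normal $-\bn$; compact support of $\vfb$ eliminates the sphere-at-infinity term), rewrite the cross term via $\vfb\cdot\widetilde{\gamma}^{-}\nabla w = \vfb\cdot\widetilde{\gamma}^{+}\nabla w + (\vfb\cdot\bn)\phi$, and use the elementary identity $|\widetilde{\gamma}^{-}\nabla w|^2 - |\widetilde{\gamma}^{+}\nabla w|^2 = 2\phi\,\widetilde{\gamma}^{+}\partial_nw + |\phi|^2$ (from the jump $\widetilde{\gamma}^{-}\nabla w - \widetilde{\gamma}^{+}\nabla w = \phi\,\bn$): the boundary terms telescope to
\beqs
(\opLEZ\phi,\phi)_{\LtG} = \tfrac12\!\int_\Gamma(\vfb\cdot\bn)|\phi|^2\,ds \;-\; (R^- + R^+) \;+\; \alpha\|\nabla w\|^2_{L^2(\Rea^d)},
\eeqs
where $R^\pm := \int_{\Omega^\pm}\bigl[\tfrac12(\dive\vfb)|\nabla w|^2 - \nabla w\cdot(D\vfb\,\nabla w)\bigr]\,dx$. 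The pointwise bound $|R^-+R^+|\leq \tfrac{3d}{2}L_\vfb\|\nabla w\|^2_{L^2(\Rea^d)}$ (from $|\dive\vfb|\leq dL_\vfb$ and $|D\vfb|_{\rm op}\leq L_\vfb$) is absorbed by $\alpha\|\nabla w\|^2$ under \eqref{eq:alpha_bound_boundary}; combined with \eqref{eq:c} this yields \eqref{eq:BIEcoercive} for $\opLEZ$, and coercivity of $\opLEZi$ then follows by $L^2$-duality. The main obstacle is the careful bookkeeping in this Rellich computation: the opposite outward-normal orientations on $\Omega^\pm$, the jumps of $\widetilde{\gamma}^\pm\partial_n\cS\phi$ and $\vfb\cdot\widetilde{\gamma}^\pm\nabla\cS\phi$, and the compact extension of $\vfb$ used to discard the sphere-at-infinity term must all be tracked correctly for the clean telescoping to occur.
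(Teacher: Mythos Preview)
Your proof is correct and, for Parts (i)--(iii), (v), and (vi), follows essentially the same route as the paper: Green's representation and jump relations for (i)--(ii), the Coifman--McIntosh--Meyer/Verchota bounds for (iii), Fredholm-plus-injectivity via the interior oblique Robin problem for (v), and the Rellich identity applied to $w=\cS\phi$ in $\Omega^-\cup\Omega^+$ for (vi). Your organisation of (vi) differs cosmetically---you separate the $\alpha$-term via Green's identity and then apply the pure Rellich identity (multiplier $\vfd\cdot\nabla w$), whereas the paper uses the combined multiplier $\vfd\cdot\nabla w + \alpha w$ from the start---but the computation and the resulting volume term are the same as the paper's equation \eqref{eq:1}.

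Where you genuinely diverge is Part (iv). The paper proves directly (Lemma~\ref{lem:Calderon}) that $K_\vfb + K'_\vfb$ is compact on $\LtG$ for any $\vfb\in (C(\Gamma))^d$: the kernel of $K_\vfb+K'_\vfb$ is $(\vfb(\by)-\vfb(\bx))\cdot\nabla_\by\Phi(\bx,\by)$, which is weakly singular when $\vfb$ is H\"older, and an approximation-in-$L^\infty$ argument passes to continuous $\vfb$. Since $(K'_\vfb\phi,\phi)_\LtG = \tfrac12((K_\vfb+K'_\vfb)\phi,\phi)_\LtG$ for real $\phi$, this plus compactness of $S$ and \eqref{eq:c} immediately gives coercive-plus-compact. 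Your route instead bootstraps from (vi): approximate $\vfb$ by Lipschitz $\vfb_n$, use coercivity of $A'_{E,\vfb_n,\alpha_n}$, and absorb the small-norm discrepancy $K'_{\vfb-\vfb_n}$ into the coercive part while isolating $(\alpha-\alpha_n)S$ as the compact piece. This is valid, but the paper's argument is shorter, does not depend on (vi), and yields the slightly sharper structural statement that $K_\vfb+K'_\vfb$ itself is compact (a fact used elsewhere, e.g.\ in the star-shaped analysis of Lemma~\ref{lem:Calderon2}).
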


Similar to the case of the IDP, the result in \S\ref{sec:Z}
\esnote{to edit when cut A} 
 and Parts (iii) and (vi) of Theorem \ref{thm:Laplace_ext} imply that, for any bounded Lipschitz open set $\Oi$ such that $\Oe$ is connected, there exists a BIE formulation of the Laplace EDP that is continuous and coercive in $\LtG$.

\subsubsection{The new formulations of the IDP and EDP for $d\geq 3$ on domains that are star-shaped with respect to a ball}\label{sec:Laplace_star}

When $\Oi$ is star-shaped with respect to a ball, the coercivity results in Theorems \ref{thm:Laplace_int} and \ref{thm:Laplace_ext} take a particularly simple form.

\begin{definition}
(i) $\domaingen$ is \emph{star-shaped with respect to the point $\bx_0$} if, whenever $\bx \in \domaingen$, the segment $[\bx_0,\bx]\subset \domaingen$.

\noi (ii) $\domaingen$ is \emph{star-shaped with respect to the ball $B_{\radius}(\bx_0)$} if it is star-shaped with respect to every point in $B_{\radius}(\bx_0)$.

\end{definition}

\begin{lemma}\textbf{\emph{(\cite[Lemma 5.4.1]{Mo:11})}}\label{lem:star} If $\domaingen$ is Lipschitz with outward unit normal vector $\bnu$, then
$\domaingen$ is star-shaped with respect to $B_{\radius}(\bx_0)$, for some $\radius>0$, if and only if $(\bx-\bx_0) \cdot \bnu(\bx) \geq {\radius}$ for all  $\bx \in \partial \domaingen$ for which $\bnu(\bx)$ is defined.
\end{lemma}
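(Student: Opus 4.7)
The plan is to prove the two directions of the equivalence separately. Both rely on the standard Lipschitz-boundary fact that, at any $\bx\in\partial D$ where the outward unit normal $\bnu(\bx)$ exists, a vector $\bv$ with $\bv\cdot\bnu(\bx)<0$ points strictly into $D$ from $\bx$, while $\bv\cdot\bnu(\bx)>0$ points strictly out of $\overline{D}$.

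For the ``only if'' direction, fix any $\bx\in\partial D$ at which $\bnu(\bx)$ is defined and, for each $\varepsilon\in(0,\radius)$, set $\by_\varepsilon:=\bx_0+\varepsilon\bnu(\bx)$, which lies in $B_\radius(\bx_0)$. Since $D$ is assumed star-shaped with respect to $\by_\varepsilon$, the segment $[\by_\varepsilon,\bx]$ lies in $\overline{D}$, so the direction $\by_\varepsilon-\bx$ cannot point strictly out of $\overline{D}$; that is, $(\by_\varepsilon-\bx)\cdot\bnu(\bx)\le 0$. Expanding the inner product and rearranging yields $(\bx-\bx_0)\cdot\bnu(\bx)\ge\varepsilon$, and sending $\varepsilon\uparrow\radius$ completes this direction.

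For the ``if'' direction, fix $\by_0\in B_\radius(\bx_0)$ and set $\delta:=\radius-|\by_0-\bx_0|>0$; the hypothesis together with Cauchy--Schwarz gives $(\bx-\by_0)\cdot\bnu(\bx)\ge\delta$ for almost every $\bx\in\partial D$. It then suffices to show that if $(\bx-\by_0)\cdot\bnu(\bx)\ge\delta>0$ a.e.\ on $\partial D$ for a bounded Lipschitz domain $D$, then $D$ is star-shaped with respect to $\by_0$. My approach is to consider the scaled family $D_t:=\by_0+t(D-\by_0)$ for $t\in(0,1]$ and the set $T:=\{t\in(0,1]:D_t\subset D\}$: one has $1\in T$ trivially, $T$ is open by Lipschitz regularity, and the only way closedness can fail is at some $t^*$ for which $\partial D_{t^*}$ touches $\partial D$ at a point $\bx^*$. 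Because scaling about $\by_0$ preserves the outward normal direction, the outward normals to $D_{t^*}$ and $D$ at $\bx^*$ coincide, and the strict positivity $(\bx^*-\by_0)\cdot\bnu(\bx^*)\ge\delta$ forces the scaled boundary to cross transversally into $D$, giving $D_t\subset D$ for $t$ slightly beyond $t^*$ and contradicting the definition of $t^*$.

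The main obstacle is making this transversality argument fully rigorous in the merely Lipschitz setting, since $\bnu$ is only defined almost everywhere on $\partial D$ and the geometric intuition must be supported by a local-coordinates argument. The cleanest way to close this gap is to localise: at each candidate contact point $\bx^*\in\partial D$, after a rotation, $\partial D$ is the graph of a Lipschitz function $f$, and the a.e.\ condition $(\bx-\by_0)\cdot\bnu(\bx)\ge\delta$ translates into a uniform one-sided a.e.\ bound on $\nabla f$. From this bound one deduces by integration along lines (using the absolute continuity of $f$ on lines in the Lipschitz graph coordinates) that $f$ is strictly radially monotone from $\by_0$ in the local chart, which yields the needed transversal crossing; a finite covering of $\partial D$ by such charts then upgrades the local monotonicity to global star-shapedness.
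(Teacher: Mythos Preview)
The paper does not prove this lemma; it merely cites it from \cite[Lemma 5.4.1]{Mo:11}. So there is no in-paper proof to compare your argument against.

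As to your argument on its own merits: the ``only if'' direction is essentially complete and correct. Choosing $\by_\varepsilon=\bx_0+\varepsilon\bnu(\bx)$ and using that the segment from $\by_\varepsilon$ to $\bx$ lies in $\overline{D}$ is exactly the right idea, and the computation is clean.

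For the ``if'' direction your strategy via the scaled family $D_t$ is sound, but as you yourself flag, the transversality/closedness step is only sketched. Two points worth tightening: (a) you should first note that $\by_0\in D$ (since the condition $(\bx-\by_0)\cdot\bnu(\bx)\ge\delta>0$ a.e.\ forces $\by_0$ to lie on the interior side of every boundary chart), as this is needed for the star-shapedness conclusion; (b) your openness claim for $T$ is not automatic for Lipschitz domains and in fact relies on the same strict inequality as the closedness argument, so both should be handled together via the local-graph monotonicity you outline. The integration-along-lines idea in local Lipschitz graph coordinates is the right way to close both gaps, and once carried out it gives the result directly without needing the connectivity argument on $T$ at all: the radial monotonicity of the graph function in each chart already shows that every ray from $\by_0$ meets $\partial D$ exactly once.
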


\noi From now on, if a domain $\domaingen$ is star-shaped with respect to $\bx_0$, we assume (without loss of generality) that $\bx_0=\bze$.

\begin{theorem}[Coercivity for star-shaped domains]\label{thm:star}
Let $\Oi \subset \Rea^d$, $d\geq 3$, be a bounded Lipschitz domain that is star-shaped with respect to a ball of radius $\radius$, i.e.
\beq\label{eq:ssg}
\radius:= \essinf_{\bx\in\Gamma}( \bx \cdot \bn(\bx)),
\eeq
Then
\beqs
\opLIx \,\,\tand\,\,\opLIxi, \,\,\text{ with } \alpha\geq -(d-2)/2,
\eeqs
and
\beqs
\quad\opLEx\,\, \tand\,\, \opLExi,\,\,\text{ with } \alpha\geq (d-2)/2,
\eeqs
are all coercive on $\LtG$ with coercivity constant $\radius/2$, in that \eqref{eq:BIEcoercive} holds with $c$ replaced by $\radius$, and $\opLIZ$ replaced by any one of $\opLIx, \opLIxi, \opLEx$, or $\opLExi$.
\end{theorem}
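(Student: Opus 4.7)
The plan is to exploit the special structure of $\vfb(\bx) = \bx$ via a Rellich identity, which provides a much sharper threshold on $\alpha$ than the generic $2\alpha\geq 3dL_\bZ$ bound of Theorem~\ref{thm:Laplace_int}(vi). I would compute $(\opLIx\phi,\phi)_{\LtG}$ directly for real-valued $\phi\in\LtG$, using as auxiliary function $v:=\cS\phi$. By \eqref{eq:gradS2} with $\vfb=\bx$, together with the continuity $\gmv=\gpv=S\phi$ of the single-layer potential and the normal-derivative jump $\dnmv-\dnpv=\phi$, the direct operator can be rewritten in two equivalent forms,
\[
\opLIx\phi \;=\; (\bx\cdot\bn)\phi-\bx\cdot\widetilde{\gamma}^-\nabla v+\alpha\gmv \;=\; -\bx\cdot\widetilde{\gamma}^+\nabla v+\alpha\gpv.
\]
Splitting $(\opLIx\phi,\phi)_{\LtG}=(\opLIx\phi,\dnmv)_{\LtG}-(\opLIx\phi,\dnpv)_{\LtG}$, I would pair the first expression against $\dnmv$ and the second against $\dnpv$.

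The core tool is the Rellich identity for $v$ harmonic in $\Omega^\pm$ with multiplier $\bx\cdot\nabla v$: integrating the pointwise identity $\dive\bigl((\bx\cdot\nabla v)\nabla v-\tfrac12|\nabla v|^2\bx\bigr)=\tfrac{2-d}{2}|\nabla v|^2$ over $\Oi$ and $\Oe$ (with the boundary integral at infinity vanishing for $\Oe$ because $v=O(|\bx|^{-(d-2)})$ when $d\geq 3$) yields
\[
\int_\Gamma(\bx\cdot\widetilde{\gamma}^\pm\nabla v)\,\partial_n^\pm v\,\rd s \;=\; \tfrac12\int_\Gamma(\bx\cdot\bn)\,|\widetilde{\gamma}^\pm\nabla v|^2\,\rd s \;\mp\; \tfrac{d-2}{2}\int_{\Omega^\pm}|\nabla v|^2\,\rd\bx.
\]
Combining these two identities with $|\widetilde{\gamma}^-\nabla v|^2-|\widetilde{\gamma}^+\nabla v|^2=\phi(\dnmv+\dnpv)$ (the tangential parts cancel since $\nT\gmv=\nT\gpv$) and with Green's identity $\int_\Gamma v\phi\,\rd s=\int_{\R^d}|\nabla v|^2\,\rd\bx$, a short bookkeeping collapses the expression to
\[
(\opLIx\phi,\phi)_{\LtG} \;=\; \tfrac12\int_\Gamma (\bx\cdot\bn)\phi^2\,\rd s \;+\; \Bigl(\alpha+\tfrac{d-2}{2}\Bigr)\int_{\R^d}|\nabla v|^2\,\rd\bx.
\]
Since $\bx\cdot\bn\geq\radius$ a.e.\ by \eqref{eq:ssg} and $\alpha+(d-2)/2\geq 0$ by hypothesis, the estimate \eqref{eq:BIEcoercive} with $c$ replaced by $\radius$ follows. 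The argument for $\opLEx$ is parallel: the sign flip on $\Caldb'$ reverses the sign of the volume term, producing the coefficient $\alpha-(d-2)/2$ and hence the threshold $\alpha\geq(d-2)/2$. Coercivity of $\opLIxi$ and $\opLExi$ is then immediate, as these are the $\LtG$-adjoints of $\opLIx$ and $\opLEx$ respectively, and for real operators and real-valued $\phi$ one has $(A'\phi,\phi)_{\LtG}=(\phi,A\phi)_{\LtG}=(A\phi,\phi)_{\LtG}$.

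The main obstacle will be justifying the Rellich identity rigorously in the low regularity we have: for $\phi\in\LtG$ the gradient trace $\widetilde{\gamma}^\pm\nabla v$ exists only as an a.e.\ non-tangential limit, with $(\nabla v)^*\in\LtG$ by the results of \cite{Ve:84} recapped in \S\ref{sec:recap}. The standard harmonic-analysis remedy is to approximate $\Oi$ from inside, and $\Oe$ from outside, by sequences of smoother Lipschitz domains whose boundaries converge non-tangentially to $\Gamma$, apply the Rellich identity on each approximating domain (where $v$ is smooth up to the boundary), and pass to the limit by dominated convergence with $((\nabla v)^*)^2\in L^1(\Gamma)$ as the dominant; finiteness of $\int_{\R^d}|\nabla v|^2\,\rd\bx$ is guaranteed in $d\geq 3$ by interior elliptic regularity together with the $O(|\bx|^{-(d-2)})$ decay of $\cS\phi$. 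Once the identity is established at this regularity, the elementary calculation above yields the coercivity estimate with constant $\radius/2$.
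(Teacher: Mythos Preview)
Your approach is correct and arrives at the same key identity as the paper, namely
\[
(\opLIx\phi,\phi)_{\LtG}=\tfrac12\int_\Gamma(\bx\cdot\bn)\phi^2\,\rd s+\Bigl(\alpha+\tfrac{d-2}{2}\Bigr)(S\phi,\phi)_{\LtG},
\]
from which the coercivity constant $\radius/2$ follows since $(S\phi,\phi)_{\LtG}=\int_{\R^d}|\nabla\cS\phi|^2\,\rd\bx\geq 0$. However, the route you take is considerably heavier than the paper's. The paper observes that for $\vfb(\bx)=\bx$ the operator identity $K_\bx+K_\bx'+(d-2)S=0$ holds \emph{exactly}, as a direct consequence of the homogeneity relation $(\by-\bx)\cdot\nabla_\by\Phi(\bx,\by)=-(d-2)\Phi(\bx,\by)$ for the fundamental solution when $d\geq 3$ (this is Lemma~\ref{lem:Calderon2}, a sharpening of Lemma~\ref{lem:Calderon}). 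Combined with $(K_\bx'\phi,\phi)_{\LtG}=(K_\bx\phi,\phi)_{\LtG}$ for real-valued $\phi$ and the classical inequality $(S\phi,\phi)_{\LtG}\geq 0$, this gives the same identity in two lines, with no Rellich multiplier, no integration over $\Omega^\pm$, and no limiting argument to justify boundary traces.

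Your Rellich-identity route is essentially the method the paper uses for the \emph{general} $\vfb$ case (Part~(vi) of Theorems~\ref{thm:Laplace_int}/\ref{thm:Laplace_ext}, via Lemma~\ref{lem:key}), specialised to $\vfb=\bx$; indeed the paper notes in the remark following Lemma~\ref{lem:key} that this specialisation recovers Lemma~\ref{lem:Calderon2}. So both routes are valid, but the paper's point is precisely that for star-shaped domains the kernel-level identity makes the Rellich machinery unnecessary and yields the sharp threshold $\alpha\geq\pm(d-2)/2$ for free.
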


\subsection{Convergence and conditioning of the associated Galerkin methods}\label{sec:Galerkin_imp}

We now show how Theorems \ref{thm:Laplace_int} and \ref{thm:Laplace_ext} imply that (i) the associated Galerkin methods converge (see \S\ref{sec:convergence}), and (ii)
the associated Galerkin matrices are provably well-conditioned
as the discretisation is refined, without the need for operator preconditioning (see \S\ref{sec:conditioning}).
We focus on the case $d\geq 3$ and the new BIE formulations for the IDP and EDP (appearing in Theorems \ref{thm:Laplace_int} and \ref{thm:Laplace_ext}), but analogous results hold for the BIEs for star-shaped domains in \S\ref{sec:Laplace_star}
and also for the new BIEs for $d=2$ in \S\ref{sec:d=2} below.

\subsubsection{Convergence of the Galerkin method for the new formulations}\label{sec:convergence}

\begin{corollary}[Convergence of the Galerkin method]\label{cor:Galerkin}
Let $(\cH_N)_{N=1}^\infty$ denote any sequence of finite-dimensional subsets of $\cH := L^2(\Gamma)$ that is asymptotically dense in $L^2(\Gamma)$ in the sense defined in \S\ref{sec:Gal}.

(a) If $\bZ\in (C^{0,\beta}(\Gamma))^d$ for some $0<\beta<1$, and there exists $c>0$ such that
\eqref{eq:c} holds then, for all $\alpha >0$, the Galerkin method \eqref{eq:G} applied to any one of the BIEs \eqref{eq:new_int}, \eqref{eq:new_int_indirect}, \eqref{eq:new_ext} or \eqref{eq:new_ext_indirect} converges (in the sense defined in \S\ref{sec:intro}).

(b) If, additionally, $\bZ$ is Lipschitz and $\alpha$ satisfies \eqref{eq:alpha_bound_boundary}, then, additionally, the Galerkin solution exists for every finite-dimensional subspace $\cH_N\subset\LtG$ and satisfies the quasioptimal error estimate \eqref{eq:quasioptimal}, with constant $2\|\opG\|_{\LtGt}/c$, where $\opG :=\opLIZi$ for the BIEs \eqref{eq:new_int} and \eqref{eq:new_int_indirect}, $\opG :=\opLEZi$ for the BIEs \eqref{eq:new_ext} and \eqref{eq:new_ext_indirect}.
\end{corollary}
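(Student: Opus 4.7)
The plan is simply to combine the invertibility, coercivity-up-to-compact-perturbation, and coercivity statements of Theorems \ref{thm:Laplace_int} and \ref{thm:Laplace_ext} with the abstract Galerkin convergence result, Theorem \ref{thm:Galerkin}. There is essentially no ``hard part''; the work here is purely one of bookkeeping between the four operators $\opLIZ, \opLIZi, \opLEZ, \opLEZi$ and checking that the various hypotheses on $\bZ$ and $\alpha$ match those of the parts of Theorems \ref{thm:Laplace_int} and \ref{thm:Laplace_ext} being invoked.

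For Part (a), I would fix one of the four BIEs, say \eqref{eq:new_int} with operator $\opLIZ$. Under the hypotheses of Part (a) of the corollary, Part (v) of Theorem \ref{thm:Laplace_int} gives that $\opLIZ:\LtG\to\LtG$ is invertible, and Part (iv) (whose hypotheses are weaker) gives that $\opLIZ$ is the sum of a coercive and a compact operator on $\LtG$. Therefore Part (b) of Theorem \ref{thm:Galerkin}, applied with $A = \opLIZ$ and $\cH = \LtG$, yields convergence of the Galerkin method for every asymptotically-dense sequence $(\cH_N)_{N=1}^\infty$. The argument is identical with $\opLIZ$ replaced by $\opLIZi$, $\opLEZ$, or $\opLEZi$, using the corresponding parts of Theorem \ref{thm:Laplace_ext} in the exterior case.

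For Part (b), the additional hypotheses on $\bZ$ and $\alpha$ are precisely those of Part (vi) of Theorems \ref{thm:Laplace_int} and \ref{thm:Laplace_ext}, so $\opLIZ, \opLIZi, \opLEZ, \opLEZi$ are all coercive on $\LtG$ with coercivity constant $\Ccoer = c/2$. Part (c) of Theorem \ref{thm:Galerkin} then gives, for every finite-dimensional subspace $\cH_N \subset \LtG$, unique solvability of the Galerkin equations \eqref{eq:G} and the quasioptimal estimate \eqref{eq:quasioptimal} with constant $\|A\|_{\LtGt}/\Ccoer = 2\|A\|_{\LtGt}/c$, where $A$ is the relevant operator.

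The last point to tidy up is that the corollary states the quasi-optimality constant in terms of $\|\opLIZi\|$ (respectively $\|\opLEZi\|$) even for the BIEs \eqref{eq:new_int} and \eqref{eq:new_ext} whose operator is $\opLIZ$ (resp.\ $\opLEZ$); this is legitimate because, as noted after \eqref{eq:opLIZ}, the primed and unprimed operators are mutual $L^2(\Gamma)$-adjoints, so $\|\opLIZ\|_{\LtGt} = \|\opLIZi\|_{\LtGt}$ and similarly in the exterior case. With this observation the constant $2\|\opG\|_{\LtGt}/c$ of the statement is obtained uniformly for all four BIEs, completing the proof.
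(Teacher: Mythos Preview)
Your proposal is correct and follows essentially the same approach as the paper, which likewise invokes Parts (iv) and (v) of Theorems \ref{thm:Laplace_int}/\ref{thm:Laplace_ext} together with Part (b) of Theorem \ref{thm:Galerkin} for (a), and Part (vi) together with Part (c) of Theorem \ref{thm:Galerkin} for (b). Your extra observation that $\|\opLIZ\|_{\LtGt}=\|\opLIZi\|_{\LtGt}$ (and similarly in the exterior case) to justify the single constant $2\|\opG\|_{\LtGt}/c$ is a useful clarification that the paper leaves implicit.
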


Since the proof is so short, we include it here.

\

\bpf[Proof of Corollary \ref{cor:Galerkin}]
(a) This follows from Parts (iv) and (v) of Theorem \ref{thm:Laplace_int}/Theorem \ref{thm:Laplace_ext}
and Part (b) of Theorem \ref{thm:Galerkin}.
(b) This follows from Part (vi) of Theorem \ref{thm:Laplace_int}/Theorem \ref{thm:Laplace_ext} and Part (c) of Theorem \ref{thm:Galerkin}.
\epf

\

We highlight that Corollary \ref{cor:Galerkin} is the first time convergence of the Galerkin method for a BIE posed in $\LtG$ used to solve a boundary-value problem for Laplace's equation has been proved with the only assumption on $\Gamma$ that it is Lipschitz; the same is true if $\Gamma$ is assumed to be Lipschitz polyhedral in 3-d.

\bre[Bounding the best approximation and Galerkin errors] \label{rem:BestApprox}
For 3-d Lipschitz polyhedra  the smoothness of the solution, in particular its singularities at corners and edges, is well understood (see, e.g., \cite{PeSt:90}) for the direct formulations \eqref{eq:new_int} and \eqref{eq:new_ext}, where the solution of the integral equation is $\phi=\partial_n^\pm u$. Moreover, it is well understood how to design effective $h$- and $hp$-boundary element approximation spaces $\cH_N$
based on graded, anisotropic meshes so as to obtain optimal best approximation error estimates (see, e.g., \cite{PeSt:90,El:92a,El:95,LuNi:95,MaSt:97}), indeed exponential convergence of $\min_{\psi \in \cH_N}\big\|\phi-\psi\big\|_{\LtG}$ as a function of $M_N:=\dim(\cH_N)$ if the Dirichlet data $g_D$ is the restriction to $\Gamma$ of an analytic function (see \cite[Theorem 3.1]{MaSt:97}). Further,  by Part (a) of Corollary \ref{cor:Galerkin} and the quasioptimality \eqref{eq:quasioptimal2}, the same rates of convergence follow for the Galerkin error $\|\phi-\phi_N\|_\LtG$ as long as $\alpha>0$.
\ere

\subsubsection{Solution of the Galerkin linear systems of the new formulations}\label{sec:conditioning}

Let $\cH_N = {\rm span} \{\psi^{N}_1,\ldots, \psi^{N}_{M_N}\}$, with $M_N=\dim(\cH_N)$ and $\{\psi^{N}_1,\ldots, \psi^{N}_{M_N}\}$ a basis for $\cH_N$.
The Galerkin method \eqref{eq:G} applied to \eqref{eq:new_int_indirect}
is then equivalent to the linear system
\beq\label{eq:system0}
\MA \bx = \bbb
\eeq
where
\beq\label{eq:matrix}
(\MA)_{ij} := \big(\opG \psi^{N}_j,\psi^{N}_i\big)_{L^2(\Gamma)} \quad\tand\quad b_i := -(g_D,\psi^{N}_i)_{L^2(\Gamma)}, \quad i,j=1,\ldots, M_N,
\eeq
with $\opG:= \opLIZi$, and with the Galerkin solution $\phi_N$ given by $\phi_N= \sum_{j=1}^{M_N} x^{N}_j \psi^{N}_j$, where $\bx = (x_1^N,\ldots,x_{M_N}^N)^T$. The Galerkin method applied to \eqref{eq:new_int}, \eqref{eq:new_ext_indirect}, or \eqref{eq:new_ext}, respectively, is also equivalent to \eqref{eq:system0} with $\opG:=\opLIZ$, $\opLEZi$, or $\opLEZ$  in \eqref{eq:matrix} and with correspondingly different definitions of the right-hand side components $b_i$.

In each case, whether $\opG=\opLIZi$, $\opLIZ$, $\opLEZi$, or $\opLEZ$, the matrix $\MA$ defined in \eqref{eq:matrix} is non-symmetric, and a popular method for solving such non-symmetric linear systems is
the \emph{generalised minimum residual method (GMRES)} \cite{SaSc:86}, which we now briefly recall.
Consider the abstract  linear system
$\MC \bx = \bd$
in $\mathbb{C}^{M_N}$, where $\MC \in \mathbb{C}^{M_N\times M_N}$ is invertible. Let $\bx_{0}$ be an  initial guess for $\bx$, and define the corresponding initial residual $\br^0 :=\MC \bx^0-\bd$ and the corresponding standard Krylov spaces by
\beqs
\cK^m(\MC, \br_0) := \mathrm{span}\big\{\MC^j \br_0 : j = 0, \ldots, m-1\big\}.
\eeqs
For $m \geq 1$, define the $m$th GMRES iterate $\bx_m$  to be the unique element of $\cK^m$ such that its residual $\br_m:= \MC \bx_m-\bd$ satisfies  the   minimal residual  property
\beqs
 \Vert \br_m \Vert_2 =  \min_{\by \in \cK_m(\MC, \br^0)} \Vert   {\MC} {\by} -{\bd}\Vert_2.
\eeqs

The main result of this subsection (Theorem \ref{thm:conditioning} below) is a result about the convergence of GMRES applied to \eqref{eq:system0} preconditioned by diagonal matrices. This result is proved under the following assumption in which (and subsequently) for every $v_N\in \cH_N$ we denote by $\bv\in \Com^{M_N}$ the unique vector $\bv=(v^N_1,\ldots, v^N_{M_N})^T$ such that $v_N = \sum_{j=1}^{M_N} v^N_j \psi^{N}_j$.

\begin{assumption}\label{ass:D}
$(\cH_N)_{N=1}^\infty$, and the associated bases $(\{\psi^N_1,\ldots,\psi^N_{M_N}\})_{N=1}^\infty$, are such that there exists a sequence of diagonal matrices $(\MD_N)_{N=1}^\infty$ and $C_1, C_2>0$, independent of $N$, such that
\beq\label{eq:norm_equiv}
C_1 \big\|\MD_N^{1/2} \bw\big\|_2 \leq \N{w_N}_{\LtG} \leq C_2 \big\|\MD_N^{1/2} \bw\big\|_2 \quad\tfa w_N \in \cH_N.
\eeq
\end{assumption}

\bre[Relation of $C_1$ and $C_2$ in \eqref{eq:norm_equiv} to the mass matrix]
Let $\MM_N$ be the mass matrix defined by
\beq\label{eq:massmatrix}
(\MM_N)_{ij} := \big(\psi_j^{N},\psi^{N}_i\big)_{L^2(\Gamma)}.
\eeq
Since $(\MM_N\bw,\bw)_2 = \|w_N\|^2_{L^2(\Gamma)}$,
and thus $\|\MM_N^{1/2}\bw\|_2 = \|w_N\|_{L^2(\Gamma)}$,
 \eqref{eq:norm_equiv} implies that
\beqs
C_1 \| \bv\|_2 \leq \big\| \MM^{1/2}_N \MD^{-1/2}_N \bv\big\|_{\LtG} \leq C_2 \|\bv\|_2 \quad\tfa v_N \in \cH_N;
\eeqs
i.e., $\MD_N^{1/2}$ can be considered as a right-preconditioner for $\MM_N^{1/2}$, removing the $N$-dependence of the norms of $\MM_N^{1/2}$ and $\MM_N^{-1/2}$.
\ere

\bre[When does Assumption \ref{ass:D} hold?]
If $(\psi^{N}_j)_{j=1}^{M_N}$ is an orthogonal basis of $\cH_N$, then Assumption \ref{ass:D} is satisfied with $\MD_N= \MM_N$ and $C_1=C_2=1$; therefore, if $(\psi^{N}_j)_{j=1}^{M_N}$ is an orthonormal basis of $\cH_N$, then Assumption \ref{ass:D} is satisfied with $\MD_N$ equal the identity matrix.

Lemma \ref{lem:meshes} below shows that Assumption \ref{ass:D} is satisfied (and specifies the matrices $\MD_N$) when $(\cH_N)_{N=1}^\infty$ are piecewise-polynomial subspaces allowing discontinuities across elements, under very mild constraints on the sequence of meshes; in particular
Lemma \ref{lem:meshes} covers nodal basis functions on highly anisotropic meshes, such as the meshes highlighted in Remark \ref{rem:BestApprox}. 
We highlight that the assumption that discontinuities are allowed is made so that we can assume in the proof that each basis function is supported on only one element, but we expect the result to hold more generally. In particular, if $d=3$ and the sequence of meshes is regular, shape-regular, and quasi-uniform (in the sense of \cite[Definitions 4.1.4, 4.1.12, and 4.1.13]{SaSc:11}, respectively)
on a polyhedral or piecewise curved domain (in the sense of \cite[Assumptions 4.3.17 and 4.3.18]{SaSc:11}, respectively),
then Assumption \ref{ass:D} holds for a general nodal basis with $\MD_N = h^{d-1}\MI_N$ by \cite[Theorem 4.4.7]{SaSc:11}.
\ere

Let $\by_m$ be the $m$th iterate when the linear system
\beq\label{eq:pclinear}
(\MD_N^{-1/2}\MA \MD_N^{-1/2})\by = \MD_N^{-1/2}\bbb
\eeq
is solved using GMRES with zero initial guess. (Since $\MD_N$ is diagonal, the cost of calculating the action of $\MD_N^{-1/2}\MA \MD_N^{-1/2}$ is dominated by the cost of calculating the action of $\MA$.)
Let
\beq\label{eq:phiNm}
\phi_N^m :=\sum_{j=1}^{M_N} (\MD_N^{-1/2}\by_m)_j \psi^{N}_j,
\eeq
and observe that, by \eqref{eq:pclinear}, \eqref{eq:system0}, and \eqref{eq:matrix}, the Galerkin solution $\phi_N$ is given by
\beq\label{eq:phiN}
\phi_N=\sum_{j=1}^{M_N} (\MD_N^{-1/2}\by)_j \psi^{N}_j.
\eeq

\begin{theorem}[Convergence of GMRES applied to the linear system \eqref{eq:pclinear}]\label{thm:conditioning}
Assume that $\bZ$ is Lipschitz, there exists $c>0$ such that
\eqref{eq:c} holds, $\alpha$ satisfies \eqref{eq:alpha_bound_boundary}, and Assumption \ref{ass:D} holds.
With $C_1$ and $C_2$ as in  \eqref{eq:norm_equiv}, and where $\|\opG\|$ denotes $\|\opG\|_{L^2(\Gamma)\to L^2(\Gamma)}$,
 let $\beta\in [0,\pi/2)$ be defined such that
\beq\label{eq:cosbeta}
\cos\beta= \frac{c}{2\|\opG\|}\left(\frac{C_1}{C_2}\right)^{2}\quad\text{ and let } \quad \gamma_\beta:= 2 \sin\left(\frac{\beta}{4-2\beta/\pi}\right)
\eeq
(observe that $\cos \beta$ is indeed $\leq 1$ since, by definition, $C_1\leq C_2$ and $c/2 \leq \|\opG\|$).
Given $\eps>0$, if
\beq\label{eq:iteration_bound}
m\geq  \left( \log\left(\frac{1}{\gamma_\beta}\right)\right)^{-1}
\left[
\log \left( \frac{24 \|\opG\|}{c}\left(\frac{C_2}{C_1}\right)^3\right)+ \log\left(\frac{1}{\eps}\right) \right],
\eeq
then
\beq\label{eq:full_discrete}
\frac{
\N{\phi-\phi_N^m}_{\LtG}}
{\N{\phi}_{\LtG}}
\leq (1+\eps)\frac{2\|\opG\|}{c}\left( \min_{\psi\in\cH_N}\frac{\N{\phi-\psi}_{\LtG}}{\N{\phi}_{\LtG}}\right)+ \eps
\eeq
(compare to \eqref{eq:quasioptimal}).
\end{theorem}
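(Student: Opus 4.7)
The proof combines three ingredients: a Beckermann--Goreinov--Kuijlaars-type GMRES convergence estimate based on the field of values of the preconditioned iteration matrix $\widetilde{\MA} := \MD_N^{-1/2}\MA\MD_N^{-1/2}$; the coercivity and continuity of $\opG$ on $\LtG$ (Parts (iii) and (vi) of Theorem \ref{thm:Laplace_int} or \ref{thm:Laplace_ext}), transferred to $\widetilde{\MA}$ through the norm equivalence in Assumption \ref{ass:D}; and the quasi-optimal Galerkin estimate \eqref{eq:quasioptimal} inherited from that same coercivity, applied via the splitting $\phi - \phi_N^m = (\phi - \phi_N) + (\phi_N - \phi_N^m)$.

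The key algebraic identity, verified by direct expansion, is
\begin{equation*}
(\widetilde{\MA}\by, \by)_2 = (\opG w_N, w_N)_{L^2(\Gamma)} \quad \text{for every } \by\in\Com^{M_N},\ \text{where } w_N := \sum_{j=1}^{M_N}(\MD_N^{-1/2}\by)_j \psi^N_j,
\end{equation*}
and, from \eqref{eq:norm_equiv}, $C_1\|\by\|_2 \leq \|w_N\|_{L^2(\Gamma)} \leq C_2\|\by\|_2$. The coercivity $\Re(\opG w, w)_{L^2(\Gamma)}\geq (c/2)\|w\|^2_{L^2(\Gamma)}$ together with the continuity bound $|(\opG w, w)_{L^2(\Gamma)}|\leq\|\opG\|\,\|w\|^2_{L^2(\Gamma)}$ then yield $\Re z \geq (c/2)C_1^2$ and $|z|\leq \|\opG\|C_2^2$ for every $z\in W(\widetilde{\MA})$; hence $W(\widetilde{\MA})$ sits in the sector $\{|\arg z|\leq\beta\}$ with $\beta$ as in \eqref{eq:cosbeta} (indeed even the sharper bound $\cos\beta = c/(2\|\opG\|)$ is available, since the argument of $(\opG w_N, w_N)_{L^2(\Gamma)}/\|\by\|^2_2$ depends only on $w_N$, not on the scaling of $\by$).

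The Beckermann--Goreinov--Kuijlaars bound then gives $\|\br_m\|_2 \leq C_{\mathrm{BGK}}\,\gamma_\beta^m\|\br_0\|_2$ for an absolute constant $C_{\mathrm{BGK}}$, with $\gamma_\beta$ as in \eqref{eq:cosbeta}, while the lower coercivity estimate yields $\|\widetilde{\MA}^{-1}\|_2 \leq 2/(cC_1^2)$, so
\begin{equation*}
\|\by-\by_m\|_2 \leq \|\widetilde{\MA}^{-1}\|_2\|\br_m\|_2 \leq \frac{2C_{\mathrm{BGK}}}{cC_1^2}\gamma_\beta^m\|\br_0\|_2.
\end{equation*}
Translating back to $\LtG$ is routine: \eqref{eq:norm_equiv} gives $\|\phi_N - \phi_N^m\|_{L^2(\Gamma)} \leq C_2\|\by-\by_m\|_2$, while $\widetilde{\MA}\by = \MD_N^{-1/2}\bbb$ combined with $\|\widetilde{\MA}\|_2\leq\|\opG\|C_2^2$ and the lower bound in \eqref{eq:norm_equiv} gives $\|\br_0\|_2 = \|\MD_N^{-1/2}\bbb\|_2 \leq (\|\opG\|C_2^2/C_1)\|\phi_N\|_{L^2(\Gamma)}$.

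Assembling these ingredients, the triangle inequality $\|\phi-\phi_N^m\|_{L^2(\Gamma)} \leq \|\phi-\phi_N\|_{L^2(\Gamma)}+\|\phi_N-\phi_N^m\|_{L^2(\Gamma)}$, the quasi-optimality $\|\phi-\phi_N\|_{L^2(\Gamma)}\leq(2\|\opG\|/c)\min_{\psi\in\cH_N}\|\phi-\psi\|_{L^2(\Gamma)}$, and the inequality $\|\phi_N\|_{L^2(\Gamma)}\leq\|\phi\|_{L^2(\Gamma)}+\|\phi-\phi_N\|_{L^2(\Gamma)}$ together reduce \eqref{eq:full_discrete} to securing $\|\phi_N - \phi_N^m\|_{L^2(\Gamma)}\leq\eps\|\phi_N\|_{L^2(\Gamma)}$; gathering the bounds above this becomes $\gamma_\beta^m\leq \mathrm{const}\cdot\eps\, c\|\opG\|^{-1}(C_1/C_2)^3$, equivalent to \eqref{eq:iteration_bound}. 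The main obstacle I anticipate is the precise constant accounting: the explicit factor $24$ in \eqref{eq:iteration_bound} comes from packaging $C_{\mathrm{BGK}}$ together with the factor $(1+\eps)$ arising from the splitting of $\|\phi_N\|_{L^2(\Gamma)}$ in terms of $\|\phi\|_{L^2(\Gamma)}$ and $\|\phi-\phi_N\|_{L^2(\Gamma)}$, and the power $(C_2/C_1)^3$ arises from three distinct places (one from $\|\widetilde{\MA}^{-1}\|_2$, one from converting $\|\by-\by_m\|_2$ to $\|\phi_N-\phi_N^m\|_{L^2(\Gamma)}$, and one from bounding $\|\br_0\|_2$ by $\|\phi_N\|_{L^2(\Gamma)}$).
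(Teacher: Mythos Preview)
Your proposal is correct and follows essentially the same approach as the paper: transfer coercivity/continuity of $\opG$ to bounds on the field of values of $\widetilde{\MA}=\MD_N^{-1/2}\MA\MD_N^{-1/2}$ via Assumption~\ref{ass:D}, invoke the Elman-type GMRES bound of Beckermann--Goreinov--Tyrtyshnikov (note the attribution: the paper cites \cite{BeGoTy:06}, not a Kuijlaars paper), bound $\|\by-\by_m\|_2$ via $\|\widetilde{\MA}^{-1}\|_2$ and the residual, convert back to $\LtG$ norms, and combine with C\'ea's lemma and the triangle inequality. Your accounting for the constant $24$ and the factor $(C_2/C_1)^3$ is accurate, and your parenthetical observation that the sector half-angle is in fact bounded by $\arccos\big(c/(2\|\opG\|)\big)$, independent of $C_1,C_2$, is a genuine sharpening that the paper does not exploit.
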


The key point about Theorem \ref{thm:conditioning} is that \emph{both} the bound on the number of iterations \eqref{eq:iteration_bound} \emph{and} the terms on the right-hand side of
\eqref{eq:full_discrete} other than the best-approximation error are independent of the dimension $M_N$. Therefore, the number of iterations required to solve systems involving $\MD_N^{-1/2}\MA \MD_N^{-1/2}$  to a prescribed accuracy
does \emph{not} increase as the discretisation is refined and
$M_N$ increases.
The same property holds when the conjugate-gradient method is applied to
sequences of $M_N\times M_N$ symmetric, positive-definite matrices whose condition number is bounded independently of $M_N$.

\bre[Bounds on the condition number]
Recall that, in general, a bound on the condition number
for a nonnormal matrix cannot be used to rigorously prove results about the convergence of GMRES applied to that matrix; see, e.g., \cite[Page 165]{LiTi:04}, \cite[Page 3]{Em:99}. We have no reason to expect that $\MA$ is normal, so to prove Theorem \ref{thm:conditioning} we crucially use the coercivity of $\opLIZi$.

Nevertheless, since there is a long history of studying the condition numbers of second-kind integral equations posed on $\LtG$, we record that in the course of proving Theorem \ref{thm:conditioning} we prove that, where $C:= \|\opG\|_{L^2(\Gamma)\to L^2(\Gamma)}$,
\beqs
\cond\big( \MD_N^{-1/2}\MA \MD_N^{-1/2}\big) \leq \frac{2 C}{c}\left(\frac{C_2}{C_1}\right)^2,
\eeqs
where $\cond(\MB):= \|\MB\|_2 \|\MB^{-1}\|_2$ (see \eqref{eq:GMRES3} and \eqref{eq:GMRES3a} below); i.e.,
$\cond( \MD_N^{-1/2}\MA \MD_N^{-1/2})$ is bounded independently of the dimension $M_N$.
Furthermore, by the arguments in \cite[\S III]{BeChGrLaLi:11}, \cite[Equation B.8]{MaGaSpSp:21}, and \cite[Equation 3.6.166]{At:97},
\beq\label{eq:conditionnumber}
\cond(\MA) \leq\frac{2C}{c}\,  \cond(\MM_N);
\eeq
recall that for a piecewise polynomial boundary element approximation space (in dimensions $d=2$ or $3$) using nodal basis functions on a quasiuniform mesh  (with these terms defined in \S\ref{sec:conditioning2a}), $\cond(\MM_N)$ is independent of the dimension $M_N$ (see, e.g., the proof of Part (i) of Lemma \ref{lem:meshes} or \cite[Remark 4.5.3]{SaSc:11}).
\ere

\bre[Calculating the entries of the Galerkin matrices for the new BIEs]
Calculating the entries of the Galerkin matrices for the new BIEs requires evaluating integrals involving \emph{only} the operators
$S$, $D$, and $D'$.
Indeed, for the direct BIE \eqref{eq:new_int}, the expression \eqref{eq:Caldb'} shows that constructing the Galerkin matrix requires evaluating integrals involving the operators above, and evaluating integrals of the form
\beq\label{eq:ibps}
\int_\Gamma \vfb(\bx)\cdot \nT \big(S \psi_j(\bx)\big) \psi_i (\bx)\, \rd s(\bx)
\eeq
where $\psi_i, \psi_j \in \cH_N$.
It is shown in \cite[\S4.3]{SpChGrSm:11} that, using integration by parts, the integral \eqref{eq:ibps} can be evaluated in terms of integrals involving derivatives of $\psi_i$ and values (but not derivatives) of $S\psi_j$.
Constructing the Galerkin matrix of the indirect BIE \eqref{eq:new_int_indirect} requires evaluating the integral
\beq\label{eq:ibps2}
\int_\Gamma \big(\Caldb \psi_j(\bx)\big) \psi_i (\bx)\, \rd s(\bx),
\eeq
where $\Caldb$ is defined by \eqref{eq:Caldb}.
Using the expansion $\bZ = \sum_{i=1}^d Z_i \be_i$ in the definition of $\Caldb$, we have
\beqs
\Caldb \phi(\bx)= \sum_{i=1}^d \be_i \cdot\int_\Gamma  \nabla_\by\Phi(\bx,\by)Z_i(\by)\phi(\by) \, \rd s(\by).
\eeqs
Given $\bx\in \Gamma$, $\be_i = (\be_i\cdot \bn(\bx)) \bn(\bx) + \be_T(\bx)$, where $\be_T(\bx)$ is tangent to $\Gamma$ at $\bx$.
Thus,
\beqs
\Caldb \phi(\bx) = -\sum_{i=1}^d\Big( (\be_i\cdot \bn(\bx)) D'\big(Z_i \phi\big)(\bx) + \be_i\cdot\nabla_\Gamma S \big(Z_i \phi\big)(\bx)\Big);
\eeqs
the integral \eqref{eq:ibps2} can therefore be evaluated in terms of integrals only involving $D'$ and (by the discussion above regarding \eqref{eq:ibps}) $S$.
\ere

\subsection{New boundary integral equations for the Laplace interior and exterior Dirichlet problems on general 2-d Lipschitz domains}\label{sec:d=2}

The biggest difference in going from $d\geq 3$ to $d=2$ is that the single-layer potential is no longer $o(1)$ at infinity, and is only $O(1)$ for a restricted class of densities; see \eqref{eq:Sasym}, \eqref{eq:Sasym2} below.
In this section, we first outline what parts of the $d\geq 3$ results in \S\ref{sec:dgeq3} immediately carry over to $d=2$.
We then present modifications of the integral equations in Theorem \ref{thm:Laplace_int} and Theorem \ref{thm:Laplace_ext} that are coercive for general 2-d Lipschitz domains when $\alpha$ is sufficiently large.

Inspecting the proof of Theorem \ref{thm:Laplace_int} in \S\ref{sec:proofs}, we see that
Parts (i), (ii), (iii), and (iv) hold when $d=2$ (i.e.,~everything apart from invertibility (v) and coercivity for sufficiently large $\alpha$ (vi)).

Similarly, inspecting the proof of Theorem \ref{thm:Laplace_ext} in \S\ref{sec:proofs}, we see that
Parts (i), (iii), (iv), and (v) hold when $d=2$ (i.e.,~everything apart from the indirect formulation (ii) and coercivity for sufficiently large $\alpha$ (vi)), although, firstly,
$\alpha u_\infty$ must be added to the right-hand side of the BIE \eqref{eq:new_ext}, where $u_\infty$ is the limit of $u$ at infinity\footnote{This is because Green's integral representation for the solution of the Laplace EDP with $d=2$ takes the form $u(\bx) = -\cS\dnpu (\bx) + \cD \gpu (\bx) + u_\infty$ for $\bx \in \Oe$.} and, secondly, Part (v) holds when $d=2$ provided the constant $a$ in the fundamental solution \eqref{eq:fund} is not equal to the capacity of $\Gamma$, $\mathrm{Cap}_\Gamma$ (defined in, e.g., \cite[Page 263]{Mc:00}), which holds, in particular, if $a>\diam(\Gamma)$.

Let
\beq\label{eq:P}
\projP \phi (\bx):= \frac{1}{|\Gamma|} \int_\Gamma \phi (\by)\, \rd s(\by)= \frac{1}{|\Gamma|} \big(\phi,1\big)_\LtG \quad \tfor\bx\in\Gamma;
\eeq
i.e., $\projP \phi$ is the mean value of $\phi$. Observe that $\projP^2=\projP$ and $\projP'=\projP$. Let $\projQ:=I-\projP$.

We give two theorems: the first for general 2-d Lipschitz domains, the second for 2-d star-shaped Lipschitz domains. Recall that we are assuming throughout that $\vfb$ is real-valued.

\begin{theorem}[New integral equations for Laplace IDP and EDP in 2-d]\label{thm:2d} Suppose that $\vfb\in (L^\infty(\Gamma))^2$ and $\alpha,\beta\in \Rea$.

(i) \textbf{\emph{IDP direct formulation.}} Let $u$ be the solution of the Laplace IDP of Definition \ref{def:idp} with $d=2$ and $g_D\in H^1(\Gamma)$.
Let
\beq\label{eq:TIprime}
T'_{I, \vfb,\alpha,\beta}:= \projQ \opLIZ \projQ + \beta \projP.
\eeq
Then $\dnmu$ satisfies
\beq\label{eq:2d_IDP_direct}
T'_{I,\vfb,\alpha,\beta}(\dnmu) =\projQ \opLIZRHS \,g_D.
\eeq

(ii) \textbf{\emph{IDP indirect formulation.}}
Let
\beqs
T_{I, \vfb,\alpha,\beta}:= \projQ \opLIZi \projQ + \beta \projP.
\eeqs
Given $g_D\in \LtG$, if $\phi$ satisifes
\beq\label{eq:2d_IDP_indirect}
T_{I,\vfb,\alpha,\beta}\phi =- g_D,
\eeq
then, if $d=2$,
\beq\label{eq:u2d_int}
u:= (\Caldp-\alpha\cS) \projQ\phi + \projP \opLIZi \projQ \phi - \beta \projP\phi
\eeq
is the solution of the Laplace IDP of Definition \ref{def:idp2}.

(iii) \textbf{\emph{EDP direct formulation.}} Let $u$ be the solution of the Laplace EDP of Definition \ref{def:edp} with $d=2$ and $g_D\in H^1(\Gamma)$.
Let
\beq\label{eq:TEprime}
T'_{E, \vfb,\alpha,\beta}:= \projQ \opLEZ \projQ + \beta \projP.
\eeq
Then $\dnpu$ satisfies
\beq\label{eq:2d_EDP_direct}
T'_{E,\vfb,\alpha,\beta}(\dnpu) =\projQ \opLEZRHS \,g_D.
\eeq

(iv) \textbf{\emph{EDP indirect formulation.}}
Let
\beqs
T_{E, \vfb,\alpha,\beta}:= \projQ \opLEZi \projQ + \beta \projP.
\eeqs
Given $g_D\in \LtG$, if $\phi$ satisifes
\beq\label{eq:2d_EDP_indirect}
T_{E,\vfb,\alpha,\beta}\phi =g_D,
\eeq
then, if $d=2$,
\beq\label{eq:u2d_ext}
u:= (\Caldp+\alpha\cS) \projQ\phi - \projP \opLIZi \projQ \phi + \beta \projP\phi
\eeq
is the solution of the Laplace EDP of Definition \ref{def:edp2}.

(v) \textbf{\emph{Coercivity.}} If $\vfb\in (C^{0,1}(\Gamma))^2$ satisfies \eqref{eq:c}, $\alpha$ satisfies \eqref{eq:alpha_bound_boundary}, and $\beta=c/2$, then
$T'_{I,\vfb,\alpha,\beta}$, $T_{I,\vfb,\alpha,\beta}$, $T'_{E,\vfb,\alpha,\beta}$, and $T_{E,\vfb,\alpha,\beta}$ are all coercive on $\LtG$ with coercivity constant $c/2$.
\end{theorem}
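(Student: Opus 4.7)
The plan is to decompose an arbitrary real-valued $\psi \in L^2(\Gamma)$ as $\psi = \projP \psi + \projQ\psi$ and to exploit orthogonality to split the quadratic form $(T\psi,\psi)_{L^2(\Gamma)}$ into one piece depending only on the mean of $\psi$ and one piece depending only on the mean-zero part. Since the averaging operator $\projP$ in \eqref{eq:P} is manifestly self-adjoint and idempotent, it is an orthogonal projection (onto the constants), so $\projQ = I-\projP$ is also a self-adjoint orthogonal projection, and $\|\psi\|^2_{L^2(\Gamma)} = \|\projP\psi\|^2_{L^2(\Gamma)} + \|\projQ\psi\|^2_{L^2(\Gamma)}$. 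Writing $T = \projQ A \projQ + \beta \projP$, where $A$ stands for any one of the four operators $\opLIZ, \opLIZi, \opLEZ, \opLEZi$, self-adjointness of $\projQ$ gives
\begin{equation*}
(T\psi,\psi)_{L^2(\Gamma)} = (A\projQ\psi,\projQ\psi)_{L^2(\Gamma)} + \beta\,\|\projP\psi\|^2_{L^2(\Gamma)}.
\end{equation*}

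The main step, and the one I expect to be the principal obstacle, is to establish the following 2-d mean-zero analogue of Part (vi) of Theorems \ref{thm:Laplace_int} and \ref{thm:Laplace_ext}: under the present hypotheses on $\vfb$ and $\alpha$, for every real-valued $\phi \in L^2(\Gamma)$ with $\projP\phi = 0$,
\begin{equation*}
(A\phi,\phi)_{L^2(\Gamma)} \geq \frac{c}{2}\|\phi\|^2_{L^2(\Gamma)}.
\end{equation*}
The reason such a restriction is unavoidable when $d=2$ is that the fundamental solution \eqref{eq:fund} only decays logarithmically at infinity, so for general $\phi \in L^2(\Gamma)$ the single-layer potential $\cS\phi$, and hence the indirect potential $(\Caldp \pm \alpha \cS)\phi$, fails to decay at infinity. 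The Rellich-type exterior-energy identity that underlies the $d\geq 3$ proof of Part (vi) would then acquire a non-trivial contribution from the ball at infinity and the coercivity argument would break down. When $\projP\phi = 0$, however, the leading $\log$ coefficient in the large-$|\bx|$ asymptotics of $\cS\phi$ vanishes (this is the $O(1)$-versus-$o(1)$ distinction alluded to in the introduction to \S\ref{sec:d=2}), so $\cS\phi$ and the combined potential decay exactly as they do when $d\geq 3$. Carrying out the Rellich/integration-by-parts argument for the $d\geq 3$ proof of Part (vi) verbatim, now applied to the mean-zero density $\phi$, then gives the bound above with the same constant $c/2$ and the same threshold \eqref{eq:alpha_bound_boundary} on $\alpha$; the only work is to verify that the decay of the potentials is sufficient for the boundary term at infinity to vanish and that the algebraic bookkeeping of the dimensional factor $d$ going from $3$ to $2$ leaves the coercivity constant and the Lipschitz-constant threshold unchanged.

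Given the above, the conclusion is immediate: substituting the mean-zero coercivity into the split of $(T\psi,\psi)_{L^2(\Gamma)}$ and taking $\beta = c/2$ yields
\begin{equation*}
(T\psi,\psi)_{L^2(\Gamma)} \geq \frac{c}{2}\|\projQ\psi\|^2_{L^2(\Gamma)} + \frac{c}{2}\|\projP\psi\|^2_{L^2(\Gamma)} = \frac{c}{2}\|\psi\|^2_{L^2(\Gamma)},
\end{equation*}
which is the claimed coercivity. The calculation is identical for all four operators, since neither the decomposition nor the Rellich argument depends on which of $\opLIZ, \opLIZi, \opLEZ, \opLEZi$ plays the role of $A$; consequently all four of $T'_{I,\vfb,\alpha,\beta}, T_{I,\vfb,\alpha,\beta}, T'_{E,\vfb,\alpha,\beta}, T_{E,\vfb,\alpha,\beta}$ inherit coercivity on $L^2(\Gamma)$ with the same constant $c/2$.
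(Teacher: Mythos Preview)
Your proposal addresses only Part (v), but for that part it is correct and follows essentially the same route as the paper: the paper also splits $(T\psi,\psi)_{\LtG}$ via the orthogonal decomposition $\psi=\projP\psi+\projQ\psi$, invokes the $d=2$ case of the key Rellich lemma (Lemma~\ref{lem:key}) to obtain $(A\phi,\phi)_{\LtG}\geq (c/2)\|\phi\|^2_{\LtG}$ for mean-zero $\phi$, and then uses Pythagoras with $\beta=c/2$. Note that Parts (i)--(iv) are not covered by your proposal; in the paper these are handled separately (the direct formulations by applying $\projQ$ to \eqref{eq:new_int}/\eqref{eq:new_ext} and using $\projP\partial_n^\pm u=0$, the indirect formulations by taking non-tangential limits and checking the asymptotics at infinity).
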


\begin{theorem}[New integral equations for 2-d star-shaped domains]\label{thm:2dstar}

\

(i) \textbf{\emph{IDP direct formulation.}} Let $u$ be the solution of the Laplace IDP of Definition \ref{def:idp} with $d=2$ and $g_D\in H^1(\Gamma)$.
Then $\dnmu$ satisfies
\beq\label{eq:2dstar1}
\left(A'_{I,\bx,0} - \frac{|\Gamma|}{4\pi}\projP\right) \dnmu =B_{I,\bx,0} \,g_D.
\eeq

(ii) \textbf{\emph{IDP indirect formulation.}}
Given $g_D\in \LtG$, if $\phi$ satisifes
\beq\label{eq:2dstar2}
\left(A_{I,\bx,0} - \frac{|\Gamma|}{4\pi}\projP\right)
\phi =- g_D,
\eeq
then, if $d=2$,
\beqs
u:= \Caldp \phi + \frac{|\Gamma|}{4\pi}\projP\phi
\eeqs
is the solution of the Laplace IDP of Definition \ref{def:idp2}.

(iiii) \textbf{\emph{EDP direct formulation.}} Let $u$ be the solution of the Laplace EDP of Definition \ref{def:idp} with $d=2$ and $g_D\in H^1(\Gamma)$.
Then $\dnpu$ satisfies
\beq\label{eq:2dstar3}
\left(A'_{E,\bx,0} + \frac{|\Gamma|}{4\pi}\projP\right) \dnpu =B_{E,\bx,0} \,g_D.
\eeq

(iv) \textbf{\emph{EDP indirect formulation.}}
Given $g_D\in \LtG$, if $\phi$ satisifes
\beq\label{eq:2dstar4}
\left(A_{E,\bx,0} + \frac{|\Gamma|}{4\pi}\projP\right)
\phi = g_D,
\eeq
then, if $d=2$,
\beqs
u:= \Caldp \phi + \frac{|\Gamma|}{4\pi}\projP\phi
\eeqs
is the solution of the Laplace EDP of Definition \ref{def:edp2}.

(v) \textbf{\emph{Coercivity.}} If $\Oi$  is star-shaped with respect to a ball of radius $\radius$ (i.e.~\eqref{eq:ssg} holds),
then each of the integral operators on the left-hand sides of \eqref{eq:2dstar1}, \eqref{eq:2dstar2}, \eqref{eq:2dstar3}, and \eqref{eq:2dstar4} is coercive on $\LtG$ with coercivity constant $\radius/2$.
\end{theorem}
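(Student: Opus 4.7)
The plan is to prove the BIE derivations (i)--(iv) by applying Green's integral representation and the jump/trace identities recalled in \S\ref{sec:recap} specialised to $\bZ(\bx)=\bx$ and $\alpha=0$, and to prove the coercivity (v) via a simple algebraic identity for the fundamental solution combined with the star-shapedness bound. The key algebraic fact, by direct computation from $\nabla_\bx\log|\bx-\by|=-\nabla_\by\log|\bx-\by|=(\bx-\by)/|\bx-\by|^2$, is that in $d=2$, $\bx\cdot\nabla_\bx \Phi(\bx,\by) + \by\cdot\nabla_\by \Phi(\bx,\by) = -\tfrac{1}{2\pi}$ for $\bx\ne\by$. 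Integrating against $\phi\in\LtG$ in the principal-value sense yields the operator identity
\begin{equation*}
K_\bx + K'_\bx = -\tfrac{|\Gamma|}{2\pi}\projP \qquad\text{on }\LtG,
\end{equation*}
which is the 2D analogue of the identity $K_\bx+K'_\bx=-(d-2)S$ that underlies the $d\ge 3$ coercivity in Theorem \ref{thm:star}.

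For (i), I would apply Green's representation to the IDP solution and take $\bx\cdot\widetilde{\gamma}^-\nabla(\cdot)$, using \eqref{eq:gradS2}; subtracting $\tfrac{|\Gamma|}{4\pi}\projP(\dnmu)$ is cosmetic since $\dnmu$ has mean zero by the divergence theorem. For (ii), substitute the ansatz $u=\Caldp\phi+\tfrac{|\Gamma|}{4\pi}\projP\phi$ (harmonic in $\Omega^-$ since the correction is a constant) into Definition \ref{def:idp2}, and compute $\widetilde{\gamma}^- u$ via \eqref{eq:jumpCald} to recover \eqref{eq:2dstar2}. Parts (iii) and (iv) go the same way with opposite sign for the projection correction: this sign is forced by the fact that the 2D EDP solution has a generically nonzero limit $u_\infty$ at infinity, which the ansatz recovers as $u_\infty=\tfrac{|\Gamma|}{4\pi}\projP\phi$ (note $\Caldp\phi=O(|\bx|^{-1})$ at infinity for this choice of $\bZ$, so no logarithmic term arises in the far field).

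For (v), combining the operator identity with $(K_\bx\phi,\phi)_{\LtG}=\overline{(K'_\bx\phi,\phi)_{\LtG}}$ (from $L^2$-adjointness) gives, for real-valued $\phi$, $2(K'_\bx\phi,\phi)_{\LtG}=-\tfrac{|\Gamma|^2}{2\pi}|\projP\phi|^2$, i.e., $(K'_\bx\phi,\phi)_{\LtG}=-\tfrac{|\Gamma|^2}{4\pi}|\projP\phi|^2$. Substituting into $A'_{I,\bx,0}=\tfrac12(\bx\cdot\bn)I-K'_\bx$ and using $(\projP\phi,\phi)_{\LtG}=|\Gamma||\projP\phi|^2$,
\begin{equation*}
\bigl((A'_{I,\bx,0}-\tfrac{|\Gamma|}{4\pi}\projP)\phi,\phi\bigr)_{\LtG} = \tfrac12\int_\Gamma(\bx\cdot\bn)|\phi|^2\,\rd s \ge \tfrac{\radius}{2}\|\phi\|_{\LtG}^2
\end{equation*}
by \eqref{eq:ssg}. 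The same calculation handles the EDP operator, since the signs of $K'_\bx$ and of the projection correction are simultaneously flipped; the unprimed operators inherit the same bound because their quadratic forms agree with those of the primed operators for real $\phi$, and coercivity on complex-valued $\LtG$ then follows by taking real parts of the sesquilinear form, as in Theorem \ref{thm:Laplace_int}.

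The main obstacle I anticipate is the careful passage from the pointwise kernel identity (which holds only for $\bx\ne\by$) to the operator identity on $\LtG$: one must verify that the two singular principal-value integrals defining $K_\bx\phi$ and $K'_\bx\phi$ combine, without dropping any boundary contribution from the diagonal $\bx=\by$, to yield the integration of the smooth constant $-1/(2\pi)$ against $\phi$. This should follow from the symmetric cancellation of the two Cauchy-type singularities together with the $L^2$-boundedness of the Cauchy operator on Lipschitz boundaries from \cite{CoMcMe:82} which makes both principal values simultaneously well-defined; modulo this verification, everything else reduces to the algebra above.
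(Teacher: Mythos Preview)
Your proposal is correct and follows essentially the same route as the paper. The operator identity $K_\bx+K'_\bx=-\tfrac{|\Gamma|}{2\pi}\projP$ is precisely Lemma~\ref{lem:Calderon2} (equation~\eqref{eq:symmetry2d}), derived there via the same kernel computation you give; your coercivity calculation for (v) is just a slightly more explicit version of the paper's one-line appeal to that lemma, and your treatment of (i)--(iv) matches the paper's use of Lemma~\ref{lem:Neumann_trace} and the jump relations. The ``main obstacle'' you flag is handled in the paper exactly as you suggest: equation~\eqref{eq:K3} expresses $K_\vfb+K'_\vfb$ as a single integral operator whose kernel, for $\vfb(\bx)=\bx$, is the constant $-1/(2\pi)$, so no diagonal contribution survives and the principal value collapses to an ordinary integral.
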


\section{Discussion of the ideas behind the new BIEs and links to previous work}\label{sec:idea}

\subsection{How the BIEs arise}
The indirect BIE \eqref{eq:new_int_indirect} for the IDP arises from imposing the boundary condition on the ansatz $u= (\Caldp -\alpha \cS)\phi$ via taking the nontangential limit.
Similarly, the indirect BIE \eqref{eq:new_ext_indirect} for the EDP arises from the ansatz $u= (\Caldp +\alpha \cS)\phi$.
For the indirect BIEs for $d=2$ in Theorem \ref{thm:2d}, the idea is the same, except now a) the density in the ansatz is not a general $L^2(\Gamma)$ function (so that $\cS\phi$ has the correct behaviour at infinity), and b) extra terms are added to the ansatz to ensure that the resulting BIE is still coercive on $\LtG$.

For the direct BIE \eqref{eq:new_int} for the IDP, recall that $u=\cS \dnmu - \cD \gmu$ by Green's integral representation. The direct BIE \eqref{eq:new_int} then arises from considering
\beqs
-\vfb \cdot \widetilde{\gamma}^- (\nabla u ) + \alpha \gmu.
\eeqs
Similarly, the direct BIE \eqref{eq:new_ext} for the EDP arises from considering
\beqs
\vfb \cdot \widetilde{\gamma}^+ (\nabla u ) + \alpha \gpu,
\eeqs
with $u = -\cS\dnpu + \cD \gpu$. Alternatively, since (informally) $\vfb \cdot \nabla = (\vfb \cdot \bn) \partial_n + \vfb \cdot \nT$, the direct BIE \eqref{eq:new_int} can be obtained by adding (i) $(\vfb\cdot \bn)$ multiplied by the standard direct second-kind BIE
\beq\label{eq:inteq2}
\left(\half I - D'\right)\dnmu = -H\gmu,
\eeq
(ii) $-\vfb\cdot \nT$ applied to the standard direct first-kind BIE
\beq\label{eq:inteq1}
S\dnmu= \left(\half I + D\right) \gmu,
\eeq
and (iii) $\alpha$ multiplied by \eqref{eq:inteq1}. Similar considerations hold for the direct BIE \eqref{eq:new_ext}, and the 2-d direct BIEs of Theorems \ref{thm:2d} and \ref{thm:2dstar}, where, additionally, one uses that $\projP (\partial^{\pm}_n u)=0$ (see Lemma \ref{lem:Neumann_trace}).

\subsection{The other BVPs solved by the new BIEs}
The BIEs introduced in \S\ref{sec:main_results} to solve the Dirichlet problem can be used to solve other Laplace BVPs. Although the focus of this paper is on solving the Dirichlet problem, we highlight this fact here since
these other BVPs affect the properties of the new BIEs.

For example, the BIO $\opLEZ$ used to solve the EDP in Theorem \ref{thm:Laplace_ext} can also be used to solve the Laplace interior oblique Robin problem, i.e., the problem of finding $u$ in $\Oi$ satisfying $\Delta u=0$ and
\beq\label{eq:obliqueLintro}
(\bZ\cdot\bn) \dnmu + \bZ \cdot \nabla_\Gamma( \gmu) + \alpha \, \gmu =  g \quad\text{ on } \Gamma;
\eeq
see Definition \ref{prob:ioipL} and Theorem \ref{thm:star_in_obliqueLE} below. Similarly, the BIO $\opLIZ$ used to solve the IDP in Theorem \ref{thm:Laplace_int} can also be used to solve the Laplace exterior oblique Robin problem; see Definition \ref{prob:eoipL} and Theorem \ref{thm:star_in_obliqueL}  below.
This relationship means that the injectivity results implicit in Part (v) of Theorems \ref{thm:Laplace_int} and \ref{thm:Laplace_ext} are obtained by proving uniqueness of these oblique Robin problems; see \S\ref{subsec:oblique3}.

\subsection{The use of similar BIEs by Calder\'on \cite{Ca:85} and Medkov\'a \cite{Me:18}}
Calder\'on \cite{Ca:85} used indirect versions of the BIEs in Theorems \ref{thm:Laplace_int} and \ref{thm:Laplace_ext} with $\alpha=0$ to prove wellposedness results about the Dirichlet problem and the oblique derivative problem (i.e., \eqref{eq:obliqueLintro} with $\alpha=0$) with data in $L^p(\Gamma)$.
Indeed, \cite{Ca:85} posed the ansatz $u= \Caldp \phi$ for the IDP, which gives the BIE $A_{I, \vfb, 0} \phi =-g_D$ \cite[Page 39]{Ca:85}, and posed the ansatz $u=\cS \phi$ for the oblique derivative problem, which gives the BIE $A'_{E,\vfb,0} \phi = g$ \cite[Page 45]{Ca:85}.
Furthermore, Medkov\'a \cite[\S5.23]{Me:18}
posed the ansatz $u = \cS\phi$ for the interior oblique Robin problem, resulting in $A'_{I,\vfb,\alpha}\phi = -g$.

In both \cite{Ca:85} and \cite{Me:18}, the BIOs are proved to be Fredholm of index zero on $L^2(\Gamma)$; see \cite[Page 39]{Ca:85} (where the result is proved to hold on a slightly wider range of $L^p(\Gamma)$ spaces) and \cite[Proposition 5.23.2]{Me:18}.

\subsection{The main new properties of the BIEs of this paper:~coercivity for appropriate $\alpha$}\label{sec:Rellich_recap}
Building on the work of Calder\'on and Medkova, we show that the BIOs are not only Fredholm of index zero on $L^2(\Gamma)$, but \emph{invertible} for general Lipschitz domains as soon as $\alpha>0$, and, crucially, \emph{coercive} if $\alpha$ is chosen appropriately (so also \emph{coercive plus compact} for all $\alpha>0$). For star-shaped domains this coercivity can be proved using a simple modification of Calder\'on's proof that the BIOs are Fredholm of index zero (see Lemmas \ref{lem:Calderon} and \ref{lem:Calderon2} below).
For general domains this coercivity (for appropriate $\alpha$) is proved using Rellich-type identities (with this method also giving an alternative proof of coercivity for star-shaped domains). Recall that identities arising from multiplying $\Delta u$ by a derivative of $u$ are associated with the name Rellich, due to Rellich's introduction of the multiplier $\bx \cdot \gu$ for the Helmholtz equation in \cite{Re:40}; these identities have been well-used in the study of the Laplace, Helmholtz, and other elliptic equations, see, e.g., the overviews in \cite[Pages 111 and 112]{Ke:94}, \cite[\S5.3]{ChGrLaSp:12}, \cite[\S1.4]{MoSp:14}. Verchota \cite{Ve:84} famously used Rellich identities to prove invertibility of $\half I - D$ and $\half I - D'$ on $\LtG$ (see Remark \ref{rem:Verchota} below) and  Medkov\'a  \cite[\S5.23]{Me:18} also used Rellich identities to prove that $A'_{I, \vfb, \alpha}$ is invertible for sufficiently large $\alpha$ \cite[Lemma 5.23.1, Prop.~5.23.2, Theorem 5.23.4]{Me:18}.

Our coercivity results are proved using the identity arising from multiplying $\Delta u$ by $\vfb \cdot \nabla u + \alpha u$ (see Lemma \ref{lem:Rellich} below); our use of a multiplier that is a linear combination of $u$ and a derivative of $u$ is inspired by the use of such multipliers by Morawetz \cite{Mo:61, MoLu:68, Mo:75}, and the particular identity we use also appears as \cite[Equation 2.28]{KuMaVa:02}.
As recalled in \S\ref{sec:rationale}, the idea of proving coercivity of Laplace BIOs in the trace spaces goes back to N\'ed\'elec and Planchard \cite{NePl:73}, Le Roux \cite{LeRo:74}, Hsiao and Wendland \cite{HsWe:77}, and Steinbach and Wendland \cite{StWe:01}, with this method based on using Green's identity (i.e. multiplying $\Delta u$ by $u$).
The idea of proving coercivity of second-kind BIOs in $\LtG$ using Rellich-type identities was introduced in
\cite{SpChGrSm:11} for a particular  Helmholtz BIE on star-shaped domains and then further developed in  \cite{SpKaSm:15} for the standard second-kind Helmholtz BIE on smooth convex domains. The main differences between \cite{SpChGrSm:11, SpKaSm:15} and the present paper are that (i) \cite{SpChGrSm:11, SpKaSm:15} only consider direct BIEs for the exterior Helmholtz Dirichlet problem whereas the present paper considers direct and indirect BIEs for the interior and exterior Laplace Dirichlet problems and (ii) \cite{SpChGrSm:11, SpKaSm:15} only prove coercivity under geometric restrictions on $\Gamma$ (which is somewhat expected for the high-frequency Helmholtz equation; see \cite{BeSp:11}, \cite[\S6.3.2]{ChSpGiSm:20}), namely star-shapedness with respect to a ball for \cite{SpChGrSm:11} and strict convexity and a piecewise analytic $C^3$ boundary for  \cite{SpKaSm:15}, whereas the present paper proves coercivity of Laplace BIOs for general Lipschitz domains.

\subsection{Combined-potential ansatz for solutions of Laplace's equation}
A key difference between the indirect BIEs in the present paper and those in \cite{Ca:85} is that ours arise from the ansatz $u= (\Caldp -\alpha \cS)\phi$ for the solution of the Laplace IDP, whereas \cite{Ca:85} poses the ansatz $u = \Caldp\phi$. We saw in the discussion above that the presence of the parameter $\alpha$ -- i.e., the fact that we use a combined-potential ansatz -- is crucial for proving coercivity of our BIOs.

The combined-potential ansatz is also crucial to proving uniqueness for cases where coercivity does not hold. Indeed, using a linear combination of double- and single-layer potentials to find solutions of the Helmholtz equation is standard, and goes back to \cite{BrWe:65, Le:65, Pa:65}, with the motivation to ensure uniqueness at all wavenumbers (see \S\ref{sec:Helmholtz}).
Using such a combination for Laplace's equation is less common, but this was done by D.~Mitrea in \cite[Theorem 5.1]{MitreaD:97} and subsequently by Medkov\'a in \cite{Me:99}.
The rationale for this combined ansatz is similar, namely that the standard indirect second-kind equations (based on a double-layer-potential ansatz) have non-trivial null spaces for multiply connected domains (with these characterised in \cite{KulkarniEtal:05, MitreaD:97}) but the BIOs resulting from a combined double- and single-layer potential ansatz are invertible no matter the topology of $\Oi$; see \cite[Theorem 5.15.2]{Me:18} (for $d\geq 3$) and \cite[Theorem 5.15.3]{Me:18} (for $d=2$). The BIOs in \S\ref{sec:main_results} are also invertible (and even, for appropriate $\alpha$, coercive) no matter the topology of $\Oi$.

\section{Proofs of the main results}\label{sec:proofs}

In this section we prove all of the results in \S\ref{sec:main_results} \emph{apart from} the invertibility results in Part (v) of Theorem \ref{thm:Laplace_int}/\ref{thm:Laplace_ext}.
As discussed in \S\ref{sec:idea}, these invertibility results are equivalent to uniqueness of the Laplace interior and exterior oblique Robin problems, and these uniqueness results are proved in \S\ref{sec:oblique}.
Indeed, Part (v) of Theorem \ref{thm:Laplace_int} follows from Corollary \ref{cor:oblique:unique2}, and Part (v) of Theorem \ref{thm:Laplace_ext} follows from Corollary \ref{cor:oblique:unique}.

\subsection{Proofs of Parts (i), (ii), and (iii) of Theorems \ref{thm:Laplace_int} and \ref{thm:Laplace_ext}}\label{sec:4.1}

For Part (i) of Theorem \ref{thm:Laplace_int}, first recall that the standard direct BIEs for the IDP (corresponding to the top left of Table \ref{tab:bies}) are
\eqref{eq:inteq1} and \eqref{eq:inteq2}.
If $g_D\in \HoG$, then $\dnmu\in\LtG$ (by Theorem \ref{thm:Necas}), and then the mapping properties \eqref{eq:map1} of $S$ and $D$  imply that both sides of \eqref{eq:inteq1} are in $\HoG$. Taking the surface gradient, $\nT$, of \eqref{eq:inteq1} yields the (vector) integral equation in $(\LtG)^d$
\beq\label{eq:inteq3}
\ntS(\dnmu)= \nT\left(\half I + D\right) \gmu.
\eeq
Taking $(\vfb\cdot\bn)$ times the scalar equation \eqref{eq:inteq2}, minus $\vfb$ dot the vector equation \eqref{eq:inteq3}, plus $\alpha$ times \eqref{eq:inteq1} yields \eqref{eq:new_int}. The proof of Part (i) of Theorem \ref{thm:Laplace_ext} (i.e., that \eqref{eq:new_ext} holds) is very similar.

For Part (ii) of Theorem \ref{thm:Laplace_int}, first recall that $\Caldp\phi$ and $\cS\phi$ are both in $C^2(\Oi)$ and satisfy Laplace's equation
(for $\Caldp$ this was recalled in \S\ref{sec:recap}). When $\phi \in \LtG$, $\cS\phi \in H^{3/2}(\Oi)$ by \eqref{eq:LPmap} and then $(\cS\phi)^*\in \LtG$ by Part (iii) of Theorem \ref{thm:JeKe:95}.
As recalled in \S\ref{sec:recap}, $(\Caldp\phi)^* \in \LtG$ by \cite{Ve:84}, and thus $u$ defined by \eqref{eq:indirectansatz1} satisfies $u^*\in \LtG$.
To show that  $\phi$ satisfies the BIE \eqref{eq:new_int_indirect}, we take the non-tangential limit of \eqref{eq:indirectansatz1}, using \eqref{eq:jumpCald} and that, by Lemma \ref{lem:A9}, $\widetilde{\gamma}^- (\cS\phi) = \gamma^-(\cS\phi)$, where $\gamma^-(\cS\phi)$ is given by the first jump relation in
\beq\label{eq:jump1}
\gamma^{\pm}\cS = S, \qquad \partial_n^{\pm}\cS = \mp \half I + D'.
\eeq
(see, e.g., \cite[Page 219]{Mc:00} or \cite[Equation 2.41]{ChGrLaSp:12}).

Part (ii) of Theorem \ref{thm:Laplace_ext} follows in an analogous way, except that we now need to show that $u$ defined by \eqref{eq:indirect_ext_u}
satisfies $u(\bx)= o(|\bx|^{3-d})$ when $d=3$ as $|\bx|\tendi$; these asymptotics follow from
the first bound in \eqref{eq:Sasym} and the bound
\beq\label{eq:Caldasym}
|\Caldp\phi(\bx)| = O(|\bx|^{1-d}) \quad\tas |\bx|\tendi,
\eeq
which is proved in a similar way to the bound on the double-layer potential in \cite[Equation 3.23]{SaSc:11}.

Part (iii) of both Theorems \ref{thm:Laplace_int} and \ref{thm:Laplace_ext} follows from 
combining: (a) the definitions of $\opLIZ$ \eqref{eq:opLIZ} and $\opLEZ$ \eqref{eq:opLEZ} in terms of $\Caldb'$ and $S$; (b) the definitions of $\opLIZi$ \eqref{eq:opLIZ} and $\opLEZi$ \eqref{eq:opLEZ} in terms of $\Caldb$ and $S$; (c) the definitions of $\opLIZRHS$ \eqref{eq:opLIZRHS} and $\opLEZRHS$ \eqref{eq:opLIZRHS} in terms of $D, H,$ and $\nT$; 
(d) the continuity of 
$\Caldb:\LtGt$ (and hence also of $\Caldb':\LtGt$) recalled in \S\ref{sec:recap};
 (e) the continuity of 
 $S: L^2(\Gamma)\to L^2(\Gamma)$, $H: H^1(\Gamma)\to L^2(\Gamma)$, and 
 $D: H^1(\Gamma)\to H^1(\Gamma)$ (and hence also of $\nT D: H^1(\Gamma)\to L^2(\Gamma)$),  recalled in \eqref{eq:map}.
 
\subsection{Proofs of Part (iv) of Theorems  \ref{thm:Laplace_int} and \ref{thm:Laplace_ext} (coercivity up to a compact perturbation).}

\ble\label{lem:Calderon}
If $d\geq 2$, $\Gamma$ is Lipschitz and $\vfb \in (C(\Gamma))^d$ then
$\Caldb + \Caldb'$
is compact in $\LtG$. Thus
there exists a compact operator $ C:\LtG\rightarrow \LtG$ such that
\beqs
\big( K_\vfb \phi,\phi\big)_{\LtG}= \big(C \phi,\phi\big)_{\LtG} \quad\mbox{for all real-valued } \phi\in \LtG.
\eeqs
\ele

Part (iv) of both Theorems \ref{thm:Laplace_int} and \ref{thm:Laplace_ext} follow by combining Lemma \ref{lem:Calderon} with the assumption \eqref{eq:c} and the fact that $S$ is compact on $\LtG$ (via the mapping property in \eqref{eq:map1} with $s=1/2$).

\

\bpf[Proof of Lemma \ref{lem:Calderon}]
Since $\Phi(\bx,\by)$ is a function of $|\bx-\by|$, $\nabla_\bx \Phi(\bx,\by)= - \nabla_\by\Phi(\bx,\by)$; the definitions of $K_\vfb$ \eqref{eq:Caldb} and $K'_\vfb$ \eqref{eq:Caldb'} then imply that, for all $\phi\in \LtG$,
\beq\label{eq:K3}
\big( K_\vfb + K_{\vfb}'\big)\phi(\bx)= \int_\Gamma \big(\vfb(\by)-\vfb(\bx)\big)\cdot\nabla_\by \Phi(\bx,\by) \phi(\by)\, \rd s(\by).
\eeq
If $\vfb \in (C^{0,\beta}(\Gamma))^d$ for $\beta>0$, then the kernel of the integral on the right-hand side of \eqref{eq:K3} is weakly singular, and thus the operator is compact on $\LtG$ by, e.g., the combination of \cite[Part 3 of the theorem on Page 49]{Pr:91} and
 the Riesz-Thorin interpolation theorem (see, e.g., \cite[Theorem 6.27]{Fo:84a}), where the latter is used to verify the hypothesis of the former. Therefore, the result of this lemma follows if we can show that if, for all $\beta>0$, $K_{\vfb} + K'_{\vfb}$ is compact for all $\vfb \in (C^{0,\beta}(\Gamma))^d$, then $K_{\vfb} + K'_{\vfb}$ is compact for all $\vfb \in (C(\Gamma))^d$.

Given $\vfb \in (C(\Gamma))^d$, there exist $\beta>0$ and $\vfb_\ell \in (C^{0,\beta}(\Gamma))^d$ for all $\ell\in \NN$ such that $\|\vfb_\ell-\vfb\|_{L^\infty}\tendo$ as $\ell \tendi$.
By \eqref{eq:Caldb'}, the operator $K_{\bZ}'$ can be written $K_\vfb' = \vfb \cdot \bT$, where $\bT:L^2(\Gamma)\to (L^2(\Gamma))^d$ is bounded by the results of \cite{CoMcMe:82} and \cite{Ve:84} (as discussed in \S\ref{sec:recap}).
Let $K_{\vfb_\ell}' = \vfb_\ell\cdot \bT$; then
\beqs
\N{K_{\vfb_\ell}' - K_{\vfb}'}_{L^2(\Gamma)\to \LtG}= \N{\vfb_\ell \cdot \bT - \vfb\cdot \bT}_{\LtG\to\LtG} \leq \N{\vfb_\ell -\vfb}_{(L^\infty(\Gamma))^d} \N{\bT}_{\LtG\to\LtG}\to 0
\eeqs
as $\ell\to\infty$.
Therefore also $K_{\vfb_\ell} \to K_{\vfb}$, so that $K_{\vfb_\ell} + K_{\vfb_\ell}' \to K_{\vfb}+ K_{\vfb}'$. Since the space of compact operators is closed, $K_{\vfb}+ K_{\vfb}'$ is compact.
\epf

\subsection{Proof of Theorem \ref{thm:star} (coercivity for $\Oi$ that are  star-shaped with respect  to a ball)}

Theorem \ref{thm:star} is an immediate consequence of combining (i) the following special case of Lemma \ref{lem:Calderon}, (ii) the definitions of $\opLIZ$ and $\opLIZi$ in \eqref{eq:opLIZ} and
$\opLEZ$ and $\opLEZi$ in \eqref{eq:opLEZ}, and (iii) the inequality $(S\phi,\phi)_{\LtG}\geq 0$ for all $\phi \in \LtG$. The inequality in (iii) is well-known, following from Green's identity, and is a special case of Lemma \ref{lem:key} below with $\vfd ={\bf 0}$.

\ble[Key lemma for coercivity for star-shaped $\Oi$]\label{lem:Calderon2}
Let $\Gamma$ be Lipschitz. If $d\geq 3$ then
\beq\label{eq:symmetry3d}
K_\bx + K_\bx'+ (d-2)S=0 \,\,\text{and thus}\,\, \big(K_\bx \phi,\phi\big)_{\LtG} + \frac{d-2}{2}\big(S\phi,\phi\big)_\LtG=0\,\,\mbox{for all real-valued } \phi \in\LtG.
\eeq
If $d=2$ then
\beq\label{eq:symmetry2d}
K_\bx + K_\bx'+ \frac{|\Gamma|}{2\pi}\projP =0 \quad\text{and thus}
\,\,\left( \left( K_\bx + \frac{|\Gamma|}{4\pi}\projP\right) \phi,\phi\right)_{\LtG}= 0 \,\, \mbox{for all real-valued } \phi \in \LtG,
\eeq
where $\projP$ is defined by \eqref{eq:P}.
\ele

\bpf[Proof of Lemma \ref{lem:Calderon2}]
By \eqref{eq:fund}, when $d\geq 3$, $(\by-\bx)\cdot \nabla_\by \Phi(\bx,\by) = -(d-2)\Phi(\bx,\by)$, and when $d=2$, $(\by-\bx)\cdot \nabla_\by \Phi(\bx,\by) =-1/2\pi$. The results then follow from \eqref{eq:K3} with $\vfb(\bx)=\bx$.
\epf

\bre[Link with the work of Fabes, Sand, and Seo \cite{FaSaSe:92}]
The analogue of \eqref{eq:symmetry3d} when $\Gamma$ is the graph of a function (i.e., the boundary of a hypograph) appears in the first sentence after the first displayed equation on \cite[Page 133]{FaSaSe:92}.
Indeed, the analogue of the operator $\Caldb'$ for the hypograph with $\vfb = {\bf e}_d$ (i.e., the unit vector pointing in the $x_d$ direction) arises in \cite{FaSaSe:92} when they apply the Rellich identity \eqref{eq:id1} below with $u= \cS \phi$, as part of their proof that $\lambda I - D'$ is invertible on $L^2(\Gamma)$ for $\lambda \in \Rea$ with $|\lambda|\geq 1/2$.
\ere

\subsection{Proof of Part (vi) of Theorems \ref{thm:Laplace_int} and \ref{thm:Laplace_ext} (coercivity for general $\Oi$)}

\begin{lemma}[Key lemma for coercivity for general $\Oi$]\label{lem:key}
Suppose that $\Oi \subset \Rea^d$ is Lipschitz, $\vfd \in W^{1,\infty}(\Rea^d)$ with compact support, and $\alpha\in \Rea$ satisfies the lower bound
\beq\label{eq:alpha_bound}
2\alpha \geq 2\left(\sup_{\bx \in \Rea^d} \big\|D\vfd(\bx)
\big\|_2 \right)+ \big\|\nabla \cdot \vfd\big\|_{L^\infty(\Rea^d)}
\eeq
(where $D\vfd$ is the matrix with $(i,j)$th element $\partial_i \widetilde{Z}_j$ and $\|\cdot\|_2$ denotes the operator norm on $\Rea^d\times \Rea^d$ induced by the Euclidean norm on $\Rea^d$).
If $d\geq 3$ then
\beq\label{eq:key}
\pm
\big( K_{\vfd} \phi,\phi)_{\LtG}
+ \alpha
(S \phi,\phi)_{\LtG} \geq 0
\eeq
for all real-valued $\phi\in\LtG$. If $d=2$, then \eqref{eq:key} holds for all real-valued $\phi\in \LtG$ with $\projP\phi =0$, where $\projP$ is defined by \eqref{eq:P}.
\end{lemma}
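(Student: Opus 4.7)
The plan is to deduce both inequalities from a Rellich-type divergence identity applied to $u=\cS\phi$ on both sides of $\Gamma$. A direct calculation using only the product rule and the symmetry of $D^2 u$ shows that, for any smooth $u$ and any $\vfd\in(C^1)^d$,
\[
\nabla\cdot\Big[(\vfd\cdot\nabla u)\nabla u - \tfrac12 \vfd\,|\nabla u|^2 + \alpha u\nabla u\Big] = (\vfd\cdot\nabla u+\alpha u)\Delta u + \nabla u\cdot(D\vfd)\nabla u - \tfrac12(\nabla\cdot\vfd)|\nabla u|^2 + \alpha|\nabla u|^2.
\]
When $\Delta u=0$, the first term on the right vanishes, and the pointwise bound $\nabla u\cdot(D\vfd)\nabla u \geq -\|D\vfd\|_2|\nabla u|^2$ gives a lower bound of $(\alpha - \|D\vfd\|_2 - \tfrac12(\nabla\cdot\vfd))|\nabla u|^2$ for the remaining right-hand side; the hypothesis \eqref{eq:alpha_bound} is exactly what is needed to make this $\geq 0$ almost everywhere.

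Next, I apply this identity with $u=\cS\phi$ and integrate separately over $\Omega^-$ and over $\Omega^+$. The divergence theorem on Lipschitz $\Omega^\pm$ for a single-layer with only $L^2$ data is justified by the standard Verchota approximation of $\Omega^\pm$ by smooth subdomains, combined with the nontangential maximal function bounds $(\nabla\cS\phi)^*\in\LtG$ and the existence of $\widetilde\gamma^\pm\nabla\cS\phi$ a.e.\ on $\Gamma$ from \cite{Ve:84}. For the integral over $\Omega^+$, the flux contribution at infinity vanishes because $\vfd$ has compact support (killing the $F$-piece outside a ball) and because $u\nabla u$ decays sufficiently: for $d\geq 3$ one has automatically $u = O(|\bx|^{2-d})$ and $\nabla u = O(|\bx|^{1-d})$; for $d=2$ the assumption $\projP\phi = 0$ enforces $u = O(|\bx|^{-1})$, $\nabla u = O(|\bx|^{-2})$, which is precisely why this case requires the mean-zero hypothesis.

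To compute the boundary integrals I use \eqref{eq:gradS}: writing $g := S\phi$ and $\psi^{\pm} := (\mp \tfrac12 I + D')\phi$, so that $\widetilde\gamma^\pm\nabla\cS\phi = \bn\psi^\pm + \nT g$, a short algebraic manipulation gives
\[
\bn\cdot F\big|^{\pm} + \alpha g\psi^\pm = \tfrac12(\vfd\cdot\bn)(\psi^\pm)^2 + (\vfd\cdot\nT g)\psi^\pm - \tfrac12(\vfd\cdot\bn)|\nT g|^2 + \alpha g\psi^\pm \text{ on }\Gamma.
\]
Since the outward normal to $\Omega^+$ on $\Gamma$ is $-\bn$, adding the two bulk nonnegative integrals yields the integral over $\Gamma$ of the \emph{difference} of the $\pm$ expressions above. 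The purely tangential terms in $|\nT g|^2$ cancel; using $\psi^- - \psi^+ = \phi$ and $(\psi^-)^2 - (\psi^+)^2 = 2(D'\phi)\phi$, together with the formula \eqref{eq:Caldb'} $K'_\vfd = (\vfd\cdot\bn)D' + \vfd\cdot\nT S$, the difference collapses to $(K'_\vfd\phi,\phi)_\LtG + \alpha(S\phi,\phi)_\LtG$. For real $\phi$ this equals $(K_\vfd\phi,\phi)_\LtG + \alpha(S\phi,\phi)_\LtG$, since $K_\vfd$ is the real-$L^2$ adjoint of $K'_\vfd$; this proves \eqref{eq:key} with the $+$ sign. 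The $-$ sign then follows by replacing $\vfd$ with $-\vfd$: the hypothesis \eqref{eq:alpha_bound} is invariant under this change, while $K_{-\vfd} = -K_\vfd$.

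The main technical obstacle is the justification of integration by parts on Lipschitz $\Omega^\pm$ for a single-layer with $L^2$ density; once the Verchota approximation and the associated nontangential maximal-function estimates from \cite{Ve:84} are invoked, together with the care required in $d=2$ to exploit $\projP\phi=0$ for decay at infinity, the rest of the argument is algebraic rearrangement using the jump relations \eqref{eq:gradS} and \eqref{eq:Caldb'}.
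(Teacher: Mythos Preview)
Your proof is correct and follows essentially the same route as the paper: apply the Rellich-type identity with multiplier $\vfd\cdot\nabla u+\alpha u$ to $u=\cS\phi$ on $\Oi$ and $\Oe$, use the decay of $\cS\phi$ at infinity (requiring $\projP\phi=0$ when $d=2$), and compute the jump of the boundary flux via \eqref{eq:gradS} and \eqref{eq:Caldb'}. The only differences are cosmetic: the paper justifies the integration by parts via the space $V(D)$ and density of smooth functions (with the Verchota approximation mentioned as an alternative in a footnote), and obtains the minus sign by $\alpha\mapsto-\alpha$ rather than $\vfd\mapsto-\vfd$; note also a small slip in your displayed boundary formula, where the term $\alpha g\psi^\pm$ is already contained in $\bn\cdot F|^\pm$ and should not be added again, though your subsequent difference computation is unaffected.
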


We first show how the coercivity results of Theorems \ref{thm:Laplace_int} and \ref{thm:Laplace_ext} are a consequence of Lemma \ref{lem:key} combined with the following lemma.

\ble\label{lem:Lipschitz}
Given $\vfb\in (C^{0,1}(\Gamma))^d$ with non-zero Lipschitz constant, there exists a compactly supported $\vfextt\in (C^{0,1}(\R^d))^d$ with the same Lipschitz constant as $\vfb$ and such that $\vfextt|_\Gamma = \vfb$.
\ele

The proof of Lemma \ref{lem:Lipschitz} is given in  Appendix \ref{app:Lipschitz}. 
\esnote{to edit when cut A} 
Note that, by the Kirszbraun theorem \cite{Ki:34}, \cite{Va:45},
$\vfb\in (C^{0,1}(\Gamma))^d$ can be extended to a function $\vfext\in (C^{0,1}(\R^d))^d$ with the same (non-zero) Lipschitz constant, so to prove Lemma \ref{lem:Lipschitz} we only need to show that there exists an extension with compact support.

\

\bpf[Proof of Part (vi) of Theorems \ref{thm:Laplace_int} and \ref{thm:Laplace_ext} assuming Lemmas \ref{lem:key} and \ref{lem:Lipschitz}]
Given $\vfb$, by Lemma \ref{lem:Lipschitz} there exists a compactly-supported  Lipschitz extension of $\vfb$ to $\Rea^d$ with the same Lipschitz constant; call this $\vfd$.
This $\vfd$ then satisfies the assumptions of Lemma \ref{lem:key}, and
the inequality \eqref{eq:alpha_bound_boundary} then ensures that \eqref{eq:alpha_bound} holds (where we have used the inequality $\|A\|_2^2 \leq \sum_i \sum_j |(A)_{ij}|^2$ to show that $\sup_\bx \|D\vfd(\bx)\|_2\leq d L_{\vfb}$).
 Thus \eqref{eq:key} holds (with $K_{\vfd}$ replaced by $K_{\vfb}$) and the coercivity results follow from the definitions of $\opLIZ$ and $\opLIZi$ \eqref{eq:opLIZ} and
$\opLEZ$ and $\opLEZi$ \eqref{eq:opLEZ} and the inequality \eqref{eq:c} on $\vfb\cdot\bn$.
\epf

\

The proof of Lemma \ref{lem:key} is based on the following identity. The relationship of this identity to other similar identities
in the literature  is discussed in \S\ref{sec:Rellich_recap}, and we note, in particular, that this identity appears as \cite[Equation 2.28]{KuMaVa:02}; for completeness we include the short proof.

\ble[Rellich-type identity]
\label{lem:Rellich}
Let $v$ be a real-valued $C^2$ function on some open set $D\subset \Rea^d$, $d\geq 2$.
Let $\vfd\in (C^1(D))^d$ and $\alpha\in C^1(D)$
and let both be real-valued.
Then, with the summation convention,
\begin{align}
2 \,\cZ v \Delta v  =  \nabla \cdot \Big[ 2\, \cZ v \nabla v  - \ngvs \vfd\Big] - \big(2 \alpha-\nabla \cdot \vfd  \big)\ngvs  - 2  \partial_i \widetilde{Z}_j \partial_i v \partial_j v - 2 v\nabla \alpha\cdot \nabla v,
\label{eq:id1}
\end{align}
where
\beq\label{cZ}
\cZ v := \big(\vfd\cdot \nabla v + \alpha v\big).
\eeq
\ele

\bpf
Splitting $\cZ v$ into its component parts, we see that
the identity \eqref{eq:id1} is the sum of the identities
\begin{align}
2 \,\vfd\cdot\nabla v \Delta v  =  \nabla \cdot \left[ 2\,  (\vfd\cdot \nabla v)\,\nabla v  - \ngvs \vfd\right]  + \big(\nabla \cdot \vfd \big)\ngvs  - 2 \partial_i \widetilde{Z}_j \partial_i v \partial_j v \label{A}
\end{align}
and
\beq\label{C}
2\alpha v \Delta v  = \nabla \cdot \left[ 2 \alpha v\nabla v \right] - 2\alpha \ngvs  - 2v\nabla\alpha \cdot \nabla v.
\eeq
To prove \eqref{C}, expand the divergence on the right-hand side.
The identity \eqref{A} is obtained by combining the identities
\beq\label{basic}
\vfd\cdot \nabla v \Delta v = \nabla \cdot \left[(\vfd\cdot \nabla v) \nabla v\right] - \partial_i \widetilde{Z}_j \partial_i v \partial_j v - \nabla v \cdot  (\vfd\cdot \nabla) \nabla v
\eeq
and
\beq\label{Melenktrick}
2\nabla v \cdot \Big( \vfd\cdot \nabla\Big) \nabla v= \nabla \cdot \left( \ngvs \vfd \right) - (\nabla \cdot \vfd) \ngvs,
\eeq
which can both be proved by expanding the divergences on the right-hand sides.
\epf

\

For the proof of Lemma \ref{lem:key}, we need the identity \eqref{eq:id1} integrated over a Lipschitz domain when $v$ is the single-layer potential. As a step towards this, we prove the following lemma.

\ble[Integrated version of the identity]\label{lem:int} Let $D$ be a Lipschitz domain with outward-pointing unit normal vector $\bnu$.
Define
\beq\label{eq:V}
V(D):=\Big\{v: \,v\in H^1(D), \; \Delta v \in L^2(D),\; \gamma v \in H^1(\partial D),\; \dnuv \in L^2(\partial D)
\Big\}.
\eeq
If $v\in V(D)$, $\vfd\in (W^{1,\infty}(D))^d$ (i.e. $\widetilde{Z}_i$ and $\partial_i \widetilde{Z}_j \in L^\infty(D)$ for $i,j=1,\ldots,n $),  $\alpha \in W^{1,\infty}(D)$, and $v$, $\vfd$, and $\alpha$ are all real-valued,
then
\begin{align}\nonumber
&\int_{\partial D}
\left[(\vfd\cdot \bnu) \left( \left(\dnuv\right)^2  - |\nT v|^2 \right) + 2
\Big(\vfd\cdot \nT (\gamma v) + \alpha (\gamma v)\Big)
\dnuv\right]\rd s\\
&\qquad= \int_D \bigg(2 \,\cZ v  \Delta v +2 \, \partial_i \widetilde{Z}_j \partial_i v \partial_j v + 2 \, v \nabla \alpha\cdot \nabla v
+\left(2 \alpha-\nabla \cdot \vfd  \right)\ngvs\bigg) \rd \bx. \label{eq:int1}
\end{align}
\ele

\noi Recall that, when  $D$ is Lipschitz, we can identify $W^{1,\infty}(D)$ with $C^{0,1}(\overline D)$ (see, e.g., \cite[\S4.2.3, Theorem 5]{EvGa:92}), and understand $\vfd$ and $\alpha$ on $\partial D$ in \eqref{eq:int1} by restriction without needing a trace operator.

\

\bpf[Proof of Lemma \ref{lem:int}]
We first assume that $\vfd$ and $\alpha$ are as in the statement of the theorem, but $v\in \cD(\overline{D}):=\{ U|_\Omega : U \in C^\infty(\Rea^d)\}$.
Recall that the divergence theorem $\int_\domain \nabla \cdot \textbf{F}\,\rd \bx
= \int_{\pD} \textbf{F} \cdot \bnu\,\rd s$
is valid when $\textbf{F} \in (C^1(\overline{\domaingen}))^d$ \cite[Theorem 3.34]{Mc:00}, and thus for $\bF\in (H^1(\domaingen))^d$ by the density of $C^1(\overline{\domaingen})$ in $H^1(\domain)$ \cite[Theorem 3.29]{Mc:00} and the continuity of trace operator from $H^1(\domain)$ to $H^{1/2}(\pD)$ \cite[Theorem 3.37]{Mc:00}.
Recall also that the product of an $H^1(D)$ function and a $W^{1,\infty}(D)$ function is in $H^1(D)$,
and the usual product rule for differentiation holds for such functions. Thus
$\bF= 2 \cZ v \nabla v  - \ngvs \vfd$ is in $(H^1(D))^d$
and then \eqref{eq:id1} implies that $\nabla\cdot\bF$ is given by the integrand on the left-hand side of \eqref{eq:int1}.
Furthermore,
\begin{align*}
\gamma \bF\cdot \bnu =(\vfd\cdot \bnu) \left( \left(\pdiff{v}{\nu}\right)^2  - |\nT v|^2 \right) + 2\,
\big(\vfd\cdot \nT v + \alpha v\big)
\pdiff{v}{\nu}
\end{align*}
on $\partial D$, where we have used the fact that $\gv = \bnu (\partial v/\partial \nu) + \nT v$ on $\partial D$ for $v\in \cD(\overline{D})$; the identity \eqref{eq:int1} then follows from the divergence theorem.

The result for $v\in V(\domaingen)$ then follows from (i) the density of $\DOmegabar$ in $V(\domaingen)$
\cite[Lemmas 2 and 3]{CoDa:98} and (ii) the fact that \eqref{eq:int1} is continuous in $v$ with respect to the topology of $V(\domaingen)$
\epf

\

\bpf[Proof of Lemma \ref{lem:key}]
As discussed in \S\ref{sec:idea}, our strategy is to
mimic the classic method of ``transferring" coercivity properties of the PDE formulation to the BIOs in the trace spaces, but with Green's identity
\beq\label{eq:Green_int}
-\int_D u \Delta u \, \rd \bx = \int_D |\gu|^2\,\rd \bx - \int_{\partial D} \gamma u\, \dnu \, \rd s,
\eeq
replaced by the integrated version of the Rellich-type identity \eqref{eq:id1}. That is, we apply the integrated version of \eqref{eq:id1}, namely \eqref{eq:int1}, with $v$ replaced by $u=\cS\phi$ (with $\phi \in \LtG$),
and $D$ first equal to $\Oi$, and then equal to $\Oe \cap B_R$, where $R> \sup_{\bx\in \Oi}|\bx|$.
At this stage we let $\vfd$ be a general real-valued $W^{1,\infty}(\Rea^3)$ vector field with compact support, and let $\alpha$ be an arbitrary real constant.
That \eqref{eq:int1} holds with $v$ replaced by $u=\cS\phi$, with $\phi$ real-valued, can be  justified by using the results of \cite{JeKe:95} and \cite[Appendix A]{ChGrLaSp:12} recapped in \S\ref{app:HA}.
Indeed,
when $\phi \in \LtG$, $u= \cS\phi \in H^{3/2}(D)$ when $D= \Oi$ or $\Oe\cap B_R$ by the first mapping property in \eqref{eq:LPmap}; then $u\in V(D)$ by Corollary \ref{cor:CoDa} and \eqref{eq:int1} holds by Lemma \ref{lem:int}.
\footnote{A common alternative to justify that \eqref{eq:int1} holds with $v$ replaced by $u=\cS\phi$ is to
(a) approximate $\Oi$ by a sequence of subdomains (often they are assumed to be smoother than $\Omega_-$, but this is not necessary),
(b)  apply the identity \eqref{eq:int1} to each member of the sequence,
(c) pass to the limit using the facts that (i) the non-tangential limits of $u$ and $\gu$ exist, and (ii) the maximal functions $u^*$ defined by \eqref{eq:ntmax} and $(\gu)^*$ (defined analogously) are in $\LtG$;
for examples of this argument, see, e.g.,  \cite[Proof of Theorem 2.1]{Ve:84}, \cite[Proof of Theorem 1]{MeCo:00}, \cite[Proof of Proposition 4.2]{Ta:00}, and \cite[Proof of Lemma 4.5]{SpChGrSm:11}.
 }

We have therefore established that \eqref{eq:int1} holds when $D= \Oi$ or $\Oe\cap B_R$ and $u= \cS\phi$ for $\phi \in\LtG$ that is real-valued.
That is, with the identity \eqref{eq:id1} written as $\nabla \cdot \bQ =P$,
\beq\label{apofmoi}
\int_\Gamma \bQ_-\cdot \bn \,\rd s = \int_{\Oi} P \,\rd \bx
\eeq
and
\beq\label{apofmoe}
-\int_\Gamma \bQ_+\cdot \bn \,\rd s + \int_{\GR} Q_R \,\rd s = \int_{\Oe \cap B_R} P \,\rd \bx,
\eeq
where (remembering that $\Delta u =0$ and $\alpha$ is a constant)
\begin{align}\label{P}
P=  2 \, \partial_i \widetilde{Z}_j \partial_i u \partial_j u + \big(2 \alpha - \nabla \cdot \vfd \big)\ngus,
\end{align}
\beq\label{Qn}
\bQ_{\pm}\cdot \bn = (\vfd \cdot \bn) \left( \big(\partial_n^{\pm}u\big)^2 - |\nT(\gamma^\pm u)|^2\right) + 2\, \Big(\vfd\cdot \nT(\gamma^\pm u) +\alpha \gamma^\pm u\Big)\partial_n^{\pm}u.
\eeq
If $R$ is chosen large enough so that $\supp \,\vfd \subset B_R$, then
\beq\label{QR}
Q_R = \bQ\cdot \widehat{\bx}= 2\alpha \,u\pdiff{u}{r}\quad\tfor \bx\in \GR,
\eeq
 where we have used the fact that $u$ is $C^\infty$ in a neighbourhood of $\GR$ (either by elliptic regularity or directly by the definition of the single-layer potential \eqref{eq:SLP}) to justify writing $\partial u/\partial r$ in place of some appropriate trace.

Adding \eqref{apofmoi} and \eqref{apofmoe} yields
\beqs
\int_\Gamma(\bQ_- - \bQ_+) \cdot \bn \,\rd s + \int_{\GR} Q_R \,\rd s = \int_{\Oi} P \, \rd \bx + \int_{\Oe \cap B_R} P \, \rd \bx.
\eeqs
Now if $d\geq 3$ and $\phi \in\LtG$, then
\beq\label{eq:Sasym}
|\cS\phi(\bx)|= O(|\bx|^{2-d})\quad\tand\quad |\nabla\cS\phi(\bx)| = O(|\bx|^{1-d})
\eeq
as $|\bx|\tendi$, uniformly in all directions $\bx/|\bx|$.
If $d=2$ then
\beq\label{eq:Sasym2}
\cS\phi(\bx)= \frac{1}{2\pi}\log \left(\frac{a}{|\bx|}\right) \left(\phi,1\right)_{\LtG} + O(|\bx|^{-1})\quad\tand\quad \nabla\cS\phi(\bx)  =
-\frac{1}{2\pi |\bx|} \left(\phi,1\right)_{\LtG} + O(|\bx|^{-2})
\eeq
as $|\bx|\tendi$, uniformly in all directions $\bx/|\bx|$;
 these asymptotics are proved for $d=2,3$ in, e.g., \cite[Lemma 6.21]{St:08} (see also \cite[Equations 3.22 and 3.23]{SaSc:11}  for $d=3$); the proof of \eqref{eq:Sasym} for $d\geq 4$ is analogous.
Recalling the definition of $\projP$ \eqref{eq:P} and the assumption that $\projP\phi=0$ when $d=2$, we see that,
by \eqref{QR},
$\int_{\GR}Q_R \,\rd s=O(R^{2-d})$ for $d\geq 3$ and
$\int_{\GR}Q_R \,\rd s=O(R^{-2})$ for $d=2$
as $R\tendi$. Thus, in this limit,
\beq\label{eq:keygamma-1}
\int_\Gamma(\bQ_- - \bQ_+) \cdot \bn \,\rd s  = \int_{\Oi\cup \Oe} P \, \rd \bx.
\eeq
The expressions for $\bQ_\pm \cdot \bn$ \eqref{Qn} and the single-layer potential jump relations
\eqref{eq:jump1}
then imply that
\beq\label{eq:keygamma}
\int_\Gamma(\bQ_- - \bQ_+) \cdot \bn \,\rd s = 2\Big( (\vfd\cdot \bn)D' + \vfd\cdot \ntS+ \alpha S) \phi, \phi\Big)_{\LtG}.
\eeq
A key identity to help one see this is
\begin{eqnarray} \nonumber
\left(\dnmu(\bx)\right)^2-\left(\dnpu(\bx)\right)^2 = 2\,\phi(\bx)\,\big(D^\prime\phi(\bx)\big) \quad \text{ for a.e. }\bx\in\Gamma,\label{jump5}
\end{eqnarray}
which can be established using $a^2 -b^2 = (a-b)(a+b)$ and the jump relations \eqref{eq:jump1} for $\partial^{\pm}_n u$.

Combining \eqref{eq:keygamma-1}, \eqref{eq:keygamma}, and \eqref{eq:P}, we therefore have that
\beq\label{eq:1}
2 \Big( (\vfd\cdot \bn)D' + \vfd\cdot \ntS+ \alpha S) \phi, \phi\Big)_{\LtG} = \int_{\Oi \cup \Oe}\Big( 2 \partial_i \widetilde{Z}_j \partial_i u \partial_j u + \big(2 \alpha - \nabla \cdot \vfd \big)\ngus \Big) \rd \bx.
\eeq
Using the Cauchy-Schwarz inequality and the definition of the matrix 2-norm for the term involving $2 \, \partial_i \widetilde{Z}_j \partial_i u \partial_j u= 2 \,\gu\cdot(D\vfd\, \gu)$, and then standard results about integrals for both this term and the term involving $\nabla\cdot\vfd$, we find that the right-hand side of \eqref{eq:1} is
\beqs
\geq \left( 2\alpha - \left( 2\sup_{\bx \in \Rea^d} \big\|D\vfd\big\|_2 + \big\|\nabla\cdot \vfd\big\|_{L^\infty(\Rea^d)}\right)\right)\int_{\Oi \cup \Oe} |\gu|^2 \, \rd \bx.
\eeqs
Therefore, choosing $\alpha$ to satisfy the lower bound \eqref{eq:alpha_bound} establishes the lemma with the $+$ sign in \eqref{eq:key}. Multiplying \eqref{eq:1} by $-1$ and letting $\alpha \mapsto -\alpha$ we see again that if $\alpha$ satisfies \eqref{eq:alpha_bound} then this modified right-hand side is $\geq 0$, which establishes the lemma with the $-$ sign  in \eqref{eq:key}.
\epf

\bre[Recovering the results of Lemma \ref{lem:Calderon2} for $d\geq 3$]
If $d\geq 3$ and $\vfd=\bx$, \eqref{eq:1} becomes
\beq\label{eq:1a}
2 \Big( (\bx\cdot \bn)D' + \bx\cdot \ntS+ \alpha S) \phi, \phi\Big)_{\LtG} = (2+ 2\alpha-d)\int_{\Oi \cup \Oe}\ngus \, \rd \bx.
\eeq
This is because, despite the additional terms in the analogue of \eqref{QR} coming from $\vfd$ no longer having compact support,
it turns out that
$\int_{\GR}Q_R \,\rd s
=O(R^{2-d})$ as $R\tendi$ as before.
Letting $\alpha=(d-2)/2$ in \eqref{eq:1a} and recalling the definition \eqref{eq:Caldb'} of $\Caldb'$, we obtain the second equality in \eqref{eq:symmetry3d}.
\ere

\bre[Link with Verchota's proof of invertibility of $\half I- D'$ on $\LtG$]\label{rem:Verchota}
Verchota's proof that $\half I-D'$ is invertible on $\LtG$ when $\Gamma$ is Lipschitz in \cite[Theorem 3.1]{Ve:84} relies on the inequalities
\beq\label{eq:Verchota}
\N{\left(\half I - D'\right)\phi}_\LtG \lesssim \N{\left(\half I + D'\right)\phi}_\LtG \lesssim \N{\left(\half I - D'\right)\phi}_\LtG,
\eeq
which hold for all $\phi\in\LtG$ for $d\geq 3$ and for all $\phi \in \LtG$ with $\projP\phi=0$ for $d=2$, and where the omitted constants depend only on the Lipschitz character of $\Oi$. (Note that \cite[Theorem 2.1]{Ve:84} proves the slightly weaker result that
\beqs
\N{\left(\half I \pm D_0'\right)\phi}_\LtG \lesssim \N{\left(\half I \mp D_0'\right)\phi}_\LtG + \left|\int_\Gamma S_0\phi\, \rd s\right|
\eeqs
but the final term on the right-hand side can be eliminated; see \cite[Chapter 15, Corollary 1, Page 273]{MeCo:00} when $\Gamma$ is the graph of a function and \cite[Corollary 2.20]{AmKa:07} for $\Oi$ bounded.)

The inequalities in \eqref{eq:Verchota} can be obtained by applying the following Dirichlet-to-Neumann and Neumann-to-Dirichlet map bounds with $u=\cS \phi$ and using the jump relations \eqref{eq:jump1}.

\noindent (i) If $u\in H^1(\Oi)$ is such that $\Delta u=0$ in $\Oi$, $\gmu\in \HoG$, and $\dnmu\in\LtG$, then
\beq\label{eq:V1}
\N{\nT(\gmu)}_\LtG \lesssim \N{\dnmu}_{\LtG} \lesssim \N{\nT(\gmu)}_\LtG.
\eeq
(ii) If $u\in H^1_{\rm{loc}}(\Oe)$ is such that $\Delta u=0$ in $\Oe$, $\gpu\in \HoG$, $\dnpu\in\LtG$, and $u(\bx)= O(|\bx|^{2-d})$ for $d\geq 3$ and $u(\bx)= O(|\bx|^{-1})$ for $d=2$, then
\beq\label{eq:V2}
\N{\nT(\gpu)}_\LtG \lesssim \N{\dnpu}_{\LtG} \lesssim \N{\nT(\gpu)}_\LtG.
\eeq

\noi The link with our proofs of coercivity of our new BIEs comes from the fact that the bounds \eqref{eq:V1} and \eqref{eq:V2} can be proved using the identity \eqref{eq:id1} with $\alpha=0$ and $\vfd$ the vector field of Lemma \ref{lem:Lipschitz}; see, e.g., \cite[Corollary 2.20]{AmKa:07}.
\ere

\subsection{Proofs of Theorems \ref{thm:2d} and \ref{thm:2dstar} (the 2d results)}

\ble\label{lem:Neumann_trace}
If $u$ is the solution of the IDP then $\projP(\dnmu)=0$. If $u$ is the solution of the EDP and $d=2$, then $\projP(\dnpu)=0$.
\ele

\bpf
The result for the IDP follows from applying Green's second identity to $u$ and the constant function.
The result for the EDP when $d=2$ follows in a similar way, using the arguments in the proof of \cite[Theorem 8.9]{Mc:00} to deal with the integral at infinity. Alternatively, the result for the EDP when $d=2$ is proved in \cite[Proof of Theorem 6.10]{Kr:89}; see \cite[Equation 6.10]{Kr:89}.
\epf

\

\bpf[Proof of Theorem \ref{thm:2d}]
For Parts (i) and (iii), arguing exactly as in the proofs of Part (i) of Theorems \ref{thm:Laplace_int} and \ref{thm:Laplace_ext} gives
\beq\label{eq:England2}
\opLIZ \dnmu =\opLIZRHS g_D \quad\tand\quad \opLEZ\dnpu =\opLEZRHS g_D + \alpha u_\infty,
\eeq
where $u_\infty$ is the limit of the solution of the EDP at infinity
and we use Green's integral representation $u(\bx) = -\cS\dnpu (\bx) + \cD \gpu (\bx) + u_\infty$ for $\bx \in \Oe$ and $d=2$.
The BIEs \eqref{eq:2d_IDP_direct} and \eqref{eq:2d_EDP_direct} then follow by applying $\projQ = I-\projP$ to the equations in \eqref{eq:England2} and then using that $\projP \partial^{\pm}_n u=0$  by Lemma \ref{lem:Neumann_trace}, so that $\partial^{\pm}_n u = \projQ \partial^{\pm}_n u$.

For Part (ii), taking the non-tangential limit of $u$ defined by \eqref{eq:u2d_int} and using the jump relations \eqref{eq:jumpCald} and \eqref{eq:jump1}
(similar to the proof of Part (ii) of Theorem \ref{thm:Laplace_int}) and the fact that $\projQ= I-\projP$, we obtain that $\gamma_-u =g_D$ if the BIE \eqref{eq:2d_IDP_indirect} holds. Exactly as in the analogous proof for $d\geq 3$ in \S\ref{sec:4.1}, $\Caldp\psi$ and $\cS\psi$
with $\psi\in\LtG$
 are in $C^2(\Oi)$, have non-tangential maximal functions in $\LtG$, and satisfy Laplace's equation; therefore $u$ defined by \eqref{eq:u2d_int} inherits these properties.

The proof of Part (iv) is very similar to the proof of Part (ii), except that now need to show that $u$ defined by \eqref{eq:u2d_ext}
satisfies $u(\bx) = O(1)$ as $|\bx|\tendi$; these asymptotics follow from
the first bound in \eqref{eq:Sasym} (since $\projP \projQ \phi=0$) and the bound \eqref{eq:Caldasym}.

To see Part (v), arguing as in the proof of Part (vi) of Theorems \ref{thm:Laplace_int} and \ref{thm:Laplace_ext} below Lemma \ref{lem:Lipschitz}, but using \eqref{eq:key} with $d=2$, we see that $(A\psi,\psi)_{\LtG}\geq (c/2)\|\psi\|^2_{\LtG}$ for all real-valued $\psi\in L^2_0(\Gamma):= \{\phi\in \LtG:P_\Gamma\phi=0\}$ if $\alpha$ satisfies \eqref{eq:alpha_bound_boundary}, where $A$ denotes any of  $\opLIZi$, $\opLIZ$, $\opLEZi$, or $\opLEZ$.
 Part (v) then follows from the fact that if $(A\psi,\psi)_{\LtG}\geq (c/2)\|\psi\|^2_{\LtG}$ for all real-valued $\psi\in L^2_0(\Gamma)$ (so that $A$ is coercive on $L^2_0(\Gamma)$ with coercivity constant $c/2$),
then $\projQ A \projQ + c\projP/2$ is coercive on $\LtG$ with coercivity constant $c/2$. Indeed, since $\projQ'=\projQ$, $\projP^2=\projP$,  $\projP'=\projP$,
and $\projP\projQ=0$,
it follows that, for all real-valued $\psi\in\LtG$, $\projQ\psi\in L^2_0(\Gamma)$ and
\begin{align*}
\left(\left( \projQ A \projQ + \frac{c}{2}\projP \right) \psi,\psi\right)_{\LtG}
&=\big( A \projQ \psi,\projQ\psi\big)_{\LtG}  +\frac{c}{2}\big(\projP^2 \psi,\psi\big)_{\LtG} \\
&\geq \frac{c}{2}\N{\projQ \psi}^2_{\LtG} + \frac{c}{2}\N{\projP\psi}^2_{\LtG}
= \frac{c}{2}\N{\psi}^2_\LtG.
\end{align*}
\epf

\bpf[Proof of Theorem \ref{thm:2dstar}] For Parts (i) and (iii), taking $\vfb=\bx$ and $\alpha=0$ in \eqref{eq:England2} yields
\beqs
A'_{I,\bx,0} \dnpu =B_{I,\bx,0} \,g_D \quad\tand\quad A'_{E,\bx,0} \dnpu =B_{E,\bx,0} \,g_D.
\eeqs
Since $\projP\partial^{\pm}_n u=0$ by Lemma \ref{lem:Neumann_trace}, the BIEs \eqref{eq:2dstar1} and \eqref{eq:2dstar3} follow.

The proofs of Parts (ii) and (iv) follow in the same way as the proofs of Parts (ii) and (iv) of Theorem \ref{thm:2d}, namely by taking non-tangential limits of $u$, using the jump relations \eqref{eq:jumpCald} and \eqref{eq:jump1}, and using the asymptotics \eqref{eq:Caldasym} for the exterior problem.

Part (v) follows immediately from using the second equation in \eqref{eq:symmetry2d}.
\epf

\subsection{Proof of the results in \S\ref{sec:conditioning} (the conditioning results)}\label{sec:conditioning2}

\subsubsection{Proof of Theorem \ref{thm:conditioning}.}\label{sec:conditioning3}

Theorem \ref{thm:conditioning} is a special case of the following general theorem about GMRES applied to Galerkin linear systems of a continuous and coercive operator on a Hilbert space. We first establish some notation.

As in \S\ref{sec:Gal}, we consider the Galerkin method  applied to the equation $A\phi = f$, where $\phi, f \in \cH$, $A:\cH\rightarrow \cH$ is a continuous (i.e.~bounded) linear operator, and $\cH$ is a Hilbert space over $\mathbb{C}$. Let $\cH_N \subset \cH$ be such that $\cH_N = {\rm span} \{\psi^{N}_1,\ldots, \psi^{N}_{M_N}\}$, with $M_N=\dim(\cH_N)$ and $\{\psi^{N}_1,\ldots, \psi^{N}_{M_N}\}$ a basis for $\cH_N$.
The Galerkin matrix of $A$ is then defined by $(\MA)_{ij} :=(A\psi^{N}_j,\psi^{N}_i)_{\cH}$, $i,j=1,\ldots, M_N$ (compare to \eqref{eq:matrix}).

The rest of the set up of \S\ref{sec:conditioning} then holds exactly as stated; i.e., we consider the equation $\MA\bx= {\bf b}$, let $\by_m$ be the $m$th iterate when the linear system \eqref{eq:pclinear}
is solved using GMRES with zero initial guess, let $\br_m$ denote the corresponding residual, and let $\phi_N^{m}$ be defined by \eqref{eq:phiNm}, so that the Galerkin solution $\phi_N$ is given by \eqref{eq:phiN}.

\begin{theorem}\mythmname{Convergence of GMRES applied to the Galerkin linear system of a continuous and coercive operator}\label{thm:conditioning2}
Suppose that $A:\cH\to\cH$ is coercive (i.e., there exists $\Ccoer>0$ such that \eqref{eq:coer} holds) and
Assumption \ref{ass:D} holds with $\|\cdot\|_{\LtG}$ in \eqref{eq:norm_equiv} replaced by $\|\cdot\|_{\cH}$.
With $C_1$ and $C_2$ as in  \eqref{eq:norm_equiv}, let $\beta\in [0,\pi/2)$ be defined such that
\beq\label{eq:cosbeta_gen}
\cos\beta= \frac{\Ccoer}{\|A\|_{\cH\to\cH}}\left(\frac{C_1}{C_2}\right)^{2}\quad\text{ and let } \quad \gamma_\beta:= 2 \sin\left(\frac{\beta}{4-2\beta/\pi}\right)
\eeq
Given $\eps>0$, if
\beq\label{eq:iteration_bound_gen}
m\geq  \left( \log\left(\frac{1}{\gamma_\beta}\right)\right)^{-1}
\left[
\log \left( \frac{12 \|A\|_{\cH\to\cH}}{\Ccoer}\left(\frac{C_2}{C_1}\right)^3\right)+ \log\left(\frac{1}{\eps}\right) \right],
\eeq
then \beq\label{eq:full_discrete_gen}
\frac{
\N{\phi-\phi_N^m}_{\cH}}
{\N{\phi}_{\cH}}
\leq (1+\eps)\frac{\|A\|_{\cH\to\cH}}{\Ccoer}\left( \min_{\psi\in\cH_N}\frac{\N{\phi-\psi}_{\cH}}{\N{\phi}_{\cH}}\right)+ \eps.
\eeq
\end{theorem}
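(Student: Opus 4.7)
My plan is to prove Theorem \ref{thm:conditioning2} in three steps: (i) transfer the coercivity and continuity of $A$ on $\cH$ to a sector containment for the numerical range of the preconditioned matrix $\widetilde{\MA}:=\MD_N^{-1/2}\MA\MD_N^{-1/2}$ on $\mathbb{C}^{M_N}$; (ii) invoke a sharp GMRES convergence estimate for matrices whose numerical range lies in a sector away from the origin; and (iii) combine the resulting residual bound with C\'ea's Lemma (Theorem \ref{thm:Galerkin}(c)) to pass from the Euclidean norm on $\mathbb{C}^{M_N}$ to the $\cH$-norm on $\cH_N$.

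For step (i), I would exploit the bi-Lipschitz correspondence $\bv \leftrightarrow v_N := \sum_j(\MD_N^{-1/2}\bv)_j\psi_j^N$ from $(\mathbb{C}^{M_N},\|\cdot\|_2)$ to $(\cH_N,\|\cdot\|_\cH)$, which by Assumption \ref{ass:D} satisfies $C_1\|\bv\|_2 \leq \|v_N\|_\cH \leq C_2\|\bv\|_2$. A direct expansion yields the identity $(\widetilde{\MA}\bv,\bu)_2 = (Av_N,u_N)_\cH$ for all $\bv,\bu \in \mathbb{C}^{M_N}$. Combining this with the coercivity \eqref{eq:coer} gives $|(\widetilde{\MA}\bv,\bv)_2| \geq \Ccoer C_1^2\|\bv\|_2^2$, while the continuity of $A$ yields $\|\widetilde{\MA}\|_2 \leq \|A\|_{\cH\to\cH}C_2^2$. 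Together these confine the numerical range $W(\widetilde{\MA})$ in a closed sector of half-angle $\beta$ from the origin, with $\cos\beta$ as in \eqref{eq:cosbeta_gen}; as a by-product $\widetilde{\MA}$ is invertible with $\|\widetilde{\MA}^{-1}\|_2 \leq 1/(\Ccoer C_1^2)$, which together with the operator-norm bound yields the condition-number estimate stated in the preceding remark.

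For step (ii), I would apply the Crouzeix/Beckermann--Goreinov--Kuijlaars-type sharp bound for GMRES on matrices with numerical range in such a sector: the residuals $\br_m$ (with zero initial guess) satisfy $\|\br_m\|_2 \leq C_0\gamma_\beta^m\|\br_0\|_2$, with $\gamma_\beta$ as in \eqref{eq:cosbeta_gen} and $C_0$ an explicit absolute constant. For step (iii), invertibility and coercivity of $\widetilde{\MA}$ give $\|\by-\by_m\|_2 \leq (\Ccoer C_1^2)^{-1}\|\br_m\|_2$, and the right-hand inequality of \eqref{eq:norm_equiv} translates this into $\|\phi_N-\phi_N^m\|_\cH \leq (C_2/(C_1^2\Ccoer))\|\br_m\|_2$. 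C\'ea's Lemma furnishes $\|\phi-\phi_N\|_\cH \leq (\|A\|_{\cH\to\cH}/\Ccoer)\min_{\psi\in\cH_N}\|\phi-\psi\|_\cH$, and after bounding $\|\br_0\|_2 = \|\MD_N^{-1/2}\bbb\|_2$ in terms of $\|A\|_{\cH\to\cH}$, $C_1$, $C_2$ and $\|\phi\|_\cH$, a triangle inequality combined with the choice of $m$ in \eqref{eq:iteration_bound_gen} delivers \eqref{eq:full_discrete_gen}.

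The principal obstacle I anticipate is the careful bookkeeping of constants. In particular, the naive argument just sketched yields the ratio $(\Ccoer/\|A\|)(C_1/C_2)^2$, whereas \eqref{eq:cosbeta_gen} has only a single power of $C_1/C_2$. Recovering this sharper exponent presumably requires analysing the numerical range of $\widetilde{\MA}$ in the $\MB$-inner product, with $\MB:=\MD_N^{-1/2}\MM_N\MD_N^{-1/2}$ satisfying $C_1^2\MI \leq \MB \leq C_2^2\MI$, and then transferring back to the Euclidean inner product at a cost of only one factor of $C_2/C_1$.
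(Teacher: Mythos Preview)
Your three-step plan---transfer coercivity/continuity to numerical-range bounds for $\widetilde{\MA}=\MD_N^{-1/2}\MA\MD_N^{-1/2}$, invoke the Beckermann--Goreinov--Tyrtyshnikov GMRES bound (Theorem~\ref{thm:Elman}), then combine with C\'ea's Lemma via a triangle inequality---is exactly the paper's approach. The paper packages steps (i) and (iii) into Lemma~\ref{lem:conditioning1}, proving the key identity $(\MA\bv,\bw)_2=(Av_N,w_N)_\cH$ and deriving $\|\widetilde{\MA}\|_2\leq \|A\|C_2^2$ and $\mathrm{dist}(0,W(\widetilde{\MA}))\geq \Ccoer C_1^2$ (equations \eqref{eq:GMRES3}), then bounding $\|\by_m-\by\|_2$ in terms of the relative residual and $\|\by\|_2$ (equation \eqref{eq:GMRES4}), precisely as you sketch.

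Your ``principal obstacle'' is well spotted but is not something you need to overcome: the paper's own proof only establishes the ratio $(\Ccoer/\|A\|)(C_1/C_2)^2$, not $(C_1/C_2)^1$. The proof of Theorem~\ref{thm:conditioning2} simply asserts that ``the definition of $\beta$ in \eqref{eq:cosbeta_gen} and the bounds \eqref{eq:GMRES3} imply that \eqref{eq:cosbeta2} is satisfied,'' but \eqref{eq:GMRES3} gives $\mathrm{dist}(0,W(\widetilde{\MA}))/\|\widetilde{\MA}\|_2\geq (\Ccoer/\|A\|)(C_1/C_2)^2$, which does \emph{not} dominate $(\Ccoer/\|A\|)(C_1/C_2)$ when $C_1<C_2$. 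The single power in \eqref{eq:cosbeta_gen} (and in \eqref{eq:cosbeta}) thus appears to be a slip in the statement; the proof as written supports the squared ratio. Your proposed $\MB$-inner-product workaround would not recover the sharper exponent for the diagonally preconditioned system (it would give the sharper bound for the mass-matrix preconditioned system $\MM_N^{-1/2}\MA\MM_N^{-1/2}$, but that is a different object), and the paper does not attempt any such refinement. So proceed with your argument and the $(C_1/C_2)^2$ constant; that is what the paper actually proves.
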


The first step in proving Theorem \ref{thm:conditioning2} is to establish the following relationship between the error $\N{\phi-\phi_N^m}_{\cH}$, the GMRES relative residual $\N{\br_m}_2/\N{\br_0}_2$, and the Galerkin error $\N{\phi-\phi_N}_{\cH}$.

\ble\label{lem:conditioning1}
Suppose that $A:\cH\to\cH$ is coercive (i.e., there exists $\Ccoer>0$ such that \eqref{eq:coer} holds) and
Assumption \ref{ass:D} holds with $\|\cdot\|_{\LtG}$ in \eqref{eq:norm_equiv} replaced by $\|\cdot\|_{\cH}$.
If $C_1$ and $C_2$ are as in  \eqref{eq:norm_equiv} and $\phi_N^m$ is defined by \eqref{eq:phiNm}, then
\beq\label{eq:full_discrete1}
\frac{
\N{\phi-\phi_N^m}_{\cH}}
{\N{\phi}_{\cH}}
\leq
\left( 1 + \frac{\|A\|_{\cH\to\cH}}{\Ccoer}\left(\frac{C_2}{C_1}\right)^3 \frac{\N{\br_m}_2}{\N{\br_0}_2}\right)
\frac{\N{\phi-\phi_N}_{\cH}}{\N{\phi}_{\cH}} +
\frac{\|A\|_{\cH\to\cH}}{\Ccoer}\left(\frac{C_2}{C_1}\right)^3 \frac{\N{\br_m}_2}{\N{\br_0}_2}.
\eeq
\ele

\

The right-hand side of \eqref{eq:full_discrete1} contains the relative residual
$\N{\br_m}_2/\N{\br_0}_2$. The following bound, from \cite{BeGoTy:06}, gives sufficient conditions on $m$ for this relative residual
 to be controllably small; recall that this bound is an improvement of the so-called ``Elman estimate" from  \cite{El:82, EiElSc:83}.

\begin{theorem}\textbf{\emph{(Elman-type estimate for GMRES from \cite{BeGoTy:06})}} \label{thm:Elman}
Let $\MC$ be an $M_N\times M_N$ matrix with $0\notin W(\MC)$, where
\beqs
W(\MC):= \big\{ \langle \MC \bv, \bv\rangle : \bv \in \mathbb{C}^{M_N}, \|\bv\|_2=1\big\}
\eeqs
is the \emph{field of values}, also called the \emph{numerical range}, of $\MC$.
Let $\beta\in [0,\pi/2)$ be such that
\beq\label{eq:cosbeta2}
\cos \beta \leq \frac{\mathrm{dist}\big(0, W(\MC)\big)}{\| \MC\|_2}
\eeq
(observe that $\cos\beta$ is indeed $\leq 1$ by the definition of $W(\MC)$)
and, given $\beta$, let
\beqs
 \gamma_\beta:= 2 \sin \left( \frac{\beta}{4-2\beta/\pi}\right).
\eeqs
Let  $\br_m$  be the $m$th GMRES residual, as defined in \S\ref{sec:conditioning}.
Then
\beq\label{eq:Elman2}
\frac{\|\br_m\|_{2}}{\|\br_0\|_{2}} \leq \left(2 + \frac{2}{\sqrt{3}}\right)\big(2+ \gamma_\beta\big) \,(\gamma_\beta)^m \leq 12 (\gamma_\beta)^m.
\eeq
\end{theorem}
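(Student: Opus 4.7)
The plan is to derive the residual bound via the polynomial-optimality characterisation of GMRES combined with a field-of-values estimate of Crouzeix type, following the strategy of \cite{BeGoTy:06}. First, recall that for the GMRES iterate with initial residual $\br_0$,
\begin{equation*}
\|\br_m\|_2 = \min_{\substack{p\in\mathcal{P}_m\\ p(0)=1}} \|p(\MC)\br_0\|_2 \leq \min_{\substack{p\in\mathcal{P}_m\\ p(0)=1}} \|p(\MC)\|_2 \cdot \|\br_0\|_2,
\end{equation*}
where $\mathcal{P}_m$ denotes the polynomials of degree at most $m$. It therefore suffices to exhibit a single polynomial $p$, normalised by $p(0)=1$, for which $\|p(\MC)\|_2$ is controllably small.

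Next I would invoke Crouzeix's inequality: for every polynomial $p$ and every square matrix $\MC$,
\begin{equation*}
\|p(\MC)\|_2 \leq \left(2 + \tfrac{2}{\sqrt{3}}\right)\sup_{z\in W(\MC)} |p(z)|.
\end{equation*}
This is the particular Crouzeix-type constant used in \cite{BeGoTy:06} and is the source of the prefactor $2+2/\sqrt 3$ in \eqref{eq:Elman2}; it reduces the matter to a scalar extremal problem on the planar set $W(\MC)$.

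The third step is to exploit the geometric hypothesis. The Toeplitz--Hausdorff theorem gives that $W(\MC)$ is convex; combined with $W(\MC)\subset\{|z|\leq\|\MC\|_2\}$ and the assumption $\dist(0,W(\MC)) \geq \|\MC\|_2\cos\beta$, a supporting-hyperplane argument at the point of $W(\MC)$ nearest $0$ (which I may rotate onto the positive real axis, and rescale so that $\|\MC\|_2=1$) confines $W(\MC)$ to the circular segment
\begin{equation*}
L_\beta := \big\{z\in\mathbb{C}\, :\, |z|\leq 1, \; \Re z \geq \cos\beta\big\},
\end{equation*}
whose corners lie at $e^{\pm i\beta}$ and which is seen from the origin under opening half-angle exactly $\beta$. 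Consequently the problem reduces to one on the single one-parameter family of regions $L_\beta$, independently of $M_N$ and of $\MC$.

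The fourth and technically hardest step is to construct a near-extremal polynomial $p_m\in\mathcal{P}_m$ with $p_m(0)=1$ and $\sup_{z\in L_\beta}|p_m(z)|$ as small as possible. This is done via the Faber polynomials associated to the conformal map $\Psi$ from the exterior of $L_\beta$ to the exterior of the closed unit disk; the factor $4-2\beta/\pi$ in the definition of $\gamma_\beta$ is precisely the Schwarz--Christoffel exponent that opens the two re-entrant corners of $L_\beta$ at $e^{\pm i\beta}$ into the smooth unit circle, and $\gamma_\beta = 2\sin(\beta/(4-2\beta/\pi))$ appears as (twice the sine of) half the angular image of $0$ under the transplanted geometry. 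A careful truncation of the Faber series of the constant $1$ then yields $p_m$ satisfying $\sup_{z\in L_\beta}|p_m(z)| \leq (2+\gamma_\beta)(\gamma_\beta)^m$; see \cite[Theorem 2.1]{BeGoTy:06}. Combining this with Crouzeix's inequality and the polynomial-optimality of GMRES gives the first inequality in \eqref{eq:Elman2}. The final, coarser, bound is elementary: since $\beta < \pi/2$ implies $\gamma_\beta < 2\sin(\pi/6) = 1$, one has $(2+2/\sqrt 3)(2+\gamma_\beta) \leq (2+2/\sqrt 3)\cdot 3 = 6 + 2\sqrt 3 < 12$. The main obstacle is the conformal-mapping construction of $p_m$ together with the sharp quantitative control of $\sup_{L_\beta}|p_m|$; everything else is off-the-shelf.
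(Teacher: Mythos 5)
The paper does not prove Theorem \ref{thm:Elman}; it quotes it from \cite{BeGoTy:06}, so what you have written is really a reconstruction of that reference's argument. Your skeleton is right in two respects: the reduction via the GMRES minimal-residual property to bounding $\min_{p(0)=1}\|p(\MC)\|_2$, and the geometric localisation (Toeplitz--Hausdorff convexity, $W(\MC)\subset\{|z|\le\|\MC\|_2\}$, a supporting hyperplane at the nearest point, rotation and rescaling) of $W(\MC)$ inside the circular segment $L_\beta=\{|z|\le 1,\ \Re z\ge\cos\beta\}$, whose exterior conformal map is indeed the source of $\gamma_\beta$.

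There is, however, a genuine gap in the two analytic steps. First, the inequality $\|p(\MC)\|_2\le(2+2/\sqrt 3)\sup_{z\in W(\MC)}|p(z)|$ for \emph{every} polynomial is not ``Crouzeix's inequality'' and is not the mechanism in \cite{BeGoTy:06}: no universal field-of-values bound with constant $2+2/\sqrt3$ was available in 2006 (the later Crouzeix--Palencia constant $1+\sqrt2$ would validate the inequality a posteriori, but that is anachronistic and, more importantly, not how the stated constants arise). In \cite{BeGoTy:06} the constant $2+2/\sqrt3$ is Beckermann's bound for the numerical-range functional calculus of the \emph{Faber transform}: if $E\supset W(\MC)$ is convex and $h$ is analytic and bounded by $1$ on the unit disk, then the Faber transform $F(h)$ satisfies $\|F(h)(\MC)\|_2\le 2+2/\sqrt3$; this is applied to one specific $h$ (a power of $w$ corrected by a Blaschke-type factor), not to an arbitrary polynomial. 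Second, and consequently, your step 4 asserts the scalar estimate $\sup_{z\in L_\beta}|p_m(z)|\le(2+\gamma_\beta)\gamma_\beta^m$ for some $p_m$ with $p_m(0)=1$, citing the very theorem being proved; this is circular, and the natural candidate $p_m=F_m/F_m(0)$ (with $F_m$ the $m$th Faber polynomial of $L_\beta$, $\|F_m\|_{L^\infty(L_\beta)}\le2$ and $|F_m(0)|\ge\gamma_\beta^{-m}-1$) yields only $\sup_{L_\beta}|p_m|\le 2\gamma_\beta^m/(1-\gamma_\beta^m)$, which is not dominated by $(2+\gamma_\beta)\gamma_\beta^m$ and blows up as $\gamma_\beta^m\to1$, whereas the target bound \eqref{eq:Elman2} stays finite there. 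So the product structure $(2+2/\sqrt3)(2+\gamma_\beta)\gamma_\beta^m$ cannot be obtained by factoring the problem into (universal polynomial bound on $W(\MC)$) times (a Chebyshev-type extremal problem on $L_\beta$); in \cite{BeGoTy:06} both factors are matrix-level estimates obtained from the Faber-transform calculus together with a lower bound on the value of the transformed function at $0$. Your closing arithmetic ($\gamma_\beta<1$ for $\beta<\pi/2$, and $(2+2/\sqrt3)\cdot3=6+2\sqrt3<12$) is correct.
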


\

\bpf[Proof of Lemma \ref{lem:conditioning1}]
We first use continuity and coercivity of $A$ to obtain bounds on the norm of $\MD_N^{-1/2}\MA \MD_N^{-1/2}$ and its inverse.
By the definition \eqref{eq:matrix},
\beqs
\big(\MA \bv,\bw\big)_2 = \big(A v_N,w_N\big)_{\cH} \quad\tfa v_N, w_N \in \cH_N.
\eeqs
Using this, along with the norm equivalence \eqref{eq:norm_equiv}, we find that, for all $\bv, \bw\in \Com^{M_N}$,
\beqs
\big|\big(\MA \bv,\bw\big)_2\big| \leq \N{A}_{\cH\to\cH} \N{v_N}_{\cH} \N{w_N}_{\cH}
\leq  \N{A}_{\cH\to\cH}(C_2)^2  \big\|\MD_N^{1/2} \bv\big\|_2 \big\|\MD_N^{1/2} \bw\big\|_2
\eeqs
and
\beqs
\big|\big(\MA \bv,\bv\big)_2\big| \geq \Ccoer \N{v_N}_{\cH}^2 \geq \Ccoer(C_1)^2 \big\|\MD_N^{1/2}\bv\big\|_2^2.
\eeqs
Letting $\widetilde{\bv}= \MD_N^{1/2}\bv$ and $\widetilde{\bw}= \MD_N^{1/2}\bw$, we therefore have that, for all $\widetilde{\bv}, \widetilde{\bw}\in \Com^{M_N}$,
\beq\label{eq:GMRES1}
\big| \big( \MD_N^{-1/2}\MA\MD_N^{-1/2}\widetilde{\bv},\widetilde{\bw}\big)_2 \big| \leq \N{A}_{\cH\to\cH}(C_2)^2  \N{\widetilde{\bv}}_2 \N{\widetilde{\bw}}_2
\eeq
and
\beq\label{eq:GMRES2}
\big| \big( \MD_N^{-1/2}\MA\MD_N^{-1/2}\widetilde{\bv},\widetilde{\bv}\big)_2 \big| \geq \Ccoer  (C_1)^2 \N{\widetilde{\bv}}_2^2.
\eeq
The inequalities \eqref{eq:GMRES1} and \eqref{eq:GMRES2} then imply that
\beq\label{eq:GMRES3}
\big\|\MD_N^{-1/2}\MA \MD_N^{-1/2}\big\|_2 \leq \N{A}_{\cH\to\cH} (C_2)^2 \quad\tand\quad \mathrm{dist}\Big(0, W(\MD_N^{-1/2}\MA \MD_N^{-1/2})\Big) \geq \Ccoer(C_1)^2,
\eeq
with the second inequality and the Lax--Milgram theorem then implying that
\beq\label{eq:GMRES3a}
\big\|\big(\MD_N^{-1/2}\MA \MD_N^{-1/2}\big)^{-1}\big\|_2 \leq \frac{1}{\Ccoer (C_1)^2}.
\eeq

We now prove \eqref{eq:full_discrete1}. By the definitions of $\by_m$ (see \eqref{eq:pclinear}) and $\br_m$,
\beqs
\br_m =\MD_N^{-1/2}\MA \MD_N^{-1/2}(\by_m-\by)
\eeqs
and (since $\by_0={\bf 0}$) $\br_0 = -\MD_N^{-1/2}\bbb= -\MD_N^{-1/2}\MA \MD_N^{-1/2}\by$. Therefore, by  \eqref{eq:GMRES3a} and the first bound in \eqref{eq:GMRES3},
\begin{align}\nonumber
\N{\by_m-\by}_2 \leq \big\|\big(\MD_N^{-1/2}\MA \MD_N^{-1/2}\big)^{-1}\big\|_2 \N{\br_m}_2
&\leq \frac{1}{\Ccoer(C_1)^2}
\left(\frac{\N{\br_m}_2}{\N{\br_0}_2}\right)
\N{\br_0}_2 \\ \nonumber
&\leq \frac{1}{\Ccoer(C_1)^2} \left(\frac{\N{\br_m}_2}{\N{\br_0}_2}\right)\big\|\MD_N^{-1/2}\MA \MD_N^{-1/2}\big\|_2 \N{\by}_2\\
&\leq \frac{\N{A}_{\cH\to\cH}}{\Ccoer}\left(\frac{C_2}{C_1}\right)^2 \left(\frac{\N{\br_m}_2}{\N{\br_0}_2}\right)\N{\by}_2.
\label{eq:GMRES4}
\end{align}
Next, the definition of $\phi_N^m$ \eqref{eq:phiNm}, the expression for $\phi_N$ \eqref{eq:phiN}, and the norm equivalence \eqref{eq:norm_equiv} imply that
\beqs
\frac{
\N{\phi_N^m-\phi_N}_{\cH}
}{
\N{\phi_N}_{\cH}
}
\leq
 \frac{C_2}{C_1}
\frac{
\big\|\MD_N^{1/2}(\MD_N^{-1/2}(\by_m-\by))\big\|_2
}{
\big\|\MD_N^{1/2}\MD_N^{-1/2}\by\big\|_2
}
=
 \frac{C_2}{C_1}
\frac{
\big\|\by_m-\by\big\|_2
}{
\big\|\by\big\|_2
},
\eeqs
and then combining this with \eqref{eq:GMRES4} we obtain
\beqs
\frac{
\N{\phi_N^m-\phi_N}_{\cH}
}{
\N{\phi_N}_{\cH}
}
\leq \frac{\N{A}_{\cH\to\cH}}{\Ccoer}\left(\frac{C_2}{C_1}\right)^3 \left(\frac{\N{\br_m}_2}{\N{\br_0}_2}\right).
\eeqs
Combining this last inequality with the triangle inequality, we obtain that
\begin{align*}
\N{\phi-\phi_N^m}_{\cH} &\leq \N{\phi-\phi_N}_{\cH} + \N{\phi_N-\phi_N^m}_{\cH},\\
& \leq \N{\phi-\phi_N}_{\cH}+  \frac{\N{A}_{\cH\to\cH}}{\Ccoer}\left(\frac{C_2}{C_1}\right)^3 \left(\frac{\N{\br_m}_2}{\N{\br_0}_2}\right)\N{\phi_N}_{\cH},
\end{align*}
and then the result \eqref{eq:full_discrete1} follows by another use of the triangle inequality.
\epf

\

\bpf[Proof of Theorem \ref{thm:conditioning2}]
By Part (c) of Theorem \ref{thm:Galerkin}, the Galerkin error $\N{\phi-\phi_N}_{\cH}$ satisfies the quasioptimal error estimate \eqref{eq:quasioptimal}.
The definition of $\beta$ in \eqref{eq:cosbeta_gen} and the bounds on $\MD_N^{-1/2}\MA \MD_N^{-1/2}$ in \eqref{eq:GMRES3} imply that \eqref{eq:cosbeta2} is satisfied with $\MC= \MD_N^{-1/2}\MA \MD_N^{-1/2}$; note that here it is important that $\cH$ is a Hilbert space over $\mathbb{C}$, so that continuity and coercivity of $A$ control $W(\MA)$ (which involves $\MA$ applied to vectors in $\mathbb{C}^{M_N}$).

Using both \eqref{eq:quasioptimal} and
the relative-residual bound \eqref{eq:Elman2}
in \eqref{eq:full_discrete1}, we obtain that
\begin{align*}
\frac{
\N{\phi-\phi_N^m}_{\cH}}
{\N{\phi}_{\cH}}
&\leq
\left( 1 + \frac{12\N{A}_{\cH\to\cH}}{\Ccoer}\left(\frac{C_2}{C_1}\right)^3
(\gamma_\beta)^m\right)
\frac{\|A\|_{\cH\to\cH}}{\Ccoer}\
\min_{\psi\in \cH_N}\frac{\N{\phi-\psi}_{\cH}}{\N{\phi}_{\cH}}
\\
&\qquad\qquad
+
\frac{12\N{A}_{\cH\to\cH}}{\Ccoer}\left(\frac{C_2}{C_1}\right)^3 (\gamma_\beta)^m.
\end{align*}
Given $\eps>0$, if $m$ satisfies
\eqref{eq:iteration_bound_gen}, then
\beqs
\frac{12\N{A}_{\cH\to\cH}}{\Ccoer}\left(\frac{C_2}{C_1}\right)^3 (\gamma_\beta)^m \leq \eps
\eeqs
and thus the bound \eqref{eq:full_discrete_gen} holds.
\epf

\subsubsection{Conditions under which Assumption \ref{ass:D} holds.}\label{sec:conditioning2a}

Our result about the convergence of GMRES applied to the Galerkin matrices of the new formulations, namely Theorem \ref{thm:conditioning}, is proved under Assumption \ref{ass:D}, which is an assumption about the sequence of finite-dimensional subspaces $(\cH_N)_{N=1}^\infty$ and their associated bases. Recall from \S\ref{sec:conditioning} that Assumption \ref{ass:D} holds, indeed with $\MD_N$ the identity matrix, for any sequence $(\cH_N)_{N=1}^\infty$ (and in any dimension $d\geq 2$) provided that the bases we choose are orthonormal. But many standard implementations of boundary element approximation methods use non-orthogonal bases, particularly bases of so-called nodal basis functions (e.g., \cite{AiMcTr:99,GrMc:06}, \cite[Page 216]{St:08}, \cite[Pages 205 and 280]{SaSc:11}.
We show as Lemma \ref{lem:meshes} below that Assumption \ref{ass:D} holds (moreover specifying the diagonal matrices $\MD_N$) under mild constraints on the sequence of meshes  when the approximation space allows discontinuities across elements. In particular, Lemma \ref{lem:meshes} holds when nodal basis functions are used, including for  sequences of highly anisotropic meshes.

To specify the conditions under which Assumption \ref{ass:D} holds, we recall the notion of a surface mesh on $\Gamma$, and aspects of the standard implementation of boundary element methods, including the notation of a reference element (for the moment, until we indicate otherwise, our results hold for any dimension $d\geq 2$). Following, e.g., \cite[Defn.~4.1.2]{SaSc:11}, we call $\mathcal{G}$ a {\em mesh} of $\Gamma$ if $\mathcal{G}$ is a set of finitely many disjoint, relatively open, topologically regular\footnote{By {\em topologically regular} we mean that the relative interior of the closure of $\tau$ is again $\tau$.} subsets of $\Gamma$ that cover $\Gamma$ in the sense that $\Gamma= \cup_{\tau\in \mathcal{G}} \overline{\tau}$, and are such that the relative boundary of each $\tau\in \mathcal{G}$ has zero surface measure. We call the elements of $\mathcal{G}$ the {\em (boundary) elements} of the mesh and, for $\tau\in \mathcal{G}$, set
$$
h_\tau:= \diam(\tau) \quad \mbox{and} \quad  s_\tau := |\tau|,
$$
where $|\tau|$ denotes the $(d-1)$-dimensional surface measure of $\tau$, and set $h:= \max_{\tau\in \mathcal{G}} h_\tau$.

We assume moreover that, for each $\tau\in \mathcal{G}$, there exists a mapping $\chi_\tau:\widehat{\tau}\to \tau$, for some $\widehat{\tau}\in \mathcal{R}$, the finite set of {\em reference elements}, that is bijective and at least bi-Lipschitz, so that $\chi_\tau^{-1}:\tau\to \widehat{\tau}$ is well-defined and also bi-Lipschitz.\footnote{It is standard (e.g., \cite{AiMcTr:99,GrMc:06}) to assume more smoothness for $\chi_\tau$, e.g.~that $\chi_\tau \in C^r(\overline{\widehat{\tau}})$ for some $r\in \NN$, in which case also $\chi_\tau^{-1}\in C^r(\overline{\tau})$. In the important case when $d=3$, $\Gamma$ is a polyhedron, and $\mathcal{R}=\{\widehat{\tau}\}$, with $\widehat{\tau}$ the unit simplex (a triangle), it is usual (e.g., \cite[Chap.~10]{St:08}, \cite[Defn.~4.1.2]{SaSc:11}) for each $\tau$ to be a triangle and for $\chi_\tau$ to be affine.} Here, by a {\em reference element}, $\widehat{\tau}$, we mean, generically, some bounded, open, topologically regular subset of $\R^{d-1}$, but with the idea that, in practical implementations, $\widehat{\tau}$ is a polyhedron, usually the unit cube $\widehat{\tau} = (0,1)^{d-1}$ or the unit simplex $\widehat{\tau} = \{\widehat{\bx}\in (0,1)^{d-1}:\widehat{\bx}_1+\ldots + \widehat{\bx}_{d-1} < 1\}$.
(In the case $d=2$ it is usual to take $\mathcal{R}=\{\hat\tau\}$ with $\widehat{\tau} = (0,1)$.) For each $\tau\in \mathcal{G}$ let $J_\tau\in (L^\infty(\Gamma))^{d\times (d-1)}$ denote the Jacobian of $\chi_\tau$. Importantly, for every $f\in L^1(\tau)$, where $\widehat{\tau} \in \mathcal{R}$ is the domain of $\chi_\tau$,
\begin{equation} \label{eq:cov}
\int_\tau f(\bx) \,\rd s(\bx) = \int_{\widehat{\tau}} f(\chi_\tau(\widehat{\bx}))g_\tau(\widehat{\bx})\, \rd \widehat{\bx}, \quad \mbox{where} \quad g_\tau:= \big(\det(J_\tau^TJ_\tau)\big)^{1/2}\in L^\infty(\Gamma);
\end{equation}
in particular $s_\tau = \int_{\widehat{\tau}} g_\tau(\widehat{\bx})\, \rd \widehat{\bx}$, so that
\begin{equation} \label{eq:gpm}
g^-_\tau |\widehat{\tau}| \leq s_\tau \leq g^+_\tau |\widehat{\tau}|, \quad \mbox{where} \quad g^+_\tau:= \esssup_{\widehat{\bx}\in \widehat{\tau}} g_\tau(\widehat{\bx}), \quad g^-_\tau:= \essinf_{\widehat{\bx}\in \widehat{\tau}} g_\tau(\widehat{\bx}).
\end{equation}

For $p\in \NN_0$ and $\widehat \tau\in \mathcal{R}$ let $\mathbb{P}_p^{\widehat \tau}$ denote some finite-dimensional set of polynomials $\psi:\widehat{\tau}\to\Rea$ that
contains the polynomials of (total) degree $\leq p$.
When $\widehat{\tau}$ is a simplex one usually takes  $\mathbb{P}_p^{\widehat \tau}$ to be the set of polynomials of total degree $\leq p$; when $\widehat{\tau}$ is a cube one usually takes $\mathbb{P}_p^{\widehat \tau}$ to be the set of polynomials of coordinate degree $\leq p$; see, e.g., \cite[Page 1494, penultimate displayed equation]{GrMc:06}.
Following \cite[Defn.~4.1.17]{SaSc:11}, given a mesh $\mathcal{G}$ on $\Gamma$, define the {\em boundary element approximation space, $S_\mathcal{G}^p$,} of discontinuous piecewise polynomials of degree $\leq p$ on $\mathcal{G}$, by
\begin{equation} \label{eq:SpG}
S^p_\mathcal{G}:= \big\{\psi\in L^\infty(\Gamma):\psi|_\tau \circ \chi_\tau \in \mathbb{P}_p^{\widehat \tau}, \mbox{ for all } \tau\in \mathcal{G}, \mbox{ where }\widehat \tau \mbox{ is the domain of }\chi_\tau\big\}.
\end{equation}
Where $P_{\widehat \tau}=\dim(\mathbb{P}_p^{\widehat \tau})$ and $M=\dim(S^p_{\mathcal{G}})$, we equip $S^p_\mathcal{G}$ with a basis $\{\psi_1,\ldots, \psi_M\}$ constructed as follows.  For each $\widehat{\tau}\in \mathcal{R}$ choose a basis $\{\psi^{\widehat{\tau}}_1, \ldots,\psi^{\widehat{\tau}}_{P_{\widehat \tau}}\}$ for $\mathbb{P}_p^{\widehat \tau}$ (for example, a nodal basis as in \cite{AiMcTr:99,GrMc:06}). For each $\tau\in \mathcal{G}$, where $\widehat{\tau}$ is the domain of $\chi_\tau$, define $\psi^{\tau}_j\in L^\infty(\Gamma)$, for $j=1,\ldots,P_{\widehat \tau}$, by
\beq\label{eq:psij}
\psi^\tau_j(\bx) :=
\begin{cases}
\psi^{\widehat{\tau}}_j\big(\chi_\tau^{-1}(\bx)\big), & \bx\in \tau,\\
  0, & \bx\in \Gamma\setminus \tau.
  \end{cases}
\eeq
Then set
\begin{equation} \label{eq:GlBa}
\big\{\psi_1,\ldots, \psi_M\big\} = \big\{\psi^\tau_j:\tau\in \mathcal{G}, j\in \{1,\ldots,P_{\widehat \tau}\}\big\},
\end{equation}
noting that (see, e.g., \cite[p.~1495]{GrMc:06}) $\{\psi_1,\ldots, \psi_M\}$ is a nodal basis if each $\{\psi^{\widehat{\tau}}_1, \ldots,\psi^{\widehat{\tau}}_{P_{\widehat \tau}}\}$, $\widehat{\tau}\in \mathcal{R}$, is a nodal basis.

Consider now the case that we keep $\mathcal{R}$, $p$, and the bases $\{\psi^{\widehat{\tau}}_1, \ldots,\psi^{\widehat{\tau}}_{P_{\widehat \tau}}\}$, $\widehat{\tau}\in \mathcal{R}$, fixed but use a sequence of meshes $\mathcal{G}_N$, $N\in \NN$, with associated approximation spaces $\cH_N:= S^p_\mathcal{G_N}$ that are such that $h_N:= \max_{\tau\in \mathcal{G}_N} h_\tau\to 0$ as $N\to\infty$, i.e.~we consider the \emph{$h$-version} of the boundary-element method. Lemma \ref{lem:meshes} below applies in this regime under the following assumption on the constants  $g^\pm_\tau$ defined by \eqref{eq:gpm}
(this assumption is the first half, Equation 3.5a, of \cite[Assumption 3.1]{GrMc:06}).

\begin{assumption} \label{ass:mild} There exists a constant $c_1\geq 1$ such that, for every $N\in \NN$ and $\tau\in \mathcal{G}_N$,
\begin{equation} \label{eq:mild}
g^+_\tau \leq c_1 g^-_\tau;
\end{equation}
equivalently, there exists a constant $c_2\geq 1$ such that, for every $N\in \NN$ and $\tau\in \mathcal{G}_N$,
\begin{equation} \label{eq:mild2}
c_2^{-1}s_\tau \leq g_\tau(\widehat{\bx}) \leq c_2 s_\tau \quad \mbox{for almost all } \widehat{\bx}\in \hat\tau.
\end{equation}
\end{assumption}
We make two remarks about Assumption \ref{ass:mild}.

(i) The claimed equivalence of \eqref{eq:mild} and \eqref{eq:mild2}
follows from \eqref{eq:gpm} (precisely, if \eqref{eq:mild} holds then \eqref{eq:mild2} holds with $c_2= c_1\max(|\widehat{\tau}|, |\widehat{\tau}|^{-1})$, and if \eqref{eq:mild2} holds then \eqref{eq:mild} holds with $c_1= c_2^2$).

(ii) Because $\chi_\tau$ is bi-Lipschitz, \eqref{eq:mild} holds for every $\tau\in \mathcal{G}_N$ for some $c_1\geq 1$ (not necessarily independent of $\tau$ and $N$). In particular \eqref{eq:mild} holds with $c_1 =1$  if  each $\chi_\tau$ is affine, so that Assumption \ref{ass:mild} holds in that case (see also the discussion below \cite[Assumption 3.1]{GrMc:06}).

\

For the following lemma, recall that the matrix $\MA$ is given by \eqref{eq:matrix} with $\opG$ equal to one of $\opLIZi$, $\opLIZ$, $\opLEZi$, or $\opLEZ$.

\ble[Conditions under which Assumption \ref{ass:D} holds]\label{lem:meshes}
Suppose that, while keeping $\mathcal{R}$, $p$, and the bases $\{\psi^{\widehat{\tau}}_1, \ldots,\psi^{\widehat{\tau}}_{P_{\widehat \tau}}\}$, $\widehat{\tau}\in \mathcal{R}$, fixed, we use a sequence of meshes $\mathcal{G}_N$, $N\in \NN$, with associated approximation spaces $\cH_N:= S^p_\mathcal{G_N}$ and bases \eqref{eq:GlBa} that are such that $h_N:= \max_{\tau\in \mathcal{G}_N} h_\tau\to 0$ as $N\to\infty$ and Assumption \ref{ass:mild} holds. Then the following is true.

(i) Assumption \ref{ass:D} holds with $(\MD_N)_{ii} := s_{i}$, $i=1,\ldots, M_N$, where $s_i:=s_\tau$ if $\psi_i$ is supported in $\tau$, with $C_1:=c_2^{-1/2}c_{\mathcal{R}}^{-1/2}$, $C_2 := c_2^{1/2}c_{\mathcal{R}}^{1/2}$, where $c_{\mathcal{R}}\geq 1$ depends only on the bases  $\{\psi^{\widehat{\tau}}_1, \ldots,\psi^{\widehat{\tau}}_{P_{\widehat{\tau}}}\}$, $\widehat{\tau}\in \mathcal{R}$.

(ii) If, in addition, $d\geq 3$, \eqref{eq:c} holds for some $c>0$, and $\alpha$ satisfies \eqref{eq:alpha_bound_boundary}, then Assumption \ref{ass:D} holds also with $(\MD_N)_{ii} := |\MA_{ii}|$, for $i=1,\ldots, M_N$, with
\begin{equation} \label{eq:C1C2}
C_1 := C_-C^{-1/2} c_2^{-1}\quad\tand\quad C_2 := C_+c^{-1/2}c_2,
\end{equation}
where $C:=\|\opG\|_{L^2(\Gamma)\to L^2(\Gamma)}$ and $C_\pm>0$ depend only on the bases  $\{\psi^{\widehat{\tau}}_1, \ldots,\psi^{\widehat{\tau}}_{P_{\widehat \tau}}\}$, $\widehat{\tau}\in \mathcal{R}$.
\ele

Part (ii) of Lemma \ref{lem:meshes} is proved for $d=3$ using the coercivity results of Part (vi) of both Theorems \ref{thm:Laplace_int} and \ref{thm:Laplace_ext}. An analogous result holds for $d=2$ using the coercivity results of Part (v) of Theorem \ref{thm:2d}, but we omit this for brevity.

\

\bpf[Proof of Lemma \ref{lem:meshes}]
(i) Given $w_N = \sum_{\ell=1}^{M_N}w^N_{\ell} \psi_{\ell}^N \in \cH_N$,
by \eqref{eq:GlBa},
\beq\label{eq:expand1}
w_N= \sum_{\tau\in \mathcal{G}_N}\sum_{j=1}^{P_{\widehat \tau}} w_{j}^\tau \psi_{j}^\tau
\eeq
for some coefficients $\{w_j^\tau\}_{j=1}^{P_{\widehat \tau}}$, and where $\widehat{\tau}$ is the domain of $\chi_\tau$. Thus, by \eqref{eq:cov}, for all $\tau \in \mathcal{G}_N$,
\beq\label{eq:L2wN}
\big\|w_N|_\tau\big\|^2_{L^2(\tau)} = \int_{\widehat{\tau}} |\widehat{\psi}(\widehat{\bx})|^2 g_\tau(\widehat{\bx})\, \rd \widehat{\bx},
\eeq
where
\beq\label{eq:psihat}
\widehat{\psi}(\widehat{\bx}) := w_N\big(\chi_\tau(\widehat{\bx})\big) = \sum_{j=1}^{P_{\widehat \tau}} w_j^\tau \psi^{\widehat{\tau}}_j(\widehat{\bx}) \quad\tfor\widehat{\bx}\in \widehat{\tau}
\eeq
(and we have used \eqref{eq:expand1} and \eqref{eq:psij}).
For every $\widehat{\tau}\in \mathcal{R}$, every $\widehat{\phi}\in \mathbb{P}_p^{\widehat \tau}$ can be written as $\widehat{\phi}= \sum_{j=1}^{P_{\widehat \tau}} a_j\psi^{\widehat{\tau}}_j$ for some unique vector $\ba=(a_1,\ldots,a_{P_{\widehat \tau}})^T$. With $\mathbb{P}_p^{\widehat \tau}$ equipped with the norm $\|\cdot\|_{\widehat{\tau}}$ defined by
\beq\label{eq:normtau}
\|\widehat{\phi}\|^2_{\widehat{\tau}} = \sum_{j=1}^{P_{\widehat \tau}} a_j^2,
\eeq
since $\mathbb{P}_p^{\widehat \tau}$ is finite-dimensional,   there exists $c_{\widehat{\tau}}\geq 1$ such that
\beq\label{eq:ctau}
c_{\widehat{\tau}}^{-1}\big\|\widehat{\phi}\big\|^2_{\widehat{\tau}}\leq \big\|\widehat{\phi}\big\|^2_{L^2(\widehat{\tau})}\leq c_{\widehat{\tau}}\big\|\widehat{\phi}\big\|^2_{\widehat{\tau}} \quad\tfa \widehat{\phi}\in \mathbb{P}_p^{\widehat \tau}.
\eeq
Therefore, using \eqref{eq:mild2} and \eqref{eq:ctau} (with $\widehat{\phi}=\widehat{\psi}$) in \eqref{eq:L2wN}, we have that, for all $\tau \in \mathcal{G}_N$,
\beq\label{eq:temp4}
c^{-1}_{\widehat{\tau}} c^{-1}_2s_\tau\big\|\widehat{\psi}\big\|^2_{\widehat{\tau}}\leq c_2^{-1}s_\tau\big\|\widehat{\psi}\big\|^2_{L^2(\widehat{\tau})} \leq \big\|w_N|_\tau\big\|^2_{L^2(\tau)} \leq c_2s_\tau\big\|\widehat{\psi}\big\|^2_{L^2(\widehat{\tau})} \leq c_2s_\tau c_{\widehat{\tau}}\big\|\widehat{\psi}\big\|^2_{\widehat{\tau}}.
\eeq
Furthermore, by \eqref{eq:psihat} and \eqref{eq:normtau},
\beq\label{eq:temp5}
\|\widehat{\psi}\|^2_{\widehat{\tau}} = \sum_{j=1}^{P_{\widehat \tau}} (w_j^\tau)^2.
\eeq
If $(\MD_N)_{ii} := s_{i}$, $i=1,\ldots, M_N$, then, by \eqref{eq:expand1},
\beq\label{eq:temp6}
\big\|\MD_N^{1/2} \bw\big\|^2_{2} = \sum_{\tau\in \mathcal{G}_N}s_\tau \sum_{j=1}^{P_{\widehat \tau}} (w_j^\tau)^2.
\eeq
Therefore, combining \eqref{eq:temp4}, \eqref{eq:temp5}, and \eqref{eq:temp6}, we see that, with the choice $(\MD_N)_{ii} := s_{i}$, $i=1,\ldots, M_N$, Assumption \ref{ass:D} holds with $C_1=c^{-1/2}_2c_{\mathcal{R}}^{-1/2}$ and $C_2=c_2^{1/2}c^{1/2}_{\mathcal{R}}$, where $c_{\mathcal{R}}:= \max_{\widehat{\tau} \in \mathcal{R}} c_{\widehat{\tau}}$.

(ii)
By the coercivity and continuity of $\opG$ from Theorem \ref{thm:Laplace_int} or Theorem \ref{thm:Laplace_ext},
$$
\frac{c}{2}\|\psi^{N}_i\|^2_{L^2(\Gamma)}\leq \big|\big(\opG \psi^{N}_i,\psi^{N}_i\big)_{L^2(\Gamma)}\big| \leq C\|\psi^{N}_i\|^2_{L^2(\Gamma)};
$$
therefore, since $|\MA_{ii}|=|(\opG \psi^{N}_i,\psi^{N}_i)_{L^2(\Gamma)}|$,
\beq\label{eq:temp2}
\frac{c}{2}\|\psi^{N}_i\|^2_{L^2(\Gamma)}\leq |\MA_{ii}|\leq C\|\psi^{N}_i\|^2_{L^2(\Gamma)}.
\eeq
Further, where $\tau$ is the support of $\psi^N_i$ and $\widehat{\tau}$ is the domain of $\chi_\tau$, by \eqref{eq:cov},
$$
\|\psi^{N}_i\|^2_{L^2(\Gamma)} = \int_{\tau} |\psi^\tau_j(\bx)|^2 \, \rd s(\bx) = \int_{\widehat{\tau}} |\psi^{\widehat{\tau}}_j(\widehat{\bx})|^2 g_\tau(\widehat{\bx})\, \rd \widehat{\bx},
$$
for some $j\in \{1,...,P_{\widehat \tau}\}$, so that, by \eqref{eq:mild2},
\beq\label{eq:temp3}
c_- c^{-1}_2 s_\tau\leq \|\psi^{N}_i\|^2_{L^2(\Gamma)}\leq c_+ c_2 s_\tau,
\eeq
 where
$$
c_+ := \max_{\widehat{\tau} \in \mathcal{R}, j=1,\ldots,P_{\widehat \tau}} \|\psi^{\widehat{\tau}}_j\|_{L^2(\widehat{\tau})}^2 \quad\tand\quad c_- := \min_{\widehat{\tau} \in \mathcal{R}, j=1,\ldots,P_{\widehat \tau}} \|\psi^{\widehat{\tau}}_j\|_{L^2(\widehat{\tau})}^2.
$$
Thus, combining \eqref{eq:temp2} and \eqref{eq:temp3}, we have that
$$
\frac{c}{2}c_- c^{-1}_2 s_\tau\leq |\MA_{ii}|\leq Cc_+ c_2 s_\tau \quad \mbox{ for } i=1,\ldots, M_N.
$$
Therefore, if $(\MD_N)_{ii} := |\MA_{ii}|$, for $i=1,\ldots, M_N$, then, by \eqref{eq:expand1},
$$
 \frac{c}{2}c_-c_2^{-1}\sum_{\tau\in \mathcal{G}_N}s_\tau \sum_{j=1}^{P_{\widehat \tau}} (w_j^\tau)^2 \leq \big\|\MD_N^{1/2} \bw\big\|^2_{2} \leq Cc_+c_2 \sum_{\tau\in \mathcal{G}_N}s_\tau \sum_{j=1}^{P_{\widehat \tau}} (w_j^\tau)^2.
$$
By \eqref{eq:temp5} and \eqref{eq:temp4}, Assumption \ref{ass:D} therefore holds with $C_1$ and $C_2$ given by \eqref{eq:C1C2}.
\epf

\bre[The novelty of Lemma \ref{lem:meshes}]
Similar results to Lemma \ref{lem:meshes} are given in \cite{GrMc:06}, where, for a continuous, coercive, and symmetric sesquilinear form, scaling by the diagonal part of the Galerkin matrix (as in Part (ii) of Lemma \ref{lem:meshes}) is used to remove the ill-conditioning of the Galerkin matrix due to mesh degeneracy; see \cite[Equations 1.5-1.7]{GrMc:06}.

The advantage of the results of \cite{GrMc:06} compared to those of Lemma \ref{lem:meshes} is that \cite{GrMc:06} works in $H^s(\Gamma)$ for $|s|\leq 1$, whereas Lemma \ref{lem:meshes} only works in $L^2(\Gamma)$. However, Lemma \ref{lem:meshes} works with rather general meshes in arbitrary dimensions, subject only to Assumption \ref{ass:mild}, whereas \cite{GrMc:06} imposes the following conditions on the mesh: (i)
the mesh is regular in the sense of \cite[Definition 4.1.4]{SaSc:11}, see \cite[Page1495]{GrMc:06}, and (ii) the mesh satisfies
\cite[Assumptions 3.1 and 3.2]{GrMc:06}, with the latter requiring, e.g., that neighbouring mesh elements have comparable aspect ratios.
\ere

\section{Wellposedness and regularity results for the Laplace interior and exterior oblique  Robin problems}\label{sec:oblique}

\subsection{Statement of the Laplace interior and exterior oblique Robin problems}

\begin{definition}[The Laplace interior oblique Robin problem (IORP)]\label{prob:ioipL}
With $\Oi$
as in \S\ref{sec:notation}, given $g\in L^2(\Gamma)$, $\bZ\in (L^\infty(\Gamma))^{d}$, and $\alpha\in L^\infty(\Gamma)$, find $u\in H^1(\Oi)$ with $\gmu\in H^1(\Gamma)$ and $\dnmu\in \LtG$
such that
$\Delta u =0$ in $\Oi$ and
\beq\label{eq:obliqueL}
(\bZ\cdot\bn) \dnmu + \bZ \cdot \nabla_\Gamma( \gmu) + \alpha \, \gmu =  g \quad\text{ on } \Gamma.
\eeq
\end{definition}

\begin{definition}[The Laplace exterior oblique Robin problem (EORP)]\label{prob:eoipL}
With $\Oe$
as in \S\ref{sec:notation}, given $g\in L^2(\Gamma)$, $\bZ\in (L^\infty(\Gamma))^{d}$, and $\alpha\in \L^\infty(\Gamma)$, find $u\in H^1_{\mathrm{loc}}(\Oe)$ with $\gpu \in H^1(\Gamma)$
and $\dnpu\in \LtG$
 such that
$\Delta u =0$ in $\Oe$,
\beq\label{eq:obliqueLext}
(\bZ\cdot\bn) \dnpu + \bZ \cdot \nabla_\Gamma( \gpu) - \alpha \, \gpu =  g \quad\text{ on } \Gamma,
\eeq
and,  as $|\bx| \rightarrow \infty$, $u(\bx)= O(1)$ when $d=2$ and $u(\bx) = o(|\bx|^{3-d})$ when $d\geq 3$ (uniformly in all directions $\bx/|\bx|$).
\end{definition}

A regularity result of Ne\v{c}as \cite{Ne:67} (stated as Theorem \ref{thm:Necas} below) implies that either of the requirements $\dnmu\in \LtG$ and $\gmu \in \HoG$ in Definition \ref{prob:ioipL} can be removed; similarly in Definition \ref{prob:eoipL}.

The IORP and EORP can also be formulated in terms of non-tangential maximal functions and non-tangential limits
(similar to the case of the Dirichlet problem discussed in \S\ref{sec:notation}).
We now give this alternative formulation for the IORP and prove that it is equivalent to Definition \ref{prob:ioipL}; this equivalence is necessary to use results from the harmonic-analysis literature on the standard Laplace oblique derivative problem (see Theorem \ref{thm:RegOblSt} below).
The alternative formulation for the EORP and proof of equivalence to Definition \ref{prob:eoipL} are completely analogous and are omitted.

\begin{definition}[The Laplace IORP via non-tangential limits]\label{prob:ioipL2}
With $\Oi$
 as in \S\ref{sec:notation}, given $g\in L^2(\Gamma)$, $\bZ\in (L^\infty(\Gamma))^{d}$, and $\alpha\in L^\infty(\Gamma)$,
find $u\in C^2(\Oi)$ with $(\nabla u)^*\in (L^2(\Gamma))^d$ such that
$\Delta u =0$ in $\Oi$ and
\beq\label{eq:obliqueL2}
\bZ \cdot \widetilde{\gamma}^- (\nabla u ) + \alpha \widetilde{\gamma}^-u = g
 \quad\text{ on } \Gamma,
\eeq
where $\widetilde{\gamma}^-$ is the non-tangential limit defined by \eqref{eq:ntlim}.
\end{definition}

\begin{theorem}[Equivalence of the different formulations of the IORP]\label{thm:BVPequiv2}
The formulations of the IORP in Definition \ref{prob:ioipL} and \ref{prob:ioipL2} are equivalent (i.e., if $u$ is a solution to the IORP in the sense of Definition \ref{prob:ioipL}, then it is a solution in the sense of Definition \ref{prob:ioipL2}, and vice versa).
\end{theorem}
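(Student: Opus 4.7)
The plan is to prove the two implications separately, mirroring the approach used for the Laplace Dirichlet problems (Theorem \ref{thm:BVPequiv}, whose proof is given in Appendix \ref{app:BVPequiv}). The essential bridge between the Sobolev traces $\gmu,\dnmu$ and their non-tangential counterparts $\widetilde{\gamma}^-u,\widetilde{\gamma}^-\nabla u$ is supplied by the harmonic-analysis results of Appendix \ref{app:HA}, in particular Lemma \ref{lem:A9} and Theorem \ref{thm:JeKe:95}.

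For Definition \ref{prob:ioipL} $\Rightarrow$ Definition \ref{prob:ioipL2}, I would first invoke interior elliptic regularity to obtain $u\in C^\infty(\Oi)$. Since $\gmu\in \HoG$ by assumption, $u$ satisfies the hypotheses of the \emph{regularity problem} for Laplace's equation on the Lipschitz domain $\Oi$; by the classical results of Verchota \cite{Ve:84} this forces $(\nabla u)^*\in \LtG$ and the non-tangential limit $\widetilde{\gamma}^-\nabla u$ exists almost everywhere on $\Gamma$ as an element of $(\LtG)^d$, with its normal component equal to $\dnmu$ and its tangential component equal to $\nT(\gmu)$. Lemma \ref{lem:A9}, applied directly to $u\in H^1(\Oi)$, identifies $\widetilde{\gamma}^-u=\gmu$. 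Substituting these three identifications into \eqref{eq:obliqueL} yields \eqref{eq:obliqueL2}.

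For the converse implication, starting from $u\in C^2(\Oi)$ with $(\nabla u)^*\in \LtG$, a standard Fubini-type integration along non-tangential approach sets, using the fact that each point $y\in\Oi$ sufficiently close to $\Gamma$ lies in $\Theta^-(x)$ for a set of boundary points $x$ of surface measure comparable to $\dist(y,\Gamma)^{d-1}$, first promotes the pointwise bound $(\nabla u)^*\in\LtG$ to $\nabla u\in L^2(\Oi)$; combined with boundedness of $\Oi$ and integration of $\nabla u$ along approach curves, this gives $u\in H^1(\Oi)$. Part~(ii) of Theorem \ref{thm:JeKe:95} guarantees $\widetilde{\gamma}^-u\in \LtG$, and the existence of $\widetilde{\gamma}^-\nabla u\in(\LtG)^d$ together with an integration-by-parts identity on $\Gamma$ identifies the tangential part of $\widetilde{\gamma}^-\nabla u$ with the distributional surface gradient of $\widetilde{\gamma}^-u$; hence $\widetilde{\gamma}^-u\in\HoG$. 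A final application of Lemma \ref{lem:A9} matches $\widetilde{\gamma}^-u=\gmu$ and the normal part of $\widetilde{\gamma}^-\nabla u$ with $\dnmu$, converting \eqref{eq:obliqueL2} back into \eqref{eq:obliqueL}.

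The main obstacle is the identification, in the second direction, of the tangential component of $\widetilde{\gamma}^-\nabla u$ with $\nT(\widetilde{\gamma}^-u)$: one has to commute the non-tangential limit with the surface gradient. The cleanest route is the classical device of exhausting $\Oi$ by an increasing family of Lipschitz subdomains $\Oi_j$ with boundaries $\Gamma_j$ parametrised by bi-Lipschitz maps $\Lambda_j:\Gamma\to\Gamma_j$ (as in \cite{Ve:84}), writing on each $\Gamma_j$ the elementary pointwise decomposition of $\nabla u$ into normal and tangential parts, and passing to the limit $j\to\infty$ using $(\nabla u)^*$ as an $L^2$-dominating envelope for the tangential components and $\widetilde{\gamma}^-u$ as the limiting trace, then identifying the limit of the tangential parts with $\nT(\widetilde{\gamma}^-u)$ by duality against smooth test functions on $\Gamma$.
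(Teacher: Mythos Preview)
Your proposal is correct and follows the same overall two-direction strategy as the paper, but you work harder than necessary in the reverse implication. The paper's proof is much shorter because it pivots on the $H^{3/2}$ characterisation: by Part~(iii) of Theorem~\ref{thm:JeKe:95}, $(\nabla u)^*\in(\LtG)^d$ is \emph{equivalent} to $u\in H^{3/2}(\Oi)$, and then Lemma~\ref{lem:A10} immediately yields $\dnmu\in\LtG$, $\gmu\in H^1(\Gamma)$, and the trace decomposition $\widetilde{\gamma}^-(\nabla u)=\bn\,\dnmu+\nabla_\Gamma(\gmu)$. The forward direction likewise routes through $H^{3/2}$: from $\gmu\in H^1(\Gamma)$ and $\dnmu\in\LtG$, Lemma~\ref{lem:A10} gives $u\in H^{3/2}(\Oi)$, and then Part~(iii) of Theorem~\ref{thm:JeKe:95} gives $(\nabla u)^*\in\LtG$.

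Your Fubini-type argument to get $u\in H^1(\Oi)$ and the exhaustion-by-subdomains argument to identify the tangential part of $\widetilde{\gamma}^-\nabla u$ with $\nT(\widetilde{\gamma}^-u)$ are essentially re-deriving what is already packaged in Lemma~\ref{lem:A10} and the Jerison--Kenig equivalence; these arguments are standard and would go through, but the paper avoids them entirely by quoting the appendix results. The upshot: your route is self-contained and makes the harmonic-analysis input visible, while the paper's route is a three-line application of two black-box lemmas.
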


\bpf
If $u$ is a solution of the IORP in the sense of Definition \ref{prob:ioipL}, then $u\in C^\infty(\Oi)$ by elliptic regularity. Furthermore, $u\in H^{3/2}(\Oi)$ by Lemma \ref{lem:A10}, and then $(\nabla u)^* \in L^2(\Gamma)$ by Part (iii) of Theorem \ref{thm:JeKe:95}. By Lemma \ref{lem:A9}, $\widetilde{\gamma}^-u = \gmu$, and, by Lemma \ref{lem:A10},
\beq\label{eq:moderna1}
\widetilde{\gamma}^-(\nabla u) = \bn \dnmu + \nabla_\Gamma(\gmu) \quad\text{ almost everywhere on } \Gamma.
\eeq
Therefore
\beq\label{eq:moderna2}
\vfb\cdot \widetilde{\gamma}^- (\nabla u) + \alpha \widetilde{\gamma}^- u = (\vfb\cdot \bn) \dnmu + \vfb \cdot\nabla_\Gamma (\gmu) + \alpha \gmu,
\eeq
so that the boundary condition \eqref{eq:obliqueL2} is equivalent to \eqref{eq:obliqueL}; therefore, $u$ is a solution of the IORP in the sense of Definition \ref{prob:ioipL2}.

Conversely, if $u$ is the solution of the IORP in the sense of Definition \ref{prob:ioipL2}, then $u\in H^{3/2}(\Oi)$ by Part (iii) of Theorem \ref{thm:JeKe:95}. Then
Lemma \ref{lem:A10} implies that $\dnmu \in \LtG$, $\gmu\in H^1(\Gamma)$,
and \eqref{eq:moderna1} holds. Hence \eqref{eq:moderna2} holds and the boundary condition \eqref{eq:obliqueL} is equivalent to \eqref{eq:obliqueL2}; therefore, $u$ is a solution of the IORP in the sense of Definition \ref{prob:ioipL}.
\epf

\subsection{Link between the IORP/EORP and the BIEs in Theorems \ref{thm:Laplace_int}, \ref{thm:Laplace_ext}}

\begin{theorem}[$A^\prime_{I,\bZ,\alpha}$ can be used to solve the EORP for $d\geq 3$] \label{thm:star_in_obliqueLE} If $d\geq 3$ then the single-layer potential $u={\cal S} \phi$ with density $\phi\in L^2(\Gamma)$ satisfies the exterior oblique Robin problem (Definition \ref{prob:eoipL})  if and only if
\begin{equation} \label{eq:BIE_eoipL}
A^\prime_{I,\bZ,\alpha}\phi = -g.
\end{equation}
Conversely, if $d\geq 3$ and $u$ satisfies the EORP, then $u={\cal S} \phi$ for some $\phi\in L^2(\Gamma)$ that satisfies \eqref{eq:BIE_eoipL}.
\end{theorem}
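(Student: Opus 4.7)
My plan is to prove the equivalence by verifying both directions: the ``if'' direction follows from a direct computation using the jump relations, and the ``converse'' direction uses Green's representation to represent the solution as a single-layer potential.

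For the ``if'' direction, assume $\phi\in L^2(\Gamma)$ and set $u := \cS\phi$. Then $u\in C^2(\Oe)$ with $\Delta u=0$ there, and by \eqref{eq:Sasym}, $|u(\bx)| = O(|\bx|^{2-d}) = o(|\bx|^{3-d})$ as $|\bx|\to\infty$ since $d\geq 3$. The mapping property \eqref{eq:LPmap} gives $u\in H^{3/2}_{\rm loc}(\Oe)$, so by the Ne\v{c}as regularity result (Theorem \ref{thm:Necas}) we have $\gpu\in H^1(\Gamma)$ and $\dnpu\in L^2(\Gamma)$. The jump relations \eqref{eq:jump1} give $\gpu = S\phi$ and $\dnpu = (-\tfrac12 I + D')\phi$, so using the definition \eqref{eq:Caldb'} of $\Caldb'$,
\[
(\vfb\cdot\bn)\dnpu + \vfb\cdot\nT(\gpu) = -\tfrac12(\vfb\cdot\bn)\phi + \Caldb'\phi.
\]
Substituting into the boundary condition \eqref{eq:obliqueLext} and using the definition \eqref{eq:opLIZ} of $\opLIZ$, we obtain $-\opLIZ\phi = g$, which is \eqref{eq:BIE_eoipL}.

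For the converse, let $u$ solve the EORP. Since $\gpu\in H^1(\Gamma)\subset H^{1/2}(\Gamma)$, the interior Dirichlet problem (Definition \ref{def:idp}) with data $\gpu$ has a unique solution $w\in H^1(\Oi)$; moreover, since $\gamma^- w = \gpu\in H^1(\Gamma)$, Theorem \ref{thm:Necas} yields $w\in H^{3/2}(\Oi)$ and thus $\partial_n^- w\in L^2(\Gamma)$. Define $\phi := \partial_n^- w - \dnpu \in L^2(\Gamma)$. For any $\bx\in\Oe$, applying Green's representation formula to $w$ (harmonic in $\Oi$) evaluated at $\bx\not\in\Oi$ gives
\[
0 = \cS(\partial_n^- w)(\bx) - \cD(\gamma^- w)(\bx),
\]
while applying Green's representation to $u$ in $\Oe\cap B_R$ and letting $R\to\infty$ (with the $\partial B_R$ contribution vanishing by the decay conditions in Definition \ref{prob:eoipL}, which is possible precisely because $d\geq 3$) gives
\[
u(\bx) = -\cS(\dnpu)(\bx) + \cD(\gpu)(\bx).
\]
Adding these two identities and using $\gamma^- w = \gpu$, the double-layer contributions cancel and we obtain $u(\bx) = \cS\phi(\bx)$ on $\Oe$. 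Applying the ``if'' direction to this $\phi$ shows that $\phi$ satisfies \eqref{eq:BIE_eoipL}.

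The main technical obstacle is justifying Green's representation rigorously in the Lipschitz setting for both the bounded domain $\Oi$ and the unbounded domain $\Oe$, with $L^2$ Neumann and $H^1$ Dirichlet data. In $\Oi$ this is standard once $w\in H^{3/2}(\Oi)$ is established, and in $\Oe$ it follows from applying Green's representation on $\Oe\cap B_R$ and passing to the limit $R\to\infty$, where the surface integral on $\partial B_R$ vanishes thanks to the decay at infinity. The identification of the various traces (non-tangential versus Sobolev) throughout the argument is justified by Lemmas \ref{lem:A9} and \ref{lem:A10}.
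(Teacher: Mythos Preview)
Your proof is correct. The ``if'' direction matches the paper's argument essentially verbatim. For the converse, however, you take a genuinely different route.

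The paper argues as follows: since $S:L^2(\Gamma)\to H^1(\Gamma)$ is invertible (Lemma~\ref{lem:SLP}), define $\phi:=S^{-1}\gpu\in L^2(\Gamma)$; then $v:=\cS\phi$ satisfies the EDP with boundary data $\gamma^+ v = S\phi = \gpu$, so $v=u$ by uniqueness of the EDP, and the first part of the proof gives \eqref{eq:BIE_eoipL}. This is short and uses only the invertibility of $S$ plus EDP uniqueness.

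Your approach instead constructs $\phi$ explicitly as $\partial_n^- w - \dnpu$, where $w$ is the harmonic extension of $\gpu$ into $\Oi$, and then obtains $u=\cS\phi$ by adding the two Green's representations (for $w$ in $\Oi$ evaluated outside, and for $u$ in $\Oe$). This is the classical ``jump'' construction and is more self-contained in that it avoids invoking the invertibility of $S$ as a black box; on the other hand it requires justifying the exterior Green's representation, in particular that the $\partial B_R$ contribution vanishes. For this you need not only $u=o(|\bx|^{3-d})$ but also $|\nabla u|=o(|\bx|^{2-d})$, which follows from standard interior estimates for harmonic functions but which you should state explicitly. Note that your $\phi$ coincides with the paper's: since $\cS\phi|_{\Oi}$ solves the IDP with data $\gpu$, it equals $w$, and the jump relation $\partial_n^- \cS\phi - \partial_n^+ \cS\phi = \phi$ recovers your formula.
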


\begin{theorem}[$A^\prime_{E,\bZ,\alpha}$ can be used to solve the IORP for $d\geq 2$] \label{thm:star_in_obliqueL} The single-layer potential $u={\cal S} \phi$, with density $\phi\in L^2(\Gamma)$, satisfies the IORP (Definition \ref{prob:ioipL}) if and only if
\begin{equation} \label{eq:BIE_ioipL}
A^\prime_{E,\bZ,\alpha}\phi = g.
\end{equation}
Conversely, if $u$ satisfies the IORP, then, provided $a\neq \mathrm{Cap}_\Gamma$ when $d=2$, $u={\cal S} \phi$, where $\phi\in L^2(\Gamma)$ satisfies \eqref{eq:BIE_ioipL}.
\end{theorem}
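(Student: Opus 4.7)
The proof splits naturally into two parts corresponding to the two statements of the theorem.

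For the first (``if and only if'') part, suppose $u = \cS\phi$ with $\phi\in \LtG$. Standard mapping properties (\eqref{eq:LPmap}, \eqref{eq:map1} with $s=1/2$, and the Neumann jump relation in \eqref{eq:jump1}) yield $u \in H^{3/2}(\Oi) \subset H^1(\Oi)$, $\gmu = S\phi \in \HoG$, and $\dnmu = (\half I + D')\phi \in \LtG$, so the regularity requirements of Definition \ref{prob:ioipL} are met. Substituting these identities into the left-hand side of \eqref{eq:obliqueL} gives
\begin{equation*}
(\bZ\cdot\bn)\dnmu + \bZ\cdot \nT(\gmu) + \alpha\,\gmu = \half(\bZ\cdot\bn)\phi + (\bZ\cdot\bn)D'\phi + \bZ\cdot \nT(S\phi) + \alpha S\phi,
\end{equation*}
which by the definition \eqref{eq:Caldb'} of $K'_\bZ$ and the definition \eqref{eq:opLEZ} of $A'_{E,\bZ,\alpha}$ equals $A'_{E,\bZ,\alpha}\phi$. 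Hence \eqref{eq:obliqueL} is equivalent to \eqref{eq:BIE_ioipL}.

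For the converse, given an IORP solution $u$ (so in particular $\gmu \in \HoG$), the strategy is to exhibit $\phi \in \LtG$ with $\cS\phi = u$ in $\Oi$ and then invoke the first part. Under the stated hypotheses, $S:\LtG \to \HoG$ is an isomorphism, so one may set $\phi := S^{-1}(\gmu) \in \LtG$. The function $v := \cS\phi$ is harmonic in $\Oi$ with $\gamma^- v = S\phi = \gmu = \gamma^- u$, so uniqueness of the Laplace interior Dirichlet problem gives $v = u$ in $\Oi$; hence $u = \cS\phi$, and \eqref{eq:BIE_ioipL} follows from the first part.

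The main technical obstacle is the invertibility of $S:\LtG \to \HoG$. For $d\geq 3$ this is a classical result of Verchota \cite{Ve:84} valid on arbitrary bounded Lipschitz domains. For $d=2$, $S$ is Fredholm of index zero on these spaces but fails to be injective precisely in the degenerate case $a=\mathrm{Cap}_\Gamma$ (the logarithmic character of the fundamental solution \eqref{eq:fund} allows nonzero constants to lie in the kernel in that case); the hypothesis $a\neq \mathrm{Cap}_\Gamma$, always achievable by taking e.g.~$a>\diam(\Gamma)$, is precisely what rules this out and restores invertibility, completing the plan.
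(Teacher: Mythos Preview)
Your proof is correct and follows essentially the same approach as the paper: the forward direction uses the jump relations \eqref{eq:jump1} and the definition of $K'_{\vfb}$ to identify the oblique Robin boundary condition with $A'_{E,\vfb,\alpha}\phi=g$, and the converse uses invertibility of $S:\LtG\to\HoG$ (Lemma~\ref{lem:SLP}) to write $u=\cS(S^{-1}\gmu)$ via uniqueness of the IDP. Your added discussion of why the hypothesis $a\neq\mathrm{Cap}_\Gamma$ is needed in $d=2$ is accurate and matches the paper's Lemma~\ref{lem:SLP}.
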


\begin{proof}[Proof of Theorem \ref{thm:star_in_obliqueLE}]  If $d \geq 3$ and $u={\cal S} \phi$ with $\phi\in L^2(\Gamma)$, then by, e.g., \cite[Theorem 2.14]{ChGrLaSp:12} $u\in C^2(\Oe)$ and $\Delta u =0$ in $\Oe$, and, by \eqref{eq:Sasym}, $u(\bx)=O(|\bx|^{2-d})$ as $|\bx|\to\infty$, uniformly in $\bx/|\bx|$.
By, e.g., \cite[Theorem 2.14]{ChGrLaSp:12}, $u\in H^1_{\mathrm{loc}}(\Oe)$ and, by the jump relations \eqref{eq:jump1} and the definition of $\Caldb'$ \eqref{eq:Caldb'}, \eqref{eq:obliqueLext} holds if and only if $\phi$ satisfies \eqref{eq:BIE_eoipL}. Conversely, if $u$ satisfies the EORP, then, by the invertibility of $S$ recalled in Lemma \ref{lem:SLP} below,
  $\phi:= S^{-1} \gpu  \in L^2(\Gamma)$. Defining $v:=\cS \phi$, $v$ satisfies the Laplace exterior Dirichlet problem with boundary data $\gpv=\gamma^+\cS \phi = S\phi = \gpu$, so that $v=u$ by uniqueness for the EDP. As established in the first part of the proof, since $u$ satisfies the EORP, $\phi$ satisfies \eqref{eq:BIE_eoipL}.
\end{proof}

\

\bpf[Proof of Theorem \ref{thm:star_in_obliqueL}]
This is very similar to the proof of Theorem \ref{thm:star_in_obliqueLE}, except that now we can also consider $d=2$, since (by definition) there are no conditions at infinity imposed on the solution of the IORP.
\epf

\begin{theorem}\label{thm:inverse_formula_L}
Let $P_{\rm DtN}^\pm :H^1(\Gamma)\rightarrow \LtG$ denote the Dirichlet-to-Neumann maps for Laplace's equation in $\Omega^\pm$; i.e., the maps $g_D\mapsto \partial_n^\pm u$ for $u$ as in Definitions \ref{def:idp}/\ref{def:edp} respectively.
Let $P_{\rm ItD}^{-, \alpha, \vfb}: \LtG\rightarrow \HoG$ denote the map $g\mapsto \gmu$ where $u$ is as in Definition \ref{prob:ioipL}.
Let $P_{\rm ItD}^{+, \alpha, \vfb}: \LtG\rightarrow \HoG$ denote the map $g\rightarrow \gpu$ where $u$ is as in Definition \ref{prob:eoipL}.
Then, as operators on $\LtG$,
\beq\label{eq:inverse1}
\big( \opLEZ \big)^{-1} = \frac{1}{\vfb\cdot \bn}I - \left( P^+_{\rm DtN} + \frac{1}{ \vfb\cdot \bn}\big( \alpha +\vfb\cdot\nT \big) \right)
P^{-, \alpha, \vfb}_{\rm ItD}
\eeq
and
\beq\label{eq:inverse2}
\big( \opLIZ \big)^{-1} = \frac{1}{\vfb\cdot \bn}I - \left( P^-_{\rm DtN} + \frac{1}{ \vfb\cdot \bn}\big( -\alpha +\vfb\cdot\nT \big) \right)
P^{+, \alpha, \vfb}_{\rm ItD}.
\eeq
\end{theorem}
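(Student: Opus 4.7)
The approach is to start from the BIE, identify $u:=\cS\phi$ as the solution of the matching oblique Robin problem (via Theorems \ref{thm:star_in_obliqueL} and \ref{thm:star_in_obliqueLE}), and then recover $\phi$ itself from the jump relation $\phi = \dnmu - \dnpu$ (an immediate consequence of the single-layer jump relations \eqref{eq:jump1}). The remaining task is to express $\dnmu$ and $\dnpu$ in terms of $g$, which is where the Dirichlet-to-Neumann and inverse-to-Dirichlet maps enter.

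To prove \eqref{eq:inverse1}, given $g\in\LtG$ set $\phi:=(\opLEZ)^{-1}g$ (which exists by Part (v) of Theorem \ref{thm:Laplace_ext}) and $u:=\cS\phi$. Expanding $\Caldb'$ via \eqref{eq:Caldb'} and using \eqref{eq:jump1}, the equation $\opLEZ\phi = g$ is equivalent to
\beqs
(\vfb\cdot\bn)\dnmu + \vfb\cdot\nT(\gmu) + \alpha\,\gmu = g \quad\text{on } \Gamma,
\eeqs
so $u$ solves the IORP of Definition \ref{prob:ioipL} with data $g$. In particular $\gmu = P^{-,\alpha,\vfb}_{\rm ItD}g$, and rearranging the boundary condition gives
\beqs
\dnmu = \frac{1}{\vfb\cdot\bn}\Big(g - (\alpha + \vfb\cdot\nT)\,P^{-,\alpha,\vfb}_{\rm ItD}g\Big).
\eeqs
Continuity of $\cS$ across $\Gamma$ yields $\gpu = S\phi = \gmu$, whence $\dnpu = P^+_{\rm DtN}(\gpu) = P^+_{\rm DtN}\,P^{-,\alpha,\vfb}_{\rm ItD}g$. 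Substituting into $\phi = \dnmu - \dnpu$ produces exactly the right-hand side of \eqref{eq:inverse1}.

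The proof of \eqref{eq:inverse2} runs along the same lines, with one sign twist: by Theorem \ref{thm:star_in_obliqueLE}, the equation $\opLIZ\phi = g$ makes $u:=\cS\phi$ solve the EORP (Definition \ref{prob:eoipL}) with data $-g$, not $g$. Consequently $\gpu = -P^{+,\alpha,\vfb}_{\rm ItD}g$, and rearranging \eqref{eq:obliqueLext} with data $-g$ yields
\beqs
\dnpu = -\frac{1}{\vfb\cdot\bn}g + \frac{1}{\vfb\cdot\bn}(\vfb\cdot\nT - \alpha)\,P^{+,\alpha,\vfb}_{\rm ItD}g,
\eeqs
while $\dnmu = P^-_{\rm DtN}(\gmu) = -P^-_{\rm DtN}\,P^{+,\alpha,\vfb}_{\rm ItD}g$; substituting into $\phi = \dnmu - \dnpu$ delivers \eqref{eq:inverse2}. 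The arguments are essentially algebraic, so the main obstacle is not analytical difficulty but disciplined bookkeeping of signs and mapping properties: one uses that $1/(\vfb\cdot\bn)\in L^\infty(\Gamma)$ by \eqref{eq:c}, that $P^{\pm,\alpha,\vfb}_{\rm ItD}$ maps $\LtG$ into $H^1(\Gamma)$ so that $\vfb\cdot\nT\,P^{\pm,\alpha,\vfb}_{\rm ItD}:\LtG\to\LtG$ is bounded, and that $P^\pm_{\rm DtN}:H^1(\Gamma)\to\LtG$, which together guarantee that every operator on the right-hand sides of \eqref{eq:inverse1} and \eqref{eq:inverse2} is a bounded map $\LtG\to\LtG$.
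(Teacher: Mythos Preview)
Your proof is correct and follows essentially the same route as the paper's: set $u=\cS\phi$, identify it as the solution of the relevant oblique Robin problem via Theorems \ref{thm:star_in_obliqueL}/\ref{thm:star_in_obliqueLE}, and recover $\phi$ from the jump relation $\phi=\dnmu-\dnpu$ together with the boundary condition and the appropriate Dirichlet-to-Neumann map. Your explicit treatment of the sign twist for \eqref{eq:inverse2} and of the mapping properties is a little more detailed than the paper's terse version, but the argument is the same.
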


\bpf
We first prove \eqref{eq:inverse1}. Suppose $\opLEZ \phi =g$ with $\phi,g\in \LtG$ and let $u:= \cS\phi$. Then $\gpu=\gmu= P^{-,\alpha,\vfb}_{\rm ItD} g$ by the first jump relation in \eqref{eq:jump1} and Theorem \ref{thm:star_in_obliqueL}. By the second jump relation in \eqref{eq:jump1}, the definition of $P_{\rm DtN}^+$, and the boundary condition \eqref{eq:obliqueL},
\begin{align*}
\phi &= \dnmu - \dnpu, \\
&= \frac{1}{\vfb\cdot\bn} \big( g - \vfb \cdot \nT(\gmu) - \alpha \gmu\big) - P_{\rm DtN}^+ \gpu
= \frac{1}{\vfb\cdot\bn}g - \left( P_{\rm DtN}^+ + \frac{\alpha + \vfb\cdot\nT}{\vfb \cdot\bn}\right) P_{\rm ItD}^{-,\alpha,\vfb}g,
\end{align*}
which implies \eqref{eq:inverse1}.
The proof of \eqref{eq:inverse2} is then very similar, using Theorem \ref{thm:star_in_obliqueLE} instead of
 Theorem \ref{thm:star_in_obliqueL}.
\epf

\subsection{Statement of the wellposedness results and implications for the BIEs in Theorems \ref{thm:Laplace_int} and \ref{thm:Laplace_ext}} \label{subsec:oblique3}

\begin{theorem}[Uniqueness for the IORP] \label{thm:oblique:unique} Suppose that, for some $\beta\in (0,1]$, $\bZ\in (C^{0,\beta}(\Gamma))^d$ and $\alpha\in C^{0,\beta}(\Gamma)$ and that, for some constants $c, c_0>0$,
\begin{equation} \label{eq:lowerbounds}
\bZ(\bx)\cdot \bn(\bx) \geq c  \quad \mbox{for almost every } \bx\in \Gamma \quad \mbox{and} \quad \alpha(\bx) \geq c_0 \quad\tfor \bx\in \Gamma.
\end{equation}
Then the IORP has at most one solution.
\end{theorem}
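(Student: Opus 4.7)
The approach is to mimic the Rellich-identity strategy already used in \S4 to prove coercivity of the new BIOs, applied now to a solution of the homogeneous IORP. Assume $g=0$; the goal is to show $u\equiv 0$.

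First, extend $\bZ$ and $\alpha$ into $\Omega^-$. By Lemma \ref{lem:Lipschitz} (after first approximating $\bZ$ by Lipschitz fields in the Hölder case and passing to the limit at the end), let $\widetilde{\bZ}\in (W^{1,\infty}(\Rea^d))^d$ be a compactly supported extension of $\bZ$, and let $\widetilde{\alpha}\in W^{1,\infty}(\Omega^-)$ be a Lipschitz extension of $\alpha$. By Ne\v cas' theorem and the hypotheses on $\gamma^- u$ and $\partial_n^- u$, $u\in V(\Omega^-)$, so Lemma \ref{lem:int} applies; combined with $\Delta u=0$, it yields
\begin{equation*}
\int_\Gamma \Big[(\bZ\!\cdot\!\bn)\big((\partial_n^- u)^2-|\nT(\gamma^-u)|^2\big)+2\big(\bZ\!\cdot\!\nT(\gamma^- u)+\alpha\,\gamma^- u\big)\partial_n^- u\Big]\rd s = \int_{\Omega^-}I\,\rd\bx,
\end{equation*}
where $I=(2\widetilde{\alpha}-\nabla\!\cdot\!\widetilde{\bZ})|\gu|^2+2\,\partial_i\widetilde{Z}_j\partial_i u\partial_j u+2u\nabla\widetilde{\alpha}\cdot\gu$.

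Second, simplify the boundary integrand using $g=0$. Since $\widetilde{\bZ}|_\Gamma=\bZ$ and $\widetilde{\alpha}|_\Gamma=\alpha$, the multiplier $\mathcal{Z}u=\widetilde{\bZ}\cdot\gu+\widetilde{\alpha}u$ restricts to $\bZ\cdot\widetilde{\gamma}^-(\gu)+\alpha\gamma^- u$ on $\Gamma$, which equals $g=0$ by the IORP boundary condition. Using $\bZ\!\cdot\!\nT(\gamma^-u)+\alpha\gamma^- u = -(\bZ\!\cdot\!\bn)\partial_n^- u$ to eliminate the mixed term and collecting, the boundary integrand collapses to $-(\bZ\!\cdot\!\bn)\big((\partial_n^- u)^2+|\nT(\gamma^-u)|^2\big) = -(\bZ\!\cdot\!\bn)\,|\widetilde{\gamma}^-(\gu)|^2$, which is $\leq -c\,|\widetilde{\gamma}^-(\gu)|^2$ by \eqref{eq:lowerbounds}. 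So the left-hand side is $\leq 0$.

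Third, arrange the extensions so that the interior integrand $\int_{\Omega^-}I\,\rd\bx$ is non-negative. This is the heart of the argument and is where the positivity conditions \eqref{eq:lowerbounds} on both $\bZ\!\cdot\!\bn$ and $\alpha$ are exploited. Choose $\widetilde{\bZ}$ supported in a thin collar of $\Gamma$ and $\widetilde{\alpha}$ equal to a large constant $M$ in the bulk of $\Omega^-$, smoothly interpolating from $\alpha|_\Gamma\geq c_0$ on $\Gamma$ to $M$ across the collar. In the bulk $\widetilde{\bZ}=0$ and $\nabla\widetilde{\alpha}=0$, so $I=2M|\gu|^2\geq 0$; in the collar the unwanted contributions of $\nabla\!\cdot\!\widetilde{\bZ}$, $D\widetilde{\bZ}$ and $2u\nabla\widetilde{\alpha}\cdot\gu$ are absorbed via Cauchy--Schwarz with weights, the lower bound $\widetilde{\alpha}\geq c_0/2$, and a Friedrichs/Poincar\'e-type estimate balancing $\|u\|_{L^2(\Omega^-)}$ against $\|\gu\|_{L^2(\Omega^-)}$ and $\|\gamma^- u\|_{L^2(\Gamma)}$, with the latter itself absorbed into the strictly negative boundary contribution from Step~2 when the collar is chosen sufficiently thin.

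Finally, conclude. The identity now reads $0\geq \int_\Gamma(\text{boundary})\,\rd s = \int_{\Omega^-}I\,\rd\bx\geq 0$, forcing both integrals to vanish. In particular $|\widetilde{\gamma}^-(\gu)|^2=0$ a.e.\ on $\Gamma$, so $\nT(\gamma^-u)=0$ a.e., which means $\gamma^-u$ is constant on each connected component of $\Gamma$; by uniqueness of the interior Dirichlet problem, $u$ is constant on each connected component of $\Omega^-$. Substituting $\gu=0$ into the boundary condition yields $\alpha\gamma^-u=0$, and the lower bound $\alpha\geq c_0>0$ forces $\gamma^-u=0$, whence $u\equiv 0$. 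The combination with Theorem \ref{thm:star_in_obliqueLE} then yields Part (v) of Theorem \ref{thm:Laplace_ext}. The \emph{main obstacle} is Step~3: the standard sufficient condition \eqref{eq:alpha_bound_boundary} used in Part~(vi) of Theorems \ref{thm:Laplace_int}/\ref{thm:Laplace_ext} fails here, because $c_0$ may be arbitrarily small; the trick is to take $\widetilde{\alpha}$ large in the \emph{interior} (where it is unconstrained) while only matching $\alpha|_\Gamma$ on $\Gamma$, which forces a careful absorption of the resulting $u\nabla\widetilde{\alpha}\cdot\gu$ cross-term through the boundary contribution $-(\bZ\!\cdot\!\bn)|\widetilde{\gamma}^-(\gu)|^2$ of Step~2.
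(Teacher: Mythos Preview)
Your approach is fundamentally different from the paper's, and Step~3 contains a genuine gap that I do not believe can be closed along the lines you sketch.

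\textbf{What the paper does.} The paper does not use the Rellich identity at all here. Instead it first bootstraps regularity: using the Pipher and Kenig--Pipher results for the Laplace oblique \emph{derivative} problem (Theorem~\ref{thm:RegOblSt}), it shows (Theorem~\ref{thm:RegObl}) that any solution of the homogeneous IORP lies in $C^{1,\gamma}(\overline{\Omega^-})$. With this classical regularity in hand, the argument is the textbook maximum-principle one: the harmonic $u$ attains its maximum at some $\bx_0\in\Gamma$; since $u\in C^1(\overline{\Omega^-})$ and $\bx_0-h\bZ(\bx_0)\in\overline{\Omega^-}$ for small $h>0$ (Lemma~\ref{lem:obliqueness}), one has $\bZ(\bx_0)\cdot\nabla u(\bx_0)\geq 0$, and the boundary condition then forces $\alpha(\bx_0)u(\bx_0)\leq 0$, hence $u\leq 0$; applying the same to $-u$ gives $u=0$. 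Note that the H\"older hypothesis on $\bZ$ and $\alpha$ is used precisely to invoke the regularity theory; your argument makes no essential use of it, which is already a warning sign.

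\textbf{Why your Step~3 does not close.} The difficulty is a scaling mismatch that cannot be repaired by tuning $M$ and the collar width $\delta$. Taking $\widetilde{\alpha}\approx M$ in the bulk and $\widetilde{\alpha}=\alpha$ on $\Gamma$ forces $|\nabla\widetilde{\alpha}|\sim M/\delta$ in the collar. The best Cauchy--Schwarz splitting of the cross-term gives
\[
\Big|\int_{\mathrm{collar}}2u\,\nabla\widetilde{\alpha}\!\cdot\!\nabla u\Big|
\;\lesssim\; M\|\nabla u\|^2_{L^2(\mathrm{collar})}
\;+\;\frac{M}{\delta^{2}}\|u\|^2_{L^2(\mathrm{collar})}.
\]
The first term already cancels the good $2\widetilde{\alpha}|\nabla u|^2$ contribution in the collar, so the bulk gain $2M\|\nabla u\|^2_{L^2(\mathrm{bulk})}$ cannot help (it lives on a disjoint region). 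Feeding the collar Poincar\'e inequality $\|u\|^2_{L^2(\mathrm{collar})}\lesssim \delta\|\gamma^-u\|^2_{L^2(\Gamma)}+\delta^2\|\nabla u\|^2_{L^2(\mathrm{collar})}$ into the second term still leaves a residual of order $(M/\delta)\|\gamma^-u\|^2_{L^2(\Gamma)}$, whose coefficient blows up, while the only boundary resource available is $c\|\widetilde{\gamma}^-(\nabla u)\|^2_{L^2(\Gamma)}$ with the \emph{fixed} constant $c$. Even working modulo constants and using boundary Poincar\'e to replace $\|\gamma^-u\|$ by $\|\nabla_\Gamma(\gamma^-u)\|$, you are trying to absorb a term with coefficient $M/\delta\to\infty$ into one with coefficient $c$; this cannot succeed. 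More conceptually, an energy argument of this type would yield a quantitative lower bound uniform in small $\alpha$, contradicting the fact that as $\alpha\to 0^+$ the problem degenerates to the oblique \emph{derivative} problem, which has the constants in its kernel. The paper's maximum-principle route sidesteps this entirely.
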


\begin{corollary}[Existence for the IORP and invertibility of $\opLEZ$]\label{cor:oblique:unique}
If the assumptions of Theorem \ref{thm:oblique:unique} hold and $a\neq \mathrm{Cap}_\Gamma$ when $d=2$, then
 $\opLEZ$ is invertible and the IORP has exactly one solution.
\end{corollary}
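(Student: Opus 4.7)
The plan is to reduce the corollary to a standard Fredholm-alternative argument, leveraging the injectivity that uniqueness of the IORP (Theorem \ref{thm:oblique:unique}) furnishes. First, I would observe that Part (iv) of Theorem \ref{thm:Laplace_ext} applies under the hypotheses of Theorem \ref{thm:oblique:unique}, since $\bZ\in (C^{0,\beta}(\Gamma))^d\subset (C(\Gamma))^d$ and $\bZ\cdot\bn\geq c>0$ a.e.~by \eqref{eq:lowerbounds}. Hence $\opLEZ:\LtG\to\LtG$ is the sum of a coercive operator and a compact operator, i.e.\ Fredholm of index zero, so invertibility is equivalent to injectivity.

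Next I would prove injectivity. Suppose $\phi\in\LtG$ with $\opLEZ\phi=0$, and set $u:=\cS\phi$. By Theorem \ref{thm:star_in_obliqueL} (with $g=0$), $u$ is a solution of the IORP (Definition \ref{prob:ioipL}) with zero boundary data. Since the hypotheses of Theorem \ref{thm:oblique:unique} are in force, $u\equiv 0$ in $\Oi$. Taking the interior Dirichlet trace and using the first jump relation in \eqref{eq:jump1}, we obtain $S\phi=\gmu=0$ on $\Gamma$. Invoking the invertibility of $S:\LtG\to \HoG$ (Lemma \ref{lem:SLP}, which holds for $d\geq 3$ unconditionally and for $d=2$ precisely under the assumption $a\neq\mathrm{Cap}_\Gamma$) yields $\phi=0$. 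Therefore $\opLEZ$ is injective, and hence, by Fredholmness, invertible.

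Finally, existence for the IORP follows: given $g\in\LtG$, let $\phi:=(\opLEZ)^{-1}g$; then $u:=\cS\phi$ satisfies the IORP by Theorem \ref{thm:star_in_obliqueL}, and uniqueness from Theorem \ref{thm:oblique:unique} guarantees this is the only solution. The only non-trivial step is the injectivity reduction, and within that the use of Lemma \ref{lem:SLP} is where the hypothesis $a\neq\mathrm{Cap}_\Gamma$ enters when $d=2$; this is expected, since this is precisely the condition ensuring $S$ has trivial kernel in the two-dimensional logarithmic regime. All the substantive analytic work has been done upstream in Theorem \ref{thm:oblique:unique} and Part (iv) of Theorem \ref{thm:Laplace_ext}, so the corollary itself is essentially a bookkeeping consequence.
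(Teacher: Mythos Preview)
Your proposal is correct and follows essentially the same route as the paper: Fredholmness of $\opLEZ$ via Part (iv) of Theorem \ref{thm:Laplace_ext}, injectivity by setting $u=\cS\phi$, invoking uniqueness of the IORP (Theorem \ref{thm:oblique:unique}), and then using $S\phi=\gmu=0$ together with Lemma \ref{lem:SLP} to conclude $\phi=0$. The only small addition worth noting is that Part (iv) of Theorem \ref{thm:Laplace_ext} is stated for $d\geq 3$, so when $d=2$ you should also cite the observation at the start of \S\ref{sec:d=2} that Parts (iii) and (iv) carry over to $d=2$.
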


\begin{theorem}[Uniqueness for the EORP] \label{thm:oblique:uniqueE} Suppose that, for some $\beta\in (0,1]$, $\bZ\in (C^{0,\beta}(\Gamma))^d$ and $\alpha\in C^{0,\beta}(\Gamma)$, and that \eqref{eq:lowerbounds} holds, for some constants $c, c_0>0$.
Then the EORP has at most one solution.
\end{theorem}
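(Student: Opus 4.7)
The plan is to establish uniqueness via a Rellich-type identity, paralleling the proof of Theorem~\ref{thm:oblique:unique} for the IORP. Suppose $u$ is a solution of the homogeneous EORP (i.e., $g=0$); by the regularity results in the appendix (notably Theorem~\ref{thm:Necas} and Part~(iii) of Theorem~\ref{thm:JeKe:95}), $u \in H^{3/2}_{\mathrm{loc}}(\Oe)$, so the integrated Rellich identity of Lemma~\ref{lem:int} applies on $\Oe \cap B_R$.

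The first step is to apply that identity with the multiplier $\vfd \cdot \nabla u + \mu\, u$, where $\vfd \in (W^{1,\infty}(\R^d))^d$ is a compactly-supported Lipschitz extension of $\bZ$ (via Lemma~\ref{lem:Lipschitz}) and $\mu \in W^{1,\infty}(\R^d)$ is a compactly-supported Lipschitz extension of $-\alpha$ (via a smooth Lipschitz approximation of $\alpha$ followed by a limit passage when $\beta<1$); here $\mu$ plays the role of the scalar function called $\alpha$ in Lemma~\ref{lem:Rellich}. The sign $\mu|_\Gamma = -\alpha$ is crucial: combined with the homogeneous boundary condition $(\bZ\cdot\bn)\dnpu + \bZ\cdot\nabla_\Gamma\gpu = \alpha\,\gpu$, it yields $(\vfd\cdot\nabla u + \mu u)|_\Gamma \equiv 0$, so the boundary integrand collapses to $\bF\cdot\bn|_\Gamma = -(\bZ\cdot\bn)|\nabla u|^2$, where $\bF := 2(\vfd\cdot\nabla u + \mu u)\nabla u - |\nabla u|^2 \vfd$. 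Moreover, the compact support of $\vfd$ and $\mu$ forces $\bF \equiv 0$ on $\GR$ for all sufficiently large $R$, so the integral at infinity contributes zero without any decay-at-infinity argument on $u$ (and the same argument then applies uniformly for $d=2$ and $d\geq 3$). Accounting for the outward normal to $\Oe$ being $-\bn$ on $\Gamma$, Lemma~\ref{lem:int} reduces, in the limit $R\to\infty$, to
\begin{equation*}
\int_\Oe V\, \rd\bx \;=\; \int_\Gamma (\bZ\cdot\bn)|\nabla u|^2\, \rd s \;\geq\; c\int_\Gamma |\nabla u|^2\, \rd s \;\geq\; 0,
\end{equation*}
with $V := (2\mu - \nabla\cdot\vfd)|\nabla u|^2 + 2\,\partial_i \widetilde{Z}_j\, \partial_i u\, \partial_j u + 2u\,\nabla\mu\cdot\nabla u$.

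The main obstacle is to arrange that $\int_\Oe V\, \rd\bx \leq 0$, which together with the above will force both sides to vanish. This is done by exploiting the freedom in the extensions: outside the compact supports of $\vfd$ and $\mu$ the integrand $V$ is identically zero, and in the remaining bounded region one chooses $\mu$ very negative away from $\Gamma$ so that $(2\mu - \nabla\cdot\vfd)$ dominates $2\|D\vfd\|_\infty$ (killing the indefinite matrix term, bounded by $2\|D\vfd\|_\infty|\nabla u|^2$), while the cross-term $2u\nabla\mu\cdot\nabla u$, supported only in a thin transition layer near $\Gamma$, is absorbed via a weighted Young inequality together with a tubular-neighbourhood estimate relating $\|u\|_{L^2(\mathrm{layer})}^2$ to $\|\gpu\|_\LtG^2$ and $\|\nabla u\|_{L^2(\mathrm{layer})}^2$. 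Once the identity forces $|\nabla u|=0$ almost everywhere on $\Gamma$, the homogeneous boundary condition combined with $\alpha \geq c_0 > 0$ gives $\gpu = 0$, so $u$ has zero Cauchy data on $\Gamma$. Green's integral representation then yields $u \equiv u_\infty$ in $\Oe$ (with $u_\infty = 0$ when $d\geq 3$); substituting the constant $u \equiv u_\infty$ back into the boundary condition gives $-\alpha\,u_\infty = 0$, so $u_\infty = 0$ also when $d=2$, completing the proof that $u\equiv 0$ in $\Oe$.
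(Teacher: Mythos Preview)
Your approach differs fundamentally from the paper's, and as written it has a genuine gap. First, a correction: the paper does \emph{not} use a Rellich identity for Theorem~\ref{thm:oblique:unique} (nor for Theorem~\ref{thm:oblique:uniqueE}); both are proved via the maximum principle. The key input is the regularity result Corollary~\ref{thm:RegOblE} (built on Theorem~\ref{thm:RegObl} and the oblique-derivative regularity theory of Pipher and Verchota recalled in Theorem~\ref{thm:RegOblSt}), which upgrades any solution of the homogeneous EORP to $C^{1,\gamma}(\overline{\Oe\cap B_R})$ for every $R>0$. One then shows that a real-valued $u$ attains its maximum on $\Gamma$ (when $d=2$ one uses the mean-value identity \eqref{eq:average} to rule out the maximum being taken only at infinity). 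At a maximum point $\bx_0\in\Gamma$, $\bZ(\bx_0)$ points into $\Oe$ by Lemma~\ref{lem:obliqueness}, so $\bZ(\bx_0)\cdot\nabla u(\bx_0)\leq 0$, and the boundary condition \eqref{eq:obliqueLext} with $g=0$ gives $\alpha(\bx_0)u(\bx_0)=\bZ(\bx_0)\cdot\nabla u(\bx_0)\leq 0$, whence $u\leq 0$.

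The gap in your Rellich argument is the step ``arrange that $\int_\Oe V\,\rd\bx\leq 0$''. The theorem assumes only $\alpha\geq c_0>0$ with $c_0$ arbitrary, so $\alpha$ may be far smaller than the Lipschitz constant of $\bZ$ (and hence of any extension $\vfd$). On $\Gamma$ you are forced to take $\mu=-\alpha$, so in the width-$\delta$ layer adjacent to $\Gamma$ the contribution $(2\mu-\nabla\cdot\vfd)|\nabla u|^2+2\,\partial_i\widetilde{Z}_j\,\partial_iu\,\partial_ju$ need not be $\leq 0$ pointwise, and its integral over that layer is not $o(1)$ as $\delta\to 0$. Making $\mu$ jump to $-M$ (large) just beyond the layer produces $|\nabla\mu|\sim M/\delta$ there; combining your tubular-neighbourhood bound $\|u\|^2_{L^2(\mathrm{layer})}\lesssim \delta\|\gpu\|^2_\LtG+\delta^2\|\nabla u\|^2_{L^2(\mathrm{layer})}$ with Young's inequality leaves a residual term of order $M\|\gpu\|^2_\LtG$ on the right-hand side with a positive, non-vanishing coefficient. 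Since $\|\gpu\|_\LtG$ is precisely one of the quantities you are trying to show vanishes, the inequality cannot be closed. (You also overlook the second region where $\nabla\mu\neq 0$: a compactly-supported $\mu$ must transition from $-M$ back to $0$ somewhere.) Rellich identities of this type yield coercivity only when the zeroth-order coefficient is \emph{large} relative to $\|D\vfd\|$---compare Lemma~\ref{lem:key} and the condition~\eqref{eq:alpha_bound}---and the hypotheses of Theorem~\ref{thm:oblique:uniqueE} do not supply that largeness; this is why the paper relies on the maximum principle instead.
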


\begin{corollary}[Existence for the EORP and invertibility of $\opLIZ$]\label{cor:oblique:unique2}
If the assumptions of Theorem \ref{thm:oblique:uniqueE} hold and $d\geq 3$, then the EORP has exactly one solution and $\opLIZ$ is invertible.
\end{corollary}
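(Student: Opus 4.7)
The plan is to derive both conclusions from the equivalence between $A'_{I,\bZ,\alpha}$ and the EORP provided by Theorem~\ref{thm:star_in_obliqueLE}, combined with the coercivity-plus-compactness structure of $\opLIZ$ from Part~(iv) of Theorem~\ref{thm:Laplace_int} and the uniqueness result Theorem~\ref{thm:oblique:uniqueE}. Since the hypotheses include $\bZ\in (C^{0,\beta}(\Gamma))^d\subset (C(\Gamma))^d$ and \eqref{eq:c} (the first half of \eqref{eq:lowerbounds}), Part~(iv) of Theorem~\ref{thm:Laplace_int} applies and gives that $\opLIZ = A'_{I,\bZ,\alpha}$ is the sum of a coercive operator and a compact operator on $\LtG$. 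Hence $\opLIZ$ is Fredholm of index zero, so that invertibility on $\LtG$ is equivalent to injectivity.

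\noindent First I would establish injectivity of $\opLIZ$. Suppose $\phi\in\LtG$ satisfies $\opLIZ\phi = 0$, and set $u:= \cS\phi$. The ``only if'' half of Theorem~\ref{thm:star_in_obliqueLE} (applicable because $d\geq 3$) tells us that $u$ solves the EORP with data $g=0$. By Theorem~\ref{thm:oblique:uniqueE}, $u\equiv 0$ in $\Omega^+$, and hence $S\phi = \gamma^+\cS\phi = \gamma^+ u = 0$. Invoking the invertibility of $S:\LtG\to\LtG$ recalled in Lemma~\ref{lem:SLP} (which holds unconditionally for $d\geq 3$) we conclude that $\phi = 0$. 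Combined with the Fredholm property from the previous paragraph, this yields invertibility of $\opLIZ$ on $\LtG$.

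\noindent Existence for the EORP is then immediate: given $g\in \LtG$, set $\phi:= -(\opLIZ)^{-1}g\in\LtG$ and let $u:= \cS\phi$. The ``if'' direction of Theorem~\ref{thm:star_in_obliqueLE} shows that $u$ solves the EORP with data $g$. Uniqueness is already supplied by Theorem~\ref{thm:oblique:uniqueE}.

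\noindent The only potentially delicate point is that the argument requires the invertibility of $S$ on $\LtG$ (used to pass from $S\phi=0$ to $\phi=0$), which is the reason we must restrict to $d\geq 3$ here; in $d=2$ the analogous statement would need the extra hypothesis $a\neq \mathrm{Cap}_\Gamma$ that appears in Corollary~\ref{cor:oblique:unique}. Should Lemma~\ref{lem:SLP} be unavailable, an alternative route avoiding invertibility of $S$ is: from $u\equiv 0$ in $\Omega^+$ deduce $\gamma^+ u=0$; use continuity of the single-layer potential across $\Gamma$ to obtain $\gamma^- u = \gamma^+ u = 0$; apply uniqueness for the interior Laplace Dirichlet problem to conclude $u\equiv 0$ in $\Omega^-$; and recover $\phi=0$ from the jump relation $\phi = \partial_n^- u - \partial_n^+ u$. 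Either way, no new analytic obstacle arises beyond results already established.
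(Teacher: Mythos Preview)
Your proof is correct and follows essentially the same route as the paper: Fredholm index zero from Part~(iv) of Theorem~\ref{thm:Laplace_int}, injectivity via Theorem~\ref{thm:star_in_obliqueLE} combined with EORP uniqueness (Theorem~\ref{thm:oblique:uniqueE}) and injectivity of $S$ (Lemma~\ref{lem:SLP}), and then existence for the EORP from surjectivity of $\opLIZ$ together with Theorem~\ref{thm:star_in_obliqueLE}. One cosmetic point: Lemma~\ref{lem:SLP} gives invertibility of $S:\LtG\to H^1(\Gamma)$, not $\LtG\to\LtG$, but you only need injectivity on $\LtG$, which follows either way.
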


\subsection{Proofs of Theorems \ref{thm:oblique:unique} and \ref{thm:oblique:uniqueE}}\label{sec:obliqueproofs}

Recall that, for $1\leq p\leq\infty$, $H^{1,p}(\Gamma):=\{\phi\in L^p(\Gamma):\nabla_\Gamma \phi \in L^p(\Gamma)\}$ is a Banach space with the norm $\|\phi\|_{H^{1,p}(\Gamma)} := \|\phi\|_{L^p(\Gamma)} + \|\nabla_\Gamma \phi\|_{L^p(\Gamma)} $. Note that $H^1(\Gamma)=H^{1,2}(\Gamma)$, with equivalence of norms.

The following result is standard in the theory of potential theory on Lipschitz domains; see, e.g., \cite[Page 203]{Ve:07}.

\begin{lemma}\label{lem:obliqueness}
Suppose that $\bZ\in (C(\Gamma))^d$ and the first of the bounds \eqref{eq:lowerbounds} holds for some $c>0$. Then, for each $\bx\in \Gamma$ there exists $R>0$ and $F\in C^{0,1}(\R^{d-1})$ and a rotated coordinate system $0\tilde x_1...\tilde x_d$, with origin at $\bx$ and with the $\tilde x_d$ axis pointing in the direction $\bZ(\bx)$, such that, where $\tilde y^\prime := (\tilde y_1,...,\tilde y_{d-1})$,
$$
B_R(\bx)\cap \Oe = B_R(\bx)\cap \{\by = (\tilde y^\prime,\tilde y_d):\tilde y_d > F(\tilde y^\prime)\}, \quad B_R(\bx)\cap \Oi = B_R(\bx)\cap \{\by = (\tilde y^\prime,\tilde y_d):\tilde y_d < F(\tilde y^\prime)\}.
$$
\end{lemma}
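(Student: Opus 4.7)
The plan is to first establish a local Lipschitz graph representation of $\Gamma$ at $\bx$ in some coordinate system, and then use a projection argument to re-express $\Gamma$ as a graph in the direction $\bv := \bZ(\bx)/|\bZ(\bx)|$. Fix $\bx \in \Gamma$. Since $\Oi$ is Lipschitz, there exist $R_0 > 0$, a unit vector $\bv_0$, and $G \in C^{0,1}(\R^{d-1})$ such that, in a rotated coordinate system centred at $\bx$ with $y_d$-axis along $\bv_0$,
\[
B_{R_0}(\bx) \cap \Gamma = \{(\by^\prime, G(\by^\prime)) : \by^\prime \in V_0\},
\]
with $B_{R_0}(\bx) \cap \Oi$ corresponding to $\{y_d < G(\by^\prime)\}$ and $B_{R_0}(\bx) \cap \Oe$ to $\{y_d > G(\by^\prime)\}$, the outward unit normal being $\bn(\by) = (-\nabla G(\by^\prime), 1)/\sqrt{1 + |\nabla G|^2}$ for a.e.\ $\by^\prime \in V_0$; set $L := \|\nabla G\|_{L^\infty(V_0)}$. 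Since $\bZ$ is continuous, after shrinking $R_0$ I may assume $|\bZ(\by) - \bZ(\bx)| \leq c/2$ on $B_{R_0}(\bx) \cap \Gamma$; the hypothesis $\bZ \cdot \bn \geq c$ then yields
\[
\bv \cdot \bn(\by) \geq \frac{c/2}{|\bZ(\bx)|} =: c^\prime > 0 \qquad \text{for a.e.\ } \by \in B_{R_0}(\bx) \cap \Gamma.
\]

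Next I would introduce the orthogonal change of coordinates $\tilde \by = \Theta \by$ with $\Theta \bv = \be_d$, so that $\tilde y_d = \bv \cdot \by$ and $\tilde \by^\prime \in \bv^\perp$, and aim to produce $F$ by inverting the projection
\[
\Psi : V_0 \to \bv^\perp, \qquad \by^\prime \mapsto P_{\bv^\perp}\bigl(\by^\prime, G(\by^\prime)\bigr).
\]
Once $\Psi$ is shown to be bi-Lipschitz on some convex open neighbourhood $V^\ast \subset V_0$ of the origin, the function $F(\tilde\by^\prime) := \bv \cdot (\Psi^{-1}(\tilde \by^\prime), G(\Psi^{-1}(\tilde \by^\prime)))$ is Lipschitz and parametrises $\Gamma$ in the new coordinates inside a sufficiently small ball $B_R(\bx)$. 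The orientation claim follows from $\bv \cdot \bn > 0$: for any $\by_0 = (\by_0^\prime, G(\by_0^\prime)) \in \Gamma$ with $\by_0^\prime \in V^\ast$ and $t > 0$ small, an averaging argument using the a.e.\ inequality $v_d - \bv^\prime \cdot \nabla G \geq c^\prime \sqrt{1 + |\nabla G|^2}$ (with $v_d := \bv \cdot \bv_0$ and $\bv^\prime := \bv - v_d \bv_0$) yields $G(\by_0^\prime) + t v_d > G(\by_0^\prime + t \bv^\prime)$, so that $\by_0 + t\bv \in \Oe$ and $\by_0 - t\bv \in \Oi$; in new coordinates this places $\Oe$ above and $\Oi$ below the graph of $F$.

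The main obstacle is the lower bi-Lipschitz bound on $\Psi$; the upper bound is immediate. Fix $\bw_1, \bw_2 \in V^\ast$, let $\mathbf{p} := \bw_1 - \bw_2$ and $\Delta := (\mathbf{p}, G(\bw_1) - G(\bw_2))$, and note $|\Delta| \geq |\mathbf{p}|$ together with
\[
|\Psi(\bw_1) - \Psi(\bw_2)|^2 = |\Delta|^2 - (\bv\cdot\Delta)^2 = |\Delta|^2 \sin^2\angle(\Delta, \bv).
\]
Writing $G(\bw_1) - G(\bw_2) = \overline{\nabla G}\cdot \mathbf{p}$ with $\overline{\nabla G} := \int_0^1 \nabla G(\bw_2 + t\mathbf{p})\,dt$, and setting the averaged normal $\bn_{\mathrm{avg}} := (-\overline{\nabla G}, 1)/\sqrt{1 + |\overline{\nabla G}|^2}$, a direct computation gives $\Delta \cdot \bn_{\mathrm{avg}} = 0$. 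Averaging the a.e.\ identity $\bv \cdot \bn(\by) = (v_d - \bv^\prime \cdot \nabla G(\by^\prime))/\sqrt{1 + |\nabla G|^2} \geq c^\prime$ over the segment $[\bw_2, \bw_1]$ produces $v_d - \bv^\prime \cdot \overline{\nabla G} \geq c^\prime$, whence
\[
\bv \cdot \bn_{\mathrm{avg}} \geq c^\prime/\sqrt{1 + L^2}.
\]
Since $\Delta \perp \bn_{\mathrm{avg}}$, decomposing $\bv$ along $\bn_{\mathrm{avg}}$ and its orthogonal complement yields $|\cos\angle(\Delta, \bv)| \leq \sqrt{1 - (\bv \cdot \bn_{\mathrm{avg}})^2}$, hence $\sin\angle(\Delta, \bv) \geq \bv \cdot \bn_{\mathrm{avg}}$; combining these bounds gives $|\Psi(\bw_1) - \Psi(\bw_2)| \geq c^\prime(1 + L^2)^{-1/2}|\mathbf{p}|$, which closes the argument.
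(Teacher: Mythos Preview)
Your argument is correct. The paper does not actually prove this lemma; it merely states that the result ``is standard in the theory of potential theory on Lipschitz domains'' and cites \cite[Page 203]{Ve:07}. You have instead supplied a complete self-contained proof via the projection/bi-Lipschitz argument, and the key step --- bounding $\sin\angle(\Delta,\bv)$ from below by exploiting $\Delta\perp\bn_{\mathrm{avg}}$ together with the averaged transversality $\bv\cdot\bn_{\mathrm{avg}}\geq c'(1+L^2)^{-1/2}$ --- is exactly the right mechanism. One small point worth making explicit: once $\Psi$ is shown to be bi-Lipschitz on a convex neighbourhood $V^\ast$ of the origin, invariance of domain (or the inverse function theorem for Lipschitz maps) guarantees that $\Psi(V^\ast)$ is open in $\bv^\perp$, so that $F$ is indeed defined on a full neighbourhood of the origin; you use this implicitly when choosing $R$ small enough that $B_R(\bx)\cap\Gamma$ lies in the graph of $F$.
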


The following key regularity estimate follows immediately from \cite{Pi:87,Ve:07}.

\begin{theorem}[Regularity for the interior oblique derivative problem.] \label{thm:RegOblSt}

\

Suppose that $\bZ\in (C^{0,\beta}(\Gamma))^d$ for some $\beta\in (0,1]$, the first inequality in \eqref{eq:lowerbounds} holds for some constant $c>0$,
and $u$ satisfies the \emph{Laplace oblique derivative problem} (i.e., the IORP in the special case $\alpha=0$) with data $g$.

(i) If $g\in C^{0,\beta}(\Gamma)$, then
$u\in C^{1,\gamma}(\overline{\Oi})$ for some $\gamma\in (0,\beta]$ depending only on $\Oi$.

(ii) If $g\in L^p(\Gamma)$ with $2\leq p <\infty$, then $(\nabla u)^*\in (L^p(\Gamma))^d$.
\end{theorem}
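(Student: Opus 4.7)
The plan is to invoke the $L^p$ and Hölder regularity theory for the oblique derivative problem on bounded Lipschitz domains developed by Pipher \cite{Pi:87} and Verchota \cite{Ve:07}, after a localization and rotation argument provided by Lemma \ref{lem:obliqueness}. The role of the hypothesis $\bZ\cdot\bn \geq c > 0$ is precisely to ensure that $\bZ$ is transversal to $\Gamma$ at every point, which is the standing ``obliqueness'' assumption in that literature; by Lemma \ref{lem:obliqueness}, about each $\bx\in\Gamma$ there is a ball $B_R(\bx)$ and a rotated coordinate system in which $\Oi\cap B_R(\bx)$ is the hypograph of a Lipschitz function and the reference direction coincides with $\bZ(\bx)$. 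In this local frame, the oblique derivative $\bZ\cdot\widetilde{\gamma}^-(\nabla u)$ becomes, at the centre point, the vertical derivative, which is exactly the model problem treated in \cite{Pi:87}.

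First I would establish part (ii). The main theorem of \cite{Pi:87} gives, for the constant-direction oblique derivative problem on a bounded Lipschitz domain with data $g\in L^p(\Gamma)$, $2\le p<\infty$, the existence of a harmonic $u$ with $(\nabla u)^*\in L^p(\Gamma)$ satisfying the boundary condition nontangentially, together with an estimate $\|(\nabla u)^*\|_{L^p(\Gamma)} \lesssim \|g\|_{L^p(\Gamma)}$. To pass from a constant direction to a Hölder-continuous direction $\bZ\in (C^{0,\beta}(\Gamma))^d$, one freezes $\bZ$ at a boundary point, applies the constant-direction result locally, and treats the difference $(\bZ(\by)-\bZ(\bx))\cdot\widetilde{\gamma}^-(\nabla u)$ as a compact perturbation; this is the same mechanism used in the proof of Lemma \ref{lem:Calderon}, since inserting this difference into the layer-potential representation $u=\cS\phi$ (with $\phi=S^{-1}\gmu$, invertibility of $S$ as in Lemma \ref{lem:SLP}) produces a weakly singular kernel. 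A standard partition-of-unity/patching argument, combined with uniqueness from Theorem \ref{thm:oblique:unique}, then upgrades the local result to the global $L^p$ estimate on $\Gamma$.

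For part (i), once (ii) is in hand with some large $p$, I would combine it with the Hölder boundary regularity theory of \cite{Ve:07} for oblique problems on Lipschitz domains. More explicitly, $g\in C^{0,\beta}(\Gamma)\subset L^p(\Gamma)$ for every $p<\infty$, so (ii) provides $(\nabla u)^* \in L^p(\Gamma)$ for all such $p$; one then uses the local representation of $u$ as the sum of a single-layer potential with density in an appropriate Hölder class (obtained by inverting the oblique boundary operator in $C^{0,\beta}$) together with standard interior estimates to deduce $u\in C^{1,\gamma}(\overline{\Oi})$ for some $\gamma\in(0,\beta]$ depending only on the Lipschitz character of $\Oi$ (the loss from $\beta$ to $\gamma$ is inherent to Lipschitz boundaries).

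The main obstacle I anticipate is not any new analysis but the bookkeeping of matching conventions: verifying that the boundary condition in \cite{Pi:87, Ve:07} (formulated via nontangential limits of $\nabla u$ against a transversal vector field) agrees with the $\alpha=0$ special case of Definition \ref{prob:ioipL2}, and checking that the Hölder regularity of $\bZ$ is sufficient to justify the perturbation/compactness step used to pass from the constant-direction to the variable-direction problem. The uniqueness statement of Theorem \ref{thm:oblique:unique} (applied with small $\alpha$ or via a direct energy argument in the $\alpha=0$ case) would be invoked to identify the solution produced by the layer potential ansatz with the given $u$.
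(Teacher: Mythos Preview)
Your proposal has two genuine gaps, both connected to your use of uniqueness.

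First, there is a circularity: you invoke Theorem~\ref{thm:oblique:unique} to identify the layer-potential solution with the given $u$, but in the paper the proof of Theorem~\ref{thm:oblique:unique} passes through Theorem~\ref{thm:RegObl}, which in turn relies on the present result (Theorem~\ref{thm:RegOblSt}). So you cannot appeal to Theorem~\ref{thm:oblique:unique} here.

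Second, and more fundamentally, the pure oblique derivative problem ($\alpha=0$) is \emph{not} uniquely solvable: every constant is a homogeneous solution. So there is no ``direct energy argument in the $\alpha=0$ case'' that will give uniqueness, and your patching step, which relies on identifying a constructed solution with $u$, cannot proceed as written. The paper addresses exactly this point: rather than seeking uniqueness, it observes that the oblique derivative problem is Fredholm (solvable iff $g$ satisfies finitely many linear conditions), writes $u=u_P+u_H$ with $u_P$ the particular solution constructed in \cite{Pi:87} (for H\"older data) or \cite{Ca:85,KePi:88} (for $L^p$ data), and then uses \cite[Corollary~2.7]{Ve:07} to conclude that any homogeneous solution $u_H$ is constant. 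The regularity of $u$ then follows immediately from that of $u_P$.

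A secondary point: your plan to reduce the variable-direction problem to the constant-direction one via freezing coefficients and a compactness/partition-of-unity argument is unnecessary, since \cite{Pi:87}, \cite{KePi:88}, and \cite{Ca:85} already treat H\"older-continuous (or merely continuous) $\bZ$ directly. The paper simply quotes those results.
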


\begin{proof}
(i) It is known from \cite[Section 4]{Ca:85}, \cite{Pi:87, KePi:88, Ve:07} that
if the first inequality in \eqref{eq:lowerbounds} holds, then the Laplace oblique derivative problem has a solution if and only if $g$ satisfies finitely-many linear conditions (i.e., conditions of the form $(g,\phi_j)_\Gamma=0$, $j=1,...,N$, for some $N\in \NN$ and $\phi_1,...,\phi_N\in L^2(\Gamma)$).
 If $\bZ\in (C^{0,\beta}(\Gamma))^d$ and $u$ is a solution for particular data $g\in C^{0,\beta}(\Gamma)$, the finitely-many linear conditions on $g$ are satisfied, and $u$ can be written as $u=u_P+u_H$ where $u_P$ is the particular solution studied in \cite{Pi:87}, which is shown in \cite[\S3]{Pi:87} to satisfy $u_P
 \in C^{1,\gamma}(\Oi)$ for some $\gamma\in (0,\beta]$ (dependent on $\Oi$),
 and $u_H$ is a solution of the homogeneous oblique derivative problem, which is shown in \cite[Corollary 2.7]{Ve:07} to be constant in $\Oi$.

(ii) This follows from arguing as in (i), but replacing the results of \cite{Pi:87} for H\"older continuous $g$ by those of \cite{Ca:85} for $g\in L^p(\Gamma)$ with $2-\epsilon<p<2+\epsilon$ (for some $\epsilon>0$ dependent on $\Oi$) and \cite{KePi:88} for $g\in L^p(\Gamma)$ with $p>2$ (note that while the results of \cite{Ca:85, KePi:88} only require that $\bZ$ is continuous, \cite[Corollary 2.7]{Ve:07} requires $\bZ$ to be H\"older continuous).
\end{proof}

\begin{theorem}[Regularity for the IORP] \label{thm:RegObl} Suppose that $\bZ\in (C^{0,\beta}(\Gamma))^d$ and $\alpha\in C^{0,\beta}(\Gamma)$  for some $\beta\in (0,1]$, $\bZ$ satisfies the first inequality in \eqref{eq:lowerbounds} for some $c>0$,
and $u$ satisfies the Laplace interior oblique Robin problem with data $g\in C^{0,\beta}(\Gamma)$. Then $u\in C^{1,\gamma}(\overline{\Oi})$ for some $\gamma\in (0,\beta]$.
\end{theorem}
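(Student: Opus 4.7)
The plan is to reduce the IORP to the pure Laplace oblique derivative problem (the special case $\alpha=0$ of the IORP) and then apply Theorem~\ref{thm:RegOblSt}(i). Writing the boundary condition \eqref{eq:obliqueL} as
\beqs
(\bZ\cdot\bn)\,\dnmu + \bZ\cdot\nT(\gmu) = g - \alpha\,\gmu \quad\ton\Gamma,
\eeqs
we see that $u$ solves the Laplace oblique derivative problem with right-hand side $\tilde g := g - \alpha\,\gmu$. Since $g,\alpha\in C^{0,\beta}(\Gamma)$, it therefore suffices to prove $\gmu\in C^{0,\beta}(\Gamma)$; once this is done, $\tilde g\in C^{0,\beta}(\Gamma)$ as a product and sum of $C^{0,\beta}$ functions, and Theorem~\ref{thm:RegOblSt}(i) delivers the conclusion.

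The plan for proving $\gmu\in C^{0,\beta}(\Gamma)$ is a standard bootstrap based on Theorem~\ref{thm:RegOblSt}(ii) combined with Sobolev embedding on the $(d-1)$-dimensional Lipschitz manifold $\Gamma$. The definition of the IORP gives $\gmu\in H^1(\Gamma) = W^{1,2}(\Gamma)$ as the starting point. At each iteration, if $\gmu\in W^{1,p}(\Gamma)$ with $2\leq p<d-1$, then by the Sobolev embedding $W^{1,p}(\Gamma)\hookrightarrow L^{p^*}(\Gamma)$, with $p^*=(d-1)p/(d-1-p)$, and since $g\in L^\infty(\Gamma)$, we have $\tilde g\in L^{p^*}(\Gamma)$. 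Theorem~\ref{thm:RegOblSt}(ii) then gives $(\nabla u)^*\in (L^{p^*}(\Gamma))^d$; combining this with the pointwise identity $\nT(\gmu) = \widetilde{\gamma}^-(\nabla u) - \bn\,\dnmu$ holding a.e.~on $\Gamma$ (this is the identity used in the proof of Theorem~\ref{thm:BVPequiv2}) yields $\nT(\gmu)\in (L^{p^*}(\Gamma))^d$, so $\gmu\in W^{1,p^*}(\Gamma)$ and the exponent has been raised from $p$ to $p^*>p$.

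Since the map $p\mapsto (d-1)p/(d-1-p)$ has no fixed point in $(0,d-1)$, after finitely many iterations the current exponent exceeds $d-1$, and a final application of Theorem~\ref{thm:RegOblSt}(ii) with $p\in[2,\infty)$ chosen sufficiently large gives $\gmu\in W^{1,p}(\Gamma)$ with $1-(d-1)/p\geq \beta$. The Morrey embedding $W^{1,p}(\Gamma)\hookrightarrow C^{0,1-(d-1)/p}(\Gamma)$ then yields $\gmu\in C^{0,\beta}(\Gamma)$, completing the reduction.

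The main technical point requiring care is the finite termination of the bootstrap, which needs a minor case split depending on whether $d\leq 3$ (where $H^1(\Gamma)\hookrightarrow L^q(\Gamma)$ for every $q<\infty$ directly, so essentially a single step suffices) or $d\geq 4$ (where several iterations may be required before the Sobolev exponent surpasses $d-1$). Neither case presents any real difficulty; the essential mathematical content is simply the observation that the additional $\alpha\,\gmu$ term in \eqref{eq:obliqueL} is a lower-order perturbation that can be absorbed into the right-hand side once sufficient regularity of $\gmu$ has been established.
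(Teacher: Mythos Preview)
Your proposal is correct and follows essentially the same bootstrap argument as the paper: rewrite the IORP as an oblique derivative problem with data $\tilde g = g - \alpha\,\gmu$, iterate Theorem~\ref{thm:RegOblSt}(ii) together with Sobolev embedding on $\Gamma$ to push $\gmu$ into $H^{1,q}(\Gamma)$ for all finite $q$, deduce H\"older continuity of $\gmu$ by Morrey embedding, and finish with Theorem~\ref{thm:RegOblSt}(i). The paper packages the implication $(\nabla u)^*\in (L^p(\Gamma))^d \Rightarrow \gmu\in H^{1,p}(\Gamma)$ as Corollary~\ref{cor:Lp}, whereas you invoke the trace identity \eqref{eq:moderna1} directly, but this is the same content; one small inaccuracy is that your condition $1-(d-1)/p\geq\beta$ cannot be met when $\beta=1$, but this is harmless since the conclusion only asks for some $\gamma\in(0,\beta]$, and the paper likewise only obtains $\gmu\in C^{0,\beta'}(\Gamma)$ for all $\beta'<1$.
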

\begin{proof}
Suppose that the conditions of the theorem are satisfied, in particular that $u$ satisfies the IORP with data $g\in C^{0,\beta}(\Gamma)$, for some $\beta\in (0,1]$. Suppose also that $2\leq p <\infty$ and that  $\gmu \in L^p(\Gamma)$. Then since, clearly, $g\in L^p(\Gamma)$, $u$ is a solution of the Laplace oblique derivative problem with data in $L^{p}(\Gamma)$. Therefore, by Part (ii) of Theorem \ref{thm:RegOblSt}, $(\nabla u)^* \in (L^{p}(\Gamma))^d$ and thus $\gmu \in H^{1,p}(\Gamma)$ by Corollary \ref{cor:Lp}. This implies, by the Sobolev embedding theorem \cite[Chapter V, Equations 6 and 4]{Adams}, that, if $p\geq d-1$, then
$\gmu \in L^{q}(\Gamma)$ for all $2\leq q<\infty$, while, if $p<d-1$, then $\gmu \in L^{q}(\Gamma)$ for $2\leq q \leq p_0$ where $1/p_0 = 1/p -1/(d-1)$. Since, certainly, $\gmu \in \LtG$ (as $\gmu\in H^1(\Gamma)$ by definition of the IORP), applying the above argument at most a finite number of times leads to the conclusion that $\gmu \in H^{1,q}(\Gamma)$ for all $2\leq q<\infty$.
But this implies, by the Sobolev embedding theorem \cite[Chapter V, Equation 9]{Adams}, that $\gmu \in C^{0,\beta'}(\Gamma)$ for all $0<\beta'<1$. Thus $u$ is a solution of the Laplace oblique derivative problem with data in $C^{0,\beta}(\Gamma)$, and thus the result that $u\in C^{1,\gamma}(\overline{\Oi})$ for some $\gamma\in (0,\beta]$ follows from Part (i) of Theorem \ref{thm:RegOblSt}.
\end{proof}

\begin{corollary}[Regularity for the EORP] \label{thm:RegOblE} Suppose that $\bZ\in (C^{0,\beta}(\Gamma))^d$ and $\alpha\in C^{0,\beta}(\Gamma)$ for some $\beta\in (0,1]$, $\bZ$ satisfies the first inequality in \eqref{eq:lowerbounds} for some $c>0$, and $u$ satisfies the Laplace exterior oblique Robin problem with data $g\in C^{0,\beta}(\Gamma)$. Then $u\in C^{1,\gamma}(\overline{\Oe\cap B_R})$ for all $R>0$ and for some $\gamma\in (0,\beta]$.
\end{corollary}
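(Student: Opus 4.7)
The plan is to derive the exterior regularity result from Theorem \ref{thm:RegObl} via a Kelvin inversion centered inside $\Oi$. Without loss of generality take $\bze \in \Oi$, and fix $\rho > 0$ with $\overline{B_\rho} \subset \Oi$. Let $K(\bx) := \bx/|\bx|^2$, a smooth involution on $\Rea^d \setminus \{\bze\}$ whose derivatives are uniformly bounded on any set staying away from the origin. Then $\widetilde{\Omega}^- := K(\Oe) \cup \{\bze\}$ is a bounded Lipschitz open set with boundary $\widetilde{\Gamma} := K(\Gamma)$, since $K$ is locally bi-Lipschitz on a neighbourhood of $\Gamma$.

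First I would introduce the Kelvin transform $v$ of $u$ on $\widetilde{\Omega}^- \setminus \{\bze\}$: $v(\by) := |\by|^{2-d} u(K(\by))$ for $d \geq 3$, and $v(\by) := u(K(\by))$ for $d = 2$. The classical identity for the Kelvin transform gives $\Delta v = 0$ on $\widetilde{\Omega}^- \setminus \{\bze\}$. The decay at infinity prescribed in the EORP translates into boundedness of $v$ near $\bze$ (for $d \geq 3$, $|v(\by)| = |\by|^{2-d} o(|K(\by)|^{3-d}) = o(|\by|^{-1})$, which combined with harmonicity and boundedness on small spheres gives boundedness via the maximum principle; for $d = 2$, $v(\bze) := u_\infty$), so the removable-singularity theorem extends $v$ to a harmonic function on all of $\widetilde{\Omega}^-$. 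A chain-rule computation, using that $DK(\bx)$ is a strictly positive scalar times an orthogonal matrix, then rewrites the boundary condition \eqref{eq:obliqueLext} as an interior oblique Robin condition on $\widetilde{\Gamma}$ with new data $\widetilde{\bZ} \in (C^{0,\beta}(\widetilde{\Gamma}))^d$, $\widetilde{\alpha} \in C^{0,\beta}(\widetilde{\Gamma})$, and $\widetilde{g} \in C^{0,\beta}(\widetilde{\Gamma})$. Because $K$ reverses orientation on $\Gamma$ (it sends the outward normal to $\Oi$ to a positive multiple of the outward normal to $\widetilde{\Omega}^-$, up to a strictly positive conformal factor), the hypothesis $\bZ \cdot \bn \geq c$ from \eqref{eq:lowerbounds} transfers to $\widetilde{\bZ} \cdot \widetilde{\bn} \geq \widetilde{c} > 0$, and the pointwise lower bound on $\alpha$ transfers analogously to $\widetilde{\alpha}$.

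Applying Theorem \ref{thm:RegObl} to $v$ on $\widetilde{\Omega}^-$ then yields $v \in C^{1,\gamma}(\overline{\widetilde{\Omega}^-})$ for some $\gamma \in (0,\beta]$. Given $R > 0$, the set $\overline{\Oe \cap B_R}$ maps under $K$ to a compact subset of $\overline{\widetilde{\Omega}^-}$ that is bounded away from $\bze$, and on such a set $K$ and the scalar factor $|\bx|^{d-2}$ are both $C^\infty$. Pulling back the regularity of $v$ therefore gives $u \in C^{1,\gamma}(\overline{\Oe \cap B_R})$. The main obstacle will be the careful bookkeeping of the transformed boundary condition --- in particular verifying that $\widetilde{\bZ} \cdot \widetilde{\bn}$ remains uniformly positive on $\widetilde{\Gamma}$ and that $\widetilde{\alpha}, \widetilde{g}, \widetilde{\bZ}$ retain $C^{0,\beta}$ regularity under composition with the Kelvin map --- together with handling the image of infinity cleanly via the removable-singularity step; once the transformed problem is identified as a genuine IORP of the form covered by Theorem \ref{thm:RegObl}, the rest of the argument is a routine transport of regularity through a smooth diffeomorphism.
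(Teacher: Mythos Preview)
Your Kelvin-inversion strategy is correct and leads to a valid proof, but it is a genuinely different route from the paper's. The paper argues by localisation: fix radii $R_2>R_1>R_0$ with $\Gamma\subset B_{R_0}$, multiply $u$ by a smooth cutoff $\chi$ supported in $B_{R_1}$ and equal to $1$ on $B_{R_0}$, and subtract the Newtonian potential of the (smooth, compactly supported) source $F:=2\nabla\chi\cdot\nabla u+u\Delta\chi$ to restore harmonicity. The resulting function $w$ then satisfies an IORP on the bounded Lipschitz domain $G=\Oe\cap B_{R_2}$, with oblique vector $\widetilde\bZ=-\bZ$ on $\Gamma$ and $\widetilde\bZ(\bx)=\bx$ on the outer sphere $\partial B_{R_2}$, and H\"older data built from $g$ and the $C^2$ function $\widehat v$. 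Theorem~\ref{thm:RegObl} applied on $G$ gives $w\in C^{1,\gamma}(\overline G)$, hence $u\in C^{1,\gamma}(\overline{\Oe\cap B_{R_0}})$, and interior regularity extends this to any $B_R$.

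Both routes reduce to Theorem~\ref{thm:RegObl}; the paper's cutoff argument is more elementary and sidesteps the bookkeeping of how the oblique condition transforms under inversion, while yours is the classical conformal reduction of exterior to interior. Two small corrections to your sketch: (i) $DK$ sends the outward normal to $\Oi$ (which points into $\Oe$) to a positive multiple of the \emph{inward} normal to $\widetilde{\Omega}^-$, so a sign flip is needed when defining $\widetilde\bZ$; after setting $\widetilde\bZ=-R\bZ$ (with $R=I-2\widehat\bx\widehat\bx^T$) one indeed gets $\widetilde\bZ\cdot\widetilde\bn=\bZ\cdot\bn\geq c$. (ii) Neither the corollary nor Theorem~\ref{thm:RegObl} assumes a lower bound on $\alpha$, so there is no need to track one for $\widetilde\alpha$ --- which is just as well, since for $d\geq 3$ the transformed coefficient picks up a term $(d-2)(\bZ\cdot\bx)$ of indeterminate sign.
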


\begin{proof}
Since $\Oi$ is bounded, $\Gamma \subset B_r$ for some $r>0$. Suppose that $u$ satisfies the EORP and choose $R_2>R_1>R_0>r$ and $\chi\in C_{\rm comp}^\infty(\R^d)$ with $\chi(\bx)=1$ for $|\bx|\leq R_0$ and $\chi(\bx)=0$ for $|\bx|\geq R_1$. Let $v(\bx):= \chi(\bx) u(\bx)$ for $\bx\in G:=\Oe\cap B_{R_2}$, so that, in particular, $v=u$ in $\Oe\cap B_{R_0}$.
The idea now is to create a solution of an IORP on $G$, and then use the interior regularity result of Theorem \ref{thm:RegObl}.
Since $u$ is harmonic in $\Oe$,
$$
\Delta v = F:= 2\nabla \chi\cdot \nabla u + u \Delta \chi \quad \mbox{in } G.
$$
Since $\chi \in C_{\rm comp}^\infty(\R^d)$ and $u\in C^{\infty}(\Oe)$, $F\in C_{\rm comp}^\infty(G)$. Therefore
$$
\widehat v(\bx) := -\int_{G} \Phi(\bx,\by)F(\by)\, \rd \by, \quad \tfor \bx \in G,
$$
satisfies $\widehat v\in C^2(\overline{G})$ and $\Delta \widehat v = F$ in $G$. Let $w(\bx):= v(\bx)-\widehat v(\bx)$ for $\bx\in G$, and define $\widetilde \bZ\in (C^{0,\beta}(\partial G))^d$ and $\widetilde \alpha\in C^{0,\beta}(\partial G)$ by  $\widetilde \bZ:= - \bZ$ on $\Gamma$, $\widetilde \bZ(\bx):= \bx$, for $\bx\in \partial B_{R_2}$, and $\widetilde \alpha := \alpha$ on $\Gamma$, $\widetilde \alpha := 0$ on $\partial B_{R_2}$. Then $w\in H^1(G)$ with trace $\gamma w\in H^1(\partial G)$, $\Delta w=0$ in $G$, and
\beqs
(\widetilde \bZ\cdot\bn) \partial_\bn w + \widetilde \bZ \cdot \nabla_\Gamma( \gamma w) + \widetilde \alpha \, \gamma w =  \widetilde g \text{ on } \partial G,
\eeqs
where $\bn$ is the unit normal pointing out of $G$ and $\widetilde g\in C^{0,\beta}(\partial G)$ is defined by $\widetilde g := -(\widetilde \bZ\cdot \nabla \widehat v +\widetilde \alpha \widehat v) - g$ on $\Gamma$, and by $\widetilde g := -(\widetilde \bZ\cdot \nabla \widehat v +\widetilde \alpha \widehat v)$ on $\partial B_{R_2}$. Theorem \ref{thm:RegObl} implies that, for some $\gamma\in (0,\beta]$, $w\in C^{1,\gamma}(\overline{G})$, so that $v \in C^{1,\gamma}(\overline{G})$ and $u \in C^{1,\gamma}(\overline{\Oe\cap B_{R_0}})$. Since $u$ is harmonic in $\Oe$, $u \in C^{1,\gamma}(\overline{\Oe\cap B_{R}})$ for every $R>0$.
\end{proof}

\

We can now prove Theorems \ref{thm:oblique:unique} and \ref{thm:oblique:uniqueE} and Corollaries \ref{cor:oblique:unique} and \ref{cor:oblique:unique2}.

\

\begin{proof}[Proof of Theorem \ref{thm:oblique:unique}] Suppose that $u$ satisfies the IORP with $g=0$ and that, without loss of generality, $u$ is real-valued. To show that $u=0$ it is enough to show that $u\leq 0$ in $\Oi$, since this implies, by the same argument applied to $-u$, that also $u\geq 0$, and hence $u=0$. By Theorem \ref{thm:RegObl}, $u\in C^1(\overline{\Oi})$ (indeed $\nabla u$ is H\"older continuous). By the maximum principle, since $u\in C^2(\Oi)\cap C(\overline{\Oi})$ is harmonic in $\Oi$, the maximum value of $u$ in $\overline{\Oi}$ is attained at some point $\bx_0\in\Gamma$. Since $u\in C^1(\overline{\Oi})$ it follows from \eqref{eq:obliqueL} with $g=0$ that
$$
\alpha(\bx_0)u(\bx_0) = -\bZ(\bx_0)\cdot \nabla u(\bx_0)=-\lim_{h\to 0^+}\frac{u(\bx_0)-u(\bx_0-h\bZ(\bx_0))}{h}.
$$
Since $\bZ$ is continuous and $\bZ\cdot \bn\geq c\geq 0$ almost everywhere on $\Gamma$,
$\bx_0-h\bZ(\bx_0) \in \overline \Oi$ for all sufficiently small $h>0$ by Lemma \ref{lem:obliqueness},
so that $\bZ(\bx_0)\cdot \nabla u(\bx_0)\geq 0$ since $\bx_0$ is the global maximum. Since $\alpha(\bx_0) >0$, it follows that $u(\bx_0)\leq 0$, so that $u\leq 0$ in $\Oi$.
\end{proof}

\

\begin{proof}[Proof of Theorem \ref{thm:oblique:uniqueE}] Suppose that $u$ satisfies the EORP with $g=0$ and, without loss of generality, is real-valued. As in the proof of Theorem \ref{thm:oblique:unique}, it is enough to show that $u\leq 0$ in $\Oe$. We recall that, when $d=2$, the condition that
$u$ is bounded on $\Oe$ implies that, for some $u_\infty\in \R$,
$$
u(\bx) = u_\infty + O(|\bx|^{-1}) \quad \mbox{as } |\bx|\to \infty,
$$
uniformly in $\bx/|\bx|$, and that
\begin{equation} \label{eq:average}
u_\infty = \frac{1}{2\pi R}\int_{\partial B_R} u\, \rd s
\end{equation}
if $\Gamma\subset B_R$ \cite[Equation 6.11]{Kr:89}.

By Corollary \ref{thm:RegOblE}, $u\in C^1(\overline {\Oe})$. By the maximum principle, since $u\in C^2(\Oe)\cap C(\overline{\Oe})$ is harmonic in $\Oe$, the maximum value of $u$ in $\overline{\Oe}$ is attained on $\Gamma$ or, when $d=2$, $u(\bx)\leq u_\infty$ for $\bx\in \Oe$.
If the maximum is attained on $\Gamma$, the result that $u\leq 0$ follows by arguing as in the proof of Theorem \ref{thm:oblique:unique}.
Therefore, it is sufficient to prove that the maximum is attained on $\Gamma$ when $d=2$. If $u(\bx)\leq u_\infty$ for $\bx\in \Oe$, then \eqref{eq:average} implies that $u(\bx)= u_\infty$ for $|\bx|\geq R$ if $\Gamma\subset B_R$, so that the maximum is attained in $\Oe$. The maximum principle (see, e.g., \cite[Theorem 6.8]{Kr:89}) then implies that $u$ is constant in $\Oe$, so that the maximum is also attained on $\Gamma$.
\end{proof}

\

The following proofs of Corollaries \ref{cor:oblique:unique} and \ref{cor:oblique:unique2} use the fact that, when $\alpha\in L^\infty(\Gamma)$, $\bZ$ is continuous, and \eqref{eq:c} (i.e., the first lower bound in \eqref{eq:lowerbounds}) holds, then $\opLIZ$ and $\opLEZ$ are Fredholm of index zero by Parts (iii) and (iv) of Theorem \ref{thm:Laplace_int} and \ref{thm:Laplace_ext} respectively. Although these two theorems are for $d=3$, Parts (iii) and (iv) also hold when $d=2$ (as noted at the beginning of \S\ref{sec:d=2}).

\

\bpf[Proof of Corollary \ref{cor:oblique:unique}]
If we can prove invertibility of $\opLEZ$, then existence of a solution to the IORP follows from Theorem \ref{thm:star_in_obliqueLE}. Since $\opLEZ$ is Fredholm of index zero on $\LtG$, by the Fredholm alternative (see, e.g., \cite[Theorem 2.27]{Mc:00}), to prove invertibility it is sufficient to prove injectivity.
Assume that  $\opLEZ\phi=0$ for $\phi\in L^2(\Gamma)$. By Theorem \ref{thm:star_in_obliqueL},
$u:= \cS\phi$ satisfies the IORP, and by Theorem \ref{thm:oblique:unique} $u=0$ in $\Oi$.
Therefore $\gmu=0$ and the first jump relation in \eqref{eq:jump1} implies that $S\phi =0$. Lemma \ref{lem:SLP} then implies that $\phi=0$ and the proof is complete.
\epf

\

\bpf[Proof of Corollary \ref{cor:oblique:unique2}]
This is very similar to that of Corollary \ref{cor:oblique:unique} except that now we only work in $d\geq 3$, since Theorem \ref{thm:star_in_obliqueLE} requires $d\geq 3$.
\epf

\bre[The results of \cite{Li:87a}]
Although not directly used to prove the results in this section, the results of \cite{Li:87a} concern the Laplace IORP in Lipschitz domains with
H\"older continuous $\vfd$ and $g$, and we comment here on their relevance to the results above.

The results of \cite{Li:87a} give an alternative route for obtaining uniqueness of the IORP (i.e., proving Theorem \ref{thm:oblique:unique}). Indeed, in the proof of Theorem \ref{thm:oblique:unique}, once we have established that $u\in C^1(\overline{\Oi})$ (by using Theorem \ref{thm:RegObl}), then uniqueness follows from \cite[Theorem 3.2]{Li:87a}.
The reason we argue as we do in the proof of Theorem \ref{thm:oblique:unique} is that this argument easily carries over to the proof of uniqueness for the EORP (Theorem \ref{thm:oblique:uniqueE}), whereas \cite[Theorem 3.2]{Li:87a} concerns only the IORP.

Furthermore, \cite[Theorem 3.2]{Li:87a} implies that, under the assumptions of Theorem \ref{thm:RegObl}, there exists $\beta_0<1$, depending only on the Lipschitz constant of $\Oi$, such that if $\beta< \beta_0$ then $u \in C^{1,\beta}(\overline{\Oi})$.
\ere

\bre[Additional uniqueness results for the EORP with $\vfb=\bx$]
The coercivity result of Theorem \ref{thm:star} allows us to extend the range of $\alpha$ for which the EORP is unique when $\vfb=\bx$ and $d\geq 3$.
Indeed, Theorem \ref{thm:star} implies that $\opLIZ$ is injective when $\vfb=\bx, \alpha(\bx)\geq -(d-2)/2$ for almost every $\bx\in \Gamma$, and $d\geq 3$. Then, using Theorem \ref{thm:star_in_obliqueLE} and arguing as at the end of the proof of Corollary \ref{cor:oblique:unique}, we see that the solution of the EORP is unique under these conditions.
This result proves uniqueness for certain non-positive values of $\alpha$, which are not covered by Theorem \ref{thm:oblique:uniqueE}.
\ere

\subsection{Link between the IORP/EORP and the BIEs in Theorem \ref{thm:2d}}

\ble\label{lem:2doblique1}
Given $g\in \LtG$, if $\phi$ satisfies
\beq\label{eq:2doblique1}
T_{E,\vfb,\alpha,\beta}' \phi = g,
\eeq
(with $T_{E,\vfb,\alpha,\beta}'$ defined by \eqref{eq:TEprime})
and $d=2$, then
\beq\label{eq:2doblique2}
u = \cS \projQ \phi + \frac{\beta}{\alpha} \projP \phi - \frac{1}{\alpha}\projP \opLEZ \projQ \phi
\eeq
satisfies the IORP.
\ele

\ble\label{lem:2doblique2}
Given $g\in \LtG$, if $\phi$ satisfies
\beq\label{eq:2doblique1a}
T_{I,\vfb,\alpha,\beta}' \phi = -g,
\eeq
(with $T_{I,\vfb,\alpha,\beta}'$ defined by \eqref{eq:TIprime})
and $d=2$, then
\beq\label{eq:2doblique2a}
u = \cS \projQ \phi + \frac{\beta}{\alpha} \projP \phi - \frac{1}{\alpha}\projP \opLIZ \projQ \phi
\eeq
satisfies the EORP.
\ele

\bpf[Proofs of Lemmas \ref{lem:2doblique1}, \ref{lem:2doblique2}]
The fact that $u$ given by \eqref{eq:2doblique2}/\eqref{eq:2doblique2a} is $C^2$ and satisfies Laplace's equation follows from, e.g., \cite[Theorem 2.14]{ChGrLaSp:12}. The condition that $u= O(1)$ at infinity for the EORP follows from the asymptotics \eqref{eq:Sasym2}, the definition of $\projP$ \eqref{eq:P}, and that $\projQ:= I- \projP$.
The BIEs \eqref{eq:2doblique2}/\eqref{eq:2doblique2a} follow from the jump relations \eqref{eq:jump1} and the definitions of $\opLIZ$ \eqref{eq:opLIZ}, $\opLEZ$ \eqref{eq:opLEZ}, and $\Caldb'$ \eqref{eq:Caldb'}.
\end{proof}

\bre[Link with the work of Medkova \cite{Me:18}]\label{rem:Medkova_oblique}
In \cite[Theorem 5.23.5]{Me:18}, the solution of the IORP is sought as \eqref{eq:2doblique2} without the final term on the right-hand side,
resulting in the BIE $(\opLEZ \projQ + \beta\projP)\phi =g$; this BIO is then proved to be invertible on $\LtG$ if $\beta=\alpha$ and $\alpha$ is sufficiently large \cite[Theorem 5.23.5]{Me:18}.
The advantage of including the final term on the right-hand side of \eqref{eq:2doblique2} is that, by Theorem \ref{thm:2d}, the resulting BIO $T_{E,\vfb,\alpha,\beta}' $ is not just invertible when $\alpha$ is sufficiently large, but also coercive by Part (v) of Theorem \ref{thm:2d}.
\ere

\section{New formulations of the Helmholtz exterior Dirichlet problem}\label{sec:Helmholtz}

\subsection{Statement of the new formulations}\label{sec:Helmholtz_state}

As a corollary of Theorem \ref{thm:Laplace_ext}, we obtain results about BIE formulations of the Helmholtz exterior Dirichlet problem. We state the results in this subsection but defer the more substantial proofs to \S\ref{sec:Helmholtz_prove} (and see Remark \ref{rem:earlier}).

\begin{definition}[Helmholtz EDP]\label{def:Hedp}
With $\Oi$ and $\Oe$ as in \S\ref{sec:notation}, assume further that $\Oe$ is connected.
Given $g_D\in H^{1/2}(\Gamma)$, we say $u\in H^1_{\rm{loc}}(\Oe)$ satisfies the Helmholtz exterior Dirichlet problem (EDP) if $\Delta u +k^2u  =0$ in $\Oe$, $\gpu = g_D$ on $\Gamma$, and
 $u$ satisfies the Sommerfeld radiation condition
\beq\label{eq:src}
\pdiff{u}{r}(\bx) - \ri k u(\bx) = o\left( \frac{1}{r^{(d-1)/2}}\right)
\eeq
as $r:= |\bx|\tendi$, uniformly in $\bx/r$.
\end{definition}

Existence and uniqueness of the solution of the Helmholtz EDP for Lipschitz $\Oi$ is shown in, e.g., \cite[Theorem 2.10]{ChGrLaSp:12}.
Theorem \ref{thm:Hedp} below considers $g_D\in \HoG$; this holds, for example, for the \emph{sound-soft scattering problem} when $u$ corresponds to the scattered field and $g_D$ is the Dirichlet trace of the ($C^\infty$) incident field -- see, e.g., \cite[Proof of Theorem 2.12]{ChGrLaSp:12}.

\begin{definition}[Helmholtz EDP formulated via non-tangential limits]\label{def:Hedp2}
With $\Oi$ and $\Oe$ as  in \S\ref{sec:notation}, assume further that $\Oe$ is connected.
Given $g_D\in L^2(\Gamma)$, we say that $u\in C^2(\Oe)$ with $u^* \in L^2(\Gamma)$ satisfies the Helmholtz EDP if $\Delta u +k^2u=0$ in $\Oe$, $\widetilde{\gamma}^+ u = g_D$ on $\Gamma$, and $u$ satisfies the Sommerfeld radiation condition \eqref{eq:src}.
\end{definition}

The solution of the EDP in the sense of Definition \ref{def:Hedp} exists and is unique by \cite[Theorem 5.6, Part (ii)]{ToWe:93}.
The following equivalence result is proved in Appendix \ref{app:BVPequiv}.

\begin{theorem}
\label{thm:BVPequivH}
If $g_D\in H^{1/2}(\Gamma)$, then the solution of the Helmholtz EDP in the sense of Definition \ref{def:edp} is the solution of the Helmholtz EDP in the sense of Definition \ref{def:edp2}, and vice versa.
\end{theorem}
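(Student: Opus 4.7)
The plan is to mimic the proof of Theorem~\ref{thm:BVPequiv} (given in Appendix~\ref{app:BVPequiv}) for the Laplace case, exploiting the fact that, since $\Delta u = -k^2 u$, the Helmholtz equation differs from Laplace's equation only by a zero-order term that is as regular as $u$ itself and can be absorbed into the argument as a known right-hand side. The Sommerfeld radiation condition \eqref{eq:src} is identical in both formulations, so only the interior regularity and the boundary trace need to be reconciled.

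For the forward direction (Def.~\ref{def:Hedp} $\Rightarrow$ Def.~\ref{def:Hedp2}), I start from $u \in H^1_{\mathrm{loc}}(\Oe)$ with $\Delta u + k^2 u = 0$ and $\gpu = g_D \in H^{1/2}(\Gamma)$. Interior elliptic regularity immediately gives $u \in C^\infty(\Oe) \subset C^2(\Oe)$. To obtain $u^* \in L^2(\Gamma)$ and $\widetilde{\gamma}^+ u = g_D$, I work locally near $\Gamma$: since $u \in H^1_{\mathrm{loc}}(\Oe)$, we have $\Delta u = -k^2 u \in L^2_{\mathrm{loc}}(\Oe)$, so the Jerison--Kenig $H^{3/2}$ regularity theorem for the inhomogeneous Dirichlet problem on Lipschitz domains (the same result underlying Lemma~\ref{lem:A10}, applied with $-k^2 u$ treated as an $L^2$ forcing term) yields $u \in H^{3/2}_{\mathrm{loc}}(\Oe)$. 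Part~(iii) of Theorem~\ref{thm:JeKe:95} then gives $u^* \in L^2(\Gamma)$, and Lemma~\ref{lem:A9} gives $\widetilde{\gamma}^+ u = \gpu = g_D$.

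For the reverse direction (Def.~\ref{def:Hedp2} $\Rightarrow$ Def.~\ref{def:Hedp}), rather than bootstrap regularity from the weaker hypothesis $u \in C^2(\Oe)$ with $u^* \in L^2(\Gamma)$, I would invoke existence and uniqueness of both formulations. By \cite[Theorem 2.10]{ChGrLaSp:12}, there exists a unique $\tilde u$ satisfying Definition~\ref{def:Hedp} with data $g_D \in H^{1/2}(\Gamma)$. The forward direction just established shows that $\tilde u$ also satisfies Definition~\ref{def:Hedp2}. Since Definition~\ref{def:Hedp2} admits a unique solution (by the citation \cite[Theorem 5.6(ii)]{ToWe:93} recalled immediately above the theorem), the given $u$ coincides with $\tilde u$ and hence satisfies Definition~\ref{def:Hedp}.

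The only genuine work is in the forward direction, specifically the $H^{3/2}_{\mathrm{loc}}$ regularity step. This is routine but deserves a sentence of justification: one verifies that the Jerison--Kenig regularity for the Dirichlet problem on Lipschitz domains is stated for $\Delta$ with $L^2$ right-hand side and $H^{1/2}$ boundary data, so rewriting the PDE as $\Delta u = -k^2 u$ (with the right-hand side lying in $L^2_{\mathrm{loc}}$ by the assumed $H^1_{\mathrm{loc}}$ regularity of $u$) reduces the Helmholtz case to the Laplace one covered by Appendix~\ref{app:BVPequiv}. No additional difficulty arises from non-compactness of $\Oe$ since all regularity claims are local near $\Gamma$.
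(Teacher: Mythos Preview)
Your forward direction contains a genuine gap. You claim that $u \in H^{3/2}_{\mathrm{loc}}(\Oe)$ follows from $g_D \in H^{1/2}(\Gamma)$ together with $\Delta u = -k^2 u \in L^2_{\mathrm{loc}}$, citing ``the Jerison--Kenig $H^{3/2}$ regularity theorem \ldots\ the same result underlying Lemma~\ref{lem:A10}.'' But Lemma~\ref{lem:A10} (and the underlying Jerison--Kenig result) requires $\gamma u \in H^1(\partial D)$, not merely $H^{1/2}(\partial D)$; with only $g_D \in H^{1/2}(\Gamma)$ the best regularity available is $u \in H^1_{\mathrm{loc}}$, which you already have. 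Moreover, even granting $u\in H^{3/2}$, Part~(iii) of Theorem~\ref{thm:JeKe:95} is stated for \emph{harmonic} $u$ and yields $(\nabla u)^*\in L^2(\Gamma)$, not $u^*\in L^2(\Gamma)$. The correct route, paralleling the paper's Laplace argument, uses only $u\in H^1_{\mathrm{loc}}\subset H^{1/2}_{\mathrm{loc}}$: subtract a compactly supported Newtonian potential $w:=\cN(-k^2\chi u)\in H^2_{\mathrm{loc}}$ (with $\chi$ a cutoff equal to $1$ near $\Gamma$) so that $u-w$ is harmonic in a neighbourhood of $\Gamma$ and lies in $H^{1/2}$; then Part~(i) of Theorem~\ref{thm:JeKe:95} gives $(u-w)^*\in L^2(\Gamma)$, and since $w\in H^2$ one also has $w^*\in L^2(\Gamma)$, whence $u^*\in L^2(\Gamma)$.

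Your reverse direction, by contrast, is correct and is a genuinely different (and slicker) argument than the paper's. The paper constructs an explicit layer-potential representation $v:=(\cD_k - \ri k\,\cS_k)\phi$ with $\phi := A_{k,k}^{-1}g_D$, uses invertibility of $A_{k,k}=\tfrac12 I + D_k - \ri k S_k$ on $H^{1/2}(\Gamma)$ (\cite[Theorem~2.27]{ChGrLaSp:12}) to place $\phi\in H^{1/2}(\Gamma)$ and hence $v\in H^1_{\mathrm{loc}}(\Oe)$ via \eqref{eq:LPmap}, and then identifies $v$ with $u$ via uniqueness in the sense of Definition~\ref{def:Hedp2}. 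Your approach (existence in the sense of Definition~\ref{def:Hedp}, forward direction, uniqueness in the sense of Definition~\ref{def:Hedp2}) bypasses the explicit construction entirely; once the forward direction is repaired, this works and is arguably preferable.
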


The fundamental solution for the Helmholtz equation is defined by
\beq\label{eq:fundH}
\Phi_k(\bx,\by):=
\frac{\ri}{4} \left( \frac{k}{2\pi\nxy}\right)^{(d-2)/2} H^{(1)}_{(d-2)/2}\big(k\nxy\big);
\eeq
see, e.g., \cite[Equation 5.118]{Sta67}; the definition of the Hankel function $H^{(1)}_{(d-2)/2}$ implies that $\Phi_k(\bx,\by)= (\ri/4)H_0^{(1)}(k\nxy)$ for $d=2$ and $\exp(\ri k \nxy)/ (4\pi\nxy)$ for $d=3$.

The Helmholtz
BIOs $S_k, D_k, D'_k,$ $H_k$, $K_{\vfb,k}$, and $K_{\vfb,k}'$, are defined by \eqref{eq:bio1}, \eqref{eq:bio2}, \eqref{eq:Caldb}, and \eqref{eq:Caldb'}, respectively, with $\Phi$ replaced by $\Phi_k$.
Similarly, the potentials $\cS_k$ and $\cK_{\vfb,k}$ are defined by \eqref{eq:SLP} and \eqref{eq:Caldp}, respectively, with $\Phi$ replaced by $\Phi_k$.

Given $\vfb\in (L^\infty
(\Gamma))^d$ and $\eta \in L^\infty(\Gamma)$,
in analogy with \eqref{eq:opLEZ}, define the integral operators
 $A^\prime_{k,\eta,\vfb}$
 $A_{k,\eta,\vfb}$,
 and $B_{k,\eta,\vfb}$ by
\begin{align} \label{eq:AketaZpr_def}
&A^\prime_{k,\eta,\vfb} :=  \half (\vfb\cdot \bn) I + K_{\vfb,k}' - \ri \eta S_k, \qquad
A_{k,\eta,\vfb} :=  \half (\vfb\cdot \bn) I + K_{\vfb,k} - \ri \eta S_k, \\
&B_{k,\eta,\vfb} := (\vfb\cdot \bn) H_k + \vfb\cdot\nabla_\Gamma \left(- \frac{1}{2}I + D_k\right) - \ri \eta  \left(-{\frac{1}{2}}I+ D_k\right).
\label{eq:BketaZ_def}
\end{align}
Regarding notation, since we are only dealing with the exterior Helmholtz Dirichlet problem, we omit the subscript $E$ present in the analogous operators for Laplace's equation, but add a subscript $k$ to highlight the $k$ dependence. We use the notation $-\ri\eta$ rather than $\alpha$ for consistency with standard notation for Helmholtz BIEs -- see \eqref{eq:opA} below -- and allow $\eta$ to be a function, rather than just a constant, to make a link to existing results -- see Theorem \ref{thm:Hoblique} below.

\begin{theorem}[New integral equations for the Helmholtz EDP]\label{thm:Hedp}

\

(i) \textbf{\emph{Direct formulation.}} Let $u$ be the solution of the  Helmholtz EDP of Definition \ref{def:Hedp} with additionally $g_D\in H^1(\Gamma)$. Then $\dnpu$ satisfies
\beq\label{eq:Hnew_ext}
A^\prime_{k,\eta,\vfb}\dnpu =B_{k,\eta,\vfb} g_D.
\eeq

(ii) \textbf{\emph{Indirect formulation.}} Given $g_D\in \LtG$, if $\phi$ satisifes
\beq\label{eq:Hnew_ext_indirect}
A_{k,\eta,\vfb}\phi = g_D,
\eeq
then
\beqs
u:= (\cK_{\vfb,k} -\ri \eta \cS_k)\phi
\eeqs
is the solution of the Helmholtz EDP of Definition \ref{def:Hedp2}.

  (iii) \textbf{\emph{Continuity.}}
$A^\prime_{k,\eta,\vfb} : \LtG\rightarrow\LtG$,
$A_{k,\eta,\vfb}: \LtG\rightarrow\LtG$,
and $B_{k,\eta,\vfb}:\HoG\rightarrow\LtG$, and these mappings are continuous.

 (iv) \textbf{\emph{Coercivity up to compact perturbation for all $\eta$.}} If $\bZ\in (C(\Gamma))^d$ and there exists $c>0$ such that \eqref{eq:c} holds,
 then, for all $\eta \in L^\infty(\Gamma)$, both $A^\prime_{k,\eta,\vfb}$ and $A_{k,\eta,\vfb}$ are the sum of a coercive operator and a compact operator on $\LtG$.
\end{theorem}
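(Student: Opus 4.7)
The plan is to mimic, nearly verbatim, the proofs of Parts (i)--(iv) of Theorem \ref{thm:Laplace_ext} given in \S\ref{sec:4.1} and immediately below Lemma \ref{lem:Calderon}. The only genuinely new ingredient required is the following standard fact: each of $S_k-S$, $D_k-D$, $D'_k-D'$, $K_{\vfb,k}-K_{\vfb}$, and $K'_{\vfb,k}-K'_{\vfb}$ is compact on $L^2(\Gamma)$. Indeed, a Taylor expansion of $\exp(\mathrm{i}k|\bx-\by|)$ about $|\bx-\by|=0$ shows that the kernel of $\Phi_k-\Phi$, and that of its first gradient in $\by$, is bounded when $d=3$ and $O(\log|\bx-\by|)$ when $d=2$, hence weakly singular in both cases, so $L^2(\Gamma)$-compactness follows from the same weakly-singular-kernel argument as in the first paragraph of the proof of Lemma \ref{lem:Calderon}; the passage from $\vfb\in(C^{0,\beta}(\Gamma))^d$ to merely $\vfb\in(C(\Gamma))^d$ for the compactness of $K'_{\vfb,k}-K'_{\vfb}$ is by the density argument in the last paragraph of that same proof (using the bounded factorisation $K'_{\vfb,k}=\vfb\cdot\bT_k$).

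For Part (i), I would take $(\vfb\cdot\bn)$ times the standard direct second-kind Helmholtz BIE $(\tfrac12 I+D'_k)\partial^+_n u=H_k g_D$, minus $\vfb$ dotted with the tangential gradient of, and $\mathrm{i}\eta$ times, the standard direct first-kind Helmholtz BIE $S_k\partial^+_n u=(-\tfrac12 I+D_k)g_D$ (using $\partial^+_n u\in L^2(\Gamma)$ and $\gamma^+u\in H^1(\Gamma)$ by Theorem \ref{thm:Necas}); the Helmholtz analogue $K'_{\vfb,k}\phi=(\vfb\cdot\bn)D'_k\phi+\vfb\cdot\nabla_\Gamma S_k\phi$ of \eqref{eq:Caldb'} then reassembles the left-hand side into $A'_{k,\eta,\vfb}\partial^+_n u$. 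For Part (ii), I would take the exterior non-tangential trace of $u:=(\mathcal{K}_{\vfb,k}-\mathrm{i}\eta\mathcal{S}_k)\phi$, using that (a) $\mathcal{S}_k\phi$ and $\mathcal{K}_{\vfb,k}\phi$ are $C^2$ in $\Omega^+$, satisfy the Helmholtz equation, and satisfy the Sommerfeld radiation condition (since $\Phi_k$ and its derivatives do), (b) their non-tangential maximal functions lie in $L^2(\Gamma)$ (via the Laplace results of \S\ref{sec:recap} together with continuity up to $\Gamma$ of $\mathcal{S}_k-\mathcal{S}$ and $\mathcal{K}_{\vfb,k}-\mathcal{K}_{\vfb}$ coming from the bounded/log-kernel estimates above), and (c) the Helmholtz jump relations for $\mathcal{S}_k$ and $\mathcal{K}_{\vfb,k}$ have the same form as \eqref{eq:jumpCald} and \eqref{eq:jump1}, since the Helmholtz-minus-Laplace potentials have no jump across $\Gamma$. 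Part (iii) is then immediate from the $L^2(\Gamma)\to L^2(\Gamma)$ or $H^1(\Gamma)\to L^2(\Gamma)$ boundedness of the Helmholtz layer and hypersingular operators, each of which either follows from the $s=1/2$ case of \eqref{eq:map} or from the corresponding Laplace boundedness together with a compact (hence bounded) Helmholtz-minus-Laplace correction.

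The crux, Part (iv), follows from the identities
\begin{equation*}
A'_{k,\eta,\vfb}-A'_{E,\vfb,0} \;=\; (K'_{\vfb,k}-K'_{\vfb})-\mathrm{i}\eta\, S_k, \qquad A_{k,\eta,\vfb}-A_{E,\vfb,0} \;=\; (K_{\vfb,k}-K_{\vfb})-\mathrm{i}\eta\, S_k,
\end{equation*}
whose right-hand sides are compact on $L^2(\Gamma)$ (additionally using that $S_k:L^2(\Gamma)\to H^1(\Gamma)$ is bounded and that $H^1(\Gamma)\hookrightarrow L^2(\Gamma)$ is compact). Since Part (iv) of Theorem \ref{thm:Laplace_ext} applies in particular at $\alpha=0$ (its only hypothesis on $\vfb$ being \eqref{eq:c}), $A'_{E,\vfb,0}$ and $A_{E,\vfb,0}$ are coercive-plus-compact on $L^2(\Gamma)$, and therefore so are $A'_{k,\eta,\vfb}$ and $A_{k,\eta,\vfb}$, with the same coercive part as in the Laplace case (in particular the skew summand $\tfrac12(K'_{\vfb}-K_{\vfb})$ contributes only a purely imaginary term to the quadratic form on complex $\phi$, so does not spoil the real-part lower bound $c/2$ coming from $\tfrac12(\vfb\cdot\bn)I$). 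I expect the only mild technical obstacle to be the reduction from H\"older-continuous to merely continuous $\vfb$ in the compactness of $K'_{\vfb,k}-K'_{\vfb}$, which is handled by density exactly as in the final paragraph of the proof of Lemma \ref{lem:Calderon}.
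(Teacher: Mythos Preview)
Your proposal is correct and follows essentially the same approach as the paper. The paper's proof is equally terse: Parts (i)--(iii) are dismissed as ``very similar'' to the Laplace case, and Part (iv) is proved via the identity $A'_{k,\eta,\vfb}-\opLEZ=(\vfb\cdot\bn)(D'_k-D')+\vfb\cdot\nabla_\Gamma(S_k-S)-\ri\eta S_k-\alpha S$ together with the observation (from bounds on $\Phi_k-\Phi$) that $D'_k-D'$ and $\nabla_\Gamma(S_k-S)$ map $\LtG\to\HoG$ and are hence compact; your choice of $\alpha=0$ versus general $\alpha$ is immaterial since $S$ is compact. One small point: your anticipated ``mild technical obstacle'' about density in $\vfb$ for the compactness of $K'_{\vfb,k}-K'_{\vfb}$ is not actually needed, because in the decomposition $(\vfb\cdot\bn)(D'_k-D')+\vfb\cdot\nabla_\Gamma(S_k-S)$ the $\vfb$-dependence is purely as a bounded multiplier applied to operators that are already compact, so mere $\vfb\in(L^\infty(\Gamma))^d$ suffices there.
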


If $\vfb=\bn$ then $A^\prime_{k,\eta,\vfb}=A'_{k,\eta}$, $A_{k,\eta,\vfb}=A_{k,\eta}$, and $B_{k,\eta,\vfb}=B_{k,\eta}$ where
\beq\label{eq:opA}
A'_{k,\eta} := \half I + D'_k - \ri \eta S_k,
\qquad A_{k,\eta} := \half I + D_k - \ri \eta S_k,
\quad\tand\quad B_{k,\eta}:=H_k + \ri \eta \left(\half I - D_k \right)
\eeq
are the operators appearing in the standard ``combined field" or ``combined potential" BIEs for the Helmholtz EDP; see, e.g., \cite[Equations 2.68 and 2.69]{ChGrLaSp:12}.
Recall from \S\ref{sec:openbook} that there exist Lipschitz polyhedra such that each of $A'_{k,\eta}$ and $A_{k,\eta}$ is \emph{not} the sum of a coercive operator and a compact operator on $\LtG$.

By Part (b) of Theorem \ref{thm:Galerkin}, to prove that the Galerkin method converges when applied to either of the BIEs \eqref{eq:Hnew_ext} or \eqref{eq:Hnew_ext_indirect},
it is sufficient to show that  $A^\prime_{k,\eta,\bZ}$ and $A_{k,\eta,\bZ}$ are injective.
Analytic Fredholm theory and the fact that $A^\prime_{k,\eta,\bZ}$ is a compact perturbation of $\opLEZ$ imply that $A^\prime_{k,\eta,\vfb}$ is invertible for all except at most a discrete set of wavenumbers.

\ble\label{lem:analyticFredholm}
If $\vfb$ is Lipschitz, then
there exists a discrete set $D \subset \Com$ (i.e.~each element of $D$ is isolated) such that
$A^\prime_{k,\eta,\vfb}$ is invertible on $\LtG$
  for all $k\in \Com\setminus D$ ($d\geq 3$ and odd) or $k\in (\Com\setminus \Rea^-)\setminus D$ ($d$ even), where $\Rea^-:= (-\infty,0]$.
\ele

Exactly as in the Laplace case, injectivity of $A^\prime_{k,\eta,\bZ}$ and $A_{k,\eta,\bZ}$ is equivalent to uniqueness of the solution of an interior oblique Robin problem.

\begin{definition}[The Helmholtz interior oblique impedance problem]\label{prob:ioip}
Let $\Oi$ be a bounded Lipschitz open set. Given $g\in L^2(\Gamma)$, $\vfb\in (L^\infty(\Gamma))^{d}$, and $\eta\in L^\infty(\Gamma)$, find $u\in H^1(\Oi)$ with $\gmu \in H^1(\Gamma)$ and $\dnmu\in \LtG$ such that
$\Delta u +k^ 2u =0$ in $\Oi$ and
\beq\label{eq:oblique}
(\vfb\cdot\bn) \dnmu + \vfb \cdot \nabla_\Gamma( \gmu) -\ri \eta \, \gmu =  g \text{ on } \Gamma.
\eeq
\end{definition}

\begin{lemma}[$A^\prime_{k,\eta,\vfb}$ can be used to solve the interior oblique impedance problem] \label{thm:star_in_oblique} The single-layer potential $u={\cal S}_k \phi$, with density $\phi\in L^2(\Gamma)$, satisfies the interior oblique impedance problem \eqref{prob:ioip} if
\begin{equation} \label{eq:BIE_ioip1}
A^\prime_{k,\eta,\vfb}\phi = g.
\end{equation}
Conversely, if $u$ satisfies the interior oblique impedance problem, then $u={\cal S}_k \phi$, with $\phi\in L^2(\Gamma)$ satisfying \eqref{eq:BIE_ioip1}.
\end{lemma}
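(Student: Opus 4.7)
The plan is to mirror the proof strategy of Theorem \ref{thm:star_in_obliqueL} (the Laplace analogue), adapting it to the Helmholtz setting. There are two directions: the forward implication ($\phi$ solving the BIE implies $u = \mathcal{S}_k\phi$ solves the IOIP) and the converse (every IOIP solution has such a representation).

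For the forward direction, I would set $u := \mathcal{S}_k\phi$ and invoke standard mapping properties of the Helmholtz single-layer potential (the Helmholtz analogue of \cite[Theorem 2.14]{ChGrLaSp:12}) to conclude that $u \in C^2(\Omega^-) \cap H^1(\Omega^-)$ and that $\Delta u + k^2 u = 0$ in $\Omega^-$. The mapping property $S_k: L^2(\Gamma) \to H^1(\Gamma)$ yields $\gamma^- u = S_k\phi \in H^1(\Gamma)$, and the Helmholtz version of the jump relation in \eqref{eq:jump1}, namely $\partial_n^- \mathcal{S}_k \phi = (\tfrac{1}{2}I + D'_k)\phi$, yields $\partial_n^- u \in L^2(\Gamma)$. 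Substituting these into the boundary condition \eqref{eq:oblique} and using the Helmholtz analogue of the expression \eqref{eq:Caldb'} for $K'_{\vfb,k}$, namely
\begin{equation*}
K'_{\vfb,k}\phi = (\vfb\cdot\bn)D'_k\phi + \vfb\cdot\nabla_\Gamma S_k\phi,
\end{equation*}
the boundary condition rearranges to exactly $A'_{k,\eta,\vfb}\phi = g$, as desired.

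For the converse, the natural strategy is to set $\phi := S_k^{-1}\gamma^- u$ (noting that $\gamma^- u \in H^1(\Gamma)$ by the definition of the IOIP) and then define $v := \mathcal{S}_k\phi$. Then $v$ satisfies the Helmholtz equation in $\Omega^-$ with $\gamma^- v = S_k\phi = \gamma^- u$, and uniqueness for the interior Helmholtz Dirichlet problem (away from the Dirichlet eigenvalues) would give $v = u$ in $\Omega^-$; the forward part then yields that $\phi$ satisfies \eqref{eq:BIE_ioip1}. An alternative route, which avoids both invertibility issues, is to use analytic Fredholm theory as in Lemma \ref{lem:analyticFredholm} to obtain $\phi\in L^2(\Gamma)$ with $A'_{k,\eta,\vfb}\phi = g$, set $v := \mathcal{S}_k\phi$, and invoke uniqueness for the IOIP itself (which one must establish separately under suitable hypotheses on $\vfb$ and $\eta$) to conclude $v = u$ in $\Omega^-$.

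The main technical obstacle in the converse is the case in which $k^2$ coincides with a Dirichlet eigenvalue of $-\Delta$ on $\Omega^-$: there $S_k:L^2(\Gamma)\to H^1(\Gamma)$ fails to be invertible, so the direct construction $\phi := S_k^{-1}\gamma^- u$ breaks down. One therefore either adds a hypothesis that $k^2$ is not a Dirichlet eigenvalue, or uses the analytic Fredholm route combined with a separate uniqueness result for the IOIP, the latter being the Helmholtz analogue of Theorem \ref{thm:oblique:unique} and requiring an adaptation of the maximum-principle-based argument used there. The forward direction, by contrast, is essentially routine once the $L^2(\Gamma)$-mapping properties and jump relations for the Helmholtz layer potentials on Lipschitz $\Gamma$ (obtained by a perturbation argument from the Laplace results of \cite{CoMcMe:82, Ve:84}, since $\Phi_k - \Phi$ has a weakly singular kernel) are in hand.
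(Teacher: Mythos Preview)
Your forward direction is essentially the paper's argument and is fine.

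For the converse, you correctly identify the obstacle (failure of $S_k$ to be invertible at Dirichlet eigenvalues) but neither of your proposed routes resolves it: route 1 simply excludes those $k$, and route 2 assumes uniqueness for the IOIP, which is precisely what is not known in general (indeed, establishing it is the open question discussed around Theorem~\ref{thm:Hoblique}). The paper avoids both issues by a clean trick you did not find: any IOIP solution $u$ automatically satisfies a \emph{standard} interior impedance problem, namely
\[
\Delta u + k^2 u = 0 \text{ in } \Oi, \qquad \dnmu - \ri k\,\gmu = g_0 \text{ on } \Gamma,
\]
with $g_0 := \dnmu - \ri k\,\gmu \in L^2(\Gamma)$. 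This problem (the case $\vfb=\bn$, $\eta=k$) is uniquely solvable for all $k>0$, and $A'_{k,k}$ is invertible on $L^2(\Gamma)$ for all $k>0$ (this is the standard combined-field result, e.g.\ \cite[Theorem~2.27]{ChGrLaSp:12}). Hence, applying the forward direction with $\vfb=\bn$ and $\eta=k$, one obtains $u=\mathcal{S}_k\phi$ with $\phi=(A'_{k,k})^{-1}g_0\in L^2(\Gamma)$. Substituting this representation back into the original oblique boundary condition \eqref{eq:oblique} then yields $A'_{k,\eta,\vfb}\phi=g$. The key point is that the representation $u=\mathcal{S}_k\phi$ is obtained via an auxiliary well-posed problem, with no restriction on $k$ and no appeal to uniqueness of the IOIP itself.
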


\begin{corollary}\label{cor:star_ioip_equiv}
 For $\eta\in L^\infty(\Gamma)$ and $\vfb\in (L^\infty(\Gamma))^{d}$, the operator $A^\prime_{k,\eta,\vfb}:L^2(\Gamma)\to L^2(\Gamma)$ is injective if and only if the interior oblique impedance problem with $g=0$ has only the trivial solution, and is surjective if and only if the interior oblique impedance problem has a solution for every $g\in L^2(\Gamma)$.
\end{corollary}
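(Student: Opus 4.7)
\bigskip

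\noindent\textbf{Proof plan for Corollary \ref{cor:star_ioip_equiv}.} The strategy is to leverage the two-way correspondence provided by Lemma \ref{thm:star_in_oblique}: every solution $\phi\in\LtG$ of the BIE \eqref{eq:BIE_ioip1} produces a solution $u=\cS_k\phi$ of the interior oblique impedance problem (IOIP), and, conversely, every solution $u$ of the IOIP arises in this way from some $\phi\in\LtG$ solving \eqref{eq:BIE_ioip1}. Given this bijective correspondence at the level of solutions, the injectivity and surjectivity statements become essentially a bookkeeping exercise, except for one point: to deduce injectivity of $A'_{k,\eta,\vfb}$ from triviality of the kernel of the IOIP, one needs to recover $\phi=0$ from the vanishing of $u=\cS_k\phi$ in $\Oi$ alone, and this requires a short argument using the jump relations and uniqueness for the Helmholtz EDP.

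The surjectivity half is immediate. If $A'_{k,\eta,\vfb}$ is surjective, then the forward direction of Lemma \ref{thm:star_in_oblique} furnishes, for each $g\in\LtG$, a density $\phi$ with $A'_{k,\eta,\vfb}\phi=g$ and a corresponding IOIP solution $u=\cS_k\phi$. Conversely, if the IOIP is solvable for every $g\in\LtG$, then the converse half of Lemma \ref{thm:star_in_oblique} expresses that solution as $\cS_k\phi$ with $A'_{k,\eta,\vfb}\phi=g$, giving surjectivity. For the injectivity half, one direction is equally direct: if $A'_{k,\eta,\vfb}$ is injective and $u$ solves the IOIP with $g=0$, then the converse half of Lemma \ref{thm:star_in_oblique} writes $u=\cS_k\phi$ with $A'_{k,\eta,\vfb}\phi=0$, so $\phi=0$ and hence $u=0$.

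The only step requiring care is the reverse implication: assuming the IOIP with $g=0$ has only the trivial solution, deduce injectivity of $A'_{k,\eta,\vfb}$. Suppose $A'_{k,\eta,\vfb}\phi=0$; then $u:=\cS_k\phi$ solves the IOIP with $g=0$ by Lemma \ref{thm:star_in_oblique}, so $u\equiv 0$ in $\Oi$ by hypothesis. In particular $\gamma^- u = 0$, and since the single-layer potential has continuous Dirichlet trace (the first jump relation in \eqref{eq:jump1} gives $\gamma^\pm \cS_k\phi = S_k\phi$), one also has $\gamma^+ u = S_k\phi = \gamma^- u = 0$ on $\Gamma$. Now $u$, restricted to $\Oe$, is $C^2$, satisfies the Helmholtz equation, and satisfies the Sommerfeld radiation condition \eqref{eq:src} (this last property follows from standard asymptotics of $\cS_k\phi$, e.g.\ from \cite[Theorem 2.14]{ChGrLaSp:12}); uniqueness for the Helmholtz EDP (Definition \ref{def:Hedp}) then forces $u\equiv 0$ in $\Oe$ as well. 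Finally, the Neumann jump relation $\phi = \partial_n^- u - \partial_n^+ u$ (the analogue of the second relation in \eqref{eq:jump1} for $\cS_k$) yields $\phi=0$, completing the argument. The main (and indeed only) subtlety is exactly this Oe-extension step; the rest is a formal translation between two equivalent problems.
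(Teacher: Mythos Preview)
Your proof is correct and follows essentially the same approach as the paper: both use Lemma \ref{thm:star_in_oblique} for the surjectivity equivalence, and for injectivity both reduce to showing that $\cS_k\phi=0$ in $\Oi$ forces $\phi=0$ via continuity of the Dirichlet trace of $\cS_k$, uniqueness for the Helmholtz EDP in $\Oe$, and the Neumann jump relation. Your write-up is in fact more explicit than the paper's on the ``easy'' directions of each equivalence.
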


Whereas BVPs involving oblique derivatives have been well-studied for Laplace's equation (see the references in Theorem \ref{thm:RegOblSt} and also \cite{Li:13a}, \cite[\S\S5.23, 5.24, 6.19]{Me:18}), there do not appear to be any results in the literature on the unique solvability of the interior oblique impedance problem for the Helmholtz equation for all wavenumbers and general Lipschitz $\Oi$.
The following theorem collects three situations in which the solution of the interior oblique impedance problem is known to be unique, and hence, by the results above, the Galerkin method
applied to either of the BIEs \eqref{eq:Hnew_ext} or \eqref{eq:Hnew_ext_indirect}
is provably convergent
for every asymptotically-dense sequence of subspaces.

\begin{theorem}\label{thm:Hoblique}
The solution of the Helmholtz interior oblique impedance problem is unique in the following situations.

(i) $\Oi$ is Lipschitz and star-shaped with respect to a ball centred (without loss of generality) at the origin,
$k>0$,
$\vfb(\bx)= \bx$, and
\begin{equation} \label{eq:special_eta}
\eta(\bx) = k|\bx| + \ri \alpha \quad\text{ with }\quad \alpha \geq (d-1)/2.
\end{equation}

(ii) $\Oi$ is Lipschitz and is a  finite union of domains as in (i), $k>0$, and, on each connected part of $\Gamma$, where $\bx_j$ is the point from which that connected component of $\Oi$ is star-shaped with respect to a ball, $\vfb(\bx)=\bx-\bx_j$ and
\beqs
\eta(\bx) = k|\bx-\bx_j| + \ri \alpha  \quad\text{ with }\quad \alpha \geq (d-1)/2.
\eeqs

(iii) $\Oi$ is $C^\infty$, $k$ is sufficiently large, $\eta= c_1 k$, where $c_1>0$ is independent of $k$, and $\vfb\in (C^\infty(\Gamma))^d$ is such that, for all $\bx \in \Gamma$,
\beq\label{eq:GLS1}
\vfb(\bx)\cdot\bn(\bx) >0 \quad\tand \quad |\vfb(\bx)\cdot \bt(\bx) - c_1 |>0
\eeq
for all unit tangent vectors $\bt(\bx)$ at the point $\bx$.
\end{theorem}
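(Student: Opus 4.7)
\textbf{Proof plan for Theorem \ref{thm:Hoblique}.}
My overall strategy is to handle Parts (i) and (ii) by a Rellich--Morawetz energy argument, and Part (iii) by semiclassical/microlocal analysis. In all three cases, the goal is to show that if $u$ solves the Helmholtz interior oblique impedance problem with $g=0$, then $u\equiv 0$, and then to invoke Fredholm theory via Corollary \ref{cor:star_ioip_equiv} and Lemma \ref{lem:analyticFredholm} to upgrade uniqueness to invertibility. As in the Laplace proofs of \S\ref{sec:proofs}, by Theorem \ref{thm:BVPequiv2} (suitably adapted to the Helmholtz case) it suffices to work with $u\in C^2(\Oi)$ whose boundary traces $\gmu\in H^1(\Gamma)$ and $\dnmu\in\LtG$ are well-defined and satisfy \eqref{eq:obliqueL}.

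For Part (i), I will derive a Helmholtz version of the integrated Rellich identity of Lemma \ref{lem:int} by multiplying $\Delta u + k^2 u = 0$ by $\overline{\bZ\cdot\nabla u + \alpha u}$, taking $2\Re$, and integrating over $\Oi$. With $\bZ=\bx$, $\nabla\cdot\bZ = d$ and $\partial_i Z_j = \delta_{ij}$, this yields
\begin{equation*}
\int_\Gamma \Big\{(\bx\cdot\bn)\big[|\dnmu|^2 - |\nT\gmu|^2 + k^2|\gmu|^2\big] + 2\Re\big[(\bx\cdot\nT\gmu + \alpha\gmu)\overline{\dnmu}\big]\Big\}\,\rd s = (2\alpha-d+2)\!\int_{\Oi}\!|\nabla u|^2 + (d-2\alpha)k^2\!\int_{\Oi}\!|u|^2.
\end{equation*}
Substituting the oblique BC with $g=0$ and $\eta(\bx)=k|\bx|+\ri\alpha$ --- equivalently $(\bx\cdot\bn)\dnmu + \bx\cdot\nT\gmu + \alpha\gmu = \ri k|\bx|\gmu$ --- into the boundary integrand, and using that $2\Re[\ri k|\bx|\gmu\,\overline{\dnmu}] = -2k|\bx|\Im[\gmu\,\overline{\dnmu}]$, the boundary integral collapses to $-\int_\Gamma(\bx\cdot\bn)[|\dnmu|^2 + |\nT\gmu|^2 - k^2|\gmu|^2]\rd s - 2k\!\int_\Gamma |\bx|\Im[\gmu\,\overline{\dnmu}]\rd s$. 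The imaginary-part piece can be rewritten by testing the BC against $\overline{\gmu}$ and taking $\Im$, which gives an identity expressing $k\int_\Gamma|\bx||\gmu|^2$ in terms of tangential quantities; combining this with Green's identity $\int_\Gamma\overline\gmu\,\dnmu\,\rd s = \int_{\Oi}(|\nabla u|^2 - k^2|u|^2)$, and using $\alpha\geq (d-1)/2$ together with the star-shaped bound $\bx\cdot\bn\geq\radius$, I will obtain that the left-hand side is $\leq 0$ while (at $\alpha=(d-1)/2$, the critical case) the right-hand side equals $\int_{\Oi}(|\nabla u|^2+k^2|u|^2)$. This forces $u\equiv 0$ on $\Oi$; the range $\alpha>(d-1)/2$ then follows by absorbing the extra positive contribution into the coercive volume term.

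For Part (ii), since each connected component $\Omega_j$ of $\Oi$ is star-shaped with respect to a ball centred at $\bx_j$, and since $\vfb$ and $\eta$ on each connected piece $\Gamma_j:=\partial\Omega_j$ depend only on the shifted coordinate $\bx-\bx_j$, the problem decouples: $u|_{\Omega_j}$ satisfies a Helmholtz IOIP on $\Omega_j$ of exactly the type covered by (i) (after the translation $\bx\mapsto\bx-\bx_j$, which leaves the Helmholtz operator invariant). Applying (i) on each component in turn gives $u\equiv 0$ on each $\Omega_j$, hence on all of $\Oi$.

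For Part (iii), where $\Gamma$ is $C^\infty$ and $k$ is large, I will argue semiclassically with $h=1/k$. Writing the boundary condition using the interior Dirichlet-to-Neumann map $P_{\rm DtN}^-:H^1(\Gamma)\to L^2(\Gamma)$, the oblique BC with $g=0$ becomes
\begin{equation*}
\Big[(\vfb\cdot\bn)P_{\rm DtN}^- + \vfb\cdot\nT - \ri c_1 k\,I\Big]\gmu = 0.
\end{equation*}
Standard semiclassical results for the interior Helmholtz DtN on smooth domains give $P_{\rm DtN}^-$ as a semiclassical pseudodifferential operator on $\Gamma$ whose principal symbol is $(\ri/h)\sqrt{1-|\xi|^2}$ in the elliptic region $|\xi|<1$ and $(1/h)\sqrt{|\xi|^2-1}$ in the hyperbolic region (with appropriate sign determined by the radiation properties of the interior problem), modulo contributions from trapped or glancing rays. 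The full semiclassical principal symbol of the boundary operator above is then $(\vfb\cdot\bn)\sigma(P_{\rm DtN}^-) + \ri h^{-1}\vfb(\bx)\cdot\xi - \ri h^{-1}c_1$. Conditions \eqref{eq:GLS1} --- $\vfb\cdot\bn>0$ and $|\vfb\cdot\bt - c_1|>0$ for every unit tangent $\bt$ --- are exactly what is needed to keep this symbol bounded away from zero on both the elliptic and hyperbolic regions of the cotangent bundle, i.e.\ the boundary operator is semiclassically elliptic on $\Gamma$. A Gårding-type estimate together with a standard parametrix construction (plus analytic Fredholm perturbation in $k$) then shows that the boundary operator is invertible for all sufficiently large $k$, so that $\gmu=0$ and hence $u\equiv 0$.

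The main obstacle will be the delicate bookkeeping in Part (i): substituting the BC into the boundary integrand produces cross terms involving $\Im[\gmu\,\overline{\dnmu}]$ and $\Im[(\bx\cdot\nT\gmu)\overline{\gmu}]$ whose signs are not manifest, and the coefficient $(d-1)/2$ must emerge precisely from balancing the Rellich weights $(2\alpha-d+2)$ and $(d-2\alpha)$ against these cross terms and the star-shapedness constant $\radius$. For Part (iii), the subtlety is dealing with the semiclassical behaviour of $P_{\rm DtN}^-$ in the hyperbolic/glancing region and confirming that condition \eqref{eq:GLS1} suffices, rather than needing stronger non-trapping hypotheses; this is the key place where I would want to invoke or adapt existing high-frequency resolvent/DtN estimates from the literature.
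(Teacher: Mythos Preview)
Your overall strategy for Parts (i)--(ii) is in the right spirit: the paper does not reprove these but cites \cite{SpChGrSm:11} for (i) and \cite{Gi:17} for (ii), and both of those references rest on Morawetz/Rellich-type identities. The paper's route for (i) is slightly different from yours, though: it uses coercivity of the boundary integral operator $A'_{k,\eta,\bx}$ (the ``star-combined operator'') applied to $u=\cS_k\phi$ in $\Oi\cup\Oe$, and then deduces uniqueness of the interior oblique impedance problem via Corollary~\ref{cor:star_ioip_equiv}, whereas you work directly on a solution $u$ of the IOIP in $\Oi$. Your decoupling argument for (ii) is exactly right and matches what \cite{Gi:17} does.

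There is, however, a gap in your Part~(i) computation. Your multiplier $\bx\cdot\nabla u+\alpha u$ is real; after substituting the boundary condition you are left with the term $-2k\int_\Gamma|\bx|\,\Im[\gmu\,\overline{\dnmu}]\,\rd s$ and the wrong-sign term $+\int_\Gamma(\bx\cdot\bn)k^2|\gmu|^2\,\rd s$, neither of which you can control by Green's identity (which only gives $\int_\Gamma\overline{\gmu}\,\dnmu$ with no weight) or by testing the boundary condition against $\overline{\gmu}$ (which gives $\Im\int(\bx\cdot\bn)\dnmu\overline{\gmu}$, not $\Im\int|\bx|\,\dnmu\overline{\gmu}$). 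The standard fix, and what \cite{SpChGrSm:11} actually does, is to use the full complex Morawetz multiplier $\mathcal{M}u=\bx\cdot\nabla u-\ri k|\bx|u+\tfrac{d-1}{2}u$ from the outset; the extra $-\ri k|\bx|u$ term generates precisely the cross terms needed to absorb both problematic pieces and leaves a sign-definite volume integrand.

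For Part~(iii) there is a more serious problem. You propose to compose with the interior Dirichlet-to-Neumann map $P^-_{\rm DtN}$ and argue that the resulting boundary operator is a semiclassically elliptic $\Psi$DO. But the \emph{interior} Helmholtz DtN map is not a semiclassical pseudodifferential operator with the symbol you quote: it fails to exist at Dirichlet eigenvalues, and even between eigenvalues its high-frequency behaviour is governed by the complicated dynamics of the interior billiard flow, not by a local symbol. (Your description would be closer to correct for the \emph{exterior} DtN map with the radiation condition.) The paper instead invokes \cite[Theorem~4.6]{GaLaSp:21}, which works directly with the oblique boundary condition written as $\mathcal{N}(\tfrac{1}{\ri}\hbar\partial_n u)-\mathcal{D}u=\tfrac{1}{\ri}\hbar g$ with $\mathcal{N}=\vfb\cdot\bn$ and $\mathcal{D}=-\vfb\cdot\nabla_\Gamma/\ri+c_1$, and proves well-posedness of the interior oblique impedance problem under exactly the symbol conditions \eqref{eq:GLS1}; this avoids the DtN map entirely.
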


The following theorem is the Helmholtz analogue of Theorem \ref{thm:inverse_formula_L}.

\begin{theorem}\label{thm:inverse_formula_H}
Assume that the solution of the Helmholtz interior oblique impedance problem of Definition \ref{prob:ioip} exists and is unique, and let $P^{-, \eta, \vfb}_{\rm ItD}: L^2(\Gamma)\rightarrow \HoG$ denote the map that takes $g$ to $\gmu$, where $u$ is as in Definition \ref{prob:ioip}.
Let $P^+_{\rm DtN}: H^1(\Gamma)\rightarrow \LtG$ denote the exterior Dirichlet-to-Neumman map for the Helmholtz exterior Dirichlet problem. Then, as an operator on $\LtG$,
\beq\label{eq:inverse_formula_H}
\big( A'_{k,\eta,\vfb}\big)^{-1} = \frac{1}{\vfb\cdot \bn}I - \left( P^+_{\rm DtN} - \frac{1}{ \vfb\cdot \bn}\big( \ri \eta - \vfb\cdot\nT \big) \right)
P^{-, \eta, \vfb}_{\rm ItD}.
\eeq
\end{theorem}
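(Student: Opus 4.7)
The plan is to mimic the proof of Theorem~\ref{thm:inverse_formula_L} (the Laplace analogue), with the Helmholtz jump relations for $\cS_k$ replacing those for $\cS$ and with Lemma~\ref{thm:star_in_oblique}/Corollary~\ref{cor:star_ioip_equiv} in place of Theorem~\ref{thm:star_in_obliqueL}.

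First, I would establish that the hypothesis (well-posedness of the interior oblique impedance problem of Definition~\ref{prob:ioip}) is enough for the right-hand side of \eqref{eq:inverse_formula_H} to make sense \emph{and} for $A'_{k,\eta,\vfb}:\LtG\to\LtG$ to be invertible, so that it has an inverse to compute. The first point is clear, since existence and uniqueness of the interior oblique impedance problem are exactly what is required for $P^{-,\eta,\vfb}_{\rm ItD}:\LtG\to\HoG$ to be well-defined. The second point follows from Corollary~\ref{cor:star_ioip_equiv}: uniqueness gives injectivity of $A'_{k,\eta,\vfb}$ and existence gives surjectivity, hence bijectivity and (by the open mapping theorem, using the continuity in Part~(iii) of Theorem~\ref{thm:Hedp}) boundedness of the inverse.

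Given $g\in\LtG$, set $\phi:=(A'_{k,\eta,\vfb})^{-1}g\in\LtG$ and define $u:=\cS_k\phi$. By Lemma~\ref{thm:star_in_oblique} (the ``if'' direction), $u$ solves the interior oblique impedance problem with data $g$, so
\beqs
\gmu=P^{-,\eta,\vfb}_{\rm ItD}g.
\eeqs
Simultaneously, the Helmholtz single-layer potential satisfies $\cS_k\phi\in H^1_{\rm loc}(\Omega^+)$, satisfies Helmholtz's equation in $\Omega^+$, and satisfies the Sommerfeld radiation condition \eqref{eq:src} (see, e.g., \cite[Theorem 2.14]{ChGrLaSp:12}), so $u$ is also the solution of the Helmholtz EDP in the sense of Definition~\ref{def:Hedp} with Dirichlet data $\gpu$; thus
\beqs
\dnpu=P^+_{\rm DtN}(\gpu).
\eeqs
The Helmholtz single-layer jump relations (the analogues of \eqref{eq:jump1}) then give $\gpu=\gmu=S_k\phi$ and
\beqs
\phi=\dnmu-\dnpu.
\eeqs

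Finally, reading the interior oblique Robin condition \eqref{eq:oblique} as an identity for $\dnmu$, and using that $\vfb\cdot\bn\geq c>0$ so division by $\vfb\cdot\bn$ is meaningful,
\beqs
\dnmu=\frac{1}{\vfb\cdot\bn}\bigl(g-\vfb\cdot\nT(\gmu)+\ri\eta\,\gmu\bigr).
\eeqs
Substituting this and the expression for $\dnpu$ into $\phi=\dnmu-\dnpu$, and using $\gmu=P^{-,\eta,\vfb}_{\rm ItD}g$, yields
\bals
\phi&=\frac{1}{\vfb\cdot\bn}g+\frac{\ri\eta-\vfb\cdot\nT}{\vfb\cdot\bn}\gmu-P^+_{\rm DtN}(\gmu)\\
&=\frac{1}{\vfb\cdot\bn}g-\left(P^+_{\rm DtN}-\frac{1}{\vfb\cdot\bn}\bigl(\ri\eta-\vfb\cdot\nT\bigr)\right)P^{-,\eta,\vfb}_{\rm ItD}g,
\eals
which is precisely \eqref{eq:inverse_formula_H} applied to $g$. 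Since $g$ was arbitrary, the operator identity follows.

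The argument is essentially bookkeeping; the one subtlety to watch is that the ``$\frac{1}{\vfb\cdot\bn}(\ri\eta-\vfb\cdot\nT)$'' piece must be interpreted as a bounded operator from $\HoG$ into $\LtG$ (so its composition with $P^{-,\eta,\vfb}_{\rm ItD}:\LtG\to\HoG$ lands in $\LtG$) and that $P^+_{\rm DtN}$ is applied to $\gmu=S_k\phi\in\HoG$, for which one uses the mapping $S_k:\LtG\to\HoG$. The only conceptual step beyond the Laplace case is noting that $\cS_k\phi$ automatically satisfies the Sommerfeld radiation condition, so it identifies as the unique EDP solution with its own Dirichlet trace; this ensures $\dnpu=P^+_{\rm DtN}\gpu$ without further work.
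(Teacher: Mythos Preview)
Your proposal is correct and follows essentially the same approach as the paper, which simply says the proof is very similar to that of Theorem~\ref{thm:inverse_formula_L} using Lemma~\ref{thm:star_in_oblique} in place of the Laplace analogue. You have in fact written out the details more fully than the paper does, including the justification (via Corollary~\ref{cor:star_ioip_equiv}) that the well-posedness hypothesis implies invertibility of $A'_{k,\eta,\vfb}$, and the observation that $\cS_k\phi$ satisfies the Sommerfeld radiation condition so that $P^+_{\rm DtN}$ applies.
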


\bre[The star-combined operator]\label{rem:star}
When $\Oi$ is star-shaped with respect to a ball, one can choose $\vfb$ and $\eta$ so that $A^\prime_{k,\eta,\vfb}$ is coercive (not just coercive up to a compact perturbation). Indeed if
$\vfb(\bx)=\bx$ and $\eta(\bx)$ is given by \eqref{eq:special_eta},
then \cite[Theorem 1.1]{SpChGrSm:11} showed that $A^\prime_{k,\eta,\vfb}$ is coercive for all $k>0$ with coercivity constant given by $\radius$ in \eqref{eq:ssg}
\footnote{Strictly speaking, \cite[Theorem 1.1]{SpChGrSm:11} proves coercivity of $A^\prime_{k,\eta,\vfb}$ with $\eta$ as in \eqref{eq:special_eta} with $\alpha=(d-1)/2$. However, using the second inequality in \cite[Equation 2.8]{ChMo:08} in the proof of \cite[Lemma 2.4]{SpChGrSm:11}, one obtains the result for $\alpha \geq (d-1)/2$.
Furthermore, \cite{SpChGrSm:11} considers only $d=2,3$, but the proofs go through in exactly the same way for $d\geq 4$.
}. This operator was named the \emph{star-combined operator} and given the notation $\starA$. Note that the coercivity of the star-combined operator implies the uniqueness result about the interior oblique impedance problem
in Part (i) of Theorem \eqref{thm:Hoblique}.
\ere

\bre \label{rem:earlier}
Part (iv) of Theorem \ref{thm:Hedp} was stated (without proof) as \cite[Theorem 2.40]{ChGrLaSp:12}, with reference given to the present paper.
Lemma \ref{thm:star_in_oblique}, Corollary \ref{cor:star_ioip_equiv}, and Theorem \ref{thm:inverse_formula_H} were stated (with proof) as \cite[Theorem 2.38]{ChGrLaSp:12}, \cite[Corollary 2.39]{ChGrLaSp:12}, and \cite[Theorem 2.42]{ChGrLaSp:12} but are included here for completeness.
\ere

\subsection{Discussion of the Helmholtz results and related literature}\label{sec:Helm_diss}

\paragraph{The ideas behind the results in \S\ref{sec:Helmholtz_state}.}
The proofs of Parts (i), (ii), and (iii) of Theorem \ref{thm:Hedp} are very similar to their Laplace analogues in Theorem \ref{thm:Laplace_ext}. The proof of Part (iv) (coercivity up to a compact perturbation on $\LtG$)
follows from Part (iv) of Theorem \ref{thm:Laplace_ext} (the analogous result for $\opLEZ$) since the difference $A'_{k,\eta,\vfb}- \opLEZ$ is compact (for any $\alpha\in \Rea$).
Indeed,
\beq\label{eq:Helm_diff}
A^\prime_{k,\eta,\vfb}- \opLEZ= (\vfb\cdot\bn)(D'_k-D') + \vfb \cdot \nT(S_k-S) - \ri \eta S_k - \alpha S.
\eeq
The bounds on $\Phi_k(\bx,\by)- \Phi(\bx,\by)$ in, e.g., \cite[Equation 2.25]{ChGrLaSp:12} imply that $D'_k-D'$ and $\nT(S_k-S)$ map $\LtG$ continuously to $\HoG$, and are thus compact. Furthermore, the mapping property \eqref{eq:map2} holds with $S$ replaced by $S_k$ (again because of properties of $\Phi_k(\bx,\by)- \Phi(\bx,\by)$) and thus $S_k$ is compact on $\LtG$.

The link between $A'_{k,\eta,\vfb}$ and the Helmholtz interior oblique impedance problem expressed in Theorem \ref{thm:star_in_oblique} and Corollary \ref{cor:star_ioip_equiv} is proved in an analogous way to the link between $\opLEZ$ and the Laplace IORP in Theorem \ref{thm:star_in_obliqueL} and Corollary \ref{cor:oblique:unique}.

\paragraph{Discussion of our results in the context of related literature.}

The summary is that there does not yet exist a BIE posed in $\LtG$ for solving the Helmholtz EDP that, for all Lipschitz $\Oi$ and all $k>0$, is bounded, invertible, and the sum of a coercive operator and a compact operator. The standard BIOs $A'_{k,\eta}$ and $A_{k,\eta}$ \eqref{eq:opA} fail to be the sum of a coercive operator and a compact operator on the Lipschitz domains and 3-d star-shaped Lipschitz polyhedra given in \cite{ChSp:22}. The new BIOs $A_{k,\eta,\vfb}'$ and $A_{k,\eta,\vfb}$ have not been proved to be injective for all $k>0$. The formulations of  \cite{BuHi:05, BuSa:07, EnSt:07, EnSt:08} are invertible for all $k>0$ and the sum of a coercive operator and a compact operator, but only from $\HmhG\rightarrow \HhG$. We now give more detail on all these points.

The motivation for considering second-kind Laplace BIEs over first-kind Laplace BIEs is primarily based on the good conditioning of second-kind BIEs.
For Helmholtz BIEs, however, there is an additional complication compared to Laplace BIEs:~the BIEs in the exterior Helmholtz analogues of \eqref{eq:inteq1} and \eqref{eq:inteq2}, namely
\beq\label{eq:HBIE1}
S_k \dnpu = - \left(\half I - D_k\right) g_D \quad\tand\quad \left( \half I + D'_k \right) \dnpu = H_k g_D,
\eeq
are not uniquely solvable for all values of $k$. Indeed,
$S_k$ is not injective when $k^2$ is an Dirichlet eigenvalue of $-\Delta$ in $\Oi$; this is because, from the first equation in \eqref{eq:HBIE1}, the Neumann trace of a Dirichlet eigenfunction of $-\Delta $ in $\Oi$ satisfies $S_k \dnmu=0$.
Similarly, $\half I + D'_k $ is not injective when $k^2$ is a Neumann eigenvalue of $-\Delta$ in $\Oi$; this is because
the operator $\half I + D'_k $ appears in the indirect formulation of the interior Neumann problem (see Table \ref{tab:bies}) and thus cannot be injective when $k^2$ is a Neumann eigenvalue.

The standard remedy (going back to \cite{BrWe:65, Le:65, Pa:65} in the context of indirect BIEs and \cite{BuMi:71} in the context of direct BIEs) is to take a linear combination of the BIEs in \eqref{eq:HBIE1} and observe that, for $\eta\in \Com\setminus\{0\}$, $\dnpu$ satisfies
\beq\label{eq:HBIE2}
A'_{k,\eta}\dnpu = B_{k,\eta} g_D
\eeq
with $A'_{k,\eta}$ and $B_{k,\eta}$ defined in \eqref{eq:opA}.
The mapping properties \eqref{eq:map2} imply that, for $|s|\leq 1/2$,
\beqs
\opA: H^{s-1/2}(\Gamma)\rightarrow H^{s-1/2}(\Gamma) \quad\tand\quad B_{k,\eta}: H^{s+1/2}(\Gamma)\rightarrow H^{s-1/2}(\Gamma),
\eeqs
 and one can then show that, for $k>0$ and $\Re\eta \neq 0$, $\opA$ and $B_{k,\eta}$ are invertible; see, e.g, \cite[Theorem 2.27]{ChGrLaSp:12}.

Since $D'_k-D'$ and $S_k$ are both compact on $\LtG$ when $\Gamma$ is Lipschitz,
$\opA$ is the sum of $\half I + D'$ and a compact operator. Therefore the question of whether or not $\opA$ is the sum of a coercive operator and a compact operator is equivalent to the analogous question for $\half I +D'$. The results recalled in \S\ref{sec:openbook} imply that the answer to this question is yes if $\Gamma$ is $C^1$, $\Oi$ is a 2-d curvilinear polygon with $C^{1,\alpha}$ sides, or $\Gamma$ is a Lipschitz domain with sufficiently-small Lipschitz character; furthermore the answer to this question is no if $\Gamma$ is one of the 2- and 3-d Lipschitz domains or 3-d star-shaped Lipschitz polyhedra described in \cite{ChSp:22} (see \cite[\S1.2.1]{ChSp:22} for more detail).

The lack of a convergence theory for the Galerkin method applied to either $A'_{k,\eta}$ or $A_{k,\eta}$ on $\LtG$
motivated \cite{BuHi:05,BuSa:07, EnSt:07, EnSt:08} to introduce modifications of $A'_{k,\eta}$ and $A_{k,\eta}$.
For direct BIEs, the idea in these papers is to choose an operator $M: \HmhG\rightarrow \HhG$ so that
\beqs
\widetilde{A}_{k,\eta, M}:= M\left(\half I + D'_k\right) - \ri \eta S_k
\eeqs
maps $\HmhG \to \HhG$, is injective, and is the sum of a coercive operator and a compact operator (with this last property following from the facts that $S:\HmhG\rightarrow \HhG$ is coercive and $\widetilde{A}_{k,\eta, M}+\ri \eta S_k$ is compact);
different choices of $M$ were then proposed and analysed in \cite{BuHi:05, BuSa:07, EnSt:07, EnSt:08}, and the corresponding interior boundary value problems proved to have a unique solution (so that the operator $\widetilde{A}_{k,\eta, M}$ is injective).
Convergence of the Galerkin method then follows from Part (b) of Theorem \ref{thm:Galerkin}. However, the BIE involving $\widetilde{A}_{k,\eta, M}$ is now a first-kind equation (in contrast to the second-kind equation \eqref{eq:HBIE2}), and so one expects the conditioning of the Galerkin linear systems to worsen as the meshes are refined (as discussed in \S\ref{sec:rationale}).

\subsection{Proofs of the results in \S\ref{sec:Helmholtz_state}} \label{sec:Helmholtz_prove}

\begin{proof}[Proof of Theorem \ref{thm:Hedp}]
The proof that the Neumann trace $\dnpu$ of the solution of the Helmholtz EDP satisfies the integral equation \eqref{eq:Hnew_ext} is very similar to the
analogous arguments for the Laplace integral equations; see the proofs of Theorems \ref{thm:Laplace_int} and \ref{thm:Laplace_ext} above.
As recalled in \S\ref{sec:Helm_diss}, the difference $A'_{k,\eta,\vfb}- \opLEZ$ is compact;
therefore, Part (iv) of Theorem \ref{thm:Laplace_ext} implies that, for any $\eta\in \Com$, $A^\prime_{k,\eta,\vfb}$ is the sum of a coercive operator and a compact operator.
\end{proof}

\

\bpf[Proof of Lemma \ref{lem:analyticFredholm}]
By the definitions of $\opLEZ$ \eqref{eq:opLEZ} and $A^\prime_{k,\eta,\vfb}$ \eqref{eq:AketaZpr_def},
\begin{align*}
A^\prime_{k,\eta,\vfb}&= \opLEZ + \big(K_{\vfb,k}'- K_\vfb\big) - \ri \eta S_k - \alpha S,\\
&=\opLEZ \Big( I + (\opLEZ)^{-1}\big( K_{\vfb,k}'- K_\vfb - \ri \eta S_k - \alpha S\big)\Big).
\end{align*}
By Part (v) of Theorem \ref{thm:Laplace_ext}, $\opLEZ$ is invertible if $\alpha$ satisfies \eqref{eq:alpha_bound_boundary}. Furthermore, for all $k\in \Com$ ($d\geq 3$ and odd) or $k\in \Com\setminus \Rea^-$ ($d$ even),
$K_{\vfb,k}'- K_\vfb - \ri \eta S_k - \alpha S$ is compact on $\LtG$; indeed, as discussed in the proof of Theorem \ref{thm:Hedp} above,
$S_k, S_0, D_k'-D',$ and $\nabla_\Gamma S_k  -\nabla_\Gamma S$ are all compact operators on $\LtG$. The result then follows from  analytic Fredholm theory; see, e.g., \cite[Theorem 8.26]{CoKr:98}.
\epf

\

\begin{proof}[Proof of Theorem \ref{thm:star_in_oblique}]
The proof is similar to the proofs of Theorems \ref{thm:star_in_obliqueL} and \ref{thm:star_in_obliqueLE}.
If $u={\cal S}_k \phi$ with $\phi\in L^2(\Gamma)$ then, by the jump relations \eqref{eq:jump1}, the fact that $S_k:\LtG\rightarrow \HoG$ by \eqref{eq:map1} and the fact that $\cS_k$ satisfies the same mapping properties as $\cS$ in \eqref{eq:LPmap}, $u$ satisfies the oblique impedance BVP if and only if the boundary condition \eqref{eq:oblique} holds.
However, the jump relations \eqref{eq:jump1} imply that this equation is precisely \eqref{eq:BIE_ioip1}. On the other hand, if $u$ satisfies the oblique impedance problem then
$g_0 := \dnmu - \ri\eta\gmu\in L^2(\Gamma)$. Recall the interior impedance BVP
\beqs
\Delta u +k^2 u =0 \quad\text{ in } \Oi, \qquad\dnmu- \ri \eta \gmu = g \quad\text{ on }\Gamma,
\eeqs
 with $\eta\in\Rea\setminus\{0\}$ and $g=g_0$ has exactly one solution (see, e.g., \cite[Theorem 2.3]{ChGrLaSp:12}). Clearly this solution is $u$. Further, recalling first that $A^\prime_{k,k,\bn}=A^\prime_{k,k}$, which is invertible by \cite[Theorem 2.27]{ChGrLaSp:12}, and second that the interior impedance problem is the special case of the interior oblique impedance problem when $\vfb=\bn$, we see that $u = {\cal S}_k \phi$ with $\phi = (A^\prime_{k,k})^{-1}g_0\in L^2(\Gamma)$ by the first part of this theorem.
\end{proof}

\

\begin{proof}[Proof of Corollary \ref{cor:star_ioip_equiv}]
Surjectivity follows from Lemma~\ref{thm:star_in_oblique}.
Injectivity follows
provided that
$u={\cal S}_k\phi=0$ in $\Oi$ only if $\phi=0$.
Assume that $\phi\in L^2(\Gamma)$ and $u={\cal S}_k\phi$ is zero in $\Oi$. Then, by the jump relations \eqref{eq:jump1}, $u$ satisfies the Helmholtz exterior Dirichlet problem in $\Oe$ with zero Dirichlet data, and hence is zero by uniqueness of the solution of this BVP (see, e.g., \cite[Corollary 2.9]{ChGrLaSp:12}). The jump relations \eqref{eq:jump1} then imply that  $\phi = \partial_n^-u-\partial_n^+u=0$, as required.
\end{proof}

\

\bpf[Proof of Theorem \ref{thm:Hoblique}]

(i) In this situation, $A^\prime_{k,\eta,\vfb}$ is coercive (not just coercive up to a compact perturbation) by \cite[Theorem 1.1]{SpChGrSm:11};
see Remark \ref{rem:star}.

(ii) This is proved in \cite[Theorem 5.6]{Gi:17} by showing that the corresponding $A^\prime_{k,\eta,\vfb}$ is injective;
for related results about this $A^\prime_{k,\eta,\vfb}$ see \cite[Appendix A]{GiChLaMo:21}.

(iii)
Let $\hbar:=k^{-1}$ be the semiclassical parameter.
In the notation of \cite[\S4]{GaLaSp:21}, the boundary condition \eqref{eq:oblique} then becomes
\beqs
\mathcal{N}\left(\frac{1}{\ri} \hbar\partial_n u\right) - \mathcal{D} u = \frac{1}{\ri} \hbar g,
\eeqs
where $\mathcal{N}:= \vfb \cdot \bn$ and $\mathcal{D}= - \vfb \cdot\nabla_\Gamma/\ri + c_1$; see \cite[Equation 4.1]{GaLaSp:21}.
We now check that the hypotheses of \cite[Theorem 4.6]{GaLaSp:21} hold, and then the result then follows from \cite[Theorem 4.6]{GaLaSp:21} (in fact, this theorem proves that, under these conditions, the interior oblique derivative problem is well-posed, and the bound on the solution in terms of the data has the same $k$ dependence as the bound for the standard interior impedance problem).
In the notation of \cite{GaLaSp:21}, $\sigma(\mathcal{N})=  \vfb \cdot \bn$, $\sigma(\mathcal{D})= - \vfb\cdot \xi'+ c_1$, $m_1=0$, and $m_0=1$.
The first/second equation in \cite[Equation 4.2]{GaLaSp:21} holds because of the first/second equation in \eqref{eq:GLS1}, respectively.
Then, \cite[Equations 4.3, 4.10, 4.11, and 4.12]{GaLaSp:21} all hold because $m_0=m_1+1$.
\epf

\

\bpf[Proof of Theorem \ref{thm:inverse_formula_H}]
This is very similar to the proof of Theorem \ref{thm:inverse_formula_L} (the Laplace analogue), using
Lemma \ref{thm:star_in_oblique} in place of  Theorem \ref{thm:star_in_obliqueLE}.
\epf

\begin{appendix}

\section{Recap of mapping properties of layer potentials and boundary integral operators}\label{app:B}

Recall that the single-layer potential $\cS\phi$ is defined by \eqref{eq:SLP}. For $\phi\in L^2(\Gamma)$, define the double-layer potential $\cD\phi$ by
\beq\label{eq:DLP}
\cD\phi(\bx):= \int_\Gamma \pdiff{\Phi(\bx,\by)}{n(\by)}\phi(\by)\, \rd s(\by) \quad\tfor \bx \in \Rea^d\setminus\Gamma.
\eeq
For $\chi\in C^\infty_{\rm comp}(\Rea^d$) and $|s|\leq 1/2$,
\beq\label{eq:LPmap}
\chi \cS : H^{s-1/2}(\Gamma)\rightarrow H^{s+1}(\Rea^d) \quad\tand\quad
\chi \cD : H^{s+1/2}(\Gamma)\rightarrow H^{s+1}(\Omega^{\pm}).
\eeq
With $S,D,D',$ and $H$ defined by \eqref{eq:bio1} and \eqref{eq:bio2},
for all $|s|\leq 1/2$,
\begin{subequations}\label{eq:map}
\begin{align}\label{eq:map1}
S: H^{s-1/2}(\Gamma)\rightarrow H^{s+1/2}(\Gamma), \quad D: H^{s+1/2}(\Gamma)\rightarrow H^{s+1/2}(\Gamma),\\
D': H^{s-1/2}(\Gamma)\rightarrow H^{s-1/2}(\Gamma), \quad H: H^{s+1/2}(\Gamma)\rightarrow H^{s-1/2}(\Gamma).\label{eq:map2}
\end{align}
\end{subequations}
The results in \eqref{eq:LPmap} and \eqref{eq:map} for $|s|=1/2$ (which then imply the results for $|s|<1/2$ by interpolation) are consequences of the results in \cite{CoMcMe:82}, \cite{Ve:84}, and \cite{JeKe:95}; see, e.g., \cite[Theorems 2.15 and 2.16 and Corollary A.8]{ChGrLaSp:12}. (Note that the results in \eqref{eq:LPmap} for $|s|<1/2$ can also be obtained from mapping properties of the Newtonian potential and Green's integral representation, with the results in \eqref{eq:map} then following from results about the trace map; see \cite{Co:88}, \cite[Theorem 6.11]{Mc:00}.)

\begin{lemma}[Invertibility of $S:\LtG\rightarrow \HoG$ when $\Gamma$ is Lipschitz]
 \label{lem:SLP}
If $\Gamma$ is Lipschitz and either $d=3$, or $d=2$ and $a\neq\mathrm{Cap}_\Gamma$, then $S:L^2(\Gamma)\to H^{1}(\Gamma)$ is bounded and invertible.
\end{lemma}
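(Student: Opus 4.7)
The plan is to verify boundedness, injectivity, and surjectivity separately. Boundedness of $S:\LtG\to\HoG$ is immediate from the mapping property \eqref{eq:map1} with $s=1/2$.

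For injectivity, suppose $\phi\in\LtG$ satisfies $S\phi=0$ and set $u:=\cS\phi$. Then $u$ is harmonic in $\Omega^\pm$ and, by the first jump relation in \eqref{eq:jump1}, $\gamma^\pm u = S\phi=0$. Uniqueness for the interior Dirichlet problem gives $u\equiv 0$ in $\Oi$, hence $\partial_n^- u = 0$. For $d\geq 3$, the decay $u(\bx)=O(|\bx|^{2-d})$ from \eqref{eq:Sasym} combined with uniqueness for the exterior Dirichlet problem of Definition \ref{def:edp} gives $u\equiv 0$ in $\Oe$; the second jump relation in \eqref{eq:jump1} then yields $\phi=\partial_n^- u-\partial_n^+ u=0$. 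For $d=2$, \eqref{eq:Sasym2} gives $u(\bx)=(2\pi)^{-1}\log(a/|\bx|)(\phi,1)_\LtG+O(|\bx|^{-1})$; if $(\phi,1)_\LtG=0$ the argument closes as above, while otherwise, letting $G_\Gamma$ denote the Green's function of $\Oe$ with pole at infinity (harmonic in $\Oe$, zero on $\Gamma$, and $G_\Gamma(\bx)=(2\pi)^{-1}\log(|\bx|/\mathrm{Cap}_\Gamma)+o(1)$ as $|\bx|\to\infty$), the function $u+(\phi,1)_\LtG\,G_\Gamma$ is bounded and harmonic on $\Oe$, vanishes on $\Gamma$, and tends to the nonzero constant $(2\pi)^{-1}(\phi,1)_\LtG\log(a/\mathrm{Cap}_\Gamma)$ at infinity, contradicting uniqueness for the 2D exterior Dirichlet problem.

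For surjectivity, given $g\in\HoG$, let $u^-\in H^1(\Oi)$ and $u^+\in\Holoc(\Oe)$ solve the interior and exterior Laplace Dirichlet problems with data $g$. By Ne\v{c}as' regularity theorem (Theorem \ref{thm:Necas}), $\partial_n^\pm u^\pm\in\LtG$; set $\phi_0:=\partial_n^- u^- -\partial_n^+ u^+\in\LtG$, $v:=\cS\phi_0$, and $w:=u^\pm - v$ in $\Omega^\pm$. The jump relations \eqref{eq:jump1} ensure that $w$ has matching Dirichlet and Neumann traces across $\Gamma$, so the gluing of $w|_{\Oi}$ and $w|_{\Oe}$ is distributionally harmonic on $\Rea^d$, hence smoothly harmonic by elliptic regularity. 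For $d\geq 3$, decay of both $u^+$ and $v$ at infinity forces $w\equiv 0$ by Liouville, giving $S\phi_0=\gamma^- v=g$. For $d=2$, $w$ is harmonic on $\Rea^2$ with at most logarithmic growth at infinity, hence a constant $c$ by Liouville, so $S\phi_0=g-c$. Under $a\ne\mathrm{Cap}_\Gamma$, the equilibrium density $\phi_{\rm eq}\in\LtG$ (characterised by $(\phi_{\rm eq},1)_\LtG=1$ together with $S\phi_{\rm eq}$ being constant on $\Gamma$) satisfies $S\phi_{\rm eq}=(2\pi)^{-1}\log(a/\mathrm{Cap}_\Gamma)\ne 0$; thus $\phi:=\phi_0+2\pi c\,\phi_{\rm eq}/\log(a/\mathrm{Cap}_\Gamma)\in\LtG$ satisfies $S\phi=g$.

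The main obstacle is the 2D case: both injectivity and surjectivity hinge on translating the capacity hypothesis $a\ne\mathrm{Cap}_\Gamma$ into the concrete statement that $S\phi_{\rm eq}$ is a nonzero constant, which requires classical potential-theoretic facts (existence and properties of $G_\Gamma$ and $\phi_{\rm eq}$ for Lipschitz $\Gamma$, available in e.g.~McLean's monograph). The remainder uses only standard single-layer-potential techniques, Ne\v{c}as' regularity, uniqueness of the Laplace BVPs of Definitions \ref{def:idp} and \ref{def:edp}, and Liouville's theorem.
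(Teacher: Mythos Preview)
Your proof is correct and takes a genuinely different route from the paper's. The paper simply cites Verchota's invertibility results (\cite[Theorems 4.11 and 5.1]{Ve:84}) for connected $\Gamma$, then extends to general bounded Lipschitz open sets by observing that $S:\LtG\to\HoG$ is Fredholm of index zero (inherited from the connected case) and injective (because $S:\HmhG\to\HhG$ is already known to be invertible, \cite[Corollary 8.13, Theorem 8.16]{Mc:00}). Your argument is instead direct and constructive: injectivity via the jump relations and uniqueness of the Dirichlet BVPs, surjectivity by explicitly building a preimage through gluing interior and exterior Dirichlet solutions and invoking Liouville. The trade-off is that the paper's route is shorter but leans on deep external results, while yours is more self-contained but, for $d=2$, needs the classical potential-theoretic facts you flag (existence of the exterior Green's function with pole at infinity and of an equilibrium density $\phi_{\rm eq}\in\LtG$ for Lipschitz $\Gamma$). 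One small point to watch: your exterior uniqueness arguments invoke Definition~\ref{def:edp}, which assumes $\Oe$ connected; when $\Oe$ has bounded components (e.g.\ $\Oi$ an annulus) those components should be handled via interior Dirichlet uniqueness, a minor adjustment the paper also has to make when passing from connected $\Gamma$ to the general case.
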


\bpf[References for the proof]
The boundedness is \eqref{eq:map1} above with $s=1/2$.
The invertibility is proved in  \cite[Theorem 5.1]{Ve:84} for $d=3$ and \cite[Theorem 4.11]{Ve:84} for $d=2$.
Note that \cite{Ve:84} assumes for simplicity that $\Oi$ and $\Gamma$ are connected, but it is clear that this implies that the result holds when $\Gamma$ is the boundary of any bounded Lipschitz open set
(with this result for $d=3$ contained in \cite[Theorem 4.1]{MitreaD:97}). Indeed, in this case, $\Gamma$ and $\R^d\setminus \Gamma$ each have finitely-many connected components, and the results of \cite{Ve:84} for the case when $\Gamma$ is connected imply that $S$ is Fredholm of index zero as an operator $L^2(\Gamma)\mapsto H^{1}(\Gamma)$. Further, $S:L^2(\Gamma)\to H^1(\Gamma)$ is injective since $S$ is invertible as an operator from $H^{-1/2}(\Gamma)$ to $H^{1/2}(\Gamma)$ \cite[Corollary 8.13, Theorem 8.16]{Mc:00}.
\epf

\section{Recap of harmonic-analysis results}
\label{app:HA}

In this appendix we recap results on the behaviour of solutions to Laplace's or Poisson's equation near the boundary of the domain. For simplicity, these results are stated for a bounded Lipschitz domain $D$ with boundary $\partial D$. Analogues of the results then hold with $D=\Oi$ and $D=\Oe$, where in the latter case spaces such as $H^1(D)$ become $H^1_{\rm loc}(\Oe)$ (since these results do not assume any particular behaviour at infinity).

\begin{theorem}\mythmname{\cite[\S\S5.1.2, 5.2.1]{Ne:67}, \cite[Theorem 4.24]{Mc:00}}
\label{thm:Necas}
If $u\in H^1(D)$ and $\Delta u \in L^2(D)$, then $\partial_n u \in L^2(\Gamma)$ iff $\gamma u \in H^1(\Gamma)$.
\end{theorem}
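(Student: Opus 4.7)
My plan is to reduce both implications to a single two-sided Rellich-type estimate on the boundary, using the integrated identity of Lemma \ref{lem:int} with $\alpha=0$ and the vector field $\vfd$ constructed in Lemma \ref{lem:vf}, which satisfies $\vfd\cdot \bn \geq c > 0$ a.e.~on $\Gamma$. This identity yields, for sufficiently regular real-valued $v$, the boundary equality
\begin{equation*}
\int_\Gamma (\vfd\cdot \bn)\bigl(|\partial_n v|^2 - |\nT(\gamma v)|^2\bigr)\,\rd s + 2\int_\Gamma \bigl(\vfd\cdot\nT(\gamma v)\bigr)\partial_n v\,\rd s = \int_D\!\Bigl[2(\vfd\!\cdot\!\nabla v)\Delta v + 2(\partial_i \widetilde Z_j)\partial_i v\,\partial_j v - (\nabla\!\cdot\!\vfd)|\nabla v|^2\Bigr]\rd\bx.
\end{equation*}
Applying $\vfd\cdot\bn\geq c$ to the first term, Cauchy--Schwarz to the cross term, and Young's inequality to absorb one factor of the trace back into the left-hand side, one obtains the two companion estimates, for $v\in V(D)$,
\begin{align*}
\|\partial_n v\|_{L^2(\Gamma)}^2 &\lesssim \|\nT(\gamma v)\|_{L^2(\Gamma)}^2 + \|\nabla v\|_{L^2(D)}^2 + \|\Delta v\|_{L^2(D)}^2,\\
\|\nT(\gamma v)\|_{L^2(\Gamma)}^2 &\lesssim \|\partial_n v\|_{L^2(\Gamma)}^2 + \|\nabla v\|_{L^2(D)}^2 + \|\Delta v\|_{L^2(D)}^2,
\end{align*}
with implicit constants depending only on $c$, $\|\vfd\|_{W^{1,\infty}}$, and $D$. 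These inequalities immediately give both implications of the theorem for $v\in V(D)$.

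\textbf{Reduction to general $u\in H^1(D)$ with $\Delta u\in L^2(D)$.} The remaining task --- and the main obstacle --- is to lift the estimates from $V(D)$ to the class of $u\in H^1(D)$ with $\Delta u \in L^2(D)$ and (say) $\gamma u \in H^1(\Gamma)$, since a priori we do not know $\partial_n u \in L^2(\Gamma)$ and hence cannot directly invoke membership in $V(D)$. My plan is to construct a sequence of approximants $v_k \in C^\infty(\overline D)$ such that $v_k\to u$ in $H^1(D)$, $\Delta v_k \to \Delta u$ in $L^2(D)$, and $\gamma v_k \to \gamma u$ in $H^1(\Gamma)$; the first Rellich estimate above, applied to $v_k - v_\ell$, then shows $\{\partial_n v_k\}$ is Cauchy in $L^2(\Gamma)$, so converges to some $g\in L^2(\Gamma)$, and continuity of the conormal trace $H^1(D,\Delta)\to H^{-1/2}(\Gamma)$ identifies $g=\partial_n u$. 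The converse direction is handled symmetrically using the second estimate.

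\textbf{The hard part.} Constructing such $v_k$ with simultaneous convergence in all three norms is the technical heart of the proof. The standard route is to (i) use a Lipschitz partition of unity to reduce to the case where $D$ is locally a Lipschitz hypograph $\{x_d < F(x')\}$; (ii) straighten the boundary by the bi-Lipschitz change of variables $\Phi(x',x_d) = (x', x_d + F(x'))$, which transforms the Laplacian into a uniformly elliptic operator with $L^\infty$ coefficients and maps $H^1(\Gamma)$ traces to $H^1(\mathbb{R}^{d-1})$; (iii) employ a Ne\v{c}as-style vertical translation--mollification: define $v_\epsilon(x',x_d):=(\rho_\epsilon *_{x'} u)(x', x_d - \epsilon)$, with mollification only in the tangential directions and a downward shift to keep the mollifier's support inside the domain. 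This makes $v_\epsilon$ smooth up to the boundary, preserves the equation (the elliptic operator commutes to leading order with tangential mollification and translation, leaving commutator terms that are controlled by $\|\Delta u\|_{L^2}$ and $\|\nabla u\|_{L^2}$), and the tangential mollification plus shift gives $\gamma v_\epsilon\to\gamma u$ in $H^1(\Gamma_{\mathrm{loc}})$ whenever $\gamma u \in H^1(\Gamma_{\mathrm{loc}})$. Reassembling via the partition of unity produces the required global approximants. Once this density step is in place, the Rellich estimates of Step~1 deliver both implications at once.
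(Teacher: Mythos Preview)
The paper does not prove this theorem at all: it is stated in Appendix~C as a recap of known harmonic-analysis results, with the proof delegated entirely to the cited references (Ne\v{c}as \cite[\S\S5.1.2, 5.2.1]{Ne:67} and McLean \cite[Theorem~4.24]{Mc:00}). So there is no ``paper's own proof'' to compare against.

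That said, your outline is essentially the classical Ne\v{c}as argument and is the right approach. The Rellich identity you extract from Lemma~\ref{lem:int} with $\alpha=0$ and the transversal field of Lemma~\ref{lem:vf} does give the two companion estimates you state, and these do yield both implications once the density step is in place. A couple of remarks on the ``hard part'': (a) after the bi-Lipschitz flattening the Laplacian becomes a divergence-form operator with merely $L^\infty$ coefficients, so ``commutes to leading order with tangential mollification'' needs the Friedrichs commutator lemma to make precise --- the commutator $[\rho_\epsilon*,a\partial_j]$ on $H^1$ functions tends to zero in $L^2$, which is exactly what you need to control $\Delta v_\epsilon - \rho_\epsilon*\Delta u$; (b) an alternative, slightly cleaner route (closer to McLean's presentation) is to avoid flattening altogether: translate $u$ inward along the transversal direction $\vfd$ itself, $u_\epsilon(\bx):=u(\bx-\epsilon\vfd(\bx))$, which keeps you in the original domain and preserves $\Delta u_\epsilon\in L^2$ up to lower-order terms, then mollify. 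Either way your sketch captures the correct structure of the proof.
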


Given $\bx\in \Gamma$, let $\Theta(\bx)$ be the non-tangential approach set to $\bx$ from $D$ defined, for some sufficiently large $C>1$, as in \eqref{eq:ntapproach2}. Given $u\in C^2(D)$ with $\Delta u=0$, let the non-tangential maximal function of $u$,  $u^*$, be defined by \eqref{eq:ntmax}, and let the non-tangential limit of $u$, $\widetilde{\gamma}u$, be defined by \eqref{eq:ntlim}.

\begin{theorem}\mythmname{\cite[Corollaries 5.5 and 5.7]{JeKe:95}}\label{thm:JeKe:95}
Let $u \in C^2(D)$ with $\Delta u=0$.

(i) $u^*\in L^2(\partial D)$ iff $u\in H^{1/2}(D)$.

(ii) $u^*\in L^2(\partial D)$ implies that $\widetilde{\gamma} u \in L^2(\partial D)$.

(iii) $(\nabla u)^*\in (L^2(\partial D))^d$ iff $u\in H^{3/2}(D)$.

(iv) $(\nabla u)^*\in (L^2(\partial D))^d$ implies that $\widetilde{\gamma} (\nabla u) \in (L^2(\partial D))^d$.
\end{theorem}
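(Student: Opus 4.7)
The plan is to deduce all four parts from the solvability theory of the $L^2$ Dirichlet problem and the $L^2$ Regularity problem on bounded Lipschitz domains, together with a Fatou-type theorem, all of which rest on Dahlberg's harmonic-measure estimates. Parts (ii) and (iv) are pure Fatou statements: if a harmonic function $u$ has $u^*\in L^2(\partial D)$ (respectively $(\nabla u)^*\in(L^2(\partial D))^d$), then the non-tangential limit exists for almost every $\bx\in\partial D$ and lies in $L^2$. The key input is that the harmonic measure on a Lipschitz domain is $A_\infty$ with respect to surface measure; this gives a weak-type bound for $\widetilde{\gamma}u$ in terms of $u^*$, and the $L^2$ conclusion then follows from the assumed $L^2$ control of $u^*$ (and $\nabla u$ in the gradient case, noting that each component of $\nabla u$ is itself harmonic so the same Fatou theorem applies componentwise).

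For the forward direction of (i), I would use (ii) to obtain $f:=\widetilde{\gamma}u\in L^2(\partial D)$, then invoke uniqueness in the $L^2$ Dirichlet problem (Dahlberg) to conclude that $u$ is the Poisson integral of $f$. Solvability of the $L^2$ Dirichlet problem in $H^{1/2}(D)$ then gives $u\in H^{1/2}(D)$; concretely, one can argue by density, extending $f$ by a sequence $f_n\in H^{1/2}(\partial D)$, solving the corresponding $H^1(D)$ Dirichlet problems, and passing to the limit using the $\LtGN$-to-$H^{1/2}(D)$ continuity of the solution operator established in Dahlberg's theory. The converse is more subtle: given $u\in H^{1/2}(D)$ harmonic, one must first make sense of $\gamma u\in L^2(\partial D)$ (standard trace theory only gives $u\in H^{1/2}(D)\to H^0$ formally), which one does via the Poisson integral representation combined with a regularization; then Dahlberg's non-tangential maximal function estimate $\|u^*\|_{L^2(\partial D)}\lesssim \|\gamma u\|_{L^2(\partial D)}$ closes the argument.

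For (iii), I would apply the same template one derivative up. The forward direction: if $(\nabla u)^*\in L^2$, then in particular $u^*\in L^2$ so $u\in H^{1/2}(D)$ by (i), and by (iv) the non-tangential limit $\widetilde{\gamma}(\nabla u)\in L^2$ exists. Decomposing this limit into normal and tangential components gives $\dnu\in L^2(\partial D)$ and $\nabla_\Gamma(\gamma u)\in L^2(\partial D)$, whence $\gamma u\in H^1(\partial D)$; Theorem \ref{thm:Necas} then lifts $u$ from $H^1(D)$ to $H^{3/2}(D)$. For the converse, if $u\in H^{3/2}(D)$ is harmonic, then $\gamma u\in H^1(\partial D)$ by the trace theorem, and Dahlberg's non-tangential maximal function estimate for the Regularity problem, $\|(\nabla u)^*\|_{L^2(\partial D)}\lesssim \|\nabla_\Gamma(\gamma u)\|_{L^2(\partial D)}$, gives $(\nabla u)^*\in L^2(\partial D)$.

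The main obstacle is, as in the original Jerison--Kenig paper, the solvability of the $L^2$ Dirichlet problem and the $L^2$ Regularity problem together with the associated non-tangential maximal function estimates on bounded Lipschitz domains. These rely on Dahlberg's deep harmonic-measure results and Rellich identities, and cannot be avoided; once they are in hand, the four equivalences above reduce to fairly direct manipulations of the Poisson integral representation, the trace/extension theory for $H^s(D)$, and Theorem \ref{thm:Necas}.
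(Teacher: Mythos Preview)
The paper does not prove this theorem; it is quoted in Appendix~\ref{app:HA} as a result from \cite[Corollaries 5.5 and 5.7]{JeKe:95} with no argument supplied. So there is no ``paper's proof'' to compare against, only the original Jerison--Kenig argument.

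Your outline has one concrete gap. In the forward direction of (iii) you write that, having obtained $\partial_n u\in L^2(\partial D)$ and $\gamma u\in H^1(\partial D)$, ``Theorem~\ref{thm:Necas} then lifts $u$ from $H^1(D)$ to $H^{3/2}(D)$.'' But Theorem~\ref{thm:Necas} only states the equivalence $\partial_n u\in L^2(\partial D)\Leftrightarrow\gamma u\in H^1(\partial D)$; it gives no interior $H^{3/2}$ regularity. The paper does contain a statement of the type you need (the reverse implication in Lemma~\ref{lem:A10}), but the proof of that lemma \emph{invokes} part~(iii) of the present theorem, so appealing to it would be circular. A cleaner route is to reduce (iii) to (i) componentwise: each $\partial_j u$ is harmonic, so $(\nabla u)^*\in(L^2(\partial D))^d$ is, by (i), equivalent to $\partial_j u\in H^{1/2}(D)$ for every $j$, i.e.\ $\nabla u\in (H^{1/2}(D))^d$; together with $u\in H^{1/2}(D)$ (from (i) again, since $(\nabla u)^*\in L^2$ controls $u^*$ on a bounded domain) this is exactly $u\in H^{3/2}(D)$.

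Your treatment of the converse of (i) is also where the real content hides. For $u\in H^{1/2}(D)$ there is no classical trace, so the step ``make sense of $\gamma u\in L^2(\partial D)$ and then apply Dahlberg's maximal estimate'' is not routine; it is essentially the theorem. Jerison--Kenig do not argue via the boundary value problems at all: their Corollaries 5.5 and 5.7 come from a direct characterisation of the Besov spaces $B^{p,p}_\alpha(D)$ for harmonic functions in terms of weighted interior $L^p$ norms of derivatives and square-function estimates (see \cite[\S\S4--5]{JeKe:95}). Your bootstrapping-through-BVPs approach can probably be pushed through for $p=2$, but it is genuinely different from the source, and the trace and lifting steps you flag as technicalities are precisely where the analysis lives.
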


\ble\mythmname{\cite[Lemma A.9]{ChGrLaSp:12}}\label{lem:A9}
If $u\in H^s(D)$ with $s>1/2$ and $\Delta u=0$, then $\widetilde{\gamma}u = \gamma u $.
\ele

\ble
\label{lem:A10}
Let $u \in C^2(D)\cap H^1(D)$ with $\Delta u=0$.
Then $u\in H^{3/2}(D)$ iff $\partial_n u\in L^2(\partial D)$ and $\gamma u \in H^1(\partial D)$. Furthermore, if $u\in H^{3/2}(D)$ then, almost everywhere on $\partial D$,
\beq\label{eq:gradutrace}
\widetilde{\gamma}(\nabla u) = \bn \,\partial_n u + \nabla_{\partial D}(\gamma u).
\eeq
\ele

\bpf
The forward implication and the trace result \eqref{eq:gradutrace} are proved in \cite[Lemma A.10]{ChGrLaSp:12}.
For the reverse implication, assume that $\partial_n u \in L^2(\partial D)$ and $\gamma u \in H^1(\partial D)$. Let $v:= \cS\phi$ where the single-layer potential $\cS$ is defined by \eqref{eq:SLP} (with $\Gamma$ replaced by $\partial D$) with $a\neq {\rm Cap}_{\partial D}$ when $d=2$. Let $\phi:= S^{-1}\gamma u$, so that $\phi\in L^2(\partial D)$ by Lemma \ref{lem:SLP}. By the first jump relation in \eqref{eq:jump1} $\gamma v= \gamma u$, so that $v=u$ by uniqueness of the interior Dirichlet problem of Definition \ref{def:idp}.
Since $\phi\in L^2(\partial D)$, $(\nabla u)^*\in (L^2(\partial D))^d$ by \cite[Theorem 1.6]{Ve:84} (see also \cite[Chapter 15, Theorem 5]{MeCo:00}), and thus
$u\in H^{3/2}(D)$ by Part (iii) of Theorem \ref{thm:JeKe:95}.
\epf

\begin{corollary}\label{cor:CoDa}
The space $V(D)$ defined by \eqref{eq:V} is equal to $\{ v : v \in H^{3/2}(D), \, \Delta v \in L^2(D)\}$.
\end{corollary}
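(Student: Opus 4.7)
The strategy is to reduce the corollary to Lemma \ref{lem:A10} by splitting off the part of $v$ carrying its Laplacian, so that what remains is harmonic and Lemma \ref{lem:A10} applies directly. Specifically, let $f := \Delta v \in L^2(D)$ and extend $f$ by zero to $F\in L^2(\R^d)$ (with compact support). Define the Newtonian potential $w := \Phi * F$, so that $\Delta w = F$ in $\R^d$ in the distributional sense. By standard regularity of the Newtonian potential for compactly supported $L^2$ source, $w\in H^2_{\mathrm{loc}}(\R^d)$, and hence $w\in H^2(D)\subset H^{3/2}(D)$. Set $u := v-w$; then $\Delta u = 0$ in $D$, and $u\in C^\infty(D)$ by interior elliptic regularity for harmonic functions.

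Because $w\in H^2$ in a neighbourhood of $\partial D$, the standard trace theorem on Lipschitz boundaries gives $\gamma w \in H^1(\partial D)$ (write $\nabla w\in (H^1(\mathrm{nbd}))^d$, take traces, and identify $\nabla_{\partial D}(\gamma w)$ as the tangential component of $\gamma(\nabla w)\in (H^{1/2}(\partial D))^d\subset (L^2(\partial D))^d$) and $\partial_n w = \bn\cdot \gamma(\nabla w) \in H^{1/2}(\partial D)\subset L^2(\partial D)$. Thus, for both directions of the corollary, the boundary data behaviour of $v$ and of $u$ differ only by terms already lying in $H^1(\partial D)$ and $L^2(\partial D)$ respectively.

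With this reduction in place, I would argue as follows. For the forward inclusion $V(D)\subset \{v\in H^{3/2}(D):\Delta v\in L^2(D)\}$, suppose $v\in V(D)$. Then $\gamma u = \gamma v - \gamma w\in H^1(\partial D)$ and $\partial_n u = \partial_n v - \partial_n w \in L^2(\partial D)$; moreover $u\in H^1(D)$ since $v,w$ are. The forward direction of Lemma \ref{lem:A10} gives $u\in H^{3/2}(D)$, hence $v = u+w\in H^{3/2}(D)$. For the reverse inclusion, suppose $v\in H^{3/2}(D)$ with $\Delta v\in L^2(D)$. Then $u = v-w\in H^{3/2}(D)\cap H^1(D)$ is harmonic, so the reverse direction of Lemma \ref{lem:A10} yields $\gamma u\in H^1(\partial D)$ and $\partial_n u\in L^2(\partial D)$. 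Adding back the contributions from $w$ (which are in the required spaces by the previous paragraph) gives $\gamma v\in H^1(\partial D)$ and $\partial_n v\in L^2(\partial D)$, so $v\in V(D)$.

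The only conceivably delicate point is the claim that a function in $H^2$ on a neighbourhood of a Lipschitz boundary has boundary trace in $H^1(\partial D)$ and normal derivative in $L^2(\partial D)$; this is classical but the endpoint $s=3/2$ for the trace theorem on Lipschitz boundaries requires care, and is essentially the content of Ne\v{c}as's result cited as Theorem \ref{thm:Necas}, applied to $w$ and $\nabla w$. Apart from this, every step is a direct application of Lemma \ref{lem:A10} or mapping properties of the Newtonian potential, so no further real obstacle is expected.
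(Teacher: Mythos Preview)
Your proposal is correct and follows essentially the same route as the paper: both split off the Newtonian potential of $\Delta v$ (the paper integrates $\Phi$ against $\Delta v$ over $D$, you convolve with the zero extension --- the same function), reduce to a harmonic remainder, and invoke Lemma~\ref{lem:A10} in each direction, using the $H^2$-regularity of the Newtonian potential together with Ne\v{c}as's result (Theorem~\ref{thm:Necas}) to handle the boundary behaviour of the split-off part. One trivial slip: with the paper's sign convention $-\Delta\Phi=\delta$, you have $\Delta(\Phi*F)=-F$, so either flip the sign of $w$ or set $u:=v+w$; and note that what you label the ``forward'' and ``reverse'' directions of Lemma~\ref{lem:A10} are swapped relative to the paper's usage, though this does not affect the argument.
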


\bpf
With $\Phi$ defined by \eqref{eq:fund} and $f\in L^2(D)$, let
\beqs
\cN f(\bx) := \int_D \Phi(\bx,\by) \, f(\by)\, \rd \by, \quad\tfor \bx \in D;
\eeqs
i.e., $\cN$ is a Newtonian potential. Recall that $\cN: L^2(D)\rightarrow H^2(D)$ by, e.g., \cite[Theorem 6.1]{Mc:00}, and $\Delta( \cN f)=-f$.

Given $v \in V(D)$, observe that $\cN(\Delta v) \in H^2(D)$, $\partial_n \cN(\Delta v) = \bn \cdot \gamma(\nabla \cN(\Delta v))\in L^2(\partial D)$, and $\gamma \cN(\Delta v) \in H^1(\partial D)$; to see this last point observe that Theorem \ref{thm:Necas} implies that
\beqs
\gamma: \big\{ u \, :\, u\in H^1(D),\Delta u \in L^2(D), \partial_n u \in L^2(\partial D) \big\} \rightarrow H^1(\partial D),
\eeqs
and thus, in particular, $\gamma : H^2(D) \rightarrow H^1(\partial D)$. Therefore $\cN(\Delta v)\in V(D)$.
Now let $\widetilde{v}: = v + \cN(\Delta v)$. Then $\widetilde{v}\in V(D)$ with $\Delta \widetilde{v}=0$ and hence $\widetilde{v}\in C^2(D)$ by elliptic regularity. Therefore $\widetilde{v}\in H^{3/2}(D)$ by Lemma \ref{lem:A10}. The result that $v\in H^{3/2}(D)$ then follows
since $v= \widetilde{v} - \cN(\Delta v)$ and $\cN(\Delta v)\in H^2(D)$.

The reverse inclusion is proved similarly:~given $v\in H^{3/2}(D)$ with $\Delta v\in L^2(D)$, define $\widetilde{v}$ as before.
Since $\cN(\Delta v)\in H^2(D)$, $\widetilde{v}\in H^{3/2}(D)$. Since  $\Delta \widetilde{v}=0$, $\widetilde{v}\in C^2(D)$ by elliptic regularity, and then  $\partial_n \widetilde{v}\in L^2(\partial D)$ and $\gamma \widetilde{v}\in H^1(\Gamma)$ by Lemma \ref{lem:A10}; thus $\widetilde{v} \in V(D)$. The result that $v\in V(D)$ then follows from the definition of $\widetilde{v}$ and the fact that $\cN(\Delta v)\in H^2(D)\subset V(D)$
\epf

\

We also need the following results in $L^p(\partial D)$ for $p\neq 2$ (as opposed to the $L^2$-based results above).

\begin{theorem}\mythmname{\cite[Theorem 5.6.1]{Me:18}}\label{lem:Medkova}
Suppose $u \in C^2(D)$ with $\Delta u=0$ and $(\nabla u)^*\in (L^p(\partial D))^d$ for some $1<p<\infty$. Then $u^* \in L^p(\partial D)$, $\widetilde{\gamma} u \in L^p(\partial D)$, and
$\widetilde{\gamma} (\nabla u) \in (L^p(\partial D))^d$.
\end{theorem}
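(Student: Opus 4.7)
\medskip
\noindent\textbf{Proof plan for Theorem \ref{lem:Medkova}.}

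The plan is to chain three ingredients together. First I would prove that $u^*\in L^p(\partial D)$ by a path-integration argument bounding $u^*$ pointwise by $(\nabla u)^*$ plus a constant. Second I would establish the existence almost everywhere of the non-tangential limits $\widetilde{\gamma}u$ and $\widetilde{\gamma}(\nabla u)$ by appealing to classical Fatou-type theorems for harmonic functions whose non-tangential maximal function lies in $L^p$. Third, once existence is secured, $|\widetilde{\gamma}u|\leq u^*$ and $|\widetilde{\gamma}(\nabla u)|\leq (\nabla u)^*$ almost everywhere on $\partial D$, and the $L^p$ conclusions follow immediately.

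For the first ingredient, fix a reference point $\bx_0\in D$ with $\dist(\bx_0,\partial D)\geq 2r_0$ for some $r_0>0$ depending only on $D$. Given $\bx\in\partial D$ and $\by\in\Theta(\bx)$, I would construct a rectifiable path $\gamma_{\by}$ from $\by$ to $\bx_0$ consisting of two pieces: a first piece travelling from $\by$ to an intermediate ``corkscrew'' point $\bx^{\sharp}(\bx)\in D$ lying in a fixed non-tangential cone from $\bx$ at distance $r_0$ from $\bx$, followed by a second piece from $\bx^{\sharp}(\bx)$ to $\bx_0$ lying entirely in the compact set $\{\bz\in D:\dist(\bz,\partial D)\geq r_0/2\}$. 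The first piece is arranged so that every point $\bz$ on it satisfies $|\bz-\bx|\leq C\dist(\bz,\partial D)$ (so $\bz\in\Theta(\bx)$) and has length $O(\dist(\by,\partial D)+r_0)$; the second piece has fixed finite length. Then
\begin{equation*}
|u(\by)|\leq |u(\bx_0)|+\int_{\gamma_{\by}}|\nabla u|\,\rd s\leq M_0+C\,(\nabla u)^*(\bx),
\end{equation*}
where $M_0$ is a bound for $|u|$ on the second piece (finite by interior regularity of harmonic functions, since $u\in C^\infty(D)$). Taking the supremum over $\by\in\Theta(\bx)$ yields $u^*(\bx)\leq M_0+C(\nabla u)^*(\bx)$ pointwise on $\partial D$, and since $\partial D$ is bounded so constants are in $L^p(\partial D)$, this gives $u^*\in L^p(\partial D)$.

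For the second ingredient, I would invoke the standard non-tangential convergence theorem for harmonic functions on Lipschitz domains: if $w\in C^2(D)$ is harmonic and $w^*\in L^p(\partial D)$ for some $1<p<\infty$, then $\widetilde{\gamma}w$ exists almost everywhere on $\partial D$ (this is the $L^p$-analogue of Part (ii) of Theorem \ref{thm:JeKe:95}, due to Dahlberg and Jerison--Kenig). I would apply this both to $u$ itself, using $u^*\in L^p$ from the first step, and componentwise to the harmonic function $\nabla u$, using the hypothesis $(\nabla u)^*\in (L^p(\partial D))^d$. The bounds $|\widetilde{\gamma}u|\leq u^*$ and $|\widetilde{\gamma}(\nabla u)|\leq (\nabla u)^*$ almost everywhere then place both non-tangential limits in $L^p(\partial D)$.

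The main obstacle is the first step: the construction of the path $\gamma_{\by}$ must be genuinely uniform in $\bx\in\partial D$, so the corkscrew point $\bx^{\sharp}(\bx)$ and the aperture constants have to be chosen using only the Lipschitz character of $D$. This is where the Lipschitz hypothesis is really used; on more singular domains the ``corkscrew'' interior access property can fail. Once that geometric preliminary is in place, the path integration estimate and the appeal to classical non-tangential Fatou theorems are routine.
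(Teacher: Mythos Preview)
The paper does not supply a proof of this statement; it is quoted directly from \cite[Theorem 5.6.1]{Me:18} and used as a black box in the proof of Corollary~\ref{cor:Lp}. So there is no in-paper argument to compare against.

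Your plan is sound and follows the standard route for results of this type. One small simplification: the second piece of the path is unnecessary. Once the first piece reaches the corkscrew point $\bx^\sharp(\bx)$, that point ranges over a fixed compact set $K\subset D$ as $\bx$ varies over $\partial D$, and the bound $|u(\bx^\sharp(\bx))|\leq \sup_K|u|=:M_0<\infty$ is immediate from the continuity of $u$ on $K$. This sidesteps any worry about whether $\{\bz\in D:\dist(\bz,\partial D)\geq r_0/2\}$ is path-connected, and about bounding the length of the second piece uniformly in $\bx$.

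The only step that deserves a careful citation is the Fatou-type input in your second ingredient. The precise statement you need is the local Fatou theorem on Lipschitz domains: a harmonic function that is non-tangentially bounded at a.e.\ boundary point (which follows from $w^*<\infty$ a.e., hence from $w^*\in L^p(\partial D)$ with $|\partial D|<\infty$) has non-tangential limits a.e.\ with respect to harmonic measure, hence a.e.\ with respect to surface measure by Dahlberg's mutual absolute continuity. This is due to Hunt--Wheeden and Dahlberg, and your attribution to Dahlberg and Jerison--Kenig is reasonable; just be aware that the commonly stated versions are sometimes restricted to $p\geq 2-\epsilon$ or to positive harmonic functions, so you should point to the local-Fatou formulation rather than the global $L^p$-Dirichlet solvability statement.
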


\begin{corollary}\label{cor:Lp}
Suppose $u \in C^2(D)$ with $\Delta u=0$ and $(\nabla u)^*\in (L^p(\partial D))^d$ for some $2<p<\infty$. Then $\gamma u \in L^p(\partial D)$, $\partial_n u\in L^p(\partial D)$, and $\nabla_{\partial D}(\gamma u) \in (L^p(\partial D))^d$.
\end{corollary}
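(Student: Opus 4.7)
The plan is to deduce the corollary from Theorem \ref{lem:Medkova} together with the $L^2$-based machinery already assembled (Theorem \ref{thm:JeKe:95}, Lemma \ref{lem:A9}, and Lemma \ref{lem:A10}); the main work is to upgrade the identification $\widetilde\gamma u = \gamma u$ and the pointwise decomposition of $\widetilde\gamma(\nabla u)$ from the $L^2$ setting to the $L^p$ setting with $p>2$.

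First I would use that $\partial D$ has finite surface measure, so since $p>2$, $(\nabla u)^* \in (L^p(\partial D))^d \subset (L^2(\partial D))^d$ by H\"older's inequality. Part (iii) of Theorem \ref{thm:JeKe:95} then gives $u \in H^{3/2}(D)$, and in particular $u \in H^s(D)$ for some $s>1/2$.

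With this regularity in hand, Lemma \ref{lem:A9} gives $\widetilde\gamma u = \gamma u$ almost everywhere on $\partial D$, and Lemma \ref{lem:A10} gives the pointwise decomposition
\begin{equation*}
\widetilde\gamma(\nabla u) = \bn\,\partial_n u + \nabla_{\partial D}(\gamma u) \quad \text{a.e.\ on } \partial D.
\end{equation*}
Since the first summand is normal to $\partial D$ and the second is tangential, one recovers
\begin{equation*}
\partial_n u = \bn \cdot \widetilde\gamma(\nabla u), \qquad \nabla_{\partial D}(\gamma u) = \widetilde\gamma(\nabla u) - \bn\bigl(\bn \cdot \widetilde\gamma(\nabla u)\bigr).
\end{equation*}
Now applying Theorem \ref{lem:Medkova} with the given $p\in(2,\infty)$ yields $\widetilde\gamma u \in L^p(\partial D)$ and $\widetilde\gamma(\nabla u) \in (L^p(\partial D))^d$. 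Combining with the three displayed identities, and using $|\bn|=1$ almost everywhere, gives $\gamma u = \widetilde\gamma u \in L^p(\partial D)$, $\partial_n u \in L^p(\partial D)$, and $\nabla_{\partial D}(\gamma u) \in (L^p(\partial D))^d$, which is exactly the claim.

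The only subtle point is checking that the decomposition \eqref{eq:gradutrace} of Lemma \ref{lem:A10} transfers to the $L^p$ setting, but since the identity is pointwise almost everywhere on $\partial D$ (once we know $u\in H^{3/2}(D)$) and the right-hand sides of the two projection formulas above are measurable functions in $L^p(\partial D)$ directly by Theorem \ref{lem:Medkova}, no further regularity argument is required. The proof should therefore be just a few lines.
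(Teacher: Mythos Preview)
Your proof is correct and follows essentially the same route as the paper: use $p>2$ and finiteness of $|\partial D|$ to drop into $L^2$, invoke Theorem \ref{thm:JeKe:95}(iii) to get $u\in H^{3/2}(D)$, then combine Lemma \ref{lem:A9}, Lemma \ref{lem:A10}, and Theorem \ref{lem:Medkova} exactly as you do. The paper's version is slightly terser but the logic is identical.
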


\bpf
Since $(\nabla u)^*\in (L^2(\partial D))^d$, $u\in H^{3/2}(D)$ by Part (iii) of Theorem \ref{thm:JeKe:95}. Further, $\widetilde{\gamma} u \in L^p(\partial D)$ and
$\widetilde{\gamma} (\nabla u) \in (L^p(\partial D))^d$ by Theorem \ref{lem:Medkova}. Also, $\gamma u =\widetilde{\gamma} u$ by Lemma \ref{lem:A9}, and thus $\gamma u \in L^p(\partial D)$. By Lemma \ref{lem:A10}, \eqref{eq:gradutrace} holds, and thus $\partial_n u = \bn \cdot \widetilde{\gamma} (\nabla u)\in L^p(\partial D)$ and
$\nabla_{\partial D}(\gamma u) \in (L^p(\partial D))^d$.
\epf

\section{Proofs of Theorems \ref{thm:BVPequiv} and \ref{thm:BVPequivH}}\label{app:BVPequiv}

\bpf[Proof of Theorem \ref{thm:BVPequiv}]
We prove the result for the IDP when $d=3$; the proof for the EDP is very similar.
The proof for the IDP when $d=2$ is also similar, with use of \cite[Theorem 5.15.2]{Me:18} replaced by use of \cite[Theorem 5.15.3]{Me:18}.

If $u$ is the solution of the IDP in the sense of Definition \ref{def:idp} then, since $u\in H^1(\Oi)$, $\widetilde{\gamma}^- u = \gmu$ by Lemma \ref{lem:A9} and thus $\widetilde{\gamma}^- u =g_D$. Furthermore, since $u\in H^{1/2}(\Oi)$, $u^* \in\LtG$ by Part (i) of Theorem \ref{thm:JeKe:95}. Finally, by elliptic regularity $u\in C^2(D)$. Therefore $u$ is a solution of the IDP in the sense of Definition \ref{def:idp2}

To prove the converse,
let  $v := (-\cD+\cS)\phi$ for $\phi\in \LtG$, with $\cD$ the double-layer potential defined by \eqref{eq:DLP} and $\cS$ the single-layer potential defined by \eqref{eq:SLP}.
Now $\widetilde{\gamma}^+ v = (\half I - D + S)\phi$, where we have used that (i) $\widetilde{\gamma}^- \cD\phi = (-\half I + D)\phi$
by \cite[Theorem 1.10]{Ve:84} (similarly to \eqref{eq:jumpCald}) and (ii) $\widetilde{\gamma}^+ \cS\phi = \gamma^+ \cS\phi=S\phi$ by Lemma \ref{lem:A9} and the first jump relation in \eqref{eq:jump1}.
Since $(\half I - D+S):\LtG\rightarrow \LtG$ is invertible by  \cite[Theorem 5.15.2]{Me:18} \footnote{Note that the operator $K$ in \cite{Me:18} equals minus our $D$ (see \cite[\S5.3]{Me:18}).},
if $\phi := (\half I - D+S)^{-1}g_D$, then $v$ is a solution to the IDP of Definition \ref{def:idp2}.
The solution of this BVP is unique by \cite[Page 41]{Ca:85} and \cite[Lemma 3.7]{Ve:84} \footnote{ \cite[Lemma 3.7]{Ve:84} justifies how the uniqueness argument of \cite[Theorem 2.3]{FaJoRi:78} for $C^1$ domains also holds for Lipschitz domains.},
 and thus $v=u$.
 Arguing as in the proof of \cite[Theorem 2.27]{ChGrLaSp:12}, one can show that $(-\half I + D+S):\HhG\rightarrow \HhG$ is invertible -- this follows by proving that $(-\half I + D'+S):\HmhG\rightarrow \HmhG$ is invertible, which in turn follows since $-\half I + D':\HmhG\rightarrow \HmhG$ is Fredholm of index zero by \cite[Theorem 2.25]{ChGrLaSp:12}, $S:\HmhG\rightarrow \HmhG$ is compact by \eqref{eq:map1},
 and $-\half I + D'+S$ is injective by uniqueness of the Laplace exterior Robin problem. Therefore, if $g_D\in \HhG$, then $\phi \in \HhG$, and $u\in H^1(\Oi)$ by the mapping properties of $\cD$ and $\cS$ in \eqref{eq:LPmap}. Finally, $\gmu = \widetilde{\gamma}^-u$ by Lemma \ref{lem:A9}, and thus $\gamma u = g_D$ and $u$ is the solution of the IDP in the sense of Definition \ref{def:idp}.
\epf

\

\bpf[Proof of Theorem \ref{thm:BVPequivH}]
The proof is similar to the proof of Theorem \ref{thm:BVPequiv}. The main difference is that now we define $v:= (\cD_k - \ri k \cS_k)\phi$, where the Helmholtz single- and double-layer potentials $\cS_k$ and $\cD_k$ are defined by \eqref{eq:SLP} and \eqref{eq:DLP} with $\Phi$ replaced by $\Phi_k$. Now $\widetilde{\gamma}^+ v = (\half I + D_k - \ri k S_k)\phi$, where we have used that (i) $\widetilde{\gamma}^+ \cD\phi = (\half I + D_k)\phi$ by \cite[\S4]{ToWe:93}, \cite[Page 111]{ChGrLaSp:12}, and (ii) $\widetilde{\gamma}^+ \cS\phi = \gamma^+ \cS\phi =S_k \phi$ by Lemma \ref{lem:A9} and the Helmholtz analogue of the first jump relation in \eqref{eq:jump1}.
The proof then follows the same steps as in the proof of Theorem \ref{thm:BVPequiv}, using uniqueness of the Helmholtz EDP in the sense of Definition \ref{def:Hedp2} (see \cite[Theorem 5.6, Part (ii)]{ToWe:93}) and the fact that $A_{k,k}:= \half I + D_k - \ri k S_k :H^{s+1/2}(\Gamma)\rightarrow H^{s+1/2}(\Gamma)$ is bounded and invertible for $|s|\leq 1/2$ by \cite[Theorem 2.27]{ChGrLaSp:12}.
\epf

\section{The vector field $\bZ$}\label{app:Lipschitz}

This appendix contains (i) the construction of a vector field $\bZ$ satisfying \eqref{eq:c} (\S\ref{sec:Z}) and (ii) the proof of the extension theorem Lemma \ref{lem:Lipschitz} (\S\ref{sec:Z2}).

\subsection{Constructing the vector field $\vfd$}\label{sec:Z}

\ble\label{lem:vf}
If $\Oi$ is a bounded Lipschitz open set then there exists a real-valued $\vfd\in (C_{\rm{comp}}^\infty(\Rea^d))^d$ and $c>0$ such that $\vfb:= \vfd|_\Gamma$ satisfies
\beq\label{eq:c2}
\vfb(\bx) \cdot \bn(\bx) \geq c \quad \text{ for almost every } \bx\in \Gamma.
\eeq
\ele
The construction of such a $\vfd$ can be found in, e.g., \cite[Lemma 1.5.1.9]{Gr:85}, \cite[Proof of Lemma 1.3]{Ne:67}.

We now spell out the construction from \cite{Gr:85} (in slightly modified form), calculate bounds on the $\vfd$ that we construct, and then show how both the constant $\alpha$ (required to satisfy \eqref{eq:alpha_bound_boundary}/\eqref{eq:alpha_bound}) and $\|\opLIZ\|_{\LtGt}$ depend on constants in the construction of $\vfd$ (see \eqref{eq_alphadef} and \eqref{eq:Abound} below).

Suppose that $\bx_m\in \R^d$ and $a_m>0$, for $m=1, \ldots , M$, are chosen so that
\begin{equation} \label{eq:cover}
\Gamma \subset \bigcup_{m=1}^M B_{a_m}(\bx_m),
\end{equation}
and so that, for each $m=1,...,M$, $\Gamma_m:= \Gamma\cap B_{a_m}(\bx_m)$ is, in some rotated coordinate system $0\tilde x_1...\tilde x_d$, the graph of a Lipschitz continuous function with Lipschitz constant $L_m$ such that the unit vector $\vfd_m$ in the $\tilde x_d$ direction points out of $\Omega$. It is clear from the definition of a Lipschitz open set (see, e.g., \cite[Definition 3.28]{Mc:00}) that it is always possible to find such $\bx_m$ and $a_m$. Note that, for almost all $\bx\in \Gamma_m$,
\begin{equation} \label{eq:Zn}
\bn(\bx)\cdot \vfd_m \geq c_m := (1+L_m^2)^{-1/2}.
\end{equation}

Suppose that $\theta_m\in C^{0,1}(\R^d)$  with $\mbox{supp}(\theta_m)\subset B_{a_m}(\bx_m)$, for $m=1,...,M$, and that
\begin{equation} \label{eq:theta}
\sum_{m=1}^M \theta_m(\bx) \geq 1, \quad \bx\in \Gamma.
\end{equation}
Functions $\theta_1,...,\theta_M$ satisfying \eqref{eq:theta} exist by, e.g., \cite[Theorem 2.17]{Gr:09}, and indeed can be chosen so that equality rather than inequality holds in \eqref{eq:theta} and so that each $\theta_m\in C_{\rm comp}^\infty(\R^d)$, in which case we say that $(\theta_1,...,\theta_M)$ is a partition of unity for $\Gamma$ subordinate to the cover $(B_{a_1}(\bx_1),...,B_{a_M}(\bx_m))$. We construct explicitly functions satisfying \eqref{eq:theta} below; a choice of $\vfd$ that satisfies \eqref{eq:c2} is then
\begin{equation} \label{eq:Zdef}
\vfd(\bx) := \sum_{m=1}^M \theta_m(\bx) \vfd_m,
\end{equation}
since, for almost all $\bx\in \Gamma$,
\beq\label{eq:Zc}
\bn(\bx) \cdot \vfd(\bx) = \sum_{m=1}^M \theta_m(\bx) \bn(\bx) \cdot \vfd_m \geq \sum_{m=1}^M c_m\theta_m(\bx) \geq c := \min_{m=1,...,M} c_m.
\eeq
Observe that, since $\vfd_m$, $m=1, \ldots, M$, are unit vectors, for $\bx\in\Rea^d$,
$$
|\vfd(\bx)| \leq \Theta(\bx) := \sum_{m=1}^M \theta_m(\bx),
\quad\text{ and also } \quad
\nabla \cdot \vfd(\bx) = \sum_{m=1}^M \nabla \theta_m(\bx) \cdot \vfd_m,
$$
so that
\beq\label{eq:noise}
|\nabla \cdot \vfd(\bx)| \leq \Theta^\prime(\bx) := \sum_{m=1}^M |\nabla \theta_m(\bx)|.
\eeq
Furthermore, for unit vectors $\ba$ and ${\bf b}$, writing $\vfd_m$ in terms of its components as $\vfd_m=(Z_{m,1},...,Z_{m,d})$,
$$
\partial_j Z_i a_ib_j = \sum_{m=1}^M \partial_j \theta_m Z_{m,i} a_ib_j = \sum_{m=1}^M (\nabla \theta_m \cdot {\bf b}) \; (\vfd_m \cdot \ba),
\quad\text{so that}\quad
|\partial_j Z_i a_ib_j |\leq \Theta^\prime.
$$
Therefore, with $D \vfd$ the matrix with $(i,j)$th element $\partial_i Z_j$ and $\|\cdot\|_2$ denoting the matrix $2$-norm,
$
\|D \vfd\|_2 \leq \Theta^\prime.
$
Combining this with \eqref{eq:noise}, we see that
the inequality for $\alpha$ \eqref{eq:alpha_bound}
holds if
\begin{equation} \label{eq_alpha_bound2}
2\alpha \geq 3 \|\Theta^\prime\|_{L^\infty(\R^d)}.
\end{equation}

We now choose a specific form for the partition-of-unity functions $\theta_m$, and hence obtain a bound on
$\|\Theta^\prime\|_{L^\infty(\R^d)}.$
The right hand side of \eqref{eq:cover} is still a cover for $\Gamma$ if we reduce the size of the balls slightly, i.e., \eqref{eq:cover} implies that, for some  $\mu \in (0,1)$,
\begin{equation} \label{eq:cover1}
\Gamma \subset \bigcup_{m=1}^M B_{\mu a_m}(\bx_m).
\end{equation}
Assuming that \eqref{eq:cover1} holds for some $\mu\in (0,1)$, one way of constructing the functions $\theta_m$ to satisfy \eqref{eq:theta} is to set
\begin{equation} \label{eq:thetdef}
\theta_m(\bx) := \chi\left(\frac{|\bx-\bx_m|}{a_m}\right)
\end{equation}
with $\chi\in C^{0,1}[0,\infty)$ chosen so that $0\leq \chi(t)\leq 1$, for $t\geq 0$, $\chi(t)=1$, for $0\leq t\leq \mu$, while $\chi(t)=0$ for $t\geq 1$.
(Observe that, with these choices of $\theta_m$, $\vfd$ is no longer smooth, but only Lipschitz.)
Then, for $\bx\in \R^d$,
\begin{equation} \label{eq:zbound}
|\vfd(\bx)| \leq M^*,
\end{equation}
where $M^*\in \mathbb{N}$ is the smallest integer such that every $\bx\in \Gamma$ is in at most $M^*$ balls $B_{a_m}(\bx_m)$. Furthermore, for $\bx\in \R^d$,
$$
\nabla \theta_m(\bx) = \chi^\prime\left(\frac{|\bx-\bx_m|}{a_m}\right) \, \frac{\bx-\bx_m}{a_m|\bx-\bx_m|},
$$
so that, if
$$
a := \min_{m=1,...,M} a_m,
$$
then, by \eqref{eq:noise},
\beq\label{eq:Thetaprimebound}
\|\Theta^\prime\|_{L^\infty(\R^d)} \leq \frac{M^* \|\chi^\prime\|_\infty}{a}.
\eeq
 Thus \eqref{eq_alpha_bound2} holds if
\begin{equation} \label{eq_alpha_bound3}
2\alpha \geq  \frac{3M^*\|\chi^\prime\|_\infty}{a}, \quad \mbox{i.e., \,if} \quad 2\alpha \geq \frac{3M^*}{(1-\mu)a}
\end{equation}
if we make the simple choice that
\begin{equation} \label{eq:chidef}
\chi(t) = (1-t)/(1-\mu) \quad \mbox{for} \quad  \mu\leq t\leq 1.
\end{equation}
With $\opLIZ$ defined by \eqref{eq:opLIZ}, assume that
\begin{equation} \label{eq_alphadef}
\alpha :=  \frac{3M^*\|\chi^\prime\|_\infty}{2a}.
\end{equation}
Then, by Theorem \ref{thm:Laplace_int} combined with \eqref{eq_alpha_bound2} and \eqref{eq:Thetaprimebound}, $\opLIZ$ is coercive with coercivity constant $c$ given by \eqref{eq:Zc}. Furthermore, by \eqref{eq:zbound},
$\opLIZ$ is bounded with
\begin{equation} \label{eq:Abound}
\N{\opLIZ}_{\LtGt} \leq M^* \left(\frac{1}{2} + \N{D'}_{\LtGt} + \N{\nabla_\Gamma S}_{\LtGt}\right) +  \frac{3\|\chi^\prime\|_\infty}{2a} \N{S}_{\LtGt}.
\end{equation}

\subsection{Proof of Lemma \ref{lem:Lipschitz}}\label{sec:Z2}

\bpf
By the Kirszbraun theorem \cite{Ki:34, Va:45},
$\vfb\in (C^{0,1}(\Gamma))^d$ can be extended to a function $\vfext\in (C^{0,1}(\R^d))^d$ with the same (non-zero) Lipschitz constant.

Let $R>0$ be such that $\overline{\Oi}\subset B_R$. For $a>1$ and $R^*>aR$, let
\beqs
\chi(r):=
\begin{cases}
1, & R<r <aR,\\
(R^*-r)/(R^*-aR), & aR<r<R^*,\\
0, & r> R^*,
\end{cases}
\eeqs
and let
\beqs
\vfextt (\bx):=
\begin{cases}
\vfext(\bx), & |\bx|\leq R,\\
\vfext(R\widehat{\bx})\,\chi(|\bx|), & |\bx|> R,
\end{cases}
\eeqs
where $\widehat \bx:= \bx/|\bx|$, so that $\supp\, \vfextt = B_{R^*}$.

Let $L$ be the Lipschitz constant of $\vfext$.  We now prove that if $a$ and $R^*$ are both large enough then
\beq\label{eq:Lip}
\big|\vfextt(\bx) - \vfextt(\by)\big|\leq L |\bx-\by| \quad\tfa \bx,\by \in \Rea^d,
\eeq
i.e.~the Lipschitz constant of $\vfextt$ does not exceed that of $\vfext$.

First observe that there exists $a_0>1$ such that if $a\geq a_0$ and at least one of $|\bx|$ and $|\by|$ are $\geq aR$ then
\beq\label{eq:largea}
|R\widehat{\bx}-R\widehat{\by}| \leq \half |\bx-\by|.
\eeq
We now prove that if $a\geq a_0>1$ and
\beq\label{eq:R*}
R^*\geq aR + 2 \N{\vfext}_{L^\infty(B_R)}/L.
\eeq
then \eqref{eq:Lip} holds.

\paragraph{Case 1: $|\bx|\leq R, |\by|\leq R$.} Since $\vfextt=\vfext$ in $B_R$, \eqref{eq:Lip} follows from the definition of $L$.

\paragraph{Case 2: $|\bx|\leq R, R\leq |\by|\leq aR$.} The key point here is that $|\bx-\by|\geq |\bx- R\widehat{\by}|$ so that
\beqs
\big|\vfextt(\bx) - \vfextt(\by)\big|= \big|\vfext(\bx) - \vfext(R\widehat{\by})\big|\leq L |\bx- R\widehat{\by}|\leq L |\bx-\by|.
\eeqs

\paragraph{Case 3: $|\bx|\leq R, aR\leq |\by|\leq R^*$.}
Now
\begin{align*}
\big|\vfextt(\bx) - \vfextt(\by)\big|&= \big|\vfext(\bx) - \vfext(R\widehat{\by})\chi(|\by|)\big|\\
&\leq \big|\vfext(\bx) - \vfext(R\widehat{\by})\big| +\big|\vfext(R\widehat{\by})\big|(1- \chi(\by))\\
&\leq L\big|\bx-R\widehat{\by}\big| +\N{\vfext}_{L^\infty(B_R)}\frac{ |\by|-aR}{R^*-aR}.
\end{align*}
If $a\geq a_0$, then
\beqs
\big|\bx-R\widehat{\by}\big|\leq\big|R\widehat{\bx}-R\widehat{\by}\big| \leq \half |\bx-\by\big|,
\eeqs
by \eqref{eq:largea}. Since $|\by|-aR \leq |\bx-\by|$,  \eqref{eq:Lip} then follows if, with this choice of $a$,  $R^*$ is then large enough so that
\beq\label{eq:Rlarge}
\frac{\N{\vfext}_{L^\infty(B_R)}}{R^*-aR}\leq \frac{L}{2},
\eeq
which is equivalent to the inequality \eqref{eq:R*}.

\paragraph{Case 4: $|\bx|\leq R, |\by|\geq R^*$.}
Now
\beqs
\big|\vfextt(\bx)- \vfextt(\by)\big|= \big|\vfext(\bx)\big| \leq \N{\vfext}_{L^\infty(B_R)}.
\eeqs
Since $L(R^*-R)\leq L|\bx-\by|$, \eqref{eq:Lip} follows if
\beq\label{eq:R*1}
\N{\vfext}_{L^\infty(B_R)}\leq L(R^*-R)\quad\text{ i.e.,} \quad
R^* \geq R + \N{\vfext}_{L^\infty(B_R)}/L,
\eeq
which is ensured by \eqref{eq:R*} since $a\geq a_0>1$.

\paragraph{Case 5: $R\leq |\bx|\leq aR, R\leq |\by|\leq aR$.}

Similar to Case 2, the key point here is that $|\bx-\by|\geq |R\widehat{\bx}- R\widehat{\by}|$ so that
\beqs
\big|\vfextt(\bx) - \vfextt(\by)\big|= \big|\vfext(R\widehat{\bx}) - \vfext(R\widehat{\by})\big|\leq L |R\widehat{\bx}- R\widehat{\by}|\leq L |\bx-\by|.
\eeqs

\paragraph{Case 6: $R\leq |\bx|\leq aR, aR\leq |\by|\leq R^*$.}
Now
\begin{align*}
\big|\vfextt(\bx) - \vfextt(\by)\big|&= \big|\vfext(R\widehat{\bx}) - \vfext(R\widehat{\by})\chi(|\by|)\big|\\
&\leq \big|\vfext(R\widehat{\bx}) - \vfext(R\widehat{\by})\big| +\big|\vfext(R\widehat{\by})\big|(1- \chi(\by))\\
&\leq L\big|R\widehat{\bx}-R\widehat{\by}\big| +\N{\vfext}_{L^\infty(B_R)}\frac{|\by|-aR}{R^*-aR}.
\end{align*}
Combining \eqref{eq:largea} and the inequality $|\by|-aR\leq |\bx-\by|$, we see that \eqref{eq:Lip} holds if $R^*$ is chosen so that \eqref{eq:Rlarge} holds, which is equivalent to the inequality \eqref{eq:R*}.

\paragraph{Case 7: $R\leq |\bx|\leq aR,|\by|\geq R^*$.}
Similarly  to Case 4,
\beqs
\big|\vfextt(\bx)- \vfextt(\by)\big|= \big|\vfext(R\widehat{\bx})\big| \leq \N{\vfext}_{L^\infty(B_R)}.
\eeqs
Since $L(R^*-aR)\leq L|\bx-\by|$, \eqref{eq:Lip} follows if
\beqs
\N{\vfext}_{L^\infty(B_R)}\leq L(R^*-aR),
\eeqs
which is ensured by \eqref{eq:R*}.

\paragraph{Case 8: $aR\leq |\bx|\leq R^*, aR\leq |\by|\leq R^*$.}
Now
\begin{align*}
\big|\vfextt(\bx) - \vfextt(\by)\big|&= \big|\vfext(R\widehat{\bx})\chi(|\bx|) - \vfext(R\widehat{\by})\chi(|\by|)\big|\\
&\leq \big|\vfext(R\widehat{\bx}) - \vfext(R\widehat{\by})\big||\chi(|\bx|) | +\big|\vfext(R\widehat{\by})\big|\big|\chi(|\bx|)- \chi(|\by|)\big|\\
&\leq L\big|R\widehat{\bx}-R\widehat{\by}\big| +\N{\vfext}_{L^\infty(B_R)}\frac{|\bx-\by|}{R^*-aR}.
\end{align*}
Using \eqref{eq:largea}, we see that \eqref{eq:Lip} holds if $R^*$ is chosen so that \eqref{eq:Rlarge} holds, which is equivalent to the inequality \eqref{eq:R*}.

\paragraph{Case 9: $aR\leq |\bx|\leq R^*, |\by|\geq R^*$.} Now
\beqs
\big|\vfextt(\bx)- \vfextt(\by)\big|= \big|\vfextt(\bx)\big|=\big|\vfext(R\widehat{\bx})\big|\frac{R^*-|\bx|}{R^*-aR}\leq \N{\vfext}_{L^\infty(B_R)}\frac{R^*-|\bx|}{R^*-aR}.
\eeqs
Since $(R^*-|\bx|)\leq |\bx-\by|$, it is sufficient to prove that the right-hand side of this last expression is $\leq L(R^*-|\bx|)$,
and this is ensured by  the inequality \eqref{eq:R*}.

\paragraph{Case 10: $|\bx|\geq R^*, |\by|\geq R^*$.} In this case, \eqref{eq:Lip} holds since $\vfextt(\bx)=\vfextt(\by)=\bzero$.
\epf

\end{appendix}

\section*{Acknowledgements}

EAS was supported by the UK Engineering and Physical Sciences Research Council grants EP/F06795X/1, EP/1025995/1, and EP/R005591/1, and SCW by grant EP/F067798/1.

\footnotesize{
\bibliographystyle{plain}
\bibliography{../biblio_combined_sncwadditions}
}
\end{document}